\newtheorem{theorem}{Theorem}[section]
\newtheorem{prop}[theorem]{Proposition}
\newtheorem{cor}[theorem]{Corollary}
\newtheorem{lemma}[theorem]{Lemma}
\newtheorem{question}[theorem]{Question}
\newcommand\ran{\mathop{\rm ran}}
\newcommand\Span{\mathop{\rm span}}
\newcommand\rank{\mathop{\rm rank}}
\newcommand\conv{\mathop{\rm conv}}
\newcommand\her{\mathop{\rm her}}
\newcommand\ap{\mathop{\rm ap}}
\newcommand\cp{\mathop{\rm cp}}
\newcommand\thet{\mathop{\rm th}}
\newcommand\fp{\mathop{\rm fp}}
\newcommand\vp{\mathop{\rm vp}}
\newcommand\fvp{\mathop{\rm fvp}}
\newcommand\thab{\mathop{\rm thab}}
\newcommand{\cl}[1]{\mathcal{#1}}
\newcommand{\bb}[1]{\mathbb{#1}}
\newcommand{\To}{\rightarrow}
\newcommand{\ep}{\epsilon}
\newcommand\Tr{\mathop{\rm Tr}}
\newcommand{\ip}[2]{\ensuremath{\left\langle #1 , #2\right\rangle}}
\theoremstyle{definition}
\newtheorem{definition}[theorem]{Definition}
\theoremstyle{remark}
\newtheorem{example}[theorem]{Example}
\newtheorem{remark}[theorem]{Remark}
\begin{document}

\title[Parameters of non-commutative graphs]
{Information theoretic parameters of non-commutative graphs and convex corners}

\author[G. Boreland]{Gareth Boreland}
\address{Mathematical Sciences Research Centre,
Queen's University Belfast, Belfast BT7 1NN, United Kingdom}
\email{gboreland01@qub.ac.uk}

\author[I. G. Todorov]{Ivan G. Todorov}
\address{
School of Mathematical Sciences, University of Delaware, 501 Ewing Hall,
Newark, DE 19716, USA, and 
Mathematical Sciences Research Centre,
Queen's University Belfast, Belfast BT7 1NN, United Kingdom}
\email{todorov@udel.edu}

\author[A. Winter]{Andreas Winter}
\address{
ICREA and Grup d'Informaci\'o Qu\`antica, Departament de F\'isica, 
 Universitat Aut\`onoma de Barcelona, 08193 Bellaterra 
 (Barcelona), Spain}
\email{andreas.winter@uab.cat}


\date{3 January 2021}

\maketitle

\begin{abstract}
We establish a second anti-blocker theorem for non-com-mutative convex corners, 
show that the anti-blocking operation is continuous on bounded sets of convex corners, 
and define optimisation parameters for a given convex corner 
that generalise well-known graph theoretic quantities. 
We define the entropy of a state with respect to a convex corner, characterise its maximum value
in terms of a generalised fractional chromatic number and establish entropy splitting results 
that demonstrate the entropic complementarity between a convex corner and its anti-blocker.
We identify two extremal tensor products of convex corners and examine the behaviour of the introduced parameters 
with respect to tensoring.
Specialising to non-commutative graphs, we obtain
quantum versions of the fractional chromatic number and the clique covering number,
as well as a notion of non-commutative graph entropy of a state, which we show to be  
continuous with respect to the state and the graph. 
We define the Witsenhausen rate of a non-commutative graph and compute the values of 
our parameters in some specific cases. 
\end{abstract}

\tableofcontents


\section{Introduction}\label{s_intro}

The importance of graphs in information theory was recognised by Shannon in the early stages 
of its formation. The underlying idea, which he pioneered in \cite{Shannon2}, is to use the adjacency relation between 
the vertices of a graph as signifying the confusability between the symbols from an alphabet, 
transmitted via a noisy information channel. 
This led to the definition of the \emph{zero-error capacity} of a channel as an asymptotic parameter, 
depending on the behaviour of the independence numbers of the iterated strong products of the graph. 
In a similar vein, Witsenhausen \cite{wit} identified the optimal rate of transmission via a channel 
with side information, nowadays known as the \emph{Witsenhausen rate}.

In the coding problem for a source, 
K\"{o}rner \cite{Korner} employed 
the asymptotic behaviour of the chromatic numbers of the conormal graph products
to define the \emph{graph entropy} $H(G,p)$ of the source $p$, the optimal compression rate
in the presence of ambiguity captured by the graph $G$. 
Very importantly from a computational viewpoint, he expressed $H(G,p)$ 
as the solution of an optimisation problem over a convex polytope in $\bb{R}^d$, 
canonically associated with $G$. 
Graph entropy has since attracted a considerable attention in the literature, 
see e.g. \cite{Csis, relax, Korner2, Kornersimonyituza, Kornerandsimonyi, Symmgraphs, Simonyi1, Simonyi2}. 
The role similar subsets of $\bb{R}^d$, canonically associated with the graph $G$, play in information theoretical questions was 
emphasised by Gr\"{o}tschel, Lov\'{a}sz and Schrijver \cite{relax} (see also their monograph \cite{lovbook}, 
as well as Knuth's survey \cite{knuth}), 
who defined \emph{convex corners} in $\bb{R}^d$ as a unifying concept, capturing a number of 
previously considered contexts. It was thus possible to see graph entropy as a 
special case of a much more general entropic quantity, attributed to any convex corner, 
leading, among others, to probabilistic versions \cite{Marton} of the fundamental Lov\'{a}sz number \cite{lovasz}. 

Confusability in quantum information was examined in \cite{cubitt-chen-harrow, cubitt-smith, duan2, duan},
which identify a suitable quantum analogue of graphs. 
\emph{Non-commutative graphs} are simply \emph{operator systems} in the space $M_d$ 
of all complex $d$ by $d$ matrices, that is, linear subspaces closed under the adjoint operation and 
containing the identity matrix \cite{paul}. Every graph $G$ on $d$ vertices gives rise to a 
canonical operator system $\cl S_G \subseteq M_d$, which remembers $G$ up to a graph isomorphism 
\cite{Paulsen}. This led to defining and studying a number of graph parameters with relevance in 
information theory in the non-commutative setting, initiating what can be called 
\emph{non-commutative combinatorics}. Fruitful quantum versions of, among others, the Lov\'{a}sz number \cite{duan}, 
the chromatic number \cite{Paulsen}, the clique and fractional clique number \cite{BTW}, 
the minimum semi-definite rank and the intersection number \cite{LPT}, the Sandwich Theorem \cite{LPT} 
(see \cite{knuth}) and
a Ramsey-type theorem \cite{Weaver} have thereafter been found. 
In \cite{BTW}, the authors introduced a non-commutative version of convex corners; 
however, a further development was impeded at that stage by the absence of 
a \emph{second anti-blocker theorem}, a fundamental result that holds for classical convex corners \cite{knuth}.

In the present paper, we fill this gap by proving a quantum version of the anti-blocker theorem.
This allows us, in particular, to establish the equality  between the 
fractional chromatic number $\chi_{\rm f}(\cl S)$ 
and fractional clique number $\omega_{\rm f}(\cl S)$ of a non-commutative graph $\cl S$, extending the 
well-known duality result for classical graphs. 
We define the non-commutative graph entropy $H(\cl S,\rho)$ of a state $\rho$ with respect to a 
non-commutative graph $\cl S$, which reduces to the von Neumann entropy $H(\rho)$ of $\rho$ in case 
$\cl S$ coincides with the complete non-commutative graph $M_d$, and extends classical graph entropy in that 
$H(\cl S_G,p) = H(G,p)$, when $G$ is a graph on $d$ vertices and the probability distribution $p$ on its vertex set is 
viewed as a diagonal quantum state in $M_d$. 
The parameter $H(\cl S,\rho)$ is a special case of the entropy parameter $H_{\cl A}(\rho)$ 
attached to any non-commutative convex corner $\cl A$.
Another application of the second anti-blocker theorem yields an optimisation result, identifying 
the maximum entropy of a convex corner $\cl A$ in terms of a generalised fractional chromatic number of $\cl A$, 
defined also in the present paper as an extension of the fractional chromatic number of a non-commutative 
graph. 
We define the Witsenhausen rate of a non-commutative graph, and study the behaviour of $H_{\cl A}(\rho)$ as a 
function on $\cl A$, obtaining continuity results which are new also in the classical case. 

The paper is organised as follows: in Section \ref{s_crdc}, we recall the basic notions from the theory of 
classical convex corners and introduce several parameters used subsequently that can be thought of
as continuous versions of combinatorial parameters associated to graphs, such as the 
independence number, the fractional chromatic number and others. 
In Section \ref{s_ccabMd}, we examine non-commutative convex corners in $M_d$
as a quantum version of classical convex corners in $\bb{R}^d$.
These are closed convex subsets $\cl A$ of positive semi-definite matrices in $M_d$, 
possessing a natural hereditarity property. 
We extend the parameters from Section \ref{s_crdc} as solutions of optimisation problems 
over $\cl A$, define the
non-commutative anti-blocker $\cl A^{\sharp}$ of $\cl A$ 
and consider some examples. The latter are
used in Section \ref{sect_AB} in establishing the second anti-blocker theorem, stating that convex corners in $M_d$
satisfy the relation $\cl A^{\sharp\sharp} = \cl A$. In addition, we prove the continuity of the anti-blocking operation.
In Section \ref{sssde}, we show that a classical convex corner in $\bb{R}^d$ possesses two extremal quantisations, 
which are distinct provided $d > 1$. 

In Section \ref{sssmde}, we introduce the entropy $H_{\cl A}(\rho)$ of a state $\rho$ with respect to a given 
convex corner $\cl A\subseteq M_d$ and identify its maximum value
in terms of the optimisation parameters defined in Section \ref{s_ccabMd} (Theorem \ref{noncomm13}). 
This can be thought of as a continuous and quantum version of the 
corresponding facts \cite{Simonyi1, Marton} 
for the vertex packing polytope and the Gr\"{o}tschel-Lov\'{a}sz-Schrijver convex corner of a graph \cite{relax}, 
and is new in this generality even in the commutative case. 
We examine the continuity of $H_{\cl A}(\rho)$ both as a function on $\cl A$ and as a function on $\rho$,
and obtain quantum versions of the entropy splitting results from \cite{Csis} (Theorem \ref{noncomm23}). 

In Section \ref{ss_pcc}, we define two extremal tensor products of non-commutative convex corners 
and establish relations between the value of our parameters on a tensor product and the values on 
its components.
This leads to inequalities for the entropy of an entangled state with respect to a tensor product convex corner,
new also in the commutative case (Theorem \ref{th_enprod}). 

In Section \ref{s_ccfncg}, we consider three convex corners associated with a non-commutative graph $\cl S$:
the abelian projection corner ${\rm ap}(\cl S)$, the clique projection corner ${\rm cp}(\cl S)$
and the full projection corner ${\rm fp}(\cl S)$. 
Viewing projections as quantum versions of sets, we have that ${\rm ap}(\cl S)$ is 
a quantum version of the vertex packing polytope ${\rm vp}(G)$, while 
${\rm cp}(\cl S)$ and ${\rm fp}(\cl S)$ are quantum versions of the fractional vertex packing polytope
${\rm fvp}(G)$ of a graph $G$ \cite{relax, lovbook}. 
Several parameters for a non-commutative graph are thus defined as a specialisation of 
the optimisation parameters from Section \ref{s_ccabMd} to 
the corners ${\rm ap}(\cl S)$, ${\rm cp}(\cl S)$ and ${\rm fp}(\cl S)$ and their anti-blockers. 
The non-commutative graph entropy $H(\cl S,\rho)$ of $\cl S$ is defined in Section \ref{s_nge},
and its maximum value is identified in terms of the fractional chromatic number $\chi_{\rm f}(\cl S)$ 
of $\cl S$, while the clique and the clique covering number of $\cl S$ are examined in Section \ref{ss_ccn}. 
Section \ref{s_wit} contains some multiplicativity properties of 
the chromatic, the fractional chromatic, the clique and the clique covering numbers of a non-commutative 
graph that lead to the definition of its Witsenhausen rate. Finally, in Section \ref{ch4examples} 
we identify the values of our parameters in several specific examples.

\smallskip

\subsection{Notation}

For $d\in \bb{N}$, write $[d] = \{1,2,\dots,d\}$.
We denote by $\bb{R}^d_+$ the cone of all real $d$-vectors with non-negative entries,
and write $\cl P_d$ for its subset of probability distributions. 
Sometimes we work with the extended real line $\bb{R} \cup \{\infty\}$ and use the conventions 
$\frac{1}{0} = \infty$, $\frac{1}{\infty} =0$ and $0\log 0 = 0$. 
For $u,v\in \bb{R}^d$, we write $u\leq v$ when $v - u\in \bb{R}^d_+$.

Let $\{e_1, \ldots, e_d\}$ be the canonical basis of $\bb C^d$ and 
$M_d$ be the algebra of all complex $d \times d$ matrices. 
For $u,v\in \bb{C}^d$, let $uv^*$ be the rank one operator in $M_d$, given by $uv^*(w) = \langle w,v\rangle u$, $w\in \bb{C}$. 
Here, and in the sequel, we use the notation $\langle \cdot, \cdot\rangle$ to refer to both inner product 
(assumed linear on the first variable) and bilinear duality.
For a vector $v\in \bb{C}^d$, let $v_i = \langle v,e_i\rangle$, $i\in [d]$. 
We set $\cl D_d = \Span(\{e_ie_i^*: i \in [d] \})$; 
thus, $\cl D_d$ is the subalgebra of $M_d$ of all diagonal matrices.  
We write $M_d^h$ (resp. $M_d^+$) for the set of all 
Hermitian (resp. positive) matrices in $M_d$, 
and we set $\cl D_d^+= \cl D_d \cap M_d^+$. 
We call a matrix in $M_d$ \emph{strictly positive} if it is positive and invertible; we denote by 
$M_d^{++}$ the set of all strictly positive matrices in $M_d$ and, 
for a set $\cl A\subseteq M_d$, write $\cl A^{++} = \cl A\cap M_d^{++}$.
Similarly, we call a vector $v\in \bb{C}^d$ \emph{strictly positive} if $v_i > 0$ for every $i\in [d]$. 
For $M = (m_{i,j})$ and $N = (n_{i,j}) \in M_d$, we 
let $\ip{M}{N} = \Tr (M N) = \sum_{i,j=1}^d m_{i,j} n_{j,i}$.
The Hilbert-Schmidt (resp. operator) norm of a matrix $M\in M_d$ will be denoted by 
$\|M\|_2$ (resp. $\|M\|$).
For $\delta > 0$, we write ${\rm B}(M, \delta)$ for the open ball with centre $M$ and radius $\delta$ 
with respect to $\|\cdot\|_2$.
Given an orthonormal basis $V$ of a Hilbert space $H$ of dimension $d$, we make a (relative to $V$) identification 
$\cl L(H) \equiv M_d$. We will often write $M_d$ in place of $\cl L(H)$ even if we have not specified a particular basis.
For an orthogonal projection $P\in \cl L(H)$, we write $P^\perp = I - P$.


\section{Convex $\bb{R}^d$-corners}\label{s_crdc}

In this preliminary section, we recall relevant concepts and facts regarding classical convex corners
and formalise some parameters, implicitly used in the literature, which will be used throughout. 
A \emph{convex $\bb{R}^d$-corner} \cite{relax} is a non-empty closed convex subset $\cl A$ of $\bb{R}^d_+$ such that 
$$v\in \cl A, \ 0\leq u\leq v \ \Longrightarrow \ u\in \cl A.$$
The latter property will be referred to as \emph{hereditarity}. 
A convex $\bb{R}^d$-corner is called \emph{standard} if it is bounded and has non-empty 
topological interior.

\begin{lemma}\label{nonemptyint}
Let $\cl A$ be a convex $\bb R^d$-corner.  The following are equivalent:
\begin{itemize}
\item[(i)] $\cl A$ has a non-empty interior;
\item[(ii)] there exists $r > 0$ such that $r \mathbbm 1 \in \cl A$;
\item[(iii)] $\cl A$ contains a strictly positive element.
\end{itemize}
\end{lemma}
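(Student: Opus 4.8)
The plan is to establish the cycle of implications $(ii)\Rightarrow(iii)\Rightarrow(i)\Rightarrow(ii)$, which yields all three equivalences; only the last step needs any care, and that merely a comparison of the Euclidean and sup norms on $\bb R^d$.

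For $(ii)\Rightarrow(iii)$ there is nothing to do: if $r\mathbbm 1\in\cl A$ with $r>0$, then $r\mathbbm 1$ is strictly positive since all of its coordinates equal $r$. For $(iii)\Rightarrow(i)$ I would take a strictly positive $v\in\cl A$, set $r:=\min_{i\in[d]}v_i>0$, and observe that any $u\in\bb R^d$ with $\|u-\tfrac r2\mathbbm 1\|_\infty\le\tfrac r2$ satisfies $0\le u\le r\mathbbm 1\le v$ coordinatewise, hence lies in $\cl A$ by hereditarity; thus $\cl A$ contains a ball about $\tfrac r2\mathbbm 1$ and has non-empty interior. (The same computation incidentally gives $(iii)\Rightarrow(ii)$, since $0\le r\mathbbm 1\le v$ forces $r\mathbbm 1\in\cl A$.)

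For $(i)\Rightarrow(ii)$, pick $x\in\cl A$ and $\ep>0$ with ${\rm B}(x,\ep)\subseteq\cl A$. Since $\cl A\subseteq\bb R^d_+$, every coordinate of $x$ is $\ge 0$. Set $r:=\tfrac{\ep}{2\sqrt d}$ and $y:=x+r\mathbbm 1$; then $\|y-x\|_2=r\sqrt d=\tfrac\ep2<\ep$, so $y\in\cl A$, while $y_i\ge r>0$ for each $i$, so $0\le r\mathbbm 1\le y$ with $y\in\cl A$, and hereditarity gives $r\mathbbm 1\in\cl A$.

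The one point that genuinely requires attention is $(i)\Rightarrow(ii)$: one has to push the interior point $x$ in the direction $\mathbbm 1$ far enough to obtain a uniform positive lower bound on all of its coordinates, yet not so far as to leave the ball ${\rm B}(x,\ep)$ already known to be contained in $\cl A$, which is exactly what the factor $\sqrt d$ in the choice of $r$ arranges. Apart from that, the whole lemma is an immediate consequence of hereditarity together with the inclusion $\cl A\subseteq\bb R^d_+$, so I do not anticipate any real obstacle.
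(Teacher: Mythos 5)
Your proof is correct and follows essentially the same route as the paper's: the implication (i)$\Rightarrow$(ii) is the identical trick of translating an interior point by $\frac{\ep}{2\sqrt d}\mathbbm 1$ and invoking hereditarity, and (iii)$\Rightarrow$(i) differs only cosmetically (you exhibit a sup-norm cube around $\tfrac r2\mathbbm 1$ where the paper exhibits a Euclidean ball around $\tfrac{r}{2\sqrt d}\mathbbm 1$ inside $\bb R^d_+\cap{\rm B}(0,r)$). No gaps; the cyclic ordering of the implications is an immaterial difference.
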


\begin{proof}
(i)$\Rightarrow$(ii)
Suppose that $\cl A$ has  non-empty interior.
Let $a \in \cl A$ and $\delta > 0$ be such that 
${\rm B}(a, \delta) \subseteq \cl A$.
Then $a + \frac{1}{2\sqrt{d}} \delta \mathbbm 1\in \cl A$ and, since $\cl A$ is hereditary and $a \ge 0$, we have $\frac{1}{2\sqrt{d}}\delta \mathbbm 1 \in \cl A$.

(ii)$\Rightarrow$(iii)
is trivial.

(iii)$\Rightarrow$(i) Let $b\in \cl A$ be strictly positive. Setting $r= \min_{i \in [d]} b_i$,
we have that $c \le b$ for all $c \in {\rm B}(0,r)$. By the hereditarity of $\cl A$ it follows that $\bb R^d_+ \cap{\rm B}(0,r) \subseteq \cl A$. 
It is trivial to verify that 
$${\rm B}\left( \frac{r}{2 \sqrt{d}} \mathbbm 1, \frac{r}{2 \sqrt{d}}\right) \subseteq \bb R_+^d \cap{\rm B}(0,r) \subseteq \cl A.$$ 
\end{proof}

The \emph{anti-blocker} of a non-empty subset $\cl A\subseteq \bb{R}^d_+$ is given by 
$$\cl A^{\flat} = \{v\in \bb{R}^d_+ : \langle v,u\rangle \leq 1 \mbox{ for all } u\in \cl A\}.$$
It is clear that $\cl A^{\flat}$ is a convex $\bb{R}^d$-corner.
Moreover, the following \emph{second anti-blocker theorem} holds:

\begin{theorem}\cite[Lemma, p. 35]{knuth} \label{th_sabt}
A non-empty subset $\cl A\subseteq \bb{R}^d_+$ is a convex corner if and only if 
$\cl A^{\flat\flat} = \cl A$.
\end{theorem}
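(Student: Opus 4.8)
The plan is to prove the two implications separately, with the forward direction being essentially a bipolar-type argument and the reverse direction being immediate.

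First, suppose $\cl A$ is a convex $\bb{R}^d$-corner; I want to show $\cl A^{\flat\flat} = \cl A$. The inclusion $\cl A \subseteq \cl A^{\flat\flat}$ is trivial: if $u \in \cl A$, then for every $v \in \cl A^{\flat}$ we have $\langle u, v\rangle = \langle v, u \rangle \le 1$ by definition of $\cl A^{\flat}$, so $u \in \cl A^{\flat\flat}$. For the reverse inclusion, I would argue by contraposition: suppose $w \in \bb{R}^d_+ \setminus \cl A$. Since $\cl A$ is non-empty, closed and convex, the Hahn--Separation theorem gives a linear functional, i.e.\ a vector $c \in \bb{R}^d$ and a scalar $\alpha$, with $\langle w, c\rangle > \alpha \ge \langle u, c\rangle$ for all $u \in \cl A$. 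The key step is to upgrade $c$ to a vector in $\bb{R}^d_+$: using hereditarity of $\cl A$ together with the fact that $\cl A \subseteq \bb{R}^d_+$, I claim one may take $c \ge 0$. Indeed, if some coordinate $c_i < 0$, replacing it by $0$ only increases $\langle w, c\rangle$ (as $w_i \ge 0$) and, for $u \in \cl A$, the point $u'$ obtained from $u$ by zeroing its $i$-th coordinate lies in $\cl A$ by hereditarity and satisfies $\langle u, c_{\text{new}}\rangle \le \langle u', c_{\text{new}}\rangle \le \alpha$ after iterating; so the separation is preserved with $c \ge 0$. Next I normalise: since $0 \in \cl A$ (by hereditarity, as $\cl A \neq \emptyset$), we get $\alpha \ge 0$, and since $\langle w, c \rangle > \alpha \ge 0$ we have $\langle w,c\rangle > 0$, so we may rescale $c$ by a positive constant to arrange $\sup_{u \in \cl A}\langle u, c\rangle \le 1 < \langle w, c\rangle$. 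Wait --- I should be slightly careful if $\alpha = 0$; in that case $\langle u,c\rangle \le 0$ for all $u \in \cl A$, in particular $\langle u,c\rangle \le 1$, so $c \in \cl A^\flat$, while $\langle w,c \rangle > 0$ can be scaled up past $1$, again giving $c \in \cl A^\flat$ with $\langle w, c\rangle > 1$. Either way $c \in \cl A^{\flat}$ witnesses $w \notin \cl A^{\flat\flat}$. Hence $\cl A^{\flat\flat} \subseteq \cl A$, completing this direction.

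For the converse, suppose $\cl A \subseteq \bb{R}^d_+$ is non-empty and satisfies $\cl A^{\flat\flat} = \cl A$. Then $\cl A$ equals the anti-blocker of the set $\cl A^{\flat}$, and as already noted in the excerpt the anti-blocker of any non-empty subset of $\bb{R}^d_+$ is a convex $\bb{R}^d$-corner (it is an intersection of the closed half-spaces $\{v : \langle v,u\rangle \le 1\}$ with $\bb{R}^d_+$, hence closed and convex, and hereditarity is immediate since shrinking $v$ coordinatewise only decreases each $\langle v,u\rangle$). So $\cl A$ is a convex corner.

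The main obstacle is the middle step of the forward direction: producing a \emph{non-negative} separating functional. Plain Hahn--Banach only yields an arbitrary $c \in \bb{R}^d$, and one genuinely needs the hereditarity of $\cl A$ (equivalently, that $\cl A$ is ``downward closed'' inside the positive orthant) to replace negative coordinates of $c$ by zero without destroying the separation; the positivity of $w$ is what makes this replacement keep $w$ strictly on the far side. After that, the normalisation to make the bound exactly $1$ is routine but one should handle the degenerate case $\sup_{u\in\cl A}\langle u,c\rangle \le 0$ (which includes $c = 0$ being impossible since $\langle w,c\rangle>0$) as indicated above.
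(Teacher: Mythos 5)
Your proof is correct: the separation argument is sound (zeroing a negative coordinate $c_i$ leaves $\langle u,c\rangle$ bounded by $\alpha$ because $\langle u, c_{\rm new}\rangle = \langle u', c\rangle$ for the hereditarily-obtained $u'$ with $u'_i=0$), and you handle the normalisation including the degenerate case $\alpha=0$. The paper gives no proof of this statement — it simply cites Knuth and remarks that his argument for standard convex corners extends to general ones — and your argument is exactly that standard bipolar/separating-hyperplane proof, made self-contained and valid without the boundedness and interiority assumptions.
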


We note that Theorem \ref{th_sabt} was formulated in \cite{knuth} only for 
standard convex corners, but a direct verification shows that the same proof remains valid in our generality.

We define the \emph{unit $\bb R^d$-corner} $\cl C_d$
and the \emph{unit $\bb R^d$-cube} $\cl B_d$
by letting 
$$\cl C_d = \{v \in \bb R^d_+ : \|v\|_1 \leq 1\} \ \mbox{ and } \ \cl B_d = \{v \in \bb R^d_+ : \|v\|_{\infty} \le 1\}.$$
It is clear that $\cl C_d$ and $\cl B_d$ are standard convex $\bb R^d$-corners; moreover, if $\lambda > 0$ then
$$\left(\lambda\cl B_d\right)^\flat = \frac{1}{\lambda} \cl C_d \mbox{ and } 
\left(\lambda \cl C_d\right)^\flat= \frac{1}{\lambda} \cl B_d .$$ 
It follows easily that a non-empty subset $\cl A\subseteq \bb{R}^d_+$ 
is a standard convex corner if and only if $\cl A^\flat$ is so.

For a bounded convex $\bb R^d$-corner $\cl A$, we set 
$$\gamma(\cl A)= \max \{ \ip{u}{\mathbbm 1}: u \in \cl A\}.$$
It is clear that $\gamma(\cl A)=0$ if and only if $\cl A = \{0\}.$ 
If the convex $\bb R^d$-corner $\cl A$ is unbounded, we set $\gamma(\cl A) = \infty.$ 

If $\cl A$ is a convex $\bb{R}^d$-corner with $\cl A \ne \bb R^d_+$, 
then the set $\{ \beta \in \bb R_+: \beta \mathbbm 1 \in \cl A\}$ is bounded, and 
we set 
$$N(\cl A) = \max \{ \beta: \beta \mathbbm 1 \in \cl A\}.$$

\noindent We write $N(\bb R_+^d)= \infty.$
By Lemma \ref{nonemptyint}, $N( \cl A)=0$ if and only if $\cl A$ has empty interior.

For a convex $\bb R^d$-corner $\cl A$ with non-empty interior, 
we set
$$M(\cl A) = \inf \left\{\sum_{i=1}^k \lambda_i : \lambda_i > 0 \mbox{ and } \exists \ v_i\in \cl A, i \in [k], 
\mbox{ s.t. } \sum_{i=1}^k \lambda_i v_i \ge \mathbbm 1 \right\}.$$
If  $\cl A$ has empty interior, we set  $M(\cl A)= \infty.$ 
Note that $M(\bb R_+^d)=0.$  

\begin{lemma} \label{MAmin} 
If $\cl A$ is a standard convex corner, the infimum in the definition of $M(\cl A)$ is attained.   
In fact,
$$M(\cl A) = \min \left\{\mu \in \bb{R}_+ : \mbox{there exists } v \in \cl A \mbox{ s.t. } \mu v \ge \mathbbm 1 \right\}.$$
\end{lemma}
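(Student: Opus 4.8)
The plan is to show that the infimum defining $M(\cl A)$ equals the minimum on the right, by proving both inequalities and checking the minimum is attained. First I would note that the right-hand quantity, call it $\mu_0$, is always $\geq$ the left-hand infimum: given $v \in \cl A$ with $\mu_0 v \geq \mathbbm 1$, taking $k = 1$, $\lambda_1 = \mu_0$, $v_1 = v$ gives a competitor in the definition of $M(\cl A)$, so $M(\cl A) \leq \mu_0$. For the reverse inequality, I would take any admissible tuple $\lambda_1, \dots, \lambda_k > 0$ and $v_1, \dots, v_k \in \cl A$ with $\sum_i \lambda_i v_i \geq \mathbbm 1$, set $\lambda = \sum_i \lambda_i$, and form the convex combination $v = \sum_i (\lambda_i / \lambda) v_i$. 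Since $\cl A$ is convex, $v \in \cl A$, and $\lambda v = \sum_i \lambda_i v_i \geq \mathbbm 1$; hence $\mu_0 \leq \lambda$. Taking the infimum over all such tuples gives $\mu_0 \leq M(\cl A)$, so $M(\cl A) = \mu_0$.

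It remains to show that $\mu_0$ is actually attained, i.e.\ that there exists $v \in \cl A$ with $\mu_0 v \geq \mathbbm 1$. Here I would use that $\cl A$ is standard, so in particular bounded and closed, hence compact. Pick a sequence $\mu_n \downarrow \mu_0$ together with $v_n \in \cl A$ satisfying $\mu_n v_n \geq \mathbbm 1$. By compactness of $\cl A$, pass to a subsequence with $v_n \to v \in \cl A$. Then $\mu_n v_n \to \mu_0 v$ coordinatewise, and since each $\mu_n v_n \geq \mathbbm 1$ and the cone $\mathbbm 1 + \bb R^d_+$ is closed, we get $\mu_0 v \geq \mathbbm 1$. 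Thus the minimum is attained by this $v$, and $M(\cl A) = \mu_0 = \min\{\mu \in \bb{R}_+ : \exists\, v \in \cl A,\ \mu v \geq \mathbbm 1\}$.

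The only genuinely delicate point is ensuring the set over which we are minimising is non-empty and that $\mu_0 < \infty$, which is exactly where the hypothesis that $\cl A$ is standard (hence has non-empty interior) enters: by Lemma \ref{nonemptyint}, $\cl A$ contains some $r\mathbbm 1$ with $r > 0$, so $\tfrac{1}{r}\cdot r\mathbbm 1 = \mathbbm 1 \geq \mathbbm 1$ shows $\mu_0 \leq 1/r < \infty$ and the feasible set is non-empty; boundedness then gives the compactness needed for the limiting argument. Everything else is a routine convexity-and-compactness manipulation, so I do not expect any real obstacle beyond being careful that all the sets in play are non-empty.
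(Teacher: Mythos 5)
Your proof is correct and uses essentially the same ideas as the paper's: convexity of $\cl A$ to replace a sum $\sum_i \lambda_i v_i$ by the single element $\left(\sum_i \lambda_i\right)^{-1}\sum_i \lambda_i v_i \in \cl A$, and compactness of $\cl A$ to extract a limit point witnessing attainment. The only difference is organisational — you prove equality of the two infima first and attainment second, whereas the paper obtains both at once from a cluster point of normalised near-optimal sums — so there is nothing substantive to flag.
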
  

\begin{proof} 
Let $m$ be the right hand side of the displayed identity (its existence is a consequence of the compactness of $\cl A$). 
It is clear that $M(\cl A)\leq m$. 
For $n \in \bb N$, let $x_n = \sum_{i = 1}^{k_n} \lambda_i^{(n)}v_i^{(n)} \ge \mathbbm 1$, 
with $v_i^{(n)} \in \cl A$ and 
$\lambda_i^{(n)} > 0$, be such that $M(\cl A) \le \sum_{i=1}^{k_n} \lambda_i^{(n)} \le M(\cl A) + 1/n$. 
Thus, $\sum_{i = 1}^{k_n} \lambda_i^{(n)} \To_{n \To \infty} M(\cl A)$.  
By convexity, $\left(\sum_{i = 1}^{k_n} \lambda_i^{(n)}\right)^{-1} x_n \in \cl A$ for all $n \in \bb N$.  
Let $v \in \cl A$ be a cluster point of the sequence 
$\left( \left(\sum_{i=1}^{k_n} \lambda_i^{(n)}\right)^{-1} x_n \right)_{n \in \bb N}$.
Then $M(\cl A) v \ge \mathbbm 1$; this shows that $m\leq M(\cl A)$ and hence $m = M(\cl A)$. 
\end{proof}

\begin{prop} \label{NMgamma}
Let $\cl A $ be a convex $\bb R^d$-corner. Then 
$$M(\cl A)= \frac{1}{N (\cl A)} = \gamma( \cl A^\flat).$$  
\end{prop}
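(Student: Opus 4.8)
The plan is to prove the two equalities $M(\cl A)=1/N(\cl A)$ and $1/N(\cl A)=\gamma(\cl A^\flat)$ separately, reading every reciprocal with the conventions $1/0=\infty$, $1/\infty=0$. First I would dispose of the degenerate case in which $\cl A$ has empty interior: there $M(\cl A)=\infty$ by definition, $N(\cl A)=0$ by Lemma \ref{nonemptyint} (so $1/N(\cl A)=\infty$), and $\cl A^\flat$ is unbounded. Indeed, by Lemma \ref{nonemptyint} $\cl A$ contains no strictly positive element, so, averaging over a putative family of elements with nonzero $j$-th coordinates for each $j$, there must be a coordinate $j$ with $v_j=0$ for every $v\in\cl A$; then $t e_j\in\cl A^\flat$ for all $t\ge 0$, whence $\gamma(\cl A^\flat)=\infty$. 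Thus all three quantities are $\infty$, and from now on I may assume $\cl A$ has non-empty interior, equivalently $N(\cl A)>0$.

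For $M(\cl A)=1/N(\cl A)$: for the bound $M(\cl A)\le 1/N(\cl A)$, when $N(\cl A)<\infty$ we have $N(\cl A)\mathbbm 1\in\cl A$ (the defining maximum is attained, since $\cl A$ is closed and, as $\cl A\ne\bb R^d_+$, the relevant set is bounded), so $k=1$, $v_1=N(\cl A)\mathbbm 1$, $\lambda_1=1/N(\cl A)$ is admissible in the definition of $M(\cl A)$; when $N(\cl A)=\infty$, i.e. $\cl A=\bb R^d_+$, the data $v_1=n\mathbbm 1$, $\lambda_1=1/n$ give $M(\cl A)\le 1/n$ for all $n$, hence $M(\cl A)=0$. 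For the reverse inequality I would take any admissible $\lambda_i>0$, $v_i\in\cl A$ with $\sum_i\lambda_i v_i\ge\mathbbm 1$, put $\Lambda=\sum_i\lambda_i$, and average to $w=\Lambda^{-1}\sum_i\lambda_i v_i\in\cl A$; then $\Lambda w\ge\mathbbm 1$, so the minimal coordinate $\beta:=\min_{i\in[d]}w_i$ obeys $\Lambda\beta\ge 1$, while $\beta\mathbbm 1\le w$ forces $\beta\mathbbm 1\in\cl A$ by hereditarity, hence $\beta\le N(\cl A)$; combining, $\Lambda\ge 1/\beta\ge 1/N(\cl A)$, and taking the infimum gives $M(\cl A)\ge 1/N(\cl A)$. (When $\cl A$ is standard one may instead quote Lemma \ref{MAmin} and run the same minimal-coordinate argument with a single $v$.)

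For $1/N(\cl A)=\gamma(\cl A^\flat)$: write $s=\sup\{\langle\mathbbm 1,u\rangle:u\in\cl A^\flat\}\in[0,\infty]$, so that $s=\gamma(\cl A^\flat)$ (the supremum is a maximum by compactness when $\cl A^\flat$ is bounded, and $s=\infty=\gamma(\cl A^\flat)$ otherwise). Since $0\in\cl A$ and, by the second anti-blocker theorem (Theorem \ref{th_sabt}), $\cl A=\cl A^{\flat\flat}$, for $\beta>0$ one has $\beta\mathbbm 1\in\cl A$ if and only if $\langle\beta\mathbbm 1,u\rangle\le 1$ for all $u\in\cl A^\flat$, i.e. if and only if $\beta s\le 1$. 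Hence $\{\beta\ge 0:\beta\mathbbm 1\in\cl A\}$ equals $\{0\}$ when $s=\infty$ and $[0,1/s]$ when $s<\infty$; reading off the maximum --- and observing that when $s=0$ this set is all of $\bb R_+$, which by hereditarity forces $\cl A=\bb R^d_+$ and hence $N(\cl A)=\infty$ --- yields $N(\cl A)=1/s=1/\gamma(\cl A^\flat)$ in every case.

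The only real obstacle is bookkeeping rather than mathematics: the two ``generic'' inequalities are short and essentially forced by hereditarity and the duality $\cl A=\cl A^{\flat\flat}$, but one must check that each step survives the boundary cases $\cl A=\bb R^d_+$ and $\cl A$ with empty interior, consistently interpreting all reciprocals via $1/0=\infty$, $1/\infty=0$. No input beyond Lemma \ref{nonemptyint} and Theorem \ref{th_sabt} appears to be needed.
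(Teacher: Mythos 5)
Your proof is correct, and its overall architecture (dispose of the empty-interior case via Lemma \ref{nonemptyint} and the coordinate-averaging trick, then treat the generic case, reading reciprocals with the stated conventions) matches the paper's; the second equality is obtained exactly as in the paper, i.e.\ by combining hereditarity with $\cl A=\cl A^{\flat\flat}$ from Theorem \ref{th_sabt}, merely repackaged through the quantity $s=\sup\{\langle \mathbbm 1,u\rangle : u\in\cl A^\flat\}$. The one genuine difference is in the equality $M(\cl A)=1/N(\cl A)$: the paper proves $M(\cl A)\ge 1/N(\cl A)$ by invoking Lemma \ref{MAmin} to replace the infimum by a single attained minimiser $v\in\cl A$ with $M(\cl A)v\ge\mathbbm 1$, whereas you bypass attainment entirely by normalising an arbitrary admissible combination $\sum_i\lambda_i v_i\ge\mathbbm 1$ to $w=\Lambda^{-1}\sum_i\lambda_i v_i\in\cl A$ and comparing $\min_i w_i$ with $N(\cl A)$ via hereditarity. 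This buys a little robustness: Lemma \ref{MAmin} is stated (and proved, via compactness of $\cl A$) only for standard, hence bounded, convex corners, while the paper's Case~1 allows unbounded corners with non-empty interior, so your convexity-plus-hereditarity argument covers that situation without any extra work and is arguably the cleaner route for this inequality; the trade-off is only that you do not get the attainment statement of Lemma \ref{MAmin} itself, which the proposition does not need.
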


\begin{proof}  
We first consider the case where $\cl A \ne \bb{R}_+^d$ and $\cl A$ has non-empty interior.  
Set $y = N(\cl A) \mathbbm 1$, and observe that $y \in \cl A$ and $N(\cl A)>0$.  
We have $\frac{1}{N(\cl A)}y = \mathbbm 1$ and hence $M(\cl A) \le \frac{1}{N(\cl A)}$.  
By Lemma \ref{MAmin}, there exists $v \in \cl A$ satisfying $M(\cl A)v \ge \mathbbm 1$.  
This gives $\frac{1}{M (\cl A)}\mathbbm 1 \le v$ and thus $\frac{1}{M (\cl A)}\mathbbm 1 \in \cl A$ by hereditarity. 
It follows that $N(\cl A) \ge \frac{1}{M(\cl A)}$ and the first equality is proved. 

It is easy to see that $$ \cl A = \bb R_d^+ \iff \cl A^\flat = \{0\} \iff \gamma (\cl A^\flat)=0,$$ 
and that $\cl A^\flat$ is bounded when $\cl A$ has non-empty interior.  Thus when $\cl A \ne \bb{R}_+^d$ has non-empty interior, $0 < \gamma(\cl A^\flat) < \infty.$
To prove the second equality in this case, let $w \in \cl A^\flat$ satisfy $\ip{w}{\mathbbm 1} = \gamma (\cl A^\flat)$.  
Then
$1 \ge \ip{w}{N(\cl A) \mathbbm 1} = N(\cl A) \gamma (\cl A^\flat)$, and so $\gamma(\cl A^\flat) \le \frac{1}{N(\cl A)}.$  For the reverse inequality, set $v= \frac{1}{\gamma(\cl A^\flat)} \mathbbm 1.$ For all $u \in \cl A^\flat$, we have $\ip{v}{u} = \frac{1}{\gamma(\cl A^\flat)} \ip{\mathbbm 1}{u} \le 1$. This shows that $v \in \cl A^{\flat \flat}$, and so $v \in \cl A$ by Theorem \ref{th_sabt}.  
Thus, $N(\cl A) \ge \frac{1}{\gamma(\cl A^\flat)},$ as required.

In the case where $\cl A= \bb R^d_+$, the statement holds with $M(\cl A)=0$, $N(\cl A)= \infty$ and $\gamma(\cl A^\flat)=0$.

Finally, suppose that $\cl A$ has empty interior. By Lemma \ref{nonemptyint}, there exists $i \in [d]$ such that 
$v_i = 0$ for all $v \in \cl A$ and hence $u_i$ can be arbitrarily large for $u \in \cl A^\flat$,
implying that $\cl A^\flat$ is unbounded.  
The statement thus holds with $M(\cl A) = \infty$, $N(\cl A) = 0$ and $\gamma(\cl A^\flat) = \infty$.
\end{proof}


\section {Convex corners and anti-blockers in $M_d$}\label{s_ccabMd}

\subsection{Definitions and basic properties}

We begin by defining several concepts that will play an essential role in the sequel.

\begin{definition} 
A non-empty subset $\cl A \subseteq M_d^+$ will be called a \emph{convex $M_d$-corner} 
(or just a \emph{convex corner} where the context allows), 
if $\cl A$ is closed, convex, and 
\begin{equation}\label{eq_herncc}
B\in \cl A, \ 0\leq A\leq B \ \Longrightarrow \ A\in \cl A.
\end{equation}  
A convex $M_d$-corner will be called \emph{standard} if it is bounded and has non-empty relative interior.
\end{definition}

We will refer to property (\ref{eq_herncc}) as \emph{hereditarity}. 

\begin{remark}\label{ccintersect}
The intersection of an arbitrary family of convex $M_d$-corners is a convex $M_d$-corner. 
Thus, given a non-empty subset $\cl G\subseteq M_d^+$, there exists a smallest 
convex corner ${\rm C}(\cl G)$ containing $\cl G$. 
\end{remark}

Recall that $(e_i)_{i=1}^d$ is the standard basis of $\bb{R}^d$, and 
let $\phi : \bb R_+^d \To \cl D_d^+$ be the one-to-one map given by 
\begin{equation} \label{phi} 
\phi \left( \sum_{i=1}^d \lambda_i e_i \right) = \sum_{i=1}^d \lambda_i e_i e_i^*.
\end{equation}

\begin{definition}  
A non-empty subset $\cl B \subseteq \cl D_d^+$ is 
called a \emph{diagonal convex $M_d$-corner}
(or simply a \emph{diagonal convex corner} when the context allows it),
if $\phi^{-1}(\cl B)$ is a convex $\bb R^d$-corner. 
A diagonal convex corner $\cl B \subseteq \cl D_d^+$ is called \emph{standard} if
$\phi^{-1}(\cl B)$ is standard.
\end{definition}

It is often convenient to identify
the convex $\bb R^d$-corner $\cl A$ with the diagonal convex $M_d$-corner $\phi(\cl A)$.

\begin{definition}  \label{dcc2}
Let $\cl A \subseteq M^+_d$ be a non-empty subset. The \emph{anti-blocker} of $\cl A$ is the set
$$\cl A^\sharp = \{N \in M_d^+ : \ip{N}{M} \le 1 \mbox{ for all }M \in \cl A \}.$$  
If $\cl B \subseteq \cl D_d^+$ is non-empty, its \emph{diagonal anti-blocker} is given by 
$\cl B^\flat := \cl D_d \cap \cl B^\sharp$. 
\end{definition}

\begin{remark}  \label{dcc1} 
{\rm It is clear that, if $\cl A \subseteq M_d^+$ is a convex corner then $\cl D_d \cap \cl A$ is a  diagonal convex corner. 
Theorem \ref{th_sabt} and the fact that 
$\cl B^{\flat} =  \phi\left(\phi^{-1}(\cl B)^{\flat}\right)$ 
implies that if $\cl B \subseteq M_d^+$ is a diagonal convex corner then $\cl B^{\flat \flat} = \cl B$.  }
\end{remark}

Let $\cl B \subseteq \cl D_d^+$ be a diagonal convex corner. We set 
$$\gamma( \cl B) := \gamma (\phi^{-1}(\cl B)), \ 
N(\cl B): = N(\phi^{-1}(\cl B)) \mbox{ and }  M(\cl B) := M (\phi^{-1}(\cl B)).$$
Note that
$$\gamma(\cl B)= \max \{ \Tr T : T \in \cl B\} \ \mbox{ and } \ N(\cl B)= \max \{ \beta : \beta I \in \cl B\}.$$
Proposition \ref{NMgamma} shows that, if $\cl B$ is a diagonal convex corner then
$$M(\cl B) = \frac{1}{N(\cl B)} = \gamma (\cl B^\flat).$$

\begin{definition}\label{d_reflcc}
A non-empty subset $\cl A \subseteq M_d^+$ is called \emph{reflexive} if $\cl A = \cl A^{\sharp \sharp}$.
\end{definition}

\begin{lemma} \label{ccinM_d} 
Let $\cl A$ and $\cl C$ be non-empty subsets of $M^+_d$ with $\cl A\subseteq \cl C$.
Then
\begin{enumerate} 
\item [(i)] $\cl C^\sharp \subseteq \cl A^\sharp$;  
\item [(ii)] $\cl A \subseteq \cl A^{\sharp \sharp}$;
\item [(iii)] $\cl A^\sharp$ is a reflexive convex $M_d$-corner;
\item[(iv)] If $\{\cl B_{\alpha}\}_{\bb A}$ is a non-empty family of non-empty subsets of $M_d^+$
then  
$\left(\cup_{\alpha \in \bb A} \cl B_\alpha \right)^\sharp = \cap_{\alpha \in \bb A} \cl B_\alpha^\sharp.$
\item[(v)] The intersection of a non-empty family of reflexive convex corners is a reflexive convex corner.  
\end{enumerate}
\end{lemma}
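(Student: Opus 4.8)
The plan is to prove the five items of Lemma \ref{ccinM_d} in the order (i), (iv), (ii), (iii), (v), since each later item leans on earlier ones.

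For (i), suppose $\cl A\subseteq\cl C$. If $N\in\cl C^\sharp$, then $\ip{N}{M}\le 1$ for every $M\in\cl C$, hence in particular for every $M\in\cl A$, so $N\in\cl A^\sharp$. For (iv), I would unwind the definition directly: $N\in\left(\cup_\alpha\cl B_\alpha\right)^\sharp$ iff $\ip{N}{M}\le 1$ for all $M\in\cup_\alpha\cl B_\alpha$, iff for every $\alpha$ and every $M\in\cl B_\alpha$ we have $\ip{N}{M}\le 1$, iff $N\in\cl B_\alpha^\sharp$ for every $\alpha$, iff $N\in\cap_\alpha\cl B_\alpha^\sharp$. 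For (ii), fix $M\in\cl A$; for any $N\in\cl A^\sharp$ we have $\ip{M}{N}=\ip{N}{M}\le 1$ (the pairing being symmetric, since $\Tr(MN)=\Tr(NM)$ and both matrices are Hermitian), so $M\in\cl A^{\sharp\sharp}$.

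The substantive part is (iii): $\cl A^\sharp$ is a reflexive convex $M_d$-corner. First I would check the convex-corner axioms. Non-emptiness: $0\in\cl A^\sharp$ since $\ip{0}{M}=0\le 1$. Convexity: if $N_1,N_2\in\cl A^\sharp$ and $t\in[0,1]$, then $\ip{tN_1+(1-t)N_2}{M}=t\ip{N_1}{M}+(1-t)\ip{N_2}{M}\le 1$ for all $M\in\cl A$, and $tN_1+(1-t)N_2\in M_d^+$; so $\cl A^\sharp$ is convex. Closedness: $\cl A^\sharp=\bigcap_{M\in\cl A}\{N\in M_d^+:\ip{N}{M}\le 1\}$, an intersection of sets each closed in $M_d^+$ (the map $N\mapsto\Tr(NM)$ is continuous), and $M_d^+$ is closed in $M_d$; hence $\cl A^\sharp$ is closed. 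Hereditarity: suppose $N\in\cl A^\sharp$ and $0\le N'\le N$. For any $M\in\cl A\subseteq M_d^+$, write $M=M^{1/2}M^{1/2}$; then $\ip{N'}{M}=\Tr(N'M^{1/2}M^{1/2})=\Tr(M^{1/2}N'M^{1/2})\le\Tr(M^{1/2}NM^{1/2})=\ip{N}{M}\le 1$, where the inequality uses that $0\le M^{1/2}N'M^{1/2}\le M^{1/2}NM^{1/2}$ and that the trace is monotone on $M_d^+$; hence $N'\in\cl A^\sharp$. Thus $\cl A^\sharp$ is a convex $M_d$-corner. For reflexivity, apply (ii) to $\cl A^\sharp$ to get $\cl A^\sharp\subseteq(\cl A^\sharp)^{\sharp\sharp}$, and apply (i) to the inclusion $\cl A\subseteq\cl A^{\sharp\sharp}$ (which is (ii)) to get $(\cl A^{\sharp\sharp})^\sharp\subseteq\cl A^\sharp$, i.e. $\cl A^{\sharp\sharp\sharp}\subseteq\cl A^\sharp$; combining the two, $\cl A^{\sharp\sharp\sharp}=\cl A^\sharp$, which says $\cl A^\sharp$ is reflexive.

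Finally, for (v), let $\{\cl A_\alpha\}_{\alpha\in\bb A}$ be a non-empty family of reflexive convex corners and put $\cl A=\cap_\alpha\cl A_\alpha$. By Remark \ref{ccintersect}, $\cl A$ is a convex corner. To see it is reflexive, note that each $\cl A_\alpha=\cl A_\alpha^{\sharp\sharp}$, so $\cl A=\cap_\alpha\cl A_\alpha^{\sharp\sharp}=\cap_\alpha(\cl A_\alpha^\sharp)^\sharp$; by (iv) this equals $\left(\cup_\alpha\cl A_\alpha^\sharp\right)^{\sharp}$, which is of the form $\cl B^\sharp$ and hence reflexive by (iii). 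The main obstacle — really the only place where some care is needed — is the hereditarity step in (iii), where one must justify trace-monotonicity of $N\mapsto\Tr(NM)$ on $M_d^+$ via the conjugation $N\mapsto M^{1/2}NM^{1/2}$; everything else is a direct unwinding of definitions.
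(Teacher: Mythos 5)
Your proposal is correct and follows essentially the same route as the paper: items (i), (ii), (iv) by direct unwinding of the definition, hereditarity of $\cl A^\sharp$ via monotonicity of $N\mapsto\Tr(NM)$ on $M_d^+$ (which the paper states as $\Tr(CA)\le\Tr(BA)$ and you justify by conjugating with $M^{1/2}$), reflexivity of $\cl A^\sharp$ by combining (i) and (ii) to get $\cl A^{\sharp\sharp\sharp}=\cl A^\sharp$, and (v) by writing $\cap_\alpha\cl A_\alpha=\bigl(\cup_\alpha\cl A_\alpha^\sharp\bigr)^\sharp$ via (iv) and invoking (iii). No gaps; your write-up merely spells out details the paper calls trivial.
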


\begin{proof} 
Set $\cl B = \cl A^{\sharp}$. Properties (i) and (ii), as well as  
the convexity and the closedness of $\cl B$, are trivial.
Let $B \in \cl A^\sharp$ and $C\in M_d^+$ be such that $C\leq B$.
Then $\Tr (CA) \le \Tr (BA) \le 1$ for every $A\in \cl A$, and so $C \in \cl B$; thus,
$\cl B$ is hereditary and hence a convex corner.
By (ii),
$ \cl B \subseteq  \cl B^{\sharp \sharp}$.  
However, $\cl A \subseteq  \cl A^{\sharp \sharp}$ and so, by (i), 
$\cl B^{\sharp \sharp} = \cl A^{\sharp \sharp \sharp} \subseteq \cl B.$

(iv) For each $\beta \in \bb A$ we have $\cl B_{\beta} \subseteq \cup_{\alpha \in \bb A} \cl B_\alpha$, 
and (i) gives 
$\left( \cup_{\alpha \in \bb A} \cl B_\alpha \right)^\sharp \subseteq \cl B_{\beta}^\sharp$; thus, 
$\left( \cup_{\alpha \in \bb A} \cl B_\alpha \right)^\sharp \subseteq \cap_{\alpha \in \bb A} \cl B_\alpha^\sharp.$
The reverse inclusion is equally straightforward.

(v) Let $\bb{A}$ be a non-empty set, $\cl A_{\alpha} \subseteq M_d^+$ be a  reflexive convex corner, $\alpha \in \bb A$,
and $\cl A = \cap_{\alpha \in \bb A} \cl A_{\alpha}$. 
By Remark \ref{ccintersect}, $\cl A$ is a convex corner. 
By (iv) and the reflexivity of $\cl A_\alpha$, we have 
$$\cl A =  \cap_{\alpha \in \bb  A} \cl A_{\alpha}^{\sharp \sharp} =  
\left(\cup_{\alpha \in \bb  A} \cl A_{\alpha}^{\sharp} \right)^\sharp.$$ 
By (iii), $\cl A ^{\sharp \sharp} = \cl A$.  
\end{proof}

We isolate for future reference two straightforward statements. 

\begin{lemma} \label{PSD2} 
Let $\{v_i: i \in [d]\}$ be an orthonormal basis of  $\bb C^d$ and 
$M = \sum_{i,j =1}^d m_{i,j} v_iv^*_j$ be a positive matrix.
Then $$|m_{i,j}| \le \sqrt{m_{i,i} m_{j,j}} \le \max \{m_{i,i}, m_{j,j}\}.$$
Thus, if $m_{i,i}=0$ for some $i\in [d]$ then $m_{i,j} = m_{j,i} = 0$ for all $j \in [d]$.
\end{lemma}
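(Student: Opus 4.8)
The plan is to prove the inequality $|m_{i,j}| \le \sqrt{m_{i,i}m_{j,j}}$ by restricting the positive form associated with $M$ to the two-dimensional subspace spanned by $v_i$ and $v_j$, thereby reducing to a statement about $2\times 2$ positive matrices. Concretely, for indices $i \ne j$ (the case $i = j$ being trivial), consider the $2\times 2$ principal submatrix
$$
N = \begin{pmatrix} m_{i,i} & m_{i,j} \\ m_{j,i} & m_{j,j} \end{pmatrix},
$$
which is the matrix of the quadratic form $w \mapsto \langle Mw, w\rangle$ expressed in the orthonormal pair $\{v_i, v_j\}$; since $M \ge 0$, this restricted form is positive, so $N \ge 0$. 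A positive $2\times 2$ matrix has non-negative determinant, hence $m_{i,i}m_{j,j} - m_{i,j}m_{j,i} \ge 0$; because $M = M^*$ we have $m_{j,i} = \overline{m_{i,j}}$, so $m_{i,j}m_{j,i} = |m_{i,j}|^2$, giving $|m_{i,j}|^2 \le m_{i,i}m_{j,j}$, i.e. $|m_{i,j}| \le \sqrt{m_{i,i}m_{j,j}}$. The second inequality $\sqrt{m_{i,i}m_{j,j}} \le \max\{m_{i,i}, m_{j,j}\}$ is immediate since $m_{i,i}, m_{j,j} \ge 0$ (diagonal entries of a positive matrix) and the geometric mean of two non-negative reals is bounded by their maximum.

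Alternatively, and perhaps more self-containedly, I would give the one-line Cauchy–Schwarz argument: writing $M = T^*T$ for some $T \in M_d$ (a positive matrix has a square root, or any factorisation suffices), we have $m_{i,j} = \langle M v_j, v_i\rangle = \langle T v_j, T v_i\rangle$, whence $|m_{i,j}| \le \|Tv_i\|\,\|Tv_j\| = \sqrt{\langle Mv_i, v_i\rangle}\sqrt{\langle Mv_j, v_j\rangle} = \sqrt{m_{i,i}m_{j,j}}$. Either route is short; I would include whichever is cleaner in context, probably the factorisation argument as it avoids invoking facts about $2\times 2$ determinants.

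The final assertion follows formally: if $m_{i,i} = 0$ for some $i$, then for every $j \in [d]$ the bound gives $|m_{i,j}| \le \sqrt{m_{i,i}m_{j,j}} = 0$, so $m_{i,j} = 0$, and $m_{j,i} = \overline{m_{i,j}} = 0$ as well.

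There is no real obstacle here — the statement is elementary. The only thing to be careful about is bookkeeping with the conjugate: the paper's convention has $\langle\cdot,\cdot\rangle$ linear in the first variable and $M$ expanded as $\sum m_{i,j} v_i v_j^*$, so that $m_{i,j} = \langle M v_j, v_i\rangle$, and Hermiticity of $M$ (which is part of positivity) gives $m_{j,i} = \overline{m_{i,j}}$; getting these identifications right is what makes $m_{i,j}m_{j,i} = |m_{i,j}|^2$ legitimate.
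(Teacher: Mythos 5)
Your proposal is correct; note that the paper itself states Lemma \ref{PSD2} without proof, introducing it merely as a ``straightforward statement'', so there is no argument in the text to compare against. Both of your routes are valid and standard: the $2\times 2$ principal-submatrix determinant argument and the Cauchy--Schwarz argument via $m_{i,j} = \langle Tv_j, Tv_i\rangle$ with $M = T^*T$ each give $|m_{i,j}| \le \sqrt{m_{i,i}m_{j,j}}$, the remaining inequality is trivial for non-negative reals, and the final assertion follows exactly as you say. Your care with the convention $m_{i,j} = \langle Mv_j, v_i\rangle$ and $m_{j,i} = \overline{m_{i,j}}$ is precisely the bookkeeping the paper leaves implicit.
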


\begin{lemma} \label{PSD4}  
The following are equivalent for a non-empty subset $ \cl A \subseteq M_d^+$:
\begin{enumerate} 
\item [(i)] the set $\cl A$ is bounded;
\item [(ii)] the set $\{\Tr M : M \in \cl A\}$ is bounded;
\item [(iii)] the set $\{\ip{u}{Mu}: M \in \cl A, u \in \bb C^d, \|u\|=1\}$ is bounded. 
\end{enumerate} 
\end{lemma}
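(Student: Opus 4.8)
The plan is to show the equivalence of the three boundedness conditions by the cycle (i) $\Rightarrow$ (iii) $\Rightarrow$ (ii) $\Rightarrow$ (i), using only elementary matrix inequalities. The implication (i) $\Rightarrow$ (iii) is essentially immediate: if $\cl A$ is bounded in, say, the Hilbert--Schmidt norm $\|\cdot\|_2$ by a constant $K$, then for any unit vector $u$ and any $M\in\cl A$ we have $\ip{u}{Mu}\le \|M\|\le \|M\|_2\le K$, since the operator norm is dominated by the Hilbert--Schmidt norm; hence the set in (iii) is bounded by $K$. One should remark that on the finite-dimensional space $M_d$ all norms are equivalent, so boundedness in (i) does not depend on the choice of norm, and I would state this once at the start.

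For (iii) $\Rightarrow$ (ii), suppose $\ip{u}{Mu}\le K$ for every $M\in\cl A$ and every unit vector $u$. Fixing $M\in\cl A$ and taking $u=v_i$ running over an orthonormal basis that diagonalises $M$ (or simply any fixed orthonormal basis $\{e_i\}$), we get $m_{i,i}=\ip{e_i}{Me_i}\le K$ for each $i\in[d]$, whence $\Tr M = \sum_{i=1}^d m_{i,i}\le dK$. So the set in (ii) is bounded by $dK$.

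For (ii) $\Rightarrow$ (i), suppose $\Tr M\le K$ for all $M\in\cl A$. Since each $M\in\cl A$ is positive, all its diagonal entries (in any fixed orthonormal basis) are non-negative and bounded above by $\Tr M\le K$; by Lemma \ref{PSD2}, $|m_{i,j}|\le \sqrt{m_{i,i}m_{j,j}}\le K$ for all $i,j$, so every entry of $M$ is bounded in absolute value by $K$. This bounds $\|M\|_2 = \left(\sum_{i,j}|m_{i,j}|^2\right)^{1/2}\le dK$, so $\cl A$ is bounded. This closes the cycle.

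There is no real obstacle here; the only point requiring a moment's care is making sure the inner-product condition in (iii) is genuinely being used to control diagonal entries in an arbitrary basis — which Lemma \ref{PSD2} then converts into control of off-diagonal entries — and noting at the outset that finite-dimensionality makes the notion of boundedness norm-independent so that the three statements really do refer to the same phenomenon. I would present the argument as the short cycle above, citing Lemma \ref{PSD2} at the one place it is needed and otherwise relying on the standard inequalities $\|M\|\le\|M\|_2$ and $0\le m_{i,i}\le\Tr M$ for $M\in M_d^+$.
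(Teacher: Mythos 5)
Your argument is correct. The paper states Lemma \ref{PSD4} without proof (it is isolated as a straightforward fact), and your cycle (i)$\Rightarrow$(iii)$\Rightarrow$(ii)$\Rightarrow$(i) — controlling diagonal entries via (iii), summing them for (ii), and then invoking Lemma \ref{PSD2} together with $\|M\| \le \|M\|_2$ and the norm-equivalence remark to recover (i) — is exactly the intended elementary route, with the one needed lemma cited at the one place it is needed.
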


\begin{lemma}\label{l_ri}
Let $\cl A\subseteq M_d^+$ be a convex corner. The following are equivalent:
\begin{itemize}
\item[(i)] $\cl A$ has a non-empty relative interior;
\item[(ii)] there exists $r > 0$ such that $rI \in \cl A$;
\item[(iii)] for every non-zero vector $v\in \bb{C}^d$ there exists $s > 0$ such that $s vv^* \in \cl A$;
\item[(iv)] $\cl A$ contains a strictly positive element.
\end{itemize}
\end{lemma}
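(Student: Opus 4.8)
The plan is to prove the chain of implications (i)$\Rightarrow$(iii)$\Rightarrow$(iv)$\Rightarrow$(ii)$\Rightarrow$(i), mirroring the structure of the commutative Lemma \ref{nonemptyint} but replacing the coordinate vector $\mathbbm 1$ and the ball $\mathrm{B}(0,r)$ in $\bb{R}^d_+$ with the identity matrix $I$ and the operator-norm ball of positive matrices. The key facts I will use repeatedly are: a positive matrix $A$ satisfies $0\le A\le rI$ if and only if $\|A\|\le r$; the set $M_d^+$ spans $M_d^h$, which has real dimension $d^2$, so the relative interior of a convex corner is its interior inside the real affine hull $M_d^h$; and hereditarity in the form (\ref{eq_herncc}).

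First I would do (i)$\Rightarrow$(ii). Suppose $\cl A$ has non-empty relative interior; since the affine hull of $\cl A$ is contained in $M_d^h$ and $I$ is in the interior of $M_d^+$ relative to $M_d^h$, pick $A_0$ in the relative interior of $\cl A$ and $\delta>0$ with $\mathrm{B}(A_0,\delta)\cap M_d^h\subseteq \cl A$. Then $A_0 + \eta I \in \cl A$ for small $\eta>0$ (this perturbation stays in $M_d^h$ and within distance $\delta$), and since $A_0\ge 0$ and $0\le \eta I \le A_0 + \eta I$, hereditarity gives $\eta I\in\cl A$, so (ii) holds with $r=\eta$. For (ii)$\Rightarrow$(iv): $rI$ is itself strictly positive, so this is immediate. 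For (iv)$\Rightarrow$(iii): if $B\in\cl A$ is strictly positive, write $r=\|B^{-1}\|^{-1}>0$ (the smallest eigenvalue of $B$), so $rI\le B$; for any non-zero $v$, the matrix $s\,vv^*$ with $s = r/\|v\|^2$ satisfies $s\,vv^* \le rI \le B$ since $vv^*\le \|v\|^2 I$, hence $s\,vv^*\in\cl A$ by hereditarity. Finally (iii)$\Rightarrow$(iv): choosing an orthonormal basis $\{v_i\}$, for each $i$ there is $s_i>0$ with $s_i v_i v_i^* \in \cl A$; by convexity $\frac{1}{d}\sum_i s_i v_i v_i^* \in \cl A$, and this matrix is diagonal with strictly positive entries in that basis, hence strictly positive; and (iv)$\Rightarrow$(ii) follows as in the previous chain (take $r$ the least eigenvalue), while (ii)$\Rightarrow$(i) is the step below.

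For (ii)$\Rightarrow$(i): given $rI\in\cl A$, I claim $\mathrm{B}(\tfrac{r}{2}I,\tfrac{r}{2})\cap M_d^h \subseteq \cl A$, which shows $\tfrac{r}{2}I$ is in the relative interior. Indeed, if $A=A^*$ and $\|A - \tfrac{r}{2}I\|_2 < \tfrac{r}{2}$, then a fortiori $\|A-\tfrac r2 I\|<\tfrac r2$ in operator norm (or one argues directly with eigenvalues, noting $\|\cdot\|\le\|\cdot\|_2$), so all eigenvalues of $A$ lie in $(0,r)$; thus $0\le A\le rI$ and hereditarity gives $A\in\cl A$. This exhibits a relatively open neighbourhood of $\tfrac r2 I$ inside $\cl A$, so $\cl A$ has non-empty relative interior.

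The main obstacle, though a mild one, is being careful about the meaning of \emph{relative} interior: one must verify that the affine hull of a convex corner $\cl A$ with $rI\in\cl A$ is all of $M_d^h$ (equivalently that $\cl A$ has non-empty interior in $M_d^h$), rather than some proper affine subspace. This follows because $\mathrm{B}(\tfrac r2 I,\tfrac r2)\cap M_d^h$ is already a full-dimensional subset of $M_d^h$; so once we know $rI\in\cl A$ the relative interior is automatically an honest interior in $M_d^h$, and the argument closes. A secondary point to keep straight is the interplay between the Hilbert–Schmidt norm $\|\cdot\|_2$ (in which the balls $\mathrm{B}(\cdot,\cdot)$ are defined) and the operator norm controlling the order relation $0\le A\le rI$; since $\|A\|\le\|A\|_2$ on $M_d$, the Hilbert–Schmidt ball is contained in the operator-norm ball of the same radius, which is all that is needed.
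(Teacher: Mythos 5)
Your proof is correct and follows essentially the same route as the paper's: perturbing a relative-interior point by a small multiple of $I$ and invoking hereditarity for (i)$\Rightarrow$(ii), using $vv^*\le\|v\|^2 I$ for the rank-one elements, averaging over an orthonormal basis and passing to the least eigenvalue for the converse, and the norm-ball-plus-hereditarity argument for (ii)$\Rightarrow$(i). The only difference is cosmetic: you read the relative interior with respect to $M_d^h$ while the paper works with balls intersected with $M_d^+$, but since your perturbed point $A_0+\eta I$ is positive anyway and your (ii)$\Rightarrow$(i) ball argument yields the stronger $M_d^h$-version, the two readings are reconciled by your own chain of implications.
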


\begin{proof}
(i)$\Rightarrow$(ii)
Let $A \in \cl A$ and $\delta > 0$ be such that ${\rm B}(A, \delta) \cap {M}_d^+ \subseteq \cl A$.
Then $A + \frac{1}{2\sqrt{d}}\delta I \in \cl A$;
since $\frac{1}{2\sqrt{d}}\delta I \le A + \frac{1}{2\sqrt{d}}\delta I$, we have that 
$\frac{1}{2\sqrt{d}}\delta I \in \cl A$.

(ii)$\Rightarrow$(iii)
Suppose that $r > 0$ is such that $r I \in \cl A$ and let $v\in \bb{C}^d$ be a non-zero vector.
Since $\frac{r}{\|v\|^2} vv^* \leq r I$, the hereditarity of $\cl A$ implies that $\frac{r}{\|v\|^2} vv^* \in \cl A$.

(iii)$\Rightarrow$(ii)
Let $\{v_i\}_{i=1}^d$ be an orthonormal basis of $\bb{C}^d$ and, for each $i\in [d]$, let $s_i > 0$ be such that
$s_i v_i v_i^*\in \cl A$. Since $\cl A$ is convex, $A=\sum_{i=1}^d \frac{s_i}{d} v_i v_i^*\in \cl A$.
Letting $s = \min_{i \in [d] } \frac{s_i}{d}$, we have that $s > 0$ and $sI \le A$; 
by hereditarity, $s I \in \cl A$.

(ii)$\Rightarrow$(iv) is trivial.

(iv)$\Rightarrow$(i) 
By hereditarity, there exists $r > 0$ such that $rI\in \cl A$. It follows that 
any $M\in M_d^+$ with $\|M\| < r$ is in $\cl A$. 
\end{proof}

\begin{remark} \label{dcc1s} 
{\rm 
Lemmas \ref{nonemptyint} and \ref{l_ri} imply that if $\cl A$ is a standard convex $M_d$-corner then 
$\cl D_d \cap \cl A$ is a standard diagonal convex corner. 
}
\end{remark}

\begin{prop} \label{cc1} 
Let $\cl A \subseteq M_d^+$ be a convex corner.
\begin{itemize}
\item[(i)] $\cl A$  has non-empty relative interior if and only if
$\cl A^\sharp$ is bounded;
\item[(ii)] $\cl A$ is bounded if and only if $\cl A^\sharp$ has non-empty relative interior;
\item[(iii)] $\cl A$ is standard if and only if $\cl A^\sharp$ is standard.
\end{itemize}
\end{prop}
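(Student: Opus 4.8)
The strategy is to prove (i) and (ii) and then deduce (iii) as a formal consequence. For part (i), I would first handle the forward direction: if $\cl A$ has non-empty relative interior, then by Lemma \ref{l_ri} there is $r>0$ with $rI \in \cl A$. For any $N \in \cl A^\sharp$ we then have $\Tr(rN) = \langle N, rI\rangle \le 1$, so $\Tr N \le 1/r$ for all $N \in \cl A^\sharp$; by Lemma \ref{PSD4} this forces $\cl A^\sharp$ to be bounded. For the converse, suppose $\cl A$ has empty relative interior. By Lemma \ref{l_ri}, there is a non-zero vector $v$ with $svv^* \notin \cl A$ for every $s>0$; equivalently, using a limiting/separation argument, there is a direction along which $\cl A$ is "flat." More directly: since $\cl A$ has empty relative interior, by Lemma \ref{l_ri} it cannot contain any $rI$, so I would use a separating hyperplane argument to produce a non-zero $P \in M_d^+$ (or at least a non-zero Hermitian functional that can be taken positive by the hereditarity of $\cl A$) with $\langle P, M\rangle = 0$ for all $M \in \cl A$; then $tP \in \cl A^\sharp$ for all $t > 0$, so $\cl A^\sharp$ is unbounded. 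Establishing this "flat direction" cleanly is the one place that needs care — the cleanest route is probably to invoke that $\cl A \subseteq M_d^+$ spans a proper subspace of $M_d^h$ when it has empty relative interior, pick a non-zero Hermitian $H$ orthogonal to that subspace, and note that hereditarity together with $\langle H, M\rangle = 0$ on $\cl A$ forces $H \in M_d^+$ (if $v$ is a negative eigenvector of $H$ then $\epsilon vv^* \in \cl A$ would need to be $0$, consistent, but one shows $\cl A$ must avoid a whole positive direction); this is the main obstacle and I expect to spend most of the work making it rigorous, likely mirroring the argument in Lemma \ref{nonemptyint}(iii)$\Rightarrow$(i).

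For part (ii), one direction is now immediate from part (i) applied to $\cl A^\sharp$: if $\cl A$ is bounded I want $\cl A^\sharp$ to have non-empty relative interior. Here I would note that $\cl A \subseteq \cl A^{\sharp\sharp}$ by Lemma \ref{ccinM_d}(ii), and that $\cl A^\sharp$ is a reflexive convex corner by Lemma \ref{ccinM_d}(iii). Applying part (i) to the convex corner $\cl A^\sharp$: $\cl A^\sharp$ has non-empty relative interior if and only if $\cl A^{\sharp\sharp}$ is bounded. So it suffices to show $\cl A^{\sharp\sharp}$ is bounded whenever $\cl A$ is bounded. If $\cl A$ is bounded, then by Lemma \ref{PSD4} there is $c$ with $\Tr M \le c$ for all $M \in \cl A$; equivalently $\frac1c M \le$ something, but more usefully: boundedness of $\cl A$ gives, via $\cl A \subseteq R \cdot(\text{unit ball})$, that $\frac{1}{R'} I \in \cl A^\sharp$ for suitable $R' > 0$ (since $\langle \frac{1}{R'}I, M\rangle = \frac{1}{R'}\Tr M \le 1$ when $R' \ge c$), so $\cl A^\sharp$ has non-empty relative interior by Lemma \ref{l_ri}, and then part (i) — or rather a direct repeat of the argument in (i) — gives $\cl A^{\sharp\sharp}$ bounded. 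Conversely, if $\cl A^\sharp$ has non-empty relative interior, part (i) applied to $\cl A^\sharp$ gives $\cl A^{\sharp\sharp}$ bounded, hence $\cl A \subseteq \cl A^{\sharp\sharp}$ is bounded. I should be careful about the logical order so as not to use (ii) to prove (ii); the safe framing is: prove (i) outright, then observe that (i) applied to the convex corner $\cl A^\sharp$ (legitimate by Lemma \ref{ccinM_d}(iii)) together with reflexivity $\cl A^{\sharp\sharp\sharp} = \cl A^\sharp$ and $\cl A \subseteq \cl A^{\sharp\sharp}$ yields (ii).

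Finally, (iii) is purely formal: $\cl A$ is standard means $\cl A$ is bounded and has non-empty relative interior. By (i), non-empty relative interior of $\cl A$ $\iff$ boundedness of $\cl A^\sharp$; by (ii), boundedness of $\cl A$ $\iff$ non-empty relative interior of $\cl A^\sharp$. Conjoining the two equivalences gives: $\cl A$ bounded with non-empty relative interior $\iff$ $\cl A^\sharp$ bounded with non-empty relative interior, i.e.\ $\cl A$ standard $\iff$ $\cl A^\sharp$ standard. I would also remark that $\cl A^\sharp$ is always closed and convex with $0 \in \cl A^\sharp$, and is hereditary by Lemma \ref{ccinM_d}(iii), so "standard convex corner" is the right notion to be asserting for it. The only real mathematical content is the separating-hyperplane step in the converse of (i); everything else is bookkeeping with the already-established properties of $\sharp$.
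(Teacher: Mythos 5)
The forward direction of (i), both directions of (ii), and the deduction of (iii) are correct and essentially identical to the paper's proof (your detour through $\cl A^{\sharp\sharp}$ in the forward direction of (ii) is redundant, since $\frac{1}{c}I\in\cl A^\sharp$ together with Lemma \ref{l_ri} already gives the conclusion, but it is harmless). The genuine gap is the converse of (i), which you yourself flag as the main obstacle: the mechanism you propose does not work as stated. It is false that a non-zero Hermitian $H$ with $\langle H,M\rangle=0$ for all $M\in\cl A$ must lie in $M_d^+$, even using hereditarity: take $\cl A=\{\lambda e_1e_1^*:\lambda\ge 0\}\subseteq M_2^+$, a convex corner with empty relative interior, and $H=e_1e_2^*+e_2e_1^*$; then $\Tr(HM)=0$ for every $M\in\cl A$ but $H\notin M_2^+$. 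Moreover, the preliminary claim you want to ``invoke'' --- that empty relative interior forces $\Span(\cl A)\subsetneq M_d^h$ --- is not a formal consequence of the definitions (relative interior here means interior relative to $M_d^+$, not relative to the affine hull of $\cl A$), and proving it already requires exactly the positivity argument that is missing from your sketch. So as written, the converse of (i) is not established.

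What is actually needed, and what the paper proves, is a common null vector: if $\cl A$ has empty relative interior then, by Lemma \ref{l_ri}, it contains no strictly positive element, and one shows there is a single unit vector $v$ with $Mv=0$ for every $M\in\cl A$; then $\Tr(Mvv^*)=0$ for all $M\in\cl A$, so $\lambda vv^*\in\cl A^\sharp$ for every $\lambda\ge 0$ and $\cl A^\sharp$ is unbounded. The paper obtains $v$ by taking a countable dense subset $\{A_i\}$ of $\cl A$, observing that each average $B_m=\frac1m\sum_{i=1}^m A_i\in\cl A$ is singular, using positivity ($B_mv=0$ with all $A_i\ge 0$ forces $A_iv=0$ for $i\le m$) and a compactness argument on the nested sets of common null vectors. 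A shorter repair of your plan in the same spirit: pick finitely many $A_1,\dots,A_k\in\cl A$ spanning $\Span(\cl A)$; their average lies in $\cl A$ and is not strictly positive, and any unit null vector $v$ of it annihilates each $A_i$, hence all of $\Span(\cl A)\supseteq\cl A$, yielding the positive separator $vv^*$ directly. Either way, the unbounded positive direction in $\cl A^\sharp$ must be constructed from such a common null vector; it cannot be extracted from an arbitrary separating Hermitian.
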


\begin{proof} 
(i) If $\cl A$ has non-empty relative interior then, by Lemma \ref{l_ri}, $ r I \in \cl A$ for some $r >0$. 
Then $r \Tr M = \ip{M}{r I} \le 1$ for all $M \in \cl A^\sharp.$  Thus, $\Tr M \le 1/r$ for all  $M \in \cl A^\sharp$ and, by 
Lemma \ref{PSD4}, $\cl A^\sharp$ is bounded.

Suppose that $\cl A$ has empty relative interior.  
By Lemma \ref{l_ri}, $\cl A$ contains no strictly positive element.
Let $\cl E =\{A_i\}_{i\in \bb{N}}$ be a countable dense subset of $\cl A$.  
Write
$$\cl K_m = \left\{v \in \bb C^d : \|v\|=1, A_i v = 0, i \in [m]\right\}.$$  
It is clear that $\cl K_m$ is compact and $\cl K_{m+1} \subseteq   \cl K_m$, $ m \in \bb N$.  
Set $B_m = \frac{1}{m} \sum_{i=1}^m A_i$.  By convexity, $B_m \in \cl A$; by assumption, $B_m$ is not strictly positive.  
Thus there exists $v \in \bb C^d$ such that $B_m v = 0$, and
hence $A_i v = 0$ for all $i =1, \ldots, m$; in other words, $ \cl K_m$ is non-empty for all $m \in \bb N$. 
It follows that $\bigcap_{i=1}^\infty \cl K_i \neq \emptyset$, that is,
there exists a unit vector $v \in \bb C^d$ such that $Av = 0$ for all $A \in \cl E$.  
Since $\cl E$ is dense, $M v = 0$ for all $M \in \cl A$.
But then $\Tr (Mvv^*) = 0$ for all $ M \in \cl A$; thus,
$\lambda v v^* \in \cl A^\sharp$ for all $\lambda \ge 0$, showing that $\cl A^\sharp$ is unbounded.

(ii) 
If $\cl A^\sharp$ has non-empty relative interior then, by (i), 
$\cl A^{\sharp \sharp}$ is bounded.  By Lemma \ref{ccinM_d}, $\cl A$ is bounded.  
Conversely, suppose that $\cl A$ is bounded. By Lemma \ref {PSD4}, 
there exists $c > 0$ such that $\Tr M \le  c$ for all $M \in \cl A$.  
Thus $\ip{\frac{1}{c}I}{M} \le 1$ for all $M \in \cl A$, that is, $\frac{1}{c}I \in \cl A^\sharp$.
By  Lemma \ref{l_ri}, $\cl A^\sharp$ has non-empty relative interior.

(iii) is immediate from (i) and (ii). 
\end{proof}

\begin{definition} \label{def_her} 
Let $\cl B$ be a non-empty subset of $M_d^+$.
The \emph{hereditary cover} of $\cl B$ is the set
$$\her (\cl B) = \left\{M \in M_d^+: \mbox {there exists } N \in \cl B \mbox { such that } M \le N\right\}.$$  
\end{definition}

\begin{prop}\label{l_gen}
Let $\cl G \subseteq M_d^+$ be non-empty. The following hold:
\begin{itemize}
\item[(i)] 
If $\cl G$ is bounded then
${\rm C}(\cl G) = \her(\overline{\rm conv}(\cl G));$

\item[(ii)] 
$\cl G^\sharp = \her(\cl G)^\sharp = \rm C(\cl G)^\sharp$.
\end{itemize} 
\end{prop}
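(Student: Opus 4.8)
# Proof Proposal

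The plan is to prove both parts by establishing a chain of inclusions and using the anti-blocker properties from Lemma~\ref{ccinM_d}, together with a minimal amount of convex-geometric reasoning about closed convex hulls of bounded sets.

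\textbf{Part (i).} First I would observe that $\overline{\conv}(\cl G)$ is a non-empty closed convex bounded set contained in $M_d^+$ (it is contained in $M_d^+$ because $M_d^+$ is closed and convex), and that $\her(\overline{\conv}(\cl G))$ is convex and hereditary by construction. The key subtlety is \emph{closedness}: I would show $\her(\cl K)$ is closed whenever $\cl K$ is compact, by taking a sequence $M_n \le N_n$ with $M_n \to M$ and $N_n \in \cl K$, passing to a convergent subsequence $N_{n_k} \to N \in \cl K$, and noting $M \le N$ since $M_d^+$ is closed (the limit of $N_{n_k} - M_{n_k} \ge 0$ is $N - M \ge 0$); compactness of $\overline{\conv}(\cl G)$ follows from boundedness plus closedness in the finite-dimensional space $M_d$. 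Hence $\her(\overline{\conv}(\cl G))$ is a convex corner containing $\cl G$, so ${\rm C}(\cl G) \subseteq \her(\overline{\conv}(\cl G))$. For the reverse inclusion, any convex corner containing $\cl G$ must contain $\conv(\cl G)$ by convexity, then $\overline{\conv}(\cl G)$ by closedness, then $\her(\overline{\conv}(\cl G))$ by hereditarity; applying this to ${\rm C}(\cl G)$ itself gives $\her(\overline{\conv}(\cl G)) \subseteq {\rm C}(\cl G)$.

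\textbf{Part (ii).} Since $\cl G \subseteq \her(\cl G)$ and $\cl G \subseteq {\rm C}(\cl G)$, Lemma~\ref{ccinM_d}(i) immediately gives $\her(\cl G)^\sharp \subseteq \cl G^\sharp$ and ${\rm C}(\cl G)^\sharp \subseteq \cl G^\sharp$. For the reverse inclusions I would argue $\cl G^\sharp \subseteq \her(\cl G)^\sharp$ directly: if $\ip{N}{M} \le 1$ for all $M \in \cl G$ and $M' \in \her(\cl G)$ with $M' \le M \in \cl G$, then $\ip{N}{M'} \le \ip{N}{M} \le 1$ since $N \ge 0$. Similarly $\cl G^\sharp \subseteq {\rm C}(\cl G)^\sharp$: if $\ip{N}{M} \le 1$ for all $M \in \cl G$, then the set $\{M \in M_d^+ : \ip{N}{M} \le 1\}$ is a convex corner (it is closed, convex, and hereditary because $N \ge 0$) containing $\cl G$, hence contains ${\rm C}(\cl G)$, which says exactly $N \in {\rm C}(\cl G)^\sharp$. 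Note this last argument does not require $\cl G$ bounded, so part (ii) holds in full generality as stated.

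\textbf{Main obstacle.} The only genuinely non-routine point is verifying closedness of the hereditary cover of a compact set (and, implicitly, that the closed convex hull of a bounded set in $M_d$ is compact). Everything else is a formal manipulation of the defining inclusions and hereditarity. I would make sure to state the closedness lemma cleanly, perhaps as a separate displayed observation, since it is reused when unravelling ${\rm C}(\cl G) = \her(\overline{\conv}(\cl G))$.
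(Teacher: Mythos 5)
Your proposal is correct. Part (i) is essentially the paper's own argument: you show $\her(\overline{\conv}(\cl G))$ is a convex corner (with closedness obtained exactly as in the paper, by extracting a convergent subsequence of dominating elements inside the compact set $\overline{\conv}(\cl G)$) and then use minimality in both directions; the only gloss is your phrase ``convex \ldots by construction'' --- convexity of the hereditary cover does need the one-line check the paper records, namely that $A\le C$, $B\le D$ with $C,D\in\overline{\conv}(\cl G)$ give $\lambda A+(1-\lambda)B\le \lambda C+(1-\lambda)D\in\overline{\conv}(\cl G)$, but this is routine. Part (ii) is where you genuinely deviate, to your advantage. The paper proves $\cl G^\sharp\subseteq \rm C(\cl G)^\sharp$ by checking that an $M\in\cl G^\sharp$ pairs $\le 1$ with convex combinations of elements of $\cl G$ and then with anything dominated by an element of $\overline{\conv}(\cl G)$; in other words it shows $M\in\her(\overline{\conv}(\cl G))^\sharp$ and then identifies this with $\rm C(\cl G)^\sharp$, an identification that part (i) supplies only when $\cl G$ is bounded (in the unbounded case $\her(\overline{\conv}(\cl G))$ need not be closed, and one would have to add, e.g., that anti-blockers are unaffected by passing to closures). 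You instead observe that $N\in\cl G^\sharp$ means precisely that $\cl G$ is contained in the set $\{M\in M_d^+:\ip{N}{M}\le 1\}$, verify directly that this set is a convex corner (it is the corner later denoted $\cl A_{N}$ in the paper), and invoke minimality of $\rm C(\cl G)$. This makes (ii) completely independent of (i) and of any boundedness hypothesis, so your closing remark that (ii) holds in full generality is immediate, whereas with the paper's route it requires a small extra step; the price is negligible, since the verification that $\cl A_N$ is a convex corner is the same monotonicity computation you use for $\her(\cl G)^\sharp$.
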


\begin{proof}
(i) 
Set $\cl A = \her (\overline{\rm conv}(\cl G))$. It is clear that $\cl A$ is a hereditary and bounded (non-empty) subset of $M_d^+$.
Let $A,B \in \cl A$, $\lambda\in [0,1]$, and choose 
$C,D \in \overline{\conv} (\cl G)$ with $A \le C$ and $B \le D.$  
Then $\lambda C +( 1- \lambda)D \in \overline{\conv}(\cl G)$;
since $\lambda A +(1- \lambda)B \le \lambda C +(1- \lambda)D$,
we have that $\lambda A +(1- \lambda)B \in \cl A$. 
It follows that $\cl A$ is convex. 

To show that $\cl A$ is closed, suppose that $(T_n)_{n\in \bb{N}}\subseteq \cl A$
and $T_n \to_{n\to\infty} T$. Let $C_n\in \overline{\rm conv}(\cl G)$ be such that $T_n\leq C_n$, $n\in \bb{N}$.
Since $\overline{\rm conv}(\cl G)$ is compact, $(C_n)_{n\in \bb{N}}$ has a cluster point $C$ in $\overline{\rm conv}(\cl G)$.
Then $T\leq C$ and hence $T \in \cl A$. 
Thus, $\cl A$ is a convex corner containing $\cl G$. Its minimality is straightforward. 

(ii) 
Since $\cl G \subseteq \her(\cl G) \subseteq \rm C(\cl G)$, Lemma \ref{ccinM_d} gives 
$$\rm C(\cl G)^\sharp \subseteq \her(\cl G)^\sharp \subseteq \cl G^\sharp.$$ 
Let $M \in \cl G^\sharp$ and 
$Q = \sum_{i =1}^n \lambda_i A_i$, with $A_i \in \cl G$ and  $\lambda_i \in \bb R_+$, $i\in [n]$, satisfying 
$\sum_{i=1}^n \lambda_i =1$. Then $\Tr (MQ) \le \sum_{i =1}^n \lambda_i=1$; thus, 
$M \in \overline{{\rm conv}}(\cl G)^{\sharp}$.
Finally, if $N'\in \overline{\rm conv}(\cl G)$ and $0\leq N\leq N'$ then 
$\Tr (MN) \le \Tr (MN')\leq 1$, and hence $M \in \rm C(\cl G)^\sharp$ as required. 
\end{proof}

We list some immediate consequences of Proposition \ref{l_gen} and Lemma \ref{l_ri}.

\begin{cor} \label{55555} 
Suppose $\cl G \subseteq M_d^+$ is  bounded and $\overline{\rm conv}(\cl G)$ contains a strictly positive element.  
Then ${\rm C}(\cl G)$ s a standard convex corner. 
\end{cor}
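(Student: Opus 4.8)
The plan is to deduce Corollary \ref{55555} directly from Proposition \ref{l_gen}(i) together with Lemma \ref{l_ri}. First I would invoke Proposition \ref{l_gen}(i): since $\cl G$ is bounded, we have ${\rm C}(\cl G) = \her(\overline{\rm conv}(\cl G))$, and this set is a convex $M_d$-corner. Next I would check the two defining properties of a standard convex corner, namely boundedness and non-empty relative interior.

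For boundedness, I would argue that $\her(\overline{\rm conv}(\cl G))$ is bounded because $\overline{\rm conv}(\cl G)$ is bounded (it is the closed convex hull of a bounded set, hence compact by finite-dimensionality) and the hereditary cover of a bounded subset of $M_d^+$ is again bounded: if $N \le C$ with $\Tr C \le c$ for all $C \in \overline{\rm conv}(\cl G)$, then $\Tr N \le c$ as well, so $\{\Tr M : M \in {\rm C}(\cl G)\}$ is bounded and Lemma \ref{PSD4} applies. For the non-empty relative interior, I would use the hypothesis that $\overline{\rm conv}(\cl G)$ contains a strictly positive element $B$; since $B \in \overline{\rm conv}(\cl G) \subseteq \her(\overline{\rm conv}(\cl G)) = {\rm C}(\cl G)$, the convex corner ${\rm C}(\cl G)$ contains a strictly positive element, so condition (iv) of Lemma \ref{l_ri} holds, whence ${\rm C}(\cl G)$ has non-empty relative interior by the equivalence (iv)$\Rightarrow$(i).

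Combining these two observations, ${\rm C}(\cl G)$ is a bounded convex $M_d$-corner with non-empty relative interior, which is precisely the definition of a standard convex corner. I would then also remark (or simply use) that by Proposition \ref{cc1}(iii) this is equivalent to ${\rm C}(\cl G)^\sharp$ being standard, although the statement as written only asserts that ${\rm C}(\cl G)$ itself is standard.

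I do not anticipate a serious obstacle here: the corollary is essentially a bookkeeping consequence of results already established, so the only care needed is to cite the right pieces — Proposition \ref{l_gen}(i) for the identification of ${\rm C}(\cl G)$, Lemma \ref{PSD4} (or its consequence Lemma \ref{l_ri}) for boundedness, and Lemma \ref{l_ri}(iv)$\Rightarrow$(i) for the relative interior. The mildest subtlety is making explicit that a strictly positive element of $\overline{\rm conv}(\cl G)$ survives into ${\rm C}(\cl G)$, which is immediate from the inclusion $\overline{\rm conv}(\cl G) \subseteq \her(\overline{\rm conv}(\cl G))$.
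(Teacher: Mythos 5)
Your proof is correct and follows the paper's intended route: the paper presents this corollary as an immediate consequence of Proposition \ref{l_gen} and Lemma \ref{l_ri}, exactly the two results you combine (with Lemma \ref{PSD4} supplying the boundedness detail already implicit in the proof of Proposition \ref{l_gen}). No gaps.
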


\begin{cor} \label{555555} 
If $\cl C$ is a diagonal convex corner then ${\rm C}(\cl C) = \her(\cl C)$. 
\end{cor}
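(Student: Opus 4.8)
The plan is to read this off from Proposition \ref{l_gen}(i). The one preliminary observation needed is that a diagonal convex corner is automatically closed and convex as a subset of $M_d^+$. Indeed, by definition $\phi^{-1}(\cl C)$ is a convex $\bb R^d$-corner, hence a closed convex subset of $\bb R^d_+$; and $\phi$ is the restriction of a linear isomorphism $\bb R^d \to \cl D_d$ (it carries the basis $\{e_i\}$ to the basis $\{e_ie_i^*\}$ of $\cl D_d$), so it maps $\bb R^d_+$ homeomorphically onto $\cl D_d^+$, which is in turn a closed subset of $M_d^+$. Hence $\cl C = \phi(\phi^{-1}(\cl C))$ is closed and convex in $M_d^+$, and in particular $\overline{\conv}(\cl C) = \cl C$.

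Granting this, I would simply apply Proposition \ref{l_gen}(i) with $\cl G = \cl C$: it yields ${\rm C}(\cl C) = \her(\overline{\conv}(\cl C))$, and the equality $\overline{\conv}(\cl C) = \cl C$ rewrites the right-hand side as $\her(\cl C)$. All the substance — that $\her(\overline{\conv}(\cl G))$ is a convex corner and is the smallest one containing $\cl G$ — is already contained in Proposition \ref{l_gen}, so nothing further is needed.

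The step that calls for care is the boundedness hypothesis built into Proposition \ref{l_gen}(i): the argument above proves the corollary whenever $\cl C$ is bounded (equivalently, whenever $\phi^{-1}(\cl C)$ is bounded), which is the situation relevant in the sequel. For a genuinely unbounded diagonal convex corner the identity has to be read with this proviso — for instance, for $d = 2$ and $\cl C = \phi(\bb R_+ \times [0,1])$ one checks that $\her(\cl C)$ fails to be closed (rank-one matrices lying in $\her(\cl C)$ can converge to $vv^*$ with $v = (1,1)$, which is not in $\her(\cl C)$), so $\her(\cl C)$ cannot coincide with the convex corner ${\rm C}(\cl C)$. I therefore expect no real obstacle beyond recording the closedness and convexity of $\cl C$ and flagging this hypothesis; the genuine work was already carried out in Proposition \ref{l_gen}.
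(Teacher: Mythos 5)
Your argument is correct and is precisely the paper's route: the corollary is presented there as an immediate consequence of Proposition \ref{l_gen}(i), using exactly your observation that a diagonal convex corner is closed and convex, so that $\overline{\conv}(\cl C)=\cl C$ and hence ${\rm C}(\cl C)=\her(\overline{\conv}(\cl C))=\her(\cl C)$. Your caveat about boundedness is also well taken: Proposition \ref{l_gen}(i) does carry that hypothesis, and your example $\cl C=\phi(\bb R_+\times[0,1])$ really does break the identity in the unbounded case (the rank-one matrices $w_\ep w_\ep^*$ with $w_\ep=(1,1-\ep)$ satisfy $w_\ep w_\ep^*\le \mathrm{diag}(a,1)$ for $a$ large, hence lie in $\her(\cl C)$, and converge to $vv^*$ with $v=(1,1)$, which is majorised by no element of $\cl C$, so $\her(\cl C)$ is not closed while ${\rm C}(\cl C)$ is), so the statement should indeed be read with the boundedness proviso, as in the neighbouring Corollaries \ref{55555} and \ref{dcc3}.
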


\begin{cor} \label{dcc3} 
If $\cl C$ is a bounded (resp. standard)  diagonal convex corner, then $\her(\cl C)$ is a bounded (resp. standard) 
convex corner.  
\end{cor}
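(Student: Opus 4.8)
The plan is to deduce everything from Corollary \ref{555555}, which identifies ${\rm C}(\cl C)$ with $\her(\cl C)$ for a diagonal convex corner $\cl C$, together with the characterisations of boundedness and of "standard" already established. First I would observe that, since $\cl C$ is a diagonal convex corner, $\phi^{-1}(\cl C)$ is a convex $\bb R^d$-corner, and by definition $\cl C$ is bounded (resp. standard) precisely when $\phi^{-1}(\cl C)$ is. So the hypotheses translate into statements about the classical convex corner $\phi^{-1}(\cl C)\subseteq\bb R^d_+$.

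For the bounded case, I would argue that if $\cl C$ is bounded then $\{\Tr T : T\in\cl C\}$ is bounded; since every $M\in\her(\cl C)$ satisfies $M\le N$ for some $N\in\cl C$, we get $\Tr M\le\Tr N$, so $\{\Tr M : M\in\her(\cl C)\}$ is bounded, and Lemma \ref{PSD4} then gives that $\her(\cl C)=\her(\overline{\rm conv}(\cl C))={\rm C}(\cl C)$ (using Corollary \ref{555555} and the convexity of the diagonal corner) is a bounded convex corner. For the standard case, I would add the interior statement: if $\cl C$ is standard then $\phi^{-1}(\cl C)$ has non-empty interior, so by Lemma \ref{nonemptyint} there is $r>0$ with $r\mathbbm 1\in\phi^{-1}(\cl C)$, i.e.\ $rI\in\cl C\subseteq\her(\cl C)$. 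By Lemma \ref{l_ri}, $\her(\cl C)$ then has non-empty relative interior; combined with boundedness, $\her(\cl C)={\rm C}(\cl C)$ is a standard convex corner. (Alternatively one could invoke Corollary \ref{55555}: boundedness of $\cl C$ and the presence of a strictly positive element in $\overline{\rm conv}(\cl C)=\cl C$ — the latter coming from $rI\in\cl C$ — directly give that ${\rm C}(\cl C)$ is standard.)

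The only mild subtlety is making sure Corollary \ref{555555} is legitimately applicable, i.e.\ that $\cl C$ being a diagonal convex corner is exactly the hypothesis "$\cl C$ is a diagonal convex corner" of that corollary — it is — and that boundedness of $\cl C$ as a subset of $M_d^+$ matches boundedness of $\phi^{-1}(\cl C)$ as a subset of $\bb R^d_+$, which is immediate since $\phi$ is a linear isometry onto $\cl D_d^+$ for the relevant norms. I do not anticipate a real obstacle here; the content of the corollary is a bookkeeping exercise assembling Corollaries \ref{555555} and \ref{55555}, Lemmas \ref{nonemptyint}, \ref{l_ri} and \ref{PSD4}, and the definition of a standard diagonal convex corner. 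The one place to be careful is the standard case: one must verify both that $\her(\cl C)$ is bounded \emph{and} that it has non-empty relative interior, since "standard" requires both, and to note that $\overline{\rm conv}(\cl C)=\cl C$ because a diagonal convex corner is already closed and convex.
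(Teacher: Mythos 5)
Your proposal is correct and follows essentially the route the paper intends: the corollary is stated as an immediate consequence of Proposition \ref{l_gen} (via Corollary \ref{555555}, using that a diagonal convex corner is closed and convex so $\overline{\rm conv}(\cl C)=\cl C$), with boundedness of $\her(\cl C)$ coming from the trace bound and Lemma \ref{PSD4}, and the non-empty relative interior in the standard case coming from $rI\in\cl C$ (Lemma \ref{nonemptyint}) together with Lemma \ref{l_ri}. Your bookkeeping, including the check that both boundedness and non-empty relative interior are needed for ``standard,'' matches the paper's intended argument.
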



\subsection{Examples of convex $M_d$-corners}

In this subsection we consider some examples of convex $M_d$-corners that will be used subsequently. 
For  $C\in M_d^h$ and  $\lambda \in \bb R$, let
\begin{equation} \label{ccAX} 
\cl A_{C,\lambda} = \left\{M \in M_d^+ : \Tr(MC) \le \lambda \right\} 
\end{equation}  
and $\cl A_{C} = \cl A_{C,1}$.
Further, let 
$$\cl N_{C} = \left\{M \in M_d^+ : \Tr(MC) =0 \right\}$$
and   
\begin{equation} \label{ccAX3} 
\cl B_C = \{ M \in M_d^+ : M \le C \}. 
\end{equation}
It is clear that if $\lambda  > 0$ then $\cl A_{C,\lambda} = \cl A_{(1/\lambda)C}.$  
Note that, if $C \ge 0$ then $ \cl N_C = \cl A_{C,0}$.  

\begin{lemma} \label{ccinM_d2} 
Let $C \in M_d^+$ and $\lambda >0$. Then
\begin{itemize}
\item[(i)] $\cl B_C$ is a reflexive convex corner and $\cl B_C ^\sharp = \cl A_C$;
\item[(ii)] $\cl A_{C, \lambda}$ is a reflexive convex corner and $\cl A_{C, \lambda} ^\sharp = \cl B_{(1/ \lambda)C}$.
\end{itemize}
\end{lemma}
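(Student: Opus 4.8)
The plan is to prove (i) first and then derive (ii) from it by a rescaling argument. For (i), I would begin by observing that $\cl B_C = \her(\{C\})$ in the notation of Definition~\ref{def_her}, since $M \le C$ is precisely the condition defining the hereditary cover of the singleton $\{C\}$. Hence by Proposition~\ref{l_gen}(i) (applied to the bounded set $\cl G = \{C\}$, whose closed convex hull is $\{C\}$) we get that $\cl B_C = {\rm C}(\{C\})$ is a convex $M_d$-corner; alternatively one can verify closedness, convexity and hereditarity directly, which is routine. Next I would identify $\cl B_C^\sharp$: if $N \in M_d^+$ satisfies $\Tr(NC) \le 1$, then for any $M \in \cl B_C$ we have $0 \le M \le C$, so $\Tr(NM) \le \Tr(NC) \le 1$ (using that $N \ge 0$ and $M \le C$ imply $\Tr(NM) \le \Tr(NC)$), giving $\cl A_C \subseteq \cl B_C^\sharp$. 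Conversely, $C \in \cl B_C$, so any $N \in \cl B_C^\sharp$ satisfies $\Tr(NC) \le 1$, i.e. $N \in \cl A_C$; thus $\cl B_C^\sharp = \cl A_C$.

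To finish (i) it remains to show $\cl B_C$ is reflexive, i.e. $\cl B_C^{\sharp\sharp} = \cl B_C$. By Lemma~\ref{ccinM_d}(ii) we already have $\cl B_C \subseteq \cl B_C^{\sharp\sharp}$, so the work is the reverse inclusion: given $M \in M_d^+$ with $\Tr(MN) \le 1$ for all $N \in \cl A_C = \cl B_C^\sharp$, I must show $M \le C$. This is the step I expect to be the main obstacle. The natural approach is contrapositive: if $M \not\le C$, then $C - M$ has a negative eigenvalue, so there is a unit vector $u$ with $\ip{u}{(C-M)u} < 0$, i.e. $\ip{u}{Mu} > \ip{u}{Cu}$. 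I then want to produce $N \in \cl A_C$ with $\Tr(MN) > 1$. The candidate is a suitable positive multiple of $uu^*$: note $\Tr((uu^*)C) = \ip{u}{Cu}$, so $N := \frac{1}{\ip{u}{Cu}} uu^* \in \cl A_C$ provided $\ip{u}{Cu} > 0$, and then $\Tr(MN) = \frac{\ip{u}{Mu}}{\ip{u}{Cu}} > 1$, as desired. The case $\ip{u}{Cu} = 0$ needs separate handling: then $Cu = 0$ by Lemma~\ref{PSD2} (positivity of $C$ forces the $u$-row and column to vanish), so $\lambda uu^* \in \cl N_C = \cl A_{C,0} \subseteq \cl A_C$ for all $\lambda \ge 0$, while $\ip{u}{Mu} > 0$, so $\Tr(M \cdot \lambda uu^*) = \lambda \ip{u}{Mu} \to \infty$, again contradicting $M \in \cl B_C^{\sharp\sharp}$. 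This establishes $\cl B_C^{\sharp\sharp} \subseteq \cl B_C$ and hence reflexivity.

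For (ii), the identity $\cl A_{C,\lambda} = \cl A_{(1/\lambda)C}$ noted before the lemma reduces everything to the case $\lambda = 1$; alternatively one can carry $\lambda$ through directly. Since $(1/\lambda)C \in M_d^+$, part (i) applied to $(1/\lambda)C$ in place of $C$ gives that $\cl B_{(1/\lambda)C}$ is a reflexive convex corner with $\cl B_{(1/\lambda)C}^\sharp = \cl A_{(1/\lambda)C} = \cl A_{C,\lambda}$. Taking anti-blockers of this last identity and using reflexivity of $\cl B_{(1/\lambda)C}$ yields $\cl A_{C,\lambda}^\sharp = \cl B_{(1/\lambda)C}^{\sharp\sharp} = \cl B_{(1/\lambda)C}$. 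Finally, $\cl A_{C,\lambda}$ is reflexive because it is an anti-blocker (of $\cl B_{(1/\lambda)C}$), and anti-blockers are reflexive convex corners by Lemma~\ref{ccinM_d}(iii). This completes the proof; the only genuinely nontrivial ingredient is the eigenvector/separation argument in the reflexivity part of (i), where the degenerate case $\ip{u}{Cu}=0$ must be treated via Lemma~\ref{PSD2}.
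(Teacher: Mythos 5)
Your proposal is correct and follows essentially the same route as the paper: the identification $\cl B_C^\sharp = \cl A_C$ is proved by the same two-sided trace estimate, reflexivity is established by exhibiting a rank-one witness $uu^*$ (a suitable positive multiple of it) built from a vector on which $C-M$ is negative, with the same case split according to whether $\ip{u}{Cu}$ vanishes, and part (ii) is deduced from part (i) by the same rescaling and anti-blocker argument. The only cosmetic differences are your observation that $\cl B_C = \her(\{C\})$ and your use of unboundedness of $\{\lambda uu^* : \lambda \ge 0\}$ in the degenerate case where the paper picks a specific finite multiple.
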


\begin{proof} 
(i) 
It is clear that $\cl B_C$ is a convex corner.  
Suppose that 
$0 \le M \le C$ and $N  \in \cl A_C$. 
Then $0 \le \Tr (MN) \le \Tr (CN) \leq 1$, and hence $N \in \cl B_C^\sharp$. 
Thus, $\cl A_C \subseteq \cl B_C ^\sharp$.
Conversely,  if $N \in \cl B_C^\sharp$, then $\Tr (CN) \le 1$, giving that $N \in \cl A_C$; 
thus, $\cl B_C ^\sharp = \cl A_C$. 

By Lemma \ref{ccinM_d}, in order to show that $\cl B_C$ is reflexive, 
it suffices to prove that  $\cl B_C^{\sharp\sharp}\subseteq \cl B_C$. 
Suppose that $Q \ge 0$ and $Q \notin \cl B_C$;  
then $C-Q \notin M_d^+$.  
Write 
$$C-Q = \sum_{i=1}^d \lambda_i v_i v_i^*,$$ 
where $\{ v_1, \ldots, v_d \}$ is an orthonormal basis of $\bb C^d$,
$\lambda_1, \ldots, \lambda_d \in \bb R$, $i \in [d]$, and $\lambda_j <0$ for some $j \in [ d]$.
Let $D= \alpha v_j v_j^*$ with $\alpha > 0$ to be fixed shortly.  
We have $D \ge 0$ and $\Tr (CD) = \alpha \Tr (C v_j v_j^*)= \alpha \ip{Cv_j}{v_j} \ge 0$ as $C \ge 0$.
On the other hand,
\begin{equation} \label{Tr}
\Tr\big( (C-Q)D\big) = \Tr \left(\sum_{i=1}^d \alpha \lambda_i (v_i v_i^*) (v_j v_j^*)\right) = \lambda_j \alpha. 
\end{equation}  
We will show that $Q \notin \cl B_C^{\sharp \sharp}$; we consider two cases:

\smallskip

\noindent {\it Case 1. } $\ip{Cv_j}{v_j} = 0$. 
Set $\alpha= -2/ \lambda_j$.  
Then  $\Tr (CD)=0$, and so $D \in \cl A_C= \cl B_C^{\sharp}.$   
By \eqref{Tr}, $\Tr (QD) = \Tr (CD) - \lambda_j \alpha = 2$, and hence 
$Q \notin \cl B_C^{\sharp \sharp}$.

\smallskip

\noindent {\it Case 2. } $\ip{Cv_j}{v_j} > 0$.  
Set $\alpha = \ip{Cv_j}{v_j}^{-1}$; then $\Tr (CD)=1 $ and so $D\in  \cl A_C =\cl B_C^\sharp$.  
On the other hand, 
$\Tr (QD) = \Tr (CD) - \lambda_j \alpha  >1$, and hence $Q \notin \cl B_C ^{\sharp \sharp}$, completing the proof of (i).

(ii) 
By (i), 
$$\cl A_{C, \lambda}= \cl A_{(1/ \lambda)C} = \cl B_{(1/ \lambda)C}^\sharp.$$
Applying anti-blockers and using (i), we get $\cl A_{C, \lambda} ^\sharp = \cl B_{(1/ \lambda)C}$.
\end{proof}

\begin{prop}
Let $C\in M_d^h$. 
\begin{itemize}
\item[(i)] If $C \in M_d^{++}$ then $\cl A_C$ and $\cl A_C^\sharp$ are standard convex corners;
\item[(ii)] If $C \in M_d^+ \backslash M_d^{++}$ then $\cl A_C$ and $\cl A_C^\sharp$ are  convex corners, but neither 
of them is standard;
\item[(iii)] If $-C \in M_d^+$, then $\cl A_C = M_d^+$;
\item[(iv)] If $\pm C \notin M_d^+$ then $\cl A_C$ is not a convex corner and $\cl A_C^\sharp= \{0\}$.
\end{itemize}
\end{prop}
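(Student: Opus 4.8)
The statement has four parts, and I would handle them in order, with (i) and (ii) relying essentially on Lemma~\ref{ccinM_d2} and Proposition~\ref{cc1}, while (iii) and (iv) are direct computations. For (i): if $C \in M_d^{++}$, then $\cl A_C = \cl A_{C,1}$ and by Lemma~\ref{ccinM_d2}(ii), $\cl A_C^\sharp = \cl B_C$. I would show $\cl A_C$ is standard by exhibiting a strictly positive element and verifying boundedness. Since $C$ is invertible, $\frac{1}{\|C^{-1}\|}I \le C^{-1}$ is wrong in general — rather, I note $C^{-1} \in M_d^{++}$, and $\Tr(C^{-1}C) = d$, so $\frac{1}{d}C^{-1} \in \cl A_C$ is a strictly positive element, giving non-empty relative interior by Lemma~\ref{l_ri}. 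For boundedness: $C \ge \frac{1}{\|C^{-1}\|}I$, so $M \in \cl A_C$ implies $\frac{1}{\|C^{-1}\|}\Tr(M) \le \Tr(MC) \le 1$, i.e.\ $\Tr(M) \le \|C^{-1}\|$, whence $\cl A_C$ is bounded by Lemma~\ref{PSD4}. So $\cl A_C$ is standard, and then $\cl A_C^\sharp$ is standard by Proposition~\ref{cc1}(iii).

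For (ii): if $C \in M_d^+ \setminus M_d^{++}$, then $C$ has a nontrivial kernel; pick a unit vector $v$ with $Cv = 0$. Then $\Tr(\lambda vv^* C) = \lambda \langle Cv, v\rangle = 0 \le 1$ for all $\lambda \ge 0$, so $\lambda vv^* \in \cl A_C$ for all $\lambda$, showing $\cl A_C$ is unbounded; hence $\cl A_C$ is not standard. By Lemma~\ref{ccinM_d2}(ii), $\cl A_C^\sharp = \cl B_C = \{M : 0 \le M \le C\}$; since $\ran(C)$ is a proper subspace, every $M \in \cl B_C$ has $\ran(M) \subseteq \ran(C)$, so $\cl B_C$ contains no strictly positive element, hence has empty relative interior by Lemma~\ref{l_ri}, so $\cl A_C^\sharp$ is not standard. (It is still a convex corner — in fact reflexive — by Lemma~\ref{ccinM_d2}.) One subtlety: I should double-check that Lemma~\ref{ccinM_d2} as stated requires $C \in M_d^+$ and $\lambda > 0$, which is satisfied here with $\lambda = 1$, so the identity $\cl A_C^\sharp = \cl B_C$ is legitimate.

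For (iii): if $-C \in M_d^+$, then $C \le 0$, so for every $M \in M_d^+$ we have $\Tr(MC) \le 0 \le 1$, whence $M \in \cl A_C$; thus $\cl A_C = M_d^+$. For (iv): suppose $\pm C \notin M_d^+$, i.e.\ $C$ has both a strictly positive and a strictly negative eigenvalue. Write the spectral decomposition $C = \sum_i \mu_i w_i w_i^*$ with $\mu_j < 0$ for some $j$. Then for any $t \ge 0$, $\Tr(t w_j w_j^* C) = t\mu_j \le 1$, so $t w_j w_j^* \in \cl A_C$, and also $\cl A_C$ is not hereditary: take $\mu_k > 0$ and a large scalar $t$ with $\Tr(t w_j w_j^* C) = t\mu_j \le 1$ but consider $M = t(w_j w_j^* + w_k w_k^*)$; for suitable $t$ we have $M \notin \cl A_C$ while $t w_j w_j^* \le M$ and $t w_j w_j^* \in \cl A_C$, so hereditarity fails, hence $\cl A_C$ is not a convex corner. (Alternatively: $\cl A_C$ fails to be hereditary because adding a multiple of $w_k w_k^*$ eventually leaves the set.) Finally, $\cl A_C^\sharp = \{0\}$: by Lemma~\ref{ccinM_d}, $\cl A_C^\sharp = \her(\cl A_C)^\sharp = {\rm C}(\cl A_C)^\sharp$ — but more directly, if $N \in \cl A_C^\sharp$ then $\Tr(N \cdot t w_j w_j^*) \le 1$ for all $t \ge 0$, forcing $\langle N w_j, w_j\rangle = 0$; combined with the analogous argument for the negative-direction rays one concludes $N w_j = 0$; but one also needs that $\cl A_C$ contains rays $t u u^*$ in enough directions to pin $N$ down to $0$. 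The cleanest route: $\cl A_C$ is unbounded in a way that its ${\rm C}(\cl A_C) = M_d^+$ (I expect $\overline{\rm conv}$ of those rays plus hereditary cover to be all of $M_d^+$), whence $\cl A_C^\sharp = (M_d^+)^\sharp = \{0\}$.

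\textbf{Main obstacle.} The routine parts are (i) and (iii). The delicate point is part (iv): showing cleanly that $\cl A_C$ fails hereditarity (one must produce the right scalars and directions — using an eigenvector for a positive eigenvalue together with one for a negative eigenvalue), and especially showing $\cl A_C^\sharp = \{0\}$. For the latter I expect the key claim to be that the hereditary–convex closure ${\rm C}(\cl A_C)$ equals all of $M_d^+$ — this follows because $\cl A_C$ contains the ray $\{t w_j w_j^*\}$ and, by mixing with points near the origin in other directions and taking the hereditary cover, one sweeps out arbitrarily large positive matrices; then $\cl A_C^\sharp = {\rm C}(\cl A_C)^\sharp = (M_d^+)^\sharp = \{0\}$ by Proposition~\ref{l_gen}(ii). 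Verifying that ${\rm C}(\cl A_C) = M_d^+$ rigorously (rather than hand-wavingly) is the step that needs the most care.
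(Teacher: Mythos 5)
Your treatment of (i)--(iii) is correct and follows essentially the same route as the paper (boundedness of $\cl A_C$ via $\Tr$, the identification $\cl A_C^\sharp=\cl B_C$ from Lemma \ref{ccinM_d2}, and Lemma \ref{l_ri}/Proposition \ref{cc1} for the interior statements). The genuine gap is in (iv), and it is twofold. First, your witness for the failure of hereditarity points the wrong way: you exhibit $t w_jw_j^*\in\cl A_C$ with $t w_jw_j^*\le M$ and $M\notin\cl A_C$. Hereditarity is a \emph{downward} closure property, so this configuration is perfectly compatible with $\cl A_C$ being hereditary; to refute it you must place the larger matrix inside the set and the smaller one outside. (Moreover, with equal weights $\Tr(MC)=t(\mu_j+\mu_k)$ need not ever exceed $1$ when $\mu_j+\mu_k\le 0$, so even the claim $M\notin\cl A_C$ is not guaranteed.) The repair is to weight the two eigendirections so the traces against $C$ cancel: with $\mu_j<0<\mu_k$, take $M=-\frac{2}{\mu_j}w_jw_j^*+\frac{2}{\mu_k}w_kw_k^*$ and $N=\frac{2}{\mu_k}w_kw_k^*$; then $0\le N\le M$, $\Tr(MC)=0\le 1$ so $M\in\cl A_C$, while $\Tr(NC)=2>1$ so $N\notin\cl A_C$. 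This is exactly the paper's argument.

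Second, for $\cl A_C^\sharp=\{0\}$ you identify the right strategy ($\her(\cl A_C)=M_d^+$, then Proposition \ref{l_gen}) but explicitly leave the key claim as an expectation. The same cancellation trick closes it in one line: if $A\in M_d^+$ has $\lambda=\ip{A}{C}>1$, then $A':=A-\frac{\lambda}{\mu_j}w_jw_j^*$ satisfies $A'\ge A$ (since $\mu_j<0$) and $\ip{A'}{C}=0$, so $A'\in\cl A_C$ and $A\in\her(\cl A_C)$; matrices with $\ip{A}{C}\le 1$ already lie in $\cl A_C$. Hence $\her(\cl A_C)=M_d^+$ and $\cl A_C^\sharp=\her(\cl A_C)^\sharp=(M_d^+)^\sharp=\{0\}$. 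Your alternative direct route could also be completed --- every unit vector $u$ with $\ip{Cu}{u}\le 0$ gives a ray $tuu^*\subseteq\cl A_C$, forcing $\ip{Nu}{u}=0$ and hence $Nu=0$ for $N\in\cl A_C^\sharp$, and such $u$ span $\bb C^d$ because $\ip{C(v+sw_j)}{v+sw_j}\to-\infty$ as $s\to\infty$ --- but as written it too stops short of a proof.
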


\begin{proof}
By Lemmas \ref{ccinM_d} and \ref{ccinM_d2}, 
$\cl A_C$ and $\cl A_C^\sharp$ are convex corners.
Write $C= \sum_{i=1}^d\mu_i v_i v_i^*$ for some orthonormal basis $\{v_i, \ldots, v_d\}$ of eigenvectors of 
$C$ and some $\mu_1, \ldots, \mu_d \in \bb R$. 

(i)  
Let $M \in M_d^+$ and write $M= \sum_{i,j=1}^d m_{i,j}v_iv_j^* $.
Then $\Tr (MC) = \sum_{i=1}^d \mu_i m_{i,i}$.
If $M \in \cl A_C$ then $0 \le m_{i,i} \le \max_{j\in [d]} \frac{1}{\mu_j}$ for each $i\in [d]$, 
and hence $\cl A_C$ is a bounded convex corner by Lemma \ref{PSD4}. 
By Lemma \ref{ccinM_d2}, 
$\cl A_C^\sharp = \cl B_C$, and hence $\cl A_C^\sharp$  is bounded.  
By Proposition \ref{cc1}, $\cl A_C$ and $\cl A_C^\sharp$ have non-empty relative interiors.

(ii)  If $C \in M_d^+\backslash M_d^{++}$ then $\mu_i \ge 0$ for all $i\in [d]$ and  $\mu_j=0$ for some $j$.  
Then $\alpha v_j v_j^* \in \cl A_C$ for all $\alpha \ge 0$, and $\cl A_C$ is unbounded.   
By Lemma \ref{ccinM_d2}, Lemma \ref{l_ri} and 
Proposition \ref{cc1}, $\cl A_C^\sharp = \cl B_C$ has empty relative interior.

(iii)   In this case, $\mu_i \le 0$ for each $i\in [d]$ and hence 
$\Tr(MC) \le 0$ for all $M \in M_d^+$, giving $\cl A_C = M^+_d$. 
 
(iv) Write $C = \sum_{i=1}^d \mu_i v_iv^*_i$ with $\mu_j <0$ for some $j$ and $\mu_k>0$ for some $k$. 
Let $M = -\frac{2}{\mu_j}v_jv_j^*+ \frac{2}{\mu_k}v_kv_k^*$ and $N = \frac{2}{\mu_k}v_kv_k^*$. 
Then $0 \leq N \leq M$, $\Tr (MC) = 0$ and $\Tr (NC) =2$; thus, $M \in \cl A_C$ while
$N \notin \cl A_C$. It follows that the set $\cl A_C$ is not hereditary. 

Let $A \in M_d^+$ and $\lambda = \ip{A}{C}$.  If $\lambda \le 1$, then 
$A \in \cl A_C \subseteq \her(\cl A_C)$.  
If $\lambda > 1$, then $A \le A'$ where $A' := A - \frac{\lambda}{\mu_j} v_jv_j^*$ satisfies $\ip{A'}{C} = 0$.  
Thus $A' \in \cl A_C$ and therefore $A \in \her(\cl A_C)$, showing that $\her( \cl A_C)= M_d^+$. 
By Proposition \ref{l_gen}, $\cl A_C^\sharp = \{0\}$.
\end{proof}

We complete a similar analysis for the sets $\cl B_C$ and $\cl N_C$.  

\begin{prop} 
Let $C\in M_d^h$. 
\begin{itemize}
\item[(i)] If $C \in M_d^{++}$, then $\cl B_C$ is a reflexive standard convex corner;
\item[(ii)] If $C \in M_d^+ \backslash M_d^{++}$, then $\cl B_C$ is a reflexive convex corner with empty relative interior;
\item[(iii)] If $C \in M_d^h \backslash M_d^{+}$, then $\cl B_C = \emptyset$.  
\end{itemize}
\end{prop}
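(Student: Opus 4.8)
The three cases correspond exactly to the three possibilities for the spectrum of the Hermitian matrix $C$: all eigenvalues strictly positive, all non-negative with at least one zero, or at least one negative eigenvalue. Writing $C = \sum_{i=1}^d \mu_i v_i v_i^*$ in an eigenbasis, recall that $\cl B_C = \{M \in M_d^+ : M \le C\}$, and that Lemma \ref{ccinM_d2} already tells us $\cl B_C$ is a reflexive convex corner whenever $C \in M_d^+$ (so in cases (i) and (ii) the reflexivity is free). Hence in (i) and (ii) the only thing to establish is the statement about the relative interior, and in (iii) that the set is empty.

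For (iii): if some eigenvalue $\mu_j < 0$, then any $M \ge 0$ with $M \le C$ would force $\langle Mv_j, v_j\rangle \le \langle Cv_j, v_j\rangle = \mu_j < 0$, contradicting $M \ge 0$. So $\cl B_C = \emptyset$. For (i): suppose $C \in M_d^{++}$, so $\mu_i \ge r > 0$ for all $i$, where $r = \min_i \mu_i$. Then $rI \le C$, so $rI \in \cl B_C$, and by Lemma \ref{l_ri} (the implication (ii)$\Rightarrow$(i)) the convex corner $\cl B_C$ has non-empty relative interior; it is clearly bounded since $M \le C$ gives $\Tr M \le \Tr C$, so by Lemma \ref{PSD4} and the definition it is standard. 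Reflexivity is from Lemma \ref{ccinM_d2}. For (ii): if $C \in M_d^+ \setminus M_d^{++}$, pick $j$ with $\mu_j = 0$. Then every $M \in \cl B_C$ satisfies $\langle Mv_j, v_j\rangle \le \langle Cv_j, v_j\rangle = 0$, so $\langle Mv_j, v_j\rangle = 0$; since $M \ge 0$, Lemma \ref{PSD2} forces the $j$-th row and column of $M$ (in the basis $\{v_i\}$) to vanish, so in particular $\cl B_C$ contains no strictly positive element, whence by Lemma \ref{l_ri} (the failure of (iv)) it has empty relative interior. Reflexivity is again from Lemma \ref{ccinM_d2}.

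I expect no serious obstacle here: the proposition is essentially an unpacking of the spectral decomposition of $C$ combined with the two characterisation lemmas (\ref{l_ri} and \ref{PSD4}) and the prior reflexivity result (\ref{ccinM_d2}). The only mild subtlety is being careful, in case (ii), that "empty relative interior" is the correct conclusion rather than "empty interior" — this is handled cleanly by invoking the equivalence of (i) and (iv) in Lemma \ref{l_ri}, which is stated precisely for relative interiors of convex corners. One should also note at the outset that $\cl B_C$ is non-empty in cases (i) and (ii) (it contains $0$), so that calling it a "convex corner" is legitimate.
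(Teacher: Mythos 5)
Your proposal is correct and follows essentially the same route as the paper: boundedness is immediate, reflexivity comes from Lemma \ref{ccinM_d2}, case (i) uses Lemma \ref{l_ri} to get non-empty relative interior, case (ii) exhibits a vector $v_j$ with $\langle Cv_j,v_j\rangle=0$ forcing $\langle Mv_j,v_j\rangle=0$ for all $M\in\cl B_C$ and hence empty relative interior by Lemma \ref{l_ri}, and case (iii) is the same eigenvalue contradiction. The minor extras you include (Lemma \ref{PSD4} for boundedness, Lemma \ref{PSD2} for the vanishing row and column) are harmless but not needed.
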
  

\begin{proof} 
The set $\cl B_C$ is clearly bounded for any $C \in M_d^h$. 
By Lemma \ref{ccinM_d2}, if $C \ge 0$ then 
$\cl B_C$ is a reflexive convex corner satisfying $\cl B_C^\sharp = \cl A_C$.  

(i) By Lemma \ref{l_ri}, if $C \in M_d^{++}$ then $\cl B_C$ 
has non-empty relative interior and is hence a standard convex corner.

(ii) Let $u \in \bb C^d$ be a non-zero vector with $\ip{u}{Cu} = 0$.  
Then $\ip{u}{Mu} = 0$ whenever $M \in \cl B_C$.  
By Lemma \ref{l_ri}, $\cl B_C$ has empty relative interior.

(iii) is trivial.
\end{proof}

\begin{prop}\label{ccinM_d4}  
Let $C \in M_d^h$.
\begin{itemize}
\item[(i)] If $C \in M_d^+$ and $P$ is the projection onto $\ran(C)$ then 
\begin{equation}\label{eq_NCP}
\cl N_C = \left\{M \oplus  0_P: M \in \cl L(P^\perp \bb{C}^d)^+\right\}
\end{equation}
and
\begin{equation}\label{eq_NCPsha}
\cl N_C^\sharp = \left\{0_{P^\perp} \oplus N:  N \in \cl L(P\bb{C}^d)^+\right\};
\end{equation}
thus, $\cl N_C$ and $\cl N_C^{\sharp}$ are reflexive convex corners with empty relative interior;
\item[(ii)] If $C \in M_d^{++}$, then $\cl N_C = \{ 0 \}$;
\item[(iii)] If $\pm C \notin M_d^+$, then $\cl N_C$ is not a convex corner and $\cl N_C^\sharp = \{0\}$.
\end{itemize}
\end{prop}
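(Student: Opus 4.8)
The plan is to treat the three cases separately, relying heavily on the description $\cl N_C = \cl A_{C,0}$ when $C\ge 0$ and on the structural decomposition of positive matrices via Lemma \ref{PSD2}.

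For (i), fix the orthonormal eigenbasis $\{v_1,\dots,v_d\}$ of $C$ with $C=\sum_i\mu_i v_iv_i^*$, $\mu_i\ge 0$, and let $P$ be the projection onto $\ran(C) = \Span\{v_i : \mu_i>0\}$. For $M\in M_d^+$ written as $M=\sum_{i,j}m_{i,j}v_iv_j^*$, one has $\Tr(MC)=\sum_i\mu_i m_{i,i}$, so $\Tr(MC)=0$ forces $m_{i,i}=0$ for every $i$ with $\mu_i>0$; by Lemma \ref{PSD2} this kills the whole $i$-th row and column, i.e.\ $M = PMP + P^\perp M P^\perp$ reduces to $M = M'\oplus 0_P$ with $M'\in\cl L(P^\perp\bb C^d)^+$. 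This gives \eqref{eq_NCP}, and the reverse inclusion is immediate since any such $M$ has $MC=0$. For \eqref{eq_NCPsha}: using (iv) of Lemma \ref{ccinM_d} or directly, $\cl N_C = \her\big(\{r P^\perp : r\ge 0\}\big)$, so $\cl N_C^\sharp = \bigcap_{r\ge 0}\{N\in M_d^+ : r\,\Tr(NP^\perp)\le 1\}$, which is exactly $\{N\in M_d^+ : \Tr(NP^\perp)=0\} = \{0_{P^\perp}\oplus N : N\in\cl L(P\bb C^d)^+\}$ by the same row/column argument applied to $P^\perp$. Reflexivity of both then follows from Lemma \ref{ccinM_d2}(i) (or Lemma \ref{ccinM_d}(iii)) since $\cl N_C = \cl B_{P^\perp}$ up to the orthogonal splitting and $\cl N_C^\sharp = \cl N_{P^\perp}$ has the same form; emptiness of the relative interior for both follows from Lemma \ref{l_ri}, as $\cl N_C$ contains no strictly positive element whenever $P\ne 0$ (i.e.\ $C\ne 0$), and likewise for $\cl N_C^\sharp$ whenever $P\ne I$ — and if $C=0$ the statement is vacuous since then $P=0$ and $\cl N_C = M_d^+$, $\cl N_C^\sharp=\{0\}$, which one should note is the degenerate boundary case (arguably one assumes $C\ne 0$ here).

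Part (ii) is immediate: if $C\in M_d^{++}$ then $P=I$, so \eqref{eq_NCP} gives $\cl N_C = \cl L(\{0\})^+ = \{0\}$; alternatively $\Tr(MC)=0$ with $M\ge 0$, $C\ge rI$ forces $r\Tr M\le 0$, hence $M=0$.

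For (iii), suppose $\pm C\notin M_d^+$, and write $C=\sum_i\mu_i v_iv_i^*$ with $\mu_j<0$ for some $j$ and $\mu_k>0$ for some $k$. To see $\cl N_C$ is not hereditary, mimic part (iv) of the preceding proposition: take $M = -\frac{2}{\mu_j}v_jv_j^* + \frac{2}{\mu_k}v_kv_k^*$ and $N = -\frac{2}{\mu_j}v_jv_j^*$; then $0\le N\le M$, $\Tr(MC)=0$ so $M\in\cl N_C$, while $\Tr(NC) = -2\ne 0$ so $N\notin\cl N_C$. For $\cl N_C^\sharp = \{0\}$ it suffices (by Proposition \ref{l_gen}(ii)) to show $\her(\cl N_C) = M_d^+$: given $A\in M_d^+$ with $\lambda := \ip{A}{C}$, if $\lambda\le 0$ one can add a suitable nonnegative multiple of $v_kv_k^*$ to land on $\Tr=0$, and if $\lambda>0$ one adds $-\frac{\lambda}{\mu_j}v_jv_j^*$, in either case producing $A'\ge A$ with $\ip{A'}{C}=0$, i.e.\ $A'\in\cl N_C$ and $A\in\her(\cl N_C)$; then $\cl N_C^\sharp = \her(\cl N_C)^\sharp = (M_d^+)^\sharp = \{0\}$.

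The main obstacle is the verification in (i) that the diagonal-killing forces the full block structure and that the anti-blocker computation genuinely collapses to the complementary block — both rest on Lemma \ref{PSD2}, so the work is in assembling the orthogonal direct sum decomposition carefully rather than in any deep idea; the case $C=0$ should be flagged as a degenerate exception to the "empty relative interior" claim.
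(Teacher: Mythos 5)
Your treatment of (i) and (ii) is essentially the paper's: expanding $C$ in its eigenbasis, noting that $\Tr(MC)=0$ forces the diagonal entries $m_{i,i}$ to vanish whenever $\mu_i>0$, and invoking Lemma \ref{PSD2} to kill the corresponding rows and columns gives \eqref{eq_NCP}, and \eqref{eq_NCPsha} follows by the same block argument (the paper dismisses this as "straightforward"; your route through $\cl N_C=\her(\{rP^\perp : r\ge 0\})$, Lemma \ref{ccinM_d}(iv) and Proposition \ref{l_gen}(ii) is a perfectly good way to make it explicit). In (iii) you replace the paper's entrywise computation of $\cl N_C^\sharp$ (which shows directly that every diagonal entry of an element of $\cl N_C^\sharp$ vanishes and then applies Lemma \ref{PSD2}) by the hereditary-cover trick from the preceding proposition: proving $\her(\cl N_C)=M_d^+$ and citing Proposition \ref{l_gen}(ii). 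That argument is correct and arguably cleaner, and your non-hereditarity witness works. Your caveat that the "empty relative interior" clause requires $P\ne 0$ for $\cl N_C$ and $P\ne I$ for $\cl N_C^\sharp$ is legitimate and more careful than the statement in the paper.

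One genuine slip: the identification ``$\cl N_C=\cl B_{P^\perp}$ up to the orthogonal splitting'' is false --- $\cl B_{P^\perp}=\{M\in M_d^+ : M\le P^\perp\}$ is bounded, whereas $\cl N_C$ contains every positive multiple of $P^\perp$ --- so reflexivity of $\cl N_C$ cannot be deduced from Lemma \ref{ccinM_d2}(i) in the way you indicate. The repair is immediate from material you already have: either note that $\cl N_C=\left(\{rC : r\ge 0\}\right)^\sharp$ (since $\Tr(M(rC))\le 1$ for all $r\ge 0$ forces $\Tr(MC)=0$), so $\cl N_C$ is an anti-blocker and hence reflexive by Lemma \ref{ccinM_d}(iii); or observe that $\cl N_C^\sharp=\cl N_{P^\perp}$ and that applying \eqref{eq_NCPsha} once more, with $P^\perp$ in place of $C$, returns exactly the right-hand side of \eqref{eq_NCP}, i.e. $\cl N_C^{\sharp\sharp}=\cl N_C$, which is the paper's implicit argument.
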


\begin{proof} 
(i) 
It is clear that $\cl N_C$ is a convex corner.  
Write $C = \sum_{i=1}^d \lambda_i v_i v_i^*$ where $\{v_i\}_{i=1}^d$ is an orthonormal basis of $\bb C^d$;
by assumption, $\lambda_i \ge 0$, $i\in [d]$.  
For $M \in M_d^+$, write $M = \sum_{i,j=1}^d \alpha_{i,j}v_iv_j^*$, where $\alpha_{i,j} \in \bb C$, $i,j\in [d]$;
then $\Tr (MC) = \sum_{i=1}^d \lambda_i \alpha_{i,i}$.  
Suppose that $M \in \cl N_C$.  Then 
$\alpha_{i,i} = 0$ whenever $\lambda_i > 0$ and, by Lemma \ref{PSD2},  
$\alpha_{i,j}=0$ whenever $\lambda_i >0$ or $\lambda_j >0$. 
Thus, 
$$\cl N_C= \left \{ M \in M_d^+ : M = \sum_{i,j=1}^d \alpha_{ij}v_iv_j^* \mbox{ with } \alpha_{ij}=0 \mbox{ when } \lambda_i>0 \mbox{ or } \lambda_j>0 \right \}.$$
This shows (\ref{eq_NCP});  equation (\ref{eq_NCPsha}) is now straightforward.
Clearly, $\cl N_C^{\sharp\sharp} = \cl N_C$.

(ii) In this case, $P = I$ and the claim follows from (i). 

(iii) 
Write $C = \sum_{i=1}^d \lambda_i v_i v_i^*$, where $\{v_i\}_{i=1}^d$ is an orthonormal basis of $\bb{C}^d$ 
and $\lambda_i\in \bb{R}$, $i\in [d]$. 
Let $j,k\in [d]$ be such that 
$\lambda_j > 0$ and $\lambda_k < 0$. 
Let $M = \lambda_j v_kv_k^* - \lambda_k v_jv_j^*$. Note 
that $M \ge 0$ and $\Tr (MC) =0$, giving that $\alpha M \in \cl N_C$ for all $\alpha \ge 0$.  
Thus, $\cl N_C$ is unbounded.  
Since $M \ge \lambda_j v_kv_k^* \notin \cl N_C$, we have that $\cl N_C$ lacks hereditarity. 

Let $N = \sum_{r,s=1}^d \alpha_{r,s} v_r v_s^* \in \cl N_C^\sharp$, where 
$\alpha_{r,s} \in \bb C$, $r,s\in [d]$.  
Then $\alpha_{i,i} \in \bb R_+$, $i \in [d]$.  
We have that 
$$\alpha ( \lambda_j \alpha_{k,k}- \lambda_k \alpha_{j,j})  = \Tr (\alpha MN) \le 1, \ \ \ \alpha \ge 0;$$
thus, $\alpha_{j,j} = \alpha_{k,k} = 0$. 
It follows that $\alpha_{i,i}=0$ whenever $\lambda_i \neq 0$.  
On the other hand, if $\lambda_m = 0$ and $\alpha_{m,m} > 0$, then 
$\Tr\left(\frac{2}{\alpha_{m,m}} (v_mv_m^*) C\right) = 0$, 
and so $\frac{2}{\alpha_{m,m}}v_m v_m^* \in \cl N_C$.  
However, $\Tr\left(\frac{2}{\alpha_{m,m}} (v_mv_m^*) N\right) = 2$, a contradiction.
Thus, $\alpha_{m,m}=0$.  
By Lemma \ref{PSD2}, $N = 0$ and the proof is complete.
\end{proof}

\begin{remark} 
{\rm Note that $\cl N_{-C}= \cl N_C$, so the case  $-C \in M_d^+$ does not require separate consideration
in Proposition \ref{ccinM_d4}. }
\end{remark}


\section{Reflexivity of convex $M_d$-corners} \label{sect_AB}

In this section, we show the reflexivity of convex $M_d$-corners and note some of its consequences.


\subsection {The second anti-blocker theorem}
The next lemma is certainly well-known, but we include its proof for the convenience of the reader.

\begin{lemma} \label{range} 
Let $u_1, \ldots, u_n$ be linearly independent vectors in $\bb C^d$.   
Then 
$$\ran \left(\sum_{i=1}^n u_i u_i^*\right) = \Span \left \{u_1 ,\ldots, u_n\right\}.$$ 
\end{lemma}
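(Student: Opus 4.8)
The plan is to prove the two inclusions separately, with the nontrivial content lying in showing that the span is contained in the range. Write $S = \sum_{i=1}^n u_i u_i^*$, which is a positive matrix.

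First I would dispose of the easy inclusion $\ran(S) \subseteq \Span\{u_1,\dots,u_n\}$. For any $w \in \bb{C}^d$, we have $Sw = \sum_{i=1}^n u_i u_i^*(w) = \sum_{i=1}^n \langle w, u_i\rangle u_i$, which is manifestly a linear combination of $u_1,\dots,u_n$. Hence $\ran(S) \subseteq \Span\{u_1,\dots,u_n\}$. This direction uses nothing but the definition of the rank-one operators $uv^*$ recalled in the Notation section.

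For the reverse inclusion, I would argue via orthogonal complements, using that $S$ is Hermitian so $\ran(S) = (\ker S)^\perp$. Suppose $w \perp \ran(S)$, i.e.\ $w \in \ker S$ (using $S = S^*$). Then $0 = \langle Sw, w\rangle = \sum_{i=1}^n \langle u_i u_i^* w, w\rangle = \sum_{i=1}^n |\langle w, u_i\rangle|^2$, so $\langle w, u_i \rangle = 0$ for every $i$, i.e.\ $w \perp \Span\{u_1,\dots,u_n\}$. This shows $\ran(S)^\perp \subseteq \Span\{u_1,\dots,u_n\}^\perp$, and taking orthogonal complements gives $\Span\{u_1,\dots,u_n\} \subseteq \ran(S)$. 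Combined with the first inclusion, this yields equality.

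I should note that the hypothesis of linear independence is actually not needed for the equality $\ran(S) = \Span\{u_1,\dots,u_n\}$ — the argument above works verbatim for any finite collection of vectors — but it is harmless to keep it in the statement as that is the form in which the lemma will be applied. There is no real obstacle here: the only subtlety is remembering to invoke the identity $\ran(T) = (\ker T)^\perp$ valid for any Hermitian (indeed normal) $T \in M_d$, which is standard linear algebra. An alternative, slightly more computational route would be to show directly that each $u_j$ lies in $\ran(S)$ by exploiting linear independence (e.g.\ restricting $S$ to the invariant subspace $\Span\{u_1,\dots,u_n\}$, where it becomes invertible because its kernel there is trivial by the computation above), but the orthogonal-complement argument is cleaner and I would present that one.
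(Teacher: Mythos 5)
Your proof is correct, but it takes a different route from the paper's. The paper proves the nontrivial inclusion constructively, using linear independence in an essential way: for each $k$ it sets $\cl U = \Span\{u_1,\dots,u_n\}$ and $\cl U_k = \Span\{u_i : i\neq k\}$, notes $\cl U_k \neq \cl U$ by independence, picks a non-zero $v_k \in \cl U \cap \cl U_k^{\perp}$, and computes $Mv_k = \langle v_k,u_k\rangle u_k$ with non-zero coefficient, thereby exhibiting each $u_k$ directly as an element of $\ran(M)$ (this is close to the ``alternative computational route'' you sketch at the end, though not via invertibility of the restriction). Your argument instead goes through $\ran(S) = (\ker S)^{\perp}$ for the Hermitian matrix $S$ and the identity $\langle Sw,w\rangle = \sum_{i=1}^n |\langle w,u_i\rangle|^2$, which shows $\ker S = \Span\{u_1,\dots,u_n\}^{\perp}$ and hence the claim by taking orthogonal complements. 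Your route is slightly more general — as you correctly observe, it shows the linear independence hypothesis is superfluous for the equality of range and span — while the paper's argument is a direct hands-on verification that each generator lies in the range; both are complete and of comparable length.
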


\begin{proof} 
Set $M = \sum_{i=1}^n u_i u_i^*$, 
$\cl U = \Span \left \{ u_1 ,\ldots, u_n\right \}$ and, for $k \in [n]$, write
$\cl U_k = \Span \{u_i : i\neq k\}$. 
It is clear that $\ran(M) \subseteq \cl U$. 
Since $u_1 , \ldots, u_n$ are linearly independent, 
$\cl U_k\neq \cl U$.
Let $v_k$ be a non-zero vector in $\cl U\cap \cl U_k^{\perp}$.
Then 
$Mv_k = \langle u_k,v_k\rangle u_k$, and hence $u_k\in \ran(M)$.
\end{proof}

In the following, we fix a convex $M_d$-corner $\cl A$.  
Let 
\begin{equation} \label{eqclUv} 
\cl U = \left\{v \in \bb C^d:\mbox{there exists } r > 0 \mbox{ such that } rvv^* \in \cl A\right\}
\end{equation}  
and $P$ be the projection onto $\Span (\cl U)$.

\begin{lemma}  \label{2ndABc}  
The set $\cl U$ is a subspace. Moreover, there exists $r > 0$ such that $rP \in \cl A$.  
\end{lemma}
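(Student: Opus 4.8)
The plan is to prove the two assertions separately, starting with the subspace claim. Suppose $v, w \in \cl U$, so there exist $r, s > 0$ with $rvv^* \in \cl A$ and $sww^* \in \cl A$. I want to show $v + w \in \cl U$ (closure under scalar multiplication is immediate: if $rvv^* \in \cl A$ and $\lambda \in \bb C$, then $\frac{r}{|\lambda|^2}(\lambda v)(\lambda v)^* = rvv^* \in \cl A$, so $\lambda v \in \cl U$). By convexity, $\frac{1}{2}(rvv^* + sww^*) \in \cl A$. Now the key point is to compare $t(v+w)(v+w)^*$ with a small positive multiple of $rvv^* + sww^*$: since $(v+w)(v+w)^* \le 2(vv^* + ww^*)$ (a standard operator inequality, as $(v-w)(v-w)^* \ge 0$), and $vv^* \le \frac{1}{r}(rvv^*)$, $ww^* \le \frac{1}{s}(sww^*)$, we get
$$t(v+w)(v+w)^* \le 2t\left(\frac{1}{r}(rvv^*) + \frac{1}{s}(sww^*)\right) \le 2t\max\left(\tfrac{1}{r}, \tfrac{1}{s}\right)(rvv^* + sww^*),$$
and for $t$ small enough the right-hand side is $\le \frac{1}{2}(rvv^* + sww^*) \in \cl A$, so by hereditarity $t(v+w)(v+w)^* \in \cl A$, giving $v+w \in \cl U$.

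For the second assertion, let $\{v_1, \dots, v_n\}$ be a basis of $\Span(\cl U) = \cl U$ consisting of vectors in $\cl U$; pick $r_i > 0$ with $r_i v_i v_i^* \in \cl A$ for each $i$. By convexity, $B := \frac{1}{n}\sum_{i=1}^n r_i v_i v_i^* \in \cl A$. By Lemma \ref{range} (applied to the linearly independent vectors $\sqrt{r_i/n}\, v_i$), $\ran(B) = \Span\{v_1, \dots, v_n\} = \cl U = \ran(P)$, so $B$ is a strictly positive operator on $\cl U$. Hence there exists $r > 0$ with $B \ge rP$ (taking $r$ to be the smallest eigenvalue of $B$ restricted to its range). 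By hereditarity, $rP \in \cl A$, as required.

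I do not expect any serious obstacle here; the argument is a routine manipulation of operator inequalities and hereditarity, with Lemma \ref{range} supplying the one non-trivial ingredient (that the averaged rank-one operators have the full expected range). The only mild care needed is in phrasing the "strictly positive on its range" step — one should note that $B$, being a sum of positive operators with $\ran(B) = \cl U$, acts as an invertible positive operator on the subspace $\cl U$ and vanishes on $\cl U^\perp = P^\perp\bb C^d$, so indeed $B \ge rP$ for $r = \min\{\lambda : \lambda \in \sigma(B|_{\cl U})\} > 0$.
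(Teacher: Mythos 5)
Your proof is correct, and half of it diverges from the paper's. For the existence of $r>0$ with $rP\in\cl A$ you argue essentially as the paper does: extract a basis of $\Span(\cl U)$ from $\cl U$, average the corresponding rank-one elements using convexity, identify the range of the average via Lemma \ref{range}, and finish with the smallest positive eigenvalue plus hereditarity (the paper inserts an intermediate operator $Q = r_0\sum_i u_iu_i^*$ before invoking Lemma \ref{range}, whereas you apply it directly to the convex combination via the rescaled vectors $\sqrt{r_i/n}\,v_i$ --- a harmless simplification). Where you genuinely differ is the subspace claim: the paper derives it as a consequence of $rP\in\cl A$ (for $u\in\Span(\cl U)$ one has $uu^*\le tP$ for some $t>0$, so $\frac{r}{t}uu^*\in\cl A$ by hereditarity, whence $\cl U=\Span(\cl U)$), while you prove it directly from the two-vector inequality $(v+w)(v+w)^*\le 2(vv^*+ww^*)$ combined with convexity and hereditarity. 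Both are valid; the paper's route is more economical, since one construction yields both assertions at once, while yours isolates the subspace property as an elementary fact about $\cl U$ that never mentions the projection $P$, and it also justifies writing $\Span(\cl U)=\cl U$ when you choose the basis in your second step. The only microscopic omission is closure under multiplication by $\lambda=0$, where your rescaling formula is undefined; but $0\in\cl U$ trivially because $0\in\cl A$ (non-emptiness plus hereditarity), so nothing is lost.
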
  

\begin{proof} 
Let $\{u_i\}_{i=1}^k \subseteq \cl U$ be a linear basis of $\Span (\cl U)$.  
By the definition of $\cl U$, there exists $r_i > 0$ such that $r_iu_iu_i^* \in \cl A$, $i\in [k]$. 
Since $\cl A$ is convex, 
$R := \frac{1}{k} \sum_{i=1}^k r_i u_iu_i^* \in \cl A$.  
Letting $r_0 = \frac{1}{k} \min_{i \in [k]}  r_i$ and $Q = r_0 \left (\sum_{i=1}^k u_iu_i^* \right) $, we have  $0 \le Q \le R$.  
By hereditarity, $Q \in \cl A$.   By Lemma \ref{range}, 
$\ran (Q) = \ran(P)$.  
Let $r$ be the smallest positive eigenvalue of $Q$. 
Then $rP \le Q$ and hence $rP \in \cl A$, again by hereditarity.  

Suppose that $u \in \Span (\cl U)$; there exists $t > 0$ such that $uu^* \le tP$.  
By the previous paragraph, 
$rP \in \cl A$ and thus $\frac{r}{t}uu^* \in \cl A$.  
It follows that $u \in \cl U$, and so $\cl U = \Span (\cl U)$.  
\end{proof}

\begin{lemma} \label{2ndABd} 
Let $\cl A$ be a convex corner. The following hold: 
\begin{itemize} 
\item [(i)]  $PMP = M$ for every $M \in \cl A$; 
\item [(ii)] $\ip{M}{P^\perp}=0$ for all $M \in \cl A$;  
\item [(iii)] $\ip{M}{P^\perp} > 0$ for all $M \in M_d^+$ satisfying $PMP \ne M$.  
\end{itemize} 
\end{lemma}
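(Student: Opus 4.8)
The plan is to establish (i) first, since that is where the content lies, and then to deduce (ii) and (iii) from it by routine trace manipulations together with one standard positivity fact.

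For (i), I would fix $M \in \cl A$ and pass to its spectral decomposition $M = \sum_{j} \mu_j w_j w_j^*$, with the $w_j$ orthonormal and the $\mu_j$ strictly positive. The key observation is that $0 \le \mu_j w_j w_j^* \le M$ for each $j$, so hereditarity of $\cl A$ forces $\mu_j w_j w_j^* \in \cl A$, and hence $w_j \in \cl U$ by the defining property \eqref{eqclUv} of $\cl U$ (take $r = \mu_j$). Thus $\ran(M) = \Span\{w_j\} \subseteq \Span(\cl U) = \ran(P)$, so $P w_j = w_j$ for every $j$, and $PMP = \sum_j \mu_j (Pw_j)(Pw_j)^* = M$. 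Part (ii) is then immediate: by (i) and cyclicity of the trace, $\ip{M}{P^\perp} = \Tr(PMP\,P^\perp) = \Tr(M\,PP^\perp P) = 0$, since $PP^\perp = 0$.

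For (iii), let $M \in M_d^+$ with $PMP \ne M$. Since $P^\perp M P^\perp \ge 0$, one always has $\ip{M}{P^\perp} = \Tr(P^\perp M P^\perp) \ge 0$, so it suffices to rule out the value $0$. If $\ip{M}{P^\perp} = 0$, then the positive matrix $P^\perp M P^\perp$ has zero trace and is therefore $0$; taking an orthonormal basis $\{v_i\}_{i=1}^d$ of $\bb C^d$ adapted to the decomposition $\bb C^d = \ran(P) \oplus \ran(P^\perp)$ and writing $M = \sum_{i,j} m_{i,j} v_i v_j^*$, the relation $P^\perp M P^\perp = 0$ says precisely that $m_{i,i} = 0$ for every index $i$ with $v_i \in \ran(P^\perp)$. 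By Lemma \ref{PSD2} this forces $m_{i,j} = m_{j,i} = 0$ for all such $i$ and every $j$, i.e. $M = PMP$, contradicting the hypothesis; hence $\ip{M}{P^\perp} > 0$.

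I do not anticipate a genuine obstacle. The one step that is more than bookkeeping is the observation in (i) that hereditarity lets one peel off each rank-one spectral summand $\mu_j w_j w_j^*$ of an element of $\cl A$ and thereby land inside $\cl U$; once $\ran(M) \subseteq \ran(P)$ is in hand, (ii) and (iii) are trace computations combined with Lemma \ref{PSD2}. (Note that Lemma \ref{2ndABc} is not needed for the present statement.)
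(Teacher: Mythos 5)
Your proof is correct. For (i) and (ii) it is essentially the paper's argument: the paper also observes that any eigenvector of $M\in\cl A$ with positive eigenvalue lies in $\cl U$ (precisely because hereditarity lets you peel off the rank-one spectral summand), so $\ran(M)\subseteq\ran(P)$, and (ii) is then immediate.

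For (iii) you take a slightly different, contrapositive route: from $\ip{M}{P^\perp}=0$ you get $\Tr(P^\perp MP^\perp)=0$, hence the vanishing of the diagonal entries of $M$ on $\ran(P^\perp)$, and Lemma \ref{PSD2} then forces $M=PMP$. The paper instead argues directly: if $PMP\ne M$, pick an eigenvector $v$ of $M$ with positive eigenvalue $\lambda$ and $P^\perp v\ne 0$, and bound $\ip{M}{P^\perp}\ge\lambda\|P^\perp v\|^2>0$. Both are elementary; the paper's version gives an explicit quantitative lower bound (and, as stated, implicitly uses Lemma \ref{2ndABc} to pass from $v\notin\cl U$ to $P^\perp v\ne 0$), whereas yours needs only the positivity fact in Lemma \ref{PSD2} and, as you note, makes no use of Lemma \ref{2ndABc} at all. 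No gaps.
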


\begin{proof}   
(i) follows from the fact that if $v \in \bb C^d$ is an eigenvector of $M \in \cl A$ 
corresponding to a positive eigenvalue then $v \in \cl U$.

(ii) is a direct consequence of (i). 

(iii) 
Suppose that $M\in M_d^+$ and $PMP \ne M$. Then
$M$ has an eigenvector $v \notin \cl U$ whose eigenvalue $\lambda$ is positive; 
note that $P^\perp v \ne 0$.
Thus, 
$$\ip{M}{P^{\perp}} \geq \lambda \ip{vv^*}{P^\perp}
= \lambda  \|P^\perp v \|^2  > 0.$$ 
\end{proof}

Set $k = \rank(P)$ and let
$$M_d^P= \{ M \in M_d^+: PMP = M \mbox{ and } \rank (M)= k\}.$$ 
Note that 
\begin{equation}\label{eqMdP}  
M_d^P= \{ M \in M_d^+: \mbox{ there exist } s>r>0 \mbox{ such that } rP \le M \le sP \}.
\end{equation}

\begin{lemma} \label{2ndABe2}  
Let $\cl A \subseteq M_d^+$ be a convex corner and let $P$ be the projection onto $\cl U$ as defined in \eqref{eqclUv}.  Set
$$ \cl A_0= \{ A \in M_d^P \cap \cl A: (1+\ep) A \notin \cl A \mbox{ for all } \ep >0\}.$$
There exists a set 
$\left\{R_A \in M_d^+ :  A \in \cl A_0\right\}$ 
such that
$$\cl A = \bigcap_{A \in \cl A_0}  \cl A_{R_A} \cap \cl N_{P^\perp}.$$  
\end{lemma}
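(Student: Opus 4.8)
The plan is to show that $\cl A$ can be recovered as the intersection of the ``defining halfspaces'' $\cl A_{R_A}$ attached to its extreme radial points in $M_d^P$, together with the single corner $\cl N_{P^\perp}$ that forces all elements to live on $\ran(P)$. Throughout, I will use the decomposition coming from Lemmas \ref{2ndABc} and \ref{2ndABd}: every $M\in\cl A$ satisfies $PMP=M$, so $\cl A\subseteq\cl N_{P^\perp}$; moreover $rP\in\cl A$ for some $r>0$, so $\cl A$ has nonempty relative interior inside $\cl L(P\bb C^d)^+$. The inclusion $\cl A\subseteq\bigcap_{A\in\cl A_0}\cl A_{R_A}\cap\cl N_{P^\perp}$ will be arranged by construction; the substance is the reverse inclusion, i.e. that a positive matrix $Q$ lying on $\ran(P)$ and satisfying $\ip{Q}{R_A}\le 1$ for every $A\in\cl A_0$ must already lie in $\cl A$.

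The construction of the $R_A$'s is a separating hyperplane argument. Fix $A\in\cl A_0$; then $A$ is a point of $M_d^P\cap\cl A$ that is ``radially extreme'' — $(1+\ep)A\notin\cl A$ for all $\ep>0$ — so in particular $A\notin\operatorname{relint}(\cl A)$ relative to $\cl L(P\bb C^d)^h$ (if $A$ were in the relative interior, a small dilation would stay inside). Since $\cl A$ is a closed convex set in the finite-dimensional space $\cl L(P\bb C^d)^h$ and $(1+\ep)A\notin\cl A$, I can separate: there is a Hermitian $S_A$ on $P\bb C^d$ and a scalar with $\ip{M}{S_A}\le c_A$ for all $M\in\cl A$ and $\ip{A}{S_A}=c_A$ (a supporting functional at $A$). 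I then need to massage $S_A$ into an $R_A\in M_d^+$ with $\cl A\subseteq\cl A_{R_A}$ and $\ip{A}{R_A}=1$. The key points: first, $c_A>0$, because $\frac r2 P\in\cl A$ with $r>0$ forces $\frac r2\Tr(PS_A)\le c_A$ and one can also argue $c_A\ne 0$ from $A\neq 0$ (note $A\in M_d^P$ has full rank $k$ on $\ran P$, so $A\ne 0$); rescale so $c_A=1$. Second, $S_A$ can be taken positive semidefinite: since $\cl A$ is hereditary, for any $N$ with $0\le N\le M\in\cl A$ we have $N\in\cl A$, and from $\ip{N}{S_A}\le 1$ for all such $N$ one deduces (testing against $N=$ spectral pieces, using that $\cl A$ contains $rP$ hence a relative-interior point) that the negative part of $S_A$ contributes nothing, so replacing $S_A$ by its positive part $R_A:=\max(S_A,0)$ (extended by $0$ on $P^\perp\bb C^d$) preserves $\cl A\subseteq\cl A_{R_A}$ while only increasing $\ip{A}{R_A}$; a final rescaling restores $\ip{A}{R_A}=1$. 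This is the step I expect to be the main obstacle — pinning down that the supporting functional at a radially extreme point can be chosen positive and normalized, handling the interplay between hereditarity, the positivity of $c_A$, and the possibility that $A$ is singular as an operator on all of $\bb C^d$ but full rank on $\ran P$.

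For the reverse inclusion, suppose $Q\in M_d^+$ lies in $\cl N_{P^\perp}$ (so $PQP=Q$) and $\ip{Q}{R_A}\le 1$ for every $A\in\cl A_0$, but $Q\notin\cl A$. Working inside the finite-dimensional space $\cl L(P\bb C^d)^h$, where $\cl A$ is closed convex with nonempty relative interior, I separate $Q$ from $\cl A$ by a supporting hyperplane of $\cl A$: there is $T\in\cl L(P\bb C^d)^h$ with $\ip{M}{T}\le 1$ for all $M\in\cl A$ and $\ip{Q}{T}>1$. The ray $\{tQ:t\ge 0\}$ meets the boundary of $\cl A$ (since $\frac r2 P$ is a relative interior point and $Q\notin\cl A$, by a standard ``slide along the segment'' argument there is a largest $t_0\in(0,1)$ with $t_0 Q\in\cl A$); I claim $t_0 Q$ can be pushed to a point of $\cl A_0$ or at least that its supporting functional is dominated by some $R_A$. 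More directly: by Proposition \ref{l_gen}-type reasoning, $\cl A=\overline{\operatorname{conv}}$ of its radially extreme points in $M_d^P$ together with lower-dimensional boundary pieces, and a Krein–Milman / Straszewicz argument shows $\cl A_0$ together with its hereditary cover and the face structure generate $\cl A$; hence the family $\{\cl A_{R_A}\}$ already cuts out everything. Concretely I would argue: $\cl A=\bigcap\{\cl A_S : S\in M_d^+,\ \cl A\subseteq\cl A_S\}$ because $\cl A$ is a reflexive convex corner on $\ran P$ (apply the $M_d$-corner second anti-blocker machinery, i.e. $\cl A^{\sharp\sharp}=\cl A$ together with $\cl A^\sharp=\her(\text{generators})$, Proposition \ref{l_gen}), and then show each such $\cl A_S$ already contains $\bigcap_{A\in\cl A_0}\cl A_{R_A}\cap\cl N_{P^\perp}$ by decomposing $S$ over the extreme points $A\in\cl A_0$ at which the functional $\ip{\cdot}{S}$ attains its max on $\cl A$. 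This reduces the whole statement to the two ingredients already available: reflexivity of convex $M_d$-corners (so $\cl A$ equals the intersection of all $\cl A_S$ it is contained in) and the identification of a ``minimal'' separating family with the radially extreme points $\cl A_0$, the latter being the one genuinely new piece of work.
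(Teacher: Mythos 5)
Your reverse inclusion is where the proof genuinely breaks down, for two reasons. First, your ``concrete'' route asserts $\cl A=\bigcap\{\cl A_S : S\in M_d^+,\ \cl A\subseteq\cl A_S\}$ by invoking ``the second anti-blocker machinery, i.e.\ $\cl A^{\sharp\sharp}=\cl A$''. That identity is exactly Theorem \ref{2ndABe}, which the paper \emph{deduces from} the present lemma; using it here is circular, and the Krein--Milman/Straszewicz alternative you sketch is not carried out (note also that when $\cl A$ is unbounded, which the lemma allows, the functional $\ip{\cdot}{S}$ need not attain its supremum on $\cl A$, so ``decomposing $S$ over the extreme points where the max is attained'' is not available). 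Second, and independently of circularity, you never treat the genuinely delicate case: $Q\ge 0$ with $PQP=Q$ but $Q\notin M_d^P$, i.e.\ $Q$ rank-deficient on $\ran(P)$. For such $Q$ the radial boundary point $t_0Q$ (with $t_0=\max\{t:\ tQ\in\cl A\}$, which is finite and positive because $Q\notin\cl A$ and $\{t: tQ\in\cl A\}$ is down-closed by hereditarity) does \emph{not} lie in $M_d^P$, hence not in $\cl A_0$, so no $R_A$ is attached to it and your ray argument gives nothing. The paper closes this gap with a separate step: if $Q$ belonged to $\cl C:=\bigcap_{A\in\cl A_0}\cl A_{R_A}\cap\cl N_{P^\perp}$ but not to $\cl A$, one sets $Q_n=(1-\tfrac1n)Q+\tfrac rn P$, uses convexity of $\cl C$ to get $Q_n\in\cl C\cap M_d^P$, applies the full-rank case (your ray argument, which does match the paper's Case~2: $\mu Q\in\cl A_0$ and $\ip{Q}{R_{\mu Q}}=1/\mu>1$) to conclude $Q_n\in\cl A$, and then uses closedness of $\cl A$ to force $Q\in\cl A$. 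Some such argument is indispensable and is missing from your proposal.

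There is also a flaw in your construction of $R_A$, though it is repairable. Replacing a supporting functional $S_A$ by its positive part does \emph{not} preserve $\cl A\subseteq\cl A_{S_A^+}$: for $M\ge 0$ one has $\ip{M}{S_A^+}\ge\ip{M}{S_A}$, so truncation can only worsen the upper bound. The correct mechanism is that no truncation is needed: since $A\in M_d^P$ gives $A\ge tP$ for some $t>0$, hereditarity puts $A-tvv^*$ in $\cl A$ for every unit $v\in\ran(P)$, and feeding these into the supporting inequality $\ip{M}{S_A}\le\ip{A}{S_A}$ yields $\ip{vv^*}{S_A}\ge 0$ for all such $v$; taking $S_A=PS_AP\neq 0$ this forces $S_A\ge 0$ and then $c_A=\ip{A}{S_A}\ge t\Tr S_A>0$, so one may normalise to get $R_A\ge 0$ with $\ip{A}{R_A}=1$ and $\cl A\subseteq\cl A_{R_A}$. (This is precisely the positivity argument the paper runs, except that the paper applies it to a cluster point of functionals obtained by strictly separating the dilations $(1+\tfrac1n)A\notin\cl A$, together with a boundedness argument for those functionals; your supporting-hyperplane-at-$A$ variant, once corrected as above, is a legitimate and slightly shorter alternative for this half.) But with the rank-deficient case unhandled and the reflexivity appeal circular, the lemma is not yet proved.
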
  

\begin{proof}
Let $A \in \cl A_0$ and $A_n = \left(1 + \frac{1}{n}\right)A$;
thus, $A_n \notin \cl A$, $n \in \bb N$.  
By the Hahn-Banach Theorem, 
there exist $Q_{A_n} \in M_d$ and $\gamma \in \bb R$ such that  
$${\rm Re} \ip{M}{Q_{A_n}}  < \gamma < {\rm Re}  \ip{A_n}{Q_{A_n}}, \ \ \  M \in \cl A.$$  
After replacing $Q_{A_n}$ by $\frac{1}{2}(Q_{A_n} + Q_{A_n}^*)$, we may assume that 
$Q_{A_n} \in M_d^h$. 
Since $0 \in \cl A$, we have that $\gamma > 0$. 
After further replacing $Q_{A_n}$ by $\frac{1}{\gamma}Q_{A_n}$, we may assume that $\gamma = 1$, that is, 
\begin{equation} \label{2ndAB3'}  
\ip{M}{Q_{A_n}} \le 1 < \ip{A_n}{Q_{A_n}}, \ \ \  M \in \cl A.  
\end{equation}    

Note that $A, A_n \in M_d^P$, $n\in \bb{N}$.
By Lemma \ref{2ndABd} (i), 
$$\ip{M}{Q_{A_n}} = \ip{PMP}{Q_{A_n}} = \ip{M}{PQ_{A_n}P}, \ \ \ M \in \cl A;$$ 
similarly, $\ip{A_n}{Q_{A_n}}=\ip{A_n}{PQ_{A_n}P}$.  
We may thus assume that $Q_{A_n} = PQ_{A_n}P$ and hence that
the eigenvectors of $Q_{A_n}$, corresponding to non-zero eigenvalues, are contained in $\cl U$.

Fix $A \in \cl A_0$.
We claim that the set $\left\{Q_{A_n}: n \in \bb N\right\}$ is bounded.  
Write $Q_{A_n} = \sum_{i=1}^k \lambda^{(n)}_i v_i^{(n)}v_i^{(n)*}$, where 
$\left\{ v_i^{(n)}: i \in [d]\right\}  \subseteq \cl U$ is an orthonormal set.  
Using Lemma \ref{2ndABc}, let $r > 0$ be such that $rP \in \cl A$; by hereditarity,
$r v_i^{(n)}v_i^{(n)*} \in \cl A$ for all $i \in [k]$ and all $n \in \bb N$.  
By \eqref{2ndAB3'}, 
$\ip{rv_i^{(n)}v_i^{(n)*} }{Q_{A_n}} \le 1$, and so   
\begin{equation} \label{UB1'} 
\lambda_i^{(n)} \le \frac{1}{r}, \ \ \  i \in [k], \ n \in \bb N. 
\end{equation}  
By \eqref{2ndAB3'}, 
$$\left(1+\frac{1}{n}\right)\ip{A}{Q_{A_n}} > 1, \ \ \ n \in \bb N,$$ 
and hence 
\begin{equation} \label{2ndAB4'} 
\sum_{i=1}^k \lambda_i^{(n)} \ip{A}{v_i^{(n)}v_i^{(n)*}} = \ip{A}{Q_{A_n}} > \frac{1}{2}, \ \ \ n \in \bb N. 
\end{equation}
Since $A  \in M_d^P$, there exists $t > 0$ such that $A \ge tP$.
We have that 
$$t \le \ip{A}{v_i^{(n)}v_i^{(n)*}} \le \|A\|, \ \ \  i \in [k], \ n \in \bb N.$$  
Suppose $\lambda_j^{(n)} < 0$. 
Then (\ref{UB1'}) and (\ref{2ndAB4'}) give 
\begin{eqnarray*}
\lambda_j^{(n)} 
& \geq & 
\frac{\lambda_j^{(n)}}{t} \ip{A}{v_j^{(n)}v_j^{(n)*}} 
>
\frac{1}{2t} - \sum_{i \ne j} \frac{\lambda_i^{(n)}}{t} \ip{A}{v_i^{(n)}v_i^{(n)*}}\\ 
& \ge & \frac{1}{2t} - \frac{d-1}{rt} \|A\|
> 
- \frac{d-1}{rt} \|A\|.
\end{eqnarray*}
Together with \eqref{UB1'}, this shows that $\left\{ \lambda_i^{(n)}: i \in [k], n \in \bb N\right\}$ is bounded, 
and hence the set $\{ Q_{A_n}: n \in \bb N\}$ is bounded as claimed. 

Let $R_A \in M_d^h$ be a cluster point of the sequence $(Q_{A_n})_{n \in \bb N}$;
clearly, 
\begin{equation}\label{RAP} 
R_A= PR_AP.
\end{equation}
By \eqref{2ndAB3'}, 
\begin{equation} \label{2ndAB5'} 
\ip{M}{R_A} \le 1, \ \ \  M \in \cl A,
\end{equation}  
and 
\begin{equation} \label{2ndAB5''} 
\ip{\left( 1+ \frac{1}{n} \right)A}{Q_{A_n}} >1, \ \ \ n \in \bb N.
\end{equation}
Since $A \in \cl A$, \eqref{2ndAB5'} and \eqref{2ndAB5''} 
show that 
\begin{equation}\label{eq_ARA}
\ip{A}{R_A}=1.
\end{equation}

We claim that $R_A \ge 0$ for all $A \in \cl A_0.$ 
Suppose, towards a contradiction, 
that there exists $A \in \cl A_0$ for which $R_A$ has an eigenvalue $\lambda < 0$. 
By (\ref{RAP}), an associated unit eigenvector $v$ of $\lambda$ lies in $\cl U$. 
Since $A \in M_d^{P}$, there exists $t > 0$ with $A \ge tP$, and hence 
$0 \le A -tvv^* \le A$,
giving $A - tvv^* \in \cl A$ by hereditarity. 
However, $$\ip{A-tvv^*}{R_A}=1- \lambda t >1,$$ contradicting \eqref{2ndAB5'}.

Set $\cl C = \bigcap_{A \in \cl A_0} \cl A_{R_A} \cap \cl N_{P^\perp}$.  
We complete the proof by showing that $\cl C = \cl A$. 
By \eqref{2ndAB5'}, $\cl A \subseteq \cl A_{R_A}$ for all $A \in \cl A_0$.  
By Lemma \ref{2ndABd}, $\cl A \subseteq \cl N_{P^\perp}$, and thus $\cl A \subseteq \cl C$. 
Fix $M \notin \cl A$; we will show that $M \notin \cl C$.  
Let $r > 0$ be such that $rP \in \cl A$ (such $r$ exists by Lemma \ref{2ndABc}).
We identify four cases.

\smallskip

\noindent {\it  Case 1. } $M \notin M_d^+.$
Since $\cl C \subseteq M_d^+$, we have $M \notin \cl C.$

\smallskip

\noindent {\it  Case 2. } $M \in M_d^{P}$.
Let $\mu = \max \{\lambda \in \bb R_+ : \lambda M \in \cl A\}$.  
By \eqref{eqMdP}, $0 < \mu < 1$. 
Setting $A = \mu M$ we have $A \in \cl A_0$.  Then
$\cl C \subseteq \cl A_{R_{A}}$.
By (\ref{eq_ARA}), $\ip{M}{R_{A}} = \frac{1}{\mu} >1,$ and so $M \notin \cl C.$

\smallskip

\noindent {\it  Case 3. } 
$M = PMP \in M_d^+ \backslash M_d^{P}$.
Since the sets $\cl A_{R_A}$ and $ \cl N_{P^\perp}$ are convex, $\cl C$ is convex.  
By Case 2,
\begin{equation} \label{case3note} 
M_d^{P} \cap \cl A= M_d^{P} \cap \cl C. 
\end{equation}  
Suppose, towards a contradiction, that $M \in \cl C$. 
Letting $M_n = \left(1 - \frac{1}{n}\right)M + \frac{r}{n}P$, 
the convexity of $\cl C$ gives that  $M_n \in \cl C$ for all $n \in \bb N$.  
Since $M = PMP \ge 0$ and $rP \in M_d^P$, 
we have that $M_n  \in M_d^P$ for all $n \in \bb N$.
By \eqref{case3note}, $M_n \in \cl A$, $n \in \bb N$. 
Since $M_n \to_{n\to\infty} M$ and $\cl A$ is closed, $M \in \cl A$, the required contradiction.

\smallskip

\noindent {\it  Case 4. } $M \in M_d^+$, and $ PMP \ne M.$
By Lemma \ref{2ndABd} we have  $M \notin \cl N_{P^\perp}$, and hence $M \notin \cl C$.  
\end{proof}

We can now prove the non-commutative version of Theorem \ref{th_sabt}.

\begin{theorem} \label{2ndABe} 
A non-empty set $\cl A \subseteq M_d^{+}$ is reflexive if and only if $\cl A$ is a convex corner.  
\end{theorem}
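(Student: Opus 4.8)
The plan is to prove the two implications separately, with the substantive content carried by Lemma~\ref{2ndABe2} together with the explicit anti-blocker computations from the previous subsection.

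First, the direction that a reflexive set is a convex corner. If $\cl A = \cl A^{\sharp\sharp}$, then $\cl A$ is the anti-blocker of the set $\cl A^\sharp \subseteq M_d^+$, and by Lemma~\ref{ccinM_d}(iii) every set of the form $\cl B^\sharp$ is a convex $M_d$-corner; applying this with $\cl B = \cl A^\sharp$ gives that $\cl A = (\cl A^\sharp)^\sharp$ is a convex corner. This direction is essentially immediate.

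The other direction is where the work lies: assume $\cl A$ is a convex $M_d$-corner; we must show $\cl A^{\sharp\sharp} = \cl A$. By Lemma~\ref{ccinM_d}(ii) we always have $\cl A \subseteq \cl A^{\sharp\sharp}$, so only the reverse inclusion needs proof. Here I would invoke Lemma~\ref{2ndABe2}: it produces a family $\{R_A : A \in \cl A_0\} \subseteq M_d^+$ with
$$\cl A = \bigcap_{A \in \cl A_0} \cl A_{R_A} \cap \cl N_{P^\perp},$$
where $P$ is the projection onto $\cl U$ from \eqref{eqclUv}. Now I would take anti-blockers of both sides and use the algebra of $\sharp$ developed earlier. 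By Lemma~\ref{ccinM_d}(iv), the anti-blocker of an intersection (once one writes $\cl A$ as a double anti-blocker of a union, or more directly uses that $\sharp$ turns intersections of reflexive corners into the closure-of-union of their anti-blockers) interacts well with the family; concretely, each $\cl A_{R_A}$ is reflexive with $\cl A_{R_A}^\sharp = \cl B_{R_A}$ by Lemma~\ref{ccinM_d2}, and $\cl N_{P^\perp}$ is reflexive by Proposition~\ref{ccinM_d4}(i) (note $\cl N_{P^\perp}$ makes sense since $P^\perp \in M_d^+$, with $\ran(P^\perp) = \cl U^\perp$). Since $\cl A$ is an intersection of reflexive convex corners, Lemma~\ref{ccinM_d}(v) applies directly and yields $\cl A^{\sharp\sharp} = \cl A$, completing the proof.

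The main obstacle is not in the present theorem but is already absorbed into Lemma~\ref{2ndABe2}: the delicate part is the uniform boundedness argument for the separating functionals $Q_{A_n}$ that produces the cluster points $R_A \in M_d^+$, which forces the representing matrices to be genuinely positive rather than merely Hermitian. Given that lemma, the remaining step is purely formal — one only has to verify that $\cl N_{P^\perp}$ and each $\cl A_{R_A}$ are reflexive convex corners (immediate from Proposition~\ref{ccinM_d4}(i) and Lemma~\ref{ccinM_d2}(ii), using $R_A \ge 0$) and then quote Lemma~\ref{ccinM_d}(v). I would also note for completeness the degenerate possibility $\cl A_0 = \emptyset$, in which case $\cl A = \cl N_{P^\perp}$ and reflexivity follows from Proposition~\ref{ccinM_d4}(i) alone.
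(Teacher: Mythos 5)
Your proposal is correct and follows essentially the same route as the paper: the converse direction via Lemma \ref{ccinM_d}(iii) applied to $\cl A = (\cl A^\sharp)^\sharp$, and the forward direction by combining the decomposition of Lemma \ref{2ndABe2} with the reflexivity of $\cl A_{R_A}$ and $\cl N_{P^\perp}$ (Lemma \ref{ccinM_d2}, Proposition \ref{ccinM_d4}) and closing with Lemma \ref{ccinM_d}(v). Your extra remarks (the detour through part (iv), the $\cl A_0=\emptyset$ case) are harmless but not needed, since the intersecting family always contains $\cl N_{P^\perp}$ and hence is non-empty.
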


\begin{proof} 
Let $\cl A$ be a convex corner. 
By Lemma \ref{2ndABe2}, $\cl A$ is the intersection of 
convex corners of the form $\cl A_R$ and $\cl N_P$, where $R$ is positive and $P$ is a projection. 
By Proposition \ref{ccinM_d4} and Lemma \ref{ccinM_d2}, such $\cl A_R$ and $\cl N_P$ are 
reflexive. Lemma \ref{ccinM_d} now implies that $\cl A$ is reflexive. 
Conversely, if $\cl A$ is reflexive then $\cl A = \cl A^{\sharp\sharp}$ and now 
Lemma \ref{ccinM_d} shows that $\cl A$ is a convex corner. 
\end{proof}

Theorem \ref{2ndABe} and Lemma \ref{ccinM_d} have the following immediate consequence.

\begin{cor}\label{reflexiveiff} 
If $\cl A$ and $\cl B$ are convex $M_d$-corners then 
\begin{itemize}
\item[(i)] 
$\cl A \subseteq \cl B$ if and only if $\cl A^\sharp \supseteq \cl B^\sharp$;
\item[(ii)]
$\cl A = \cl B$ if and only if  $\cl A^\sharp = \cl B^\sharp$;  
\item[(iii)]
$\cl A \subsetneq \cl B$ if and only if $\cl A^\sharp \supsetneq \cl B^\sharp$.
\end{itemize}
\end{cor}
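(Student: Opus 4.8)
The plan is to deduce Corollary \ref{reflexiveiff} directly from the monotonicity properties of the anti-blocker (Lemma \ref{ccinM_d}(i)), together with the reflexivity of convex $M_d$-corners just established in Theorem \ref{2ndABe}. Everything reduces to the elementary formal properties of a Galois-type correspondence, once one knows that the double anti-blocker returns the original corner.

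For part (i): if $\cl A \subseteq \cl B$, then Lemma \ref{ccinM_d}(i) immediately gives $\cl B^\sharp \subseteq \cl A^\sharp$. Conversely, if $\cl A^\sharp \supseteq \cl B^\sharp$, apply Lemma \ref{ccinM_d}(i) again to get $\cl A^{\sharp\sharp} \subseteq \cl B^{\sharp\sharp}$, and then invoke Theorem \ref{2ndABe} (both $\cl A$ and $\cl B$ are convex corners, hence reflexive) to rewrite this as $\cl A \subseteq \cl B$. Part (ii) is then just part (i) applied in both directions: $\cl A = \cl B$ iff $\cl A \subseteq \cl B$ and $\cl B \subseteq \cl A$ iff $\cl A^\sharp \supseteq \cl B^\sharp$ and $\cl B^\sharp \supseteq \cl A^\sharp$ iff $\cl A^\sharp = \cl B^\sharp$. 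For part (iii), combine: $\cl A \subsetneq \cl B$ means $\cl A \subseteq \cl B$ and $\cl A \neq \cl B$; by (i) the first is equivalent to $\cl B^\sharp \subseteq \cl A^\sharp$, and by (ii) the second is equivalent to $\cl A^\sharp \neq \cl B^\sharp$, and together these say exactly $\cl A^\sharp \supsetneq \cl B^\sharp$.

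There is essentially no obstacle here — this is a short formal argument, and the only nontrivial input (that $\cl A^{\sharp\sharp}=\cl A$ for convex corners) has already been done in Theorem \ref{2ndABe}. The one thing to be slightly careful about is that in the converse direction of (i) one genuinely needs reflexivity of \emph{both} sides, not just one, and that the hypotheses of the corollary (being a convex $M_d$-corner) are exactly what make Theorem \ref{2ndABe} applicable. I would write the proof in three or four lines, treating (i) in full and then noting that (ii) and (iii) are purely logical consequences of (i) together with (ii)'s equality case.
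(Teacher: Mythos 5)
Your argument is correct and is exactly the route the paper intends: it states the corollary as an immediate consequence of Theorem \ref{2ndABe} and Lemma \ref{ccinM_d}, which is precisely the combination of monotonicity and reflexivity you spell out. No gaps; your note that reflexivity of both corners is needed in the converse of (i) is the right point of care.
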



\subsection{Consequences of reflexivity}\label{ss_conref}

In this subsection we give some corollaries of the reflexivity of convex $M_d$-corners.

\begin{theorem} \label{ccinM_d7} 
Let $\cl A$ be a non-empty subset of $M_d^+$. 
Then $\rm C (\cl A) = \cl A^{\sharp\sharp}$.  
\end{theorem}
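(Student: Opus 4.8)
The plan is to establish the two inclusions $\mathrm{C}(\cl A) \subseteq \cl A^{\sharp\sharp}$ and $\cl A^{\sharp\sharp} \subseteq \mathrm{C}(\cl A)$ separately. Recall from Remark \ref{ccintersect} that $\mathrm{C}(\cl A)$ is the smallest convex $M_d$-corner containing $\cl A$, which exists because such corners are closed under arbitrary intersections. For the forward inclusion, note that by Lemma \ref{ccinM_d}(iii) the set $\cl A^{\sharp\sharp}$ is a convex $M_d$-corner, and by Lemma \ref{ccinM_d}(ii) it contains $\cl A$; hence by minimality $\mathrm{C}(\cl A) \subseteq \cl A^{\sharp\sharp}$.

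For the reverse inclusion, the key point is to apply $\sharp$ twice and exploit the reflexivity result just proved. First I would observe that $\cl A \subseteq \mathrm{C}(\cl A)$ gives, by Lemma \ref{ccinM_d}(i), that $\mathrm{C}(\cl A)^\sharp \subseteq \cl A^\sharp$, and applying (i) once more yields $\cl A^{\sharp\sharp} \subseteq \mathrm{C}(\cl A)^{\sharp\sharp}$. Now $\mathrm{C}(\cl A)$ is a convex $M_d$-corner, so by Theorem \ref{2ndABe} it is reflexive, i.e.\ $\mathrm{C}(\cl A)^{\sharp\sharp} = \mathrm{C}(\cl A)$. Combining, $\cl A^{\sharp\sharp} \subseteq \mathrm{C}(\cl A)$, which completes the proof.

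I do not anticipate a serious obstacle here: the statement is essentially a formal consequence of the second anti-blocker theorem (Theorem \ref{2ndABe}) together with the elementary monotonicity and extensivity properties of the anti-blocker operation collected in Lemma \ref{ccinM_d}. The only point requiring a moment's care is making sure $\mathrm{C}(\cl A)$ is legitimately defined and is indeed a convex $M_d$-corner containing $\cl A$ — but this is exactly Remark \ref{ccintersect}. One could alternatively phrase the argument using Proposition \ref{l_gen}(ii), which already records $\cl A^\sharp = \mathrm{C}(\cl A)^\sharp$; then a single application of $\sharp$ to both sides, followed by reflexivity of $\mathrm{C}(\cl A)$ and of $\cl A^\sharp$ (Lemma \ref{ccinM_d}(iii)), gives $\cl A^{\sharp\sharp} = \mathrm{C}(\cl A)^{\sharp\sharp} = \mathrm{C}(\cl A)$ in one stroke. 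I would likely present this shorter route as the main argument and mention the direct inclusion-based argument only if extra clarity is wanted.
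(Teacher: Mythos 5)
Your proposal is correct, and the "shorter route" you say you would present — $\cl A^\sharp = \mathrm{C}(\cl A)^\sharp$ from Proposition \ref{l_gen}, then one application of $\sharp$ plus reflexivity of $\mathrm{C}(\cl A)$ via Theorem \ref{2ndABe} — is exactly the paper's proof. Your two-inclusion argument is only a minor variant (replacing Proposition \ref{l_gen} by minimality of $\mathrm{C}(\cl A)$ for one direction), so there is nothing substantive to add.
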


\begin{proof} 
By Proposition \ref{l_gen}, $\cl A^\sharp = \rm C (\cl A)^\sharp$; 
Theorem \ref{2ndABe} yields 
$\cl A^{\sharp\sharp} = \rm C (\cl A)^{\sharp \sharp} = \rm C(\cl A)$.
\end{proof}

\begin{cor}  \label{2315} 
If $\cl A \subseteq M_d^+$ is a diagonal convex corner then $\cl A^{\sharp\sharp} = \her(\cl A)$. 
\end{cor}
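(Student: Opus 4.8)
The plan is to combine Corollary \ref{555555}, which identifies $\mathrm{C}(\cl A)$ with $\her(\cl A)$ for a diagonal convex corner $\cl A$, with Theorem \ref{ccinM_d7}, which identifies $\mathrm{C}(\cl A)$ with $\cl A^{\sharp\sharp}$ for any non-empty subset of $M_d^+$. So first I would observe that a diagonal convex corner $\cl A\subseteq\cl D_d^+$ is in particular a non-empty subset of $M_d^+$, so Theorem \ref{ccinM_d7} applies and gives $\cl A^{\sharp\sharp}=\mathrm{C}(\cl A)$. Then I would invoke Corollary \ref{555555} to rewrite $\mathrm{C}(\cl A)=\her(\cl A)$, and chain the two equalities to conclude $\cl A^{\sharp\sharp}=\her(\cl A)$.

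There is essentially no obstacle here: the corollary is a one-line composition of two results already established in the excerpt. The only small point worth checking is that the hypotheses of both cited results are met. For Theorem \ref{ccinM_d7} we only need $\cl A$ non-empty, which is part of the definition of a diagonal convex corner. For Corollary \ref{555555} we need $\cl A$ to be a diagonal convex corner, which is exactly our hypothesis; note this corollary does not itself require boundedness or standardness, since $\her$ is defined for any non-empty subset and the proof of Corollary \ref{555555} goes through Proposition \ref{l_gen}(i)'s argument adapted to the unbounded diagonal case (or, more precisely, it is stated unconditionally in the excerpt).

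Thus the proof I would write is simply: \emph{By Corollary \ref{555555}, $\mathrm{C}(\cl A)=\her(\cl A)$. By Theorem \ref{ccinM_d7}, $\cl A^{\sharp\sharp}=\mathrm{C}(\cl A)$. Combining these, $\cl A^{\sharp\sharp}=\her(\cl A)$.} If one wanted to be slightly more self-contained one could alternatively argue directly: $\her(\cl A)$ is a convex corner containing $\cl A$ (by Corollary \ref{555555} it equals $\mathrm{C}(\cl A)$), hence reflexive by Theorem \ref{2ndABe}, and Proposition \ref{l_gen}(ii) gives $\her(\cl A)^{\sharp}=\cl A^{\sharp}$, so $\her(\cl A)=\her(\cl A)^{\sharp\sharp}=\cl A^{\sharp\sharp}$. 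Either route is immediate; I expect no difficulty and would present the first, shortest version.
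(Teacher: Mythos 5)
Your proof is correct and follows essentially the same route as the paper: the authors likewise observe that $\overline{\conv}(\cl A)=\cl A$ for a diagonal convex corner, invoke Proposition \ref{l_gen} (the content of Corollary \ref{555555}) to get $\mathrm{C}(\cl A)=\her(\cl A)$, and then apply Theorem \ref{ccinM_d7}. Your hypothesis-checking and the alternative argument via Proposition \ref{l_gen}(ii) are fine but not needed.
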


\begin{proof}  
If $\cl A$ is a diagonal convex corner, then $\overline{\conv} (\cl A) = \cl A$.  
By Proposition \ref{l_gen}, $\rm C (\cl A) = \her(\cl A)$, and now the claim 
follows from Theorem \ref{ccinM_d7}.  
\end{proof}

\begin{prop} \label{capsharp} 
Let $\bb A$ be a non-empty set and $\cl B_{\alpha}$ be a  convex corner, $\alpha \in \bb A$.  
Then
$$\left( \bigcap_{\alpha \in \bb A} \cl B_{\alpha} \right) ^\sharp 
= \rm C \left( \bigcup_{\alpha \in \bb A} \cl B_{\alpha}^\sharp \right).$$ 
\end{prop}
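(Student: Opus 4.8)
The plan is to prove the identity $\left(\bigcap_{\alpha} \cl B_\alpha\right)^\sharp = \rm C\left(\bigcup_\alpha \cl B_\alpha^\sharp\right)$ by taking anti-blockers on both sides and using the reflexivity theorem, which converts the statement into one that follows from the distributive law for anti-blockers over unions (Lemma \ref{ccinM_d} (iv)). Since both sides of the asserted equality are convex $M_d$-corners — the left side by Lemma \ref{ccinM_d}(iii) and the right side by Remark \ref{ccintersect} — Corollary \ref{reflexiveiff}(ii) tells me it suffices to show their anti-blockers coincide.

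First I would compute the anti-blocker of the right-hand side. By Theorem \ref{ccinM_d7}, $\rm C\left(\bigcup_\alpha \cl B_\alpha^\sharp\right) = \left(\bigcup_\alpha \cl B_\alpha^\sharp\right)^{\sharp\sharp}$, so applying $\sharp$ once more and using the reflexivity of anti-blockers (Lemma \ref{ccinM_d}(iii), giving $\cl C^{\sharp\sharp\sharp} = \cl C^\sharp$) yields
$$\rm C\left(\bigcup_{\alpha \in \bb A} \cl B_\alpha^\sharp\right)^\sharp = \left(\bigcup_{\alpha \in \bb A} \cl B_\alpha^\sharp\right)^{\sharp\sharp\sharp} = \left(\bigcup_{\alpha \in \bb A} \cl B_\alpha^\sharp\right)^\sharp.$$
By Lemma \ref{ccinM_d}(iv), this equals $\bigcap_\alpha \cl B_\alpha^{\sharp\sharp}$, which by the reflexivity of each convex corner $\cl B_\alpha$ (Theorem \ref{2ndABe}) equals $\bigcap_\alpha \cl B_\alpha$.

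Next I would compute the anti-blocker of the left-hand side: by Lemma \ref{ccinM_d}(iii), $\left(\bigcap_\alpha \cl B_\alpha\right)^{\sharp}$ is a reflexive convex corner, and $\left(\left(\bigcap_\alpha \cl B_\alpha\right)^\sharp\right)^\sharp = \bigcap_\alpha \cl B_\alpha$ precisely because $\bigcap_\alpha \cl B_\alpha$ is itself a convex corner (Remark \ref{ccintersect}) and hence reflexive by Theorem \ref{2ndABe}. So both sides of the claimed identity have the same anti-blocker, namely $\bigcap_\alpha \cl B_\alpha$, and Corollary \ref{reflexiveiff}(ii) gives the result. I do not anticipate a serious obstacle here; the only point requiring a little care is checking that $\bigcup_\alpha \cl B_\alpha^\sharp$ is non-empty so that $\rm C(\cdot)$ is defined — this holds because each $\cl B_\alpha^\sharp$ contains $0$ — and confirming that the family $\bigcap_\alpha \cl B_\alpha$ is non-empty, which again holds since $0$ lies in every convex corner. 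The argument is essentially a formal manipulation of the operations $\sharp$ and $\rm C$ via reflexivity.
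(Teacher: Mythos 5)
Your proposal is correct and rests on exactly the same ingredients as the paper's proof -- reflexivity of convex corners (Theorem \ref{2ndABe}), the union--intersection duality of Lemma \ref{ccinM_d} (iv), and Theorem \ref{ccinM_d7} -- only organised dually: you show both sides have anti-blocker $\bigcap_{\alpha} \cl B_{\alpha}$ and then invoke Corollary \ref{reflexiveiff} (ii), whereas the paper computes in one chain
$\left(\bigcap_{\alpha} \cl B_{\alpha}\right)^\sharp = \left(\bigcap_{\alpha} \cl B_{\alpha}^{\sharp\sharp}\right)^\sharp = \left(\bigcup_{\alpha} \cl B_{\alpha}^\sharp\right)^{\sharp\sharp} = {\rm C}\left(\bigcup_{\alpha} \cl B_{\alpha}^\sharp\right)$.
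The detour through Corollary \ref{reflexiveiff} is harmless (it is itself a consequence of the same reflexivity theorem), so this is essentially the paper's argument in slightly less direct form.
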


\begin{proof}  
By Theorems \ref{2ndABe} and \ref{ccinM_d7} and Lemma \ref{ccinM_d}, 
$$\left( \bigcap_{\alpha \in \bb A} \cl B_{\alpha} \right) ^\sharp
=\left( \bigcap_{\alpha \in \bb A} \cl B_{\alpha}^{\sharp\sharp} \right) ^\sharp
= \left(\bigcup_{\alpha \in \bb A} \cl B_{\alpha}^\sharp\right)^{\sharp \sharp}
= \rm C \left( \bigcup_{\alpha \in \bb A} \cl B_{\alpha}^\sharp \right).$$    
\end{proof}

By analogy with convex $\bb R^d$-corners, we introduce several parameters for convex $M_d$-corners.
Recall that a set $(P_i)_{i=1}^k \subseteq M_d$ of projections is called a 
\emph{projection-valued measure (PVM)} 
if $\sum_{i=1}^k P_i = I$. 
Let $\cl A$ be a convex $M_d$-corner.

\smallskip

{\bf (a)} \ If $\cl A$ is bounded, let 
$$\gamma (\cl A) = \max \left\{ \Tr A : A \in \cl A\right\};$$
If $\cl A$ is unbounded, set $\gamma (\cl A)= \infty$.  

\smallskip

{\bf (b)} \ If $\cl A \ne M_d^+$, let  
$$N( \cl A) = \max \left\{ \beta  : \beta I \in \cl A\right\}.$$
We set $N( M_d^+) = \infty.$ 

\smallskip

{\bf (c)} \  If  $\cl A$ has non-empty relative interior, let 
$$M(\cl A) = \inf \left \{ \sum_{i=1}^k \lambda_i: \exists \ k \in \bb N,\, A_i \in \cl A,\, \lambda_i > 0, i\in [k], \mbox{  s.t. } 
\sum_{i=1}^k \lambda_i A_i \ge I \right \}.$$
If $\cl A$ has empty relative interior, set $M(\cl A)= \infty$.   


\smallskip

{\bf (d)} \ If $\cl A_{I_d}\subseteq \cl A$, let
$$\Gamma(\cl A) = \min\left\{k\in \bb{N} : \mbox{ there exists a PVM } (P_i)_{i=1}^k \subseteq \cl A\right\};$$
otherwise, set $\Gamma(\cl A) = \infty$;

\smallskip

{\bf (e)} \ If $\cl A_{I_d}\subseteq \cl A$, let
$$\Gamma_{\rm f}(\cl A) = 
\inf \left \{ \sum_{i=1}^k \lambda_i : \exists \ k \in \bb N, \mbox{ proj. } P_i \in \cl A, \lambda_i > 0,  \mbox{  s.t. } 
\sum_{i=1}^k \lambda_i P_i \ge I \right \};$$ 
otherwise, set $\Gamma_{\rm f}(\cl A) = \infty$.

\begin{remark}\label{r_spcnd}
{\rm 
\begin{itemize}
\item[(i)] We have that $\gamma(\cl A)=0$ if and only if $\cl A = \{0\}$;
\item[(ii)] By Lemma \ref{l_ri}, $N(\cl A) = 0$ if and only if $\cl A$ has empty relative interior;
\item[(iii)] The parameter $\Gamma_{\rm f}$ can be thought of as a real relaxation of $\Gamma$. In particular, it is clear
that $\Gamma_{\rm f}(\cl A)\leq \Gamma(\cl A)$. 
\end{itemize}
}
\end{remark}


\begin{theorem} \label{NMgamma2} 
Let $\cl A$ be a convex $M_d$-corner, 
$\cl P \subseteq M_d$ be a non-empty set of non-zero projections and $\cl B = {\rm C}(\cl P)$. 
Then
\begin{itemize}
\item[(i)] 
$M(\cl A) = \inf \left \{ \mu \in \bb{R}_+ : \exists \ A \in \cl A \mbox{  s.t. } \mu A \ge I \right \}$;

\item[(ii)] 
$M(\cl A) = \frac{1}{N(\cl A)} = \gamma(\cl A^\sharp)$;

\item[(iii)] 
$M(\cl B) = \Gamma_{\rm f}(\cl B)$; 

\item[(iv)] 
$\Gamma(\cl B)\gamma(\cl B) \geq d$.
\end{itemize}
\end{theorem}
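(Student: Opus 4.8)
The plan is to treat the four items largely independently: (i) and (ii) transcribe Lemma \ref{MAmin} and Proposition \ref{NMgamma} to the non-commutative setting (using Theorem \ref{2ndABe} in place of Theorem \ref{th_sabt}), (iii) exploits the explicit description of ${\rm C}(\cl P)$, and (iv) is a trace count. For (i), write $m$ for the right-hand infimum. If $\cl A$ has empty relative interior, then no $A\in\cl A$ can satisfy $\mu A\ge I$ for any $\mu$: otherwise $A\ge\frac{1}{\mu}I$, and hereditarity would give $\frac{1}{\mu}I\in\cl A$, contradicting Lemma \ref{l_ri}; hence $m=\infty=M(\cl A)$. If $\cl A$ has non-empty relative interior, then $M(\cl A)\le m$ is the case $k=1$ in the definition of $M(\cl A)$, and conversely, for any admissible tuple $(\lambda_i,A_i)_{i=1}^k$ with $\sum_i\lambda_iA_i\ge I$, convexity gives $A:=\big(\sum_i\lambda_i\big)^{-1}\sum_i\lambda_iA_i\in\cl A$ with $\big(\sum_i\lambda_i\big)A\ge I$, so $m\le\sum_i\lambda_i$; taking the infimum gives $m\le M(\cl A)$.

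For (ii), I would first deduce from (i) together with hereditarity that, for $\mu>0$, some $A\in\cl A$ satisfies $\mu A\ge I$ exactly when $\frac{1}{\mu}I\in\cl A$; hence $M(\cl A)=\big(\sup\{\beta:\beta I\in\cl A\}\big)^{-1}=\frac{1}{N(\cl A)}$, the extreme cases $\cl A=M_d^+$ and $\cl A$ of empty relative interior being absorbed by the conventions $N(M_d^+)=\infty$, $\frac{1}{\infty}=0$, $\frac{1}{0}=\infty$. For $\frac{1}{N(\cl A)}=\gamma(\cl A^\sharp)$, I would copy the argument of Proposition \ref{NMgamma}. When $\cl A\ne M_d^+$ has non-empty relative interior, Proposition \ref{cc1}(i) makes $\cl A^\sharp$ bounded, and reflexivity ($\cl A=\cl A^{\sharp\sharp}$, Theorem \ref{2ndABe}) forces $\cl A^\sharp\ne\{0\}$, so $0<\gamma(\cl A^\sharp)<\infty$; choosing $N\in\cl A^\sharp$ with $\Tr N=\gamma(\cl A^\sharp)$ and pairing it with $N(\cl A)I\in\cl A$ yields $1\ge\ip{N}{N(\cl A)I}=N(\cl A)\gamma(\cl A^\sharp)$, while $B:=\frac{1}{\gamma(\cl A^\sharp)}I$ satisfies $\ip{B}{N'}\le1$ for all $N'\in\cl A^\sharp$, so $B\in\cl A^{\sharp\sharp}=\cl A$ and $N(\cl A)\ge\frac{1}{\gamma(\cl A^\sharp)}$. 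The cases $\cl A=M_d^+$ (where $\cl A^\sharp=\{0\}$) and $\cl A$ of empty relative interior (where $\cl A^\sharp$ is unbounded by Proposition \ref{cc1}(i)) are immediate.

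For (iii), $M(\cl B)\le\Gamma_{\rm f}(\cl B)$ is trivial, since any projection tuple witnessing $\Gamma_{\rm f}(\cl B)$ also witnesses $M(\cl B)$. For the reverse inequality, observe that a set of projections is bounded, so Proposition \ref{l_gen}(i) gives $\cl B={\rm C}(\cl P)=\her(\overline{\rm conv}(\cl P))$. By (i) it suffices to show $\Gamma_{\rm f}(\cl B)\le\mu$ whenever there is $D\in\cl B$ with $\mu D\ge I$. Fix such a $D$, choose $C\in\overline{\rm conv}(\cl P)$ with $D\le C$ (so $\mu C\ge I$), and write $C=\lim_nC_n$ with each $C_n=\sum_jt^{(n)}_jQ^{(n)}_j$ a finite convex combination of projections $Q^{(n)}_j\in\cl P$. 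Then for each $\delta>0$ and all large $n$ one has $\mu C_n\ge(1-\delta)I$, i.e. $\sum_j\frac{\mu t^{(n)}_j}{1-\delta}Q^{(n)}_j\ge I$ with coefficients summing to $\frac{\mu}{1-\delta}$; as the $Q^{(n)}_j$ are projections in $\cl B$, this gives $\Gamma_{\rm f}(\cl B)\le\frac{\mu}{1-\delta}$, and $\delta\to0$ gives $\Gamma_{\rm f}(\cl B)\le\mu$. When $M(\cl B)=\infty$ (equivalently $\cl B$ has empty relative interior) one cannot have $\cl A_{I_d}\subseteq\cl B$, since that would put $\frac{1}{d}I\in\cl B$; thus $\Gamma_{\rm f}(\cl B)=\infty$ as well, and the equality persists.

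Finally, (iv) is a trace count: if $\Gamma(\cl B)=\infty$ there is nothing to prove, since $\cl B$ contains a non-zero projection and hence $\gamma(\cl B)\ge1>0$; otherwise, taking a PVM $(P_i)_{i=1}^k\subseteq\cl B$ with $k=\Gamma(\cl B)$, we get $d=\Tr I=\sum_{i=1}^k\Tr P_i\le k\gamma(\cl B)=\Gamma(\cl B)\gamma(\cl B)$. I expect the only delicate point to be in (iii): since $\overline{\rm conv}(\cl P)$ is a closure, an element $D\in\cl B$ need not sit below a single finite convex combination of projections from $\cl P$, which forces the $\delta$-perturbation above; everything else is a fairly mechanical transcription of the $\bb R^d$-corner arguments plus bookkeeping of the infinite and boundary cases via Proposition \ref{cc1} and the stated conventions.
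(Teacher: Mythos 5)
Your proof is correct and follows essentially the same route as the paper: parts (i) and (ii) are the transcriptions of Lemma \ref{MAmin} and Proposition \ref{NMgamma} with Theorem \ref{2ndABe} replacing Theorem \ref{th_sabt}, and your (iii) and (iv) coincide with the paper's argument, namely the $\delta$-perturbation of an approximating convex combination from $\conv(\cl P)$ and the trace count over a PVM. The only (harmless) deviations are that in (i)--(ii) you work with the infimum directly instead of invoking attainment of the minimiser, and that you treat the degenerate cases ($\cl A = M_d^+$, empty relative interior, $\Gamma(\cl B)=\infty$) more explicitly than the paper does.
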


\begin{proof}
The proof of (i) is similar to that of Lemma \ref{MAmin}, and the proof of (ii) to that of
Proposition \ref{NMgamma}, using Theorem \ref{2ndABe} instead of Theorem \ref{th_sabt}.

(iii) Since $\cl P\subseteq \cl B$, we have that $M(\cl B) \leq \Gamma_{\rm f}(\cl B)$.
Set $R = \Gamma_{\rm f}(\cl B)$. 
Let $\ep > 0$, $\lambda \in \bb{R}_+$ and $A \in \cl B$ be such that 
$$\lambda A \ge I  \ \mbox{ and } \ \lambda \le M(\cl B) + \ep.$$
Let $\delta > 0$ be such that $1-\lambda \delta > 0$.
By Proposition \ref{l_gen}, there exists $B \in \overline{\conv}(\cl P)$ such that 
$A \le B$ and, hence, a sequence 
$(B^{(j)})_{j \in \bb N} \subseteq \conv ( \cl P)$ such that $B^{(j)} \To_{j \To \infty} B$.
Let $n \in \bb N$ be such that $B^{(n)} + \delta I \ge B$.
Then $\lambda B^{(n)} \ge (1 - \lambda \delta) I$ and hence
\begin{equation} \label{eq5tf}  
\frac{\lambda}{1 - \lambda \delta} B^{(n)} \ge I.
\end{equation}
Write 
$B^{(n)} = \sum_{l = 1}^{m} \mu_l P_{l}$ with $P_{l} \in \cl P$ and 
$\mu_l \in \bb R_+$ satisfying $\sum_{l = 1}^{m} \mu_l = 1$.
By \eqref{eq5tf},   
$$R \le \sum_{l=1}^{m} \frac{1}{1 - \lambda \delta}\, \lambda \mu_l \le \frac{1}{1-\lambda  \delta} \big(M(\cl B) + \ep \big).$$  
Letting $\delta\to 0$, we obtain $R \le M(\cl B) + \ep$; 
letting $\ep \to 0$, we conclude that $R \le M(\cl B)$.

(iv) Suppose that $(P_i)_{i=1}^k$ is a PVM contained in $\cl B$. 
Then 
$$d = \sum_{i=1}^k {\rm rank} (P_i) \leq k \gamma(\cl B).$$
Minimising over $k$ implies the statement. 
\end{proof}

We next show the continuity of the anti-blocker. 
We use a classical concept of convergence due to Kuratowski. 
Let $\cl X$ be a topological space. For a sequence $(F_n)_{n\in \bb{N}}$ of subsets of $\cl X$, 
set 
$$\liminf_{n\in \bb{N}} F_n = 
\left\{\lim\mbox{}_{n\to\infty} x_n : 
(x_n)_{n\in \bb{N}} \in \Pi_{n\in \bb{N}}  F_n
\mbox{ a convergent sequence}\right\}$$
and 
$$\limsup_{n\in \bb{N}} F_n = 
\left\{x :  \mbox{ a cluster point of a sequence }
(x_n)_{n\in \bb{N}} \in \Pi_{n\in \bb{N}}  F_n\right\}.$$
We say that the sequence $(F_n)_{n\in \bb{N}}$ converges to the subset $F\subseteq \cl X$, 
and write $F = \lim_{n\to\infty} F_n$, 
if $F = \liminf_{n\in \bb{N}} F_n = \limsup_{n\in \bb{N}} F_n$.

\begin{prop}\label{p_ccc}
Let $\cl A, \cl A_n$, $n\in \bb{N}$, be 
convex $M_d$-corners such that $\cup_{n\in \bb{N}}\cl A_n$ is bounded.
\begin{itemize}
\item[(i)]
$\limsup_{n\in \bb{N}} \cl A_n \subseteq \cl A$ if and only if
$\cl A^{\sharp}\subseteq \liminf_{n\in \bb{N}} \cl A_n^{\sharp}$;
\item[(ii)]
$\cl A\subseteq \liminf_{n\in \bb{N}} \cl A_n$ if and only if
$\limsup_{n\in \bb{N}} \cl A_n^{\sharp} \subseteq \cl A^{\sharp}$;
\item[(iii)]
$\cl A = \lim_{n\in \bb{N}} \cl A_n$ if and only if
$\cl A^{\sharp} = \lim_{n\in \bb{N}} \cl A_n^{\sharp}$.
\end{itemize}
\end{prop}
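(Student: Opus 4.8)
The plan is to prove (i) and (ii) and then obtain (iii) for free: by definition $\cl A=\lim_n\cl A_n$ means $\limsup_n\cl A_n\subseteq\cl A\subseteq\liminf_n\cl A_n$, and, granting the two equivalences in (i) and (ii), this is exactly $\limsup_n\cl A_n^\sharp\subseteq\cl A^\sharp\subseteq\liminf_n\cl A_n^\sharp$, i.e. $\cl A^\sharp=\lim_n\cl A_n^\sharp$. Of the four remaining implications, two are routine and use no boundedness: for ``$\Rightarrow$'' in (ii), if $N\in\limsup_n\cl A_n^\sharp$ write $N=\lim_j N_{n_j}$ with $N_{n_j}\in\cl A_{n_j}^\sharp$; for $A\in\cl A\subseteq\liminf_n\cl A_n$ pick $A_n\in\cl A_n$ with $A_n\to A$ and pass to the limit in $\Tr(N_{n_j}A_{n_j})\le1$ to get $\Tr(NA)\le1$, whence $N\in\cl A^\sharp$. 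For ``$\Leftarrow$'' in (i), if $M=\lim_j M_{n_j}$ with $M_{n_j}\in\cl A_{n_j}$ and $N\in\cl A^\sharp\subseteq\liminf_n\cl A_n^\sharp$, choose $N_n\in\cl A_n^\sharp$ with $N_n\to N$ and pass to the limit in $\Tr(M_{n_j}N_{n_j})\le1$; thus $\Tr(MN)\le1$ for all $N\in\cl A^\sharp$, so $M\in\cl A^{\sharp\sharp}=\cl A$ by Theorem \ref{2ndABe}.

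For ``$\Rightarrow$'' in (i) I would argue by scaling. Fix $N\in\cl A^\sharp$ and $t\in(0,1)$; if $tN\notin\cl A_n^\sharp$ for infinitely many $n$, pick $M_{n_k}\in\cl A_{n_k}$ with $\Tr(tNM_{n_k})>1$. Since $\bigcup_n\cl A_n$ is bounded, $(M_{n_k})$ has a cluster point $M^*$, which lies in $\limsup_n\cl A_n\subseteq\cl A$, and then $\Tr(NM^*)\ge1/t>1$ contradicts $N\in\cl A^\sharp$. Hence, for each $t<1$, $tN\in\cl A_n^\sharp$ for all large $n$, so $tN\in\liminf_n\cl A_n^\sharp$; as Kuratowski lower limits are closed and $tN\to N$ as $t\uparrow1$, we conclude $N\in\liminf_n\cl A_n^\sharp$.

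The one genuinely delicate implication is ``$\Leftarrow$'' in (ii), because the obvious idea --- apply the just-proved ``$\Rightarrow$'' of (i) to $(\cl A_n^\sharp)$ --- fails: $\bigcup_n\cl A_n^\sharp$ need not be bounded even though $\bigcup_n\cl A_n$ is. I would instead use hyperspace compactness. By Lemma \ref{PSD4} there is $c>0$ with $\bigcup_n\cl A_n\subseteq\cl B_{cI}$, and $\cl B_{cI}$ is compact; hence every subsequence of $(\cl A_n)$ has a further subsequence $(\cl A_{n_j})$ converging, in the Kuratowski (equivalently Hausdorff) sense, to a closed set $\cl A_\infty\subseteq\cl B_{cI}$. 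One checks $\cl A_\infty$ is again a convex corner: it is nonempty ($0\in\cl A_n$ for all $n$), convex, and hereditary --- the last point because, given $0\le x\le y=\lim_j y_j$ with $y_j\in\cl A_{n_j}$, one writes $x=y^{1/2}Ty^{1/2}$ with $0\le T\le I$ (a Douglas-type factorisation) and sets $x_j:=y_j^{1/2}Ty_j^{1/2}\le y_j$, so $x_j\in\cl A_{n_j}$ and $x_j\to x$. Applying the ``$\Rightarrow$'' parts of (i) and (ii) to the convergent sequence $(\cl A_{n_j})$ gives $\cl A_\infty^\sharp=\lim_j\cl A_{n_j}^\sharp$, so by hypothesis $\cl A_\infty^\sharp=\limsup_j\cl A_{n_j}^\sharp\subseteq\limsup_n\cl A_n^\sharp\subseteq\cl A^\sharp$; taking anti-blockers and using reflexivity (Theorem \ref{2ndABe} and Lemma \ref{ccinM_d}) yields $\cl A=\cl A^{\sharp\sharp}\subseteq\cl A_\infty^{\sharp\sharp}=\cl A_\infty$. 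Since every subsequential limit $\cl A_\infty$ contains $\cl A$ and Hausdorff convergence forces ${\rm dist}(M,\cl A_{n_j})\to{\rm dist}(M,\cl A_\infty)=0$ for $M\in\cl A$, a standard subsequence argument gives ${\rm dist}(M,\cl A_n)\to0$, i.e. $\cl A\subseteq\liminf_n\cl A_n$.

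The main obstacle is precisely this last implication: recognising that the breakdown of the naive ``dualise (i)'' argument is repaired exactly by compactness of the hyperspace of $\cl B_{cI}$, and verifying the one non-formal input to that compactness argument, namely that Kuratowski limits of convex $M_d$-corners are again convex $M_d$-corners --- hereditarity here is not automatic (the positive cone of $M_d$ lacks the Riesz decomposition property) and has to be obtained through the factorisation $x=y^{1/2}Ty^{1/2}$ above. Everything else (the scaling argument for ``$\Rightarrow$'' of (i), closedness of lower limits, and the two ``pass to the limit in $\Tr(\cdot\,\cdot)\le1$'' arguments) is routine.
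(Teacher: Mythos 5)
Your proof is correct, but for the two non-trivial implications it takes a different route from the paper. The two routine directions (passing to the limit in $\Tr(\cdot\,\cdot)\le 1$ for ``$\Rightarrow$'' of (ii), and the same limit-passing combined with Theorem \ref{2ndABe} for ``$\Leftarrow$'' of (i)) are exactly the paper's arguments. For ``$\Rightarrow$'' of (i) the paper simply quotes \cite[Lemma 6.9]{BTW}, whereas your scaling-plus-cluster-point argument is a correct, self-contained substitute (and it is precisely where the boundedness of $\cup_{n}\cl A_n$ enters). The genuine divergence is ``$\Leftarrow$'' of (ii): the paper gets it in two lines by applying the quoted implication to the dual sequence $(\cl A_n^\sharp)$ and invoking reflexivity, while you, having proved the forward implication only under boundedness of the union --- a property $\cup_n\cl A_n^\sharp$ need not have --- instead pass through compactness of the Hausdorff hyperspace of $\cl B_{cI}$, check that a Kuratowski limit of convex corners is again a convex corner (your factorisation $x=y^{1/2}Ty^{1/2}$ with $0\le T\le I$ is exactly the right replacement for the missing Riesz decomposition, and continuity of the square root makes $x_j\to x$ work), and then run the already-proved equivalences along convergent subsequences, finishing with reflexivity and a distance argument. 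What your route buys is self-containedness: beyond standard hyperspace compactness it uses only what you prove, and it makes explicit why the boundedness hypothesis cannot simply be dualised. What the paper's route buys is brevity, at the price of relying on the cited lemma in a form applicable to the (possibly unbounded) dual family. Both arguments are valid.
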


\begin{proof}
(i)-(ii) 
By \cite[Lemma 6.9]{BTW}, 
\begin{equation}\label{eq_limsup}
\limsup_{n\in \bb{N}} \cl A_n \subseteq \cl A \ \Longrightarrow \ 
\cl A^{\sharp}\subseteq \liminf_{n\in \bb{N}} \cl A_n^{\sharp}.
\end{equation}
Suppose, on the other hand, that 
$\cl A\subseteq \liminf_{n\in \bb{N}} \cl A_n$. 
Let $(B_{n_k})_{k\in \bb{N}}\subseteq M_d^+$ be a sequence with limit $B$ 
such that $B_{n_k}\in \cl A_{n_k}^{\sharp}$, $k\in \bb{N}$. 
Let $A\in \cl A$, and $(A_n)_{n\in \bb{N}}\subseteq M_d^+$ be a 
sequence, such that $A_n\in \cl A_n$, $n\in \bb{N}$, and $\lim_{n\to\infty} A_n = A$. 
Then 
$$\langle B,A\rangle = \lim_{k\to\infty} \langle B_{n_k},A_{n_k}\rangle \leq 1,$$
and thus $B\in \cl A^{\sharp}$. 
Hence 
\begin{equation}\label{eq_liminf}
\cl A\subseteq \liminf_{n\in \bb{N}} \cl A_n  \ \Longrightarrow \ 
\limsup_{n\in \bb{N}} \cl A_n^{\sharp} \subseteq \cl A^{\sharp}.
\end{equation}
Now suppose that 
$\cl A^{\sharp}\subseteq \liminf_{n\in \bb{N}} \cl A_n^{\sharp}$.
By (\ref{eq_liminf}) and Theorem \ref{2ndABe}, 
$$\limsup_{n\in \bb{N}} \cl A_n = 
\limsup_{n\in \bb{N}} \cl A_n^{\sharp\sharp} \subseteq \cl A^{\sharp\sharp} = \cl A.$$
Similarly, if 
$\limsup_{n\in \bb{N}} \cl A_n^{\sharp} \subseteq \cl A^{\sharp}$ then, 
by (\ref{eq_limsup}) and Theorem \ref{2ndABe}, 
$$
\cl A = \cl A^{\sharp\sharp}\subseteq \liminf_{n\in \bb{N}} \cl A_n^{\sharp\sharp}
= \liminf_{n\in \bb{N}} \cl A_n.$$

(iii) is immediate from (i) and (ii).
\end{proof}

\begin{cor}\label{c_cpa}
\begin{itemize}
\item[(i)]
The parameters $M$, $N$ and $\gamma$ are continuous on bounded sets of convex corners. 

\item[(ii)]
Let $\cl P$ and $\cl P_n$ be non-empty sets of projections in $M_d$, 
$\cl B = {\rm C}(\cl P)$, and $\cl B_n = {\rm C}(\cl P_n)$, $n\in \bb{N}$.
Suppose that $\cl B$ has non-empty relative interior. 
If $\limsup_{n\in \bb{N}}\cl P_n \subseteq \cl P$ (resp. $\cl P \subseteq \liminf_{n\in \bb{N}} \cl P_n$) 
then $\Gamma_{\rm f}(\cl B) \leq \liminf_{n\in \bb{N}} \Gamma_{\rm f}(\cl B)$
(resp. $\limsup_{n\in \bb{N}} \Gamma_{\rm f}(\cl B)\leq \Gamma_{\rm f}(\cl B)$). 
\end{itemize}
\end{cor}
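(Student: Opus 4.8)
The plan is to derive each part of Corollary~\ref{c_cpa} from the continuity statement for anti-blockers in Proposition~\ref{p_ccc} together with the variational identities already proved, in particular $M(\cl A) = \tfrac{1}{N(\cl A)} = \gamma(\cl A^\sharp)$ from Theorem~\ref{NMgamma2}(ii) and $M(\cl B) = \Gamma_{\rm f}(\cl B)$ from Theorem~\ref{NMgamma2}(iii). So the real content is showing that $\gamma$ is (semi)continuous with respect to Kuratowski convergence on a uniformly bounded family of convex corners, and then transporting this through the anti-blocker.

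For part (i), first I would observe that it suffices to prove continuity of $\gamma$, since $M$ and $N$ will then follow: if $\cl A_n \to \cl A$ (Kuratowski) with $\cup_n \cl A_n$ bounded, then by Proposition~\ref{p_ccc}(iii) we have $\cl A_n^\sharp \to \cl A^\sharp$; if moreover the $\cl A_n^\sharp$ stay uniformly bounded we can apply continuity of $\gamma$ to the anti-blockers to get $\gamma(\cl A_n^\sharp) \to \gamma(\cl A^\sharp)$, i.e.\ $M(\cl A_n) \to M(\cl A)$, and then $N = 1/M$ handles $N$ (with the usual $\tfrac1\infty = 0$, $\tfrac10 = \infty$ conventions, so one should note the limiting cases where some $\cl A_n^\sharp$ or $\cl A^\sharp$ degenerates). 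The boundedness of the $\cl A_n^\sharp$ is not automatic, so the cleanest route is probably to phrase (i) as: on a set of convex corners that is bounded \emph{and} bounded away from the ``empty relative interior'' locus (equivalently, whose anti-blockers are also uniformly bounded, cf.\ Proposition~\ref{cc1}), all three parameters are continuous. Then for $\gamma$ itself: upper semicontinuity of $\gamma$ along $\limsup_n \cl A_n \subseteq \cl A$ is immediate, since if $A_{n_k} \in \cl A_{n_k}$ with $\Tr A_{n_k} \to \limsup_n \gamma(\cl A_n)$, a subsequential cluster point $A$ lies in $\cl A$ by definition of $\limsup$, and $\Tr$ is continuous, giving $\limsup_n \gamma(\cl A_n) \le \gamma(\cl A)$; lower semicontinuity along $\cl A \subseteq \liminf_n \cl A_n$ is dual: pick $A \in \cl A$ with $\Tr A = \gamma(\cl A)$ and $A_n \in \cl A_n$ with $A_n \to A$, whence $\gamma(\cl A_n) \ge \Tr A_n \to \gamma(\cl A)$.

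For part (ii), I would argue as follows. Assume $\limsup_n \cl P_n \subseteq \cl P$. Every projection is trace-bounded by $d$, so $\cl B = \rm C(\cl P)$ and all the $\cl B_n = \rm C(\cl P_n)$ are uniformly bounded (their elements have trace at most $d$ by Proposition~\ref{l_gen}(i) and Lemma~\ref{PSD4}). From $\limsup_n \cl P_n \subseteq \cl P$ one gets $\limsup_n \overline{\conv}(\cl P_n) \subseteq \overline{\conv}(\cl P)$, and hence $\limsup_n \her(\overline{\conv}(\cl P_n)) \subseteq \her(\overline{\conv}(\cl P))$, i.e.\ $\limsup_n \cl B_n \subseteq \cl B$, using Proposition~\ref{l_gen}(i); then Proposition~\ref{p_ccc}(i) gives $\cl B^\sharp \subseteq \liminf_n \cl B_n^\sharp$, and lower semicontinuity of $\gamma$ (from part (i), applied now to the anti-blockers, which are uniformly bounded because $\cl B$ has non-empty relative interior forces $rI \in \cl B_n$-type estimates — here one uses that $\cl B$ has non-empty relative interior and, if needed, restricts attention to large $n$) yields $\gamma(\cl B^\sharp) \le \liminf_n \gamma(\cl B_n^\sharp)$, which by Theorem~\ref{NMgamma2}(ii)--(iii) reads $\Gamma_{\rm f}(\cl B) = M(\cl B) \le \liminf_n M(\cl B_n) = \liminf_n \Gamma_{\rm f}(\cl B_n)$. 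The case $\cl P \subseteq \liminf_n \cl P_n$ is entirely dual, using Proposition~\ref{p_ccc}(ii) and upper semicontinuity of $\gamma$.

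The main obstacle I anticipate is bookkeeping the boundedness/non-degeneracy hypotheses needed to pass $\gamma$ through the anti-blocker: continuity of $\gamma$ alone does not survive the anti-blocker operation unless one also controls the anti-blockers' boundedness, so the statement of (i) must implicitly be read on a class of corners closed under $\sharp$ up to uniform bounds (which is exactly what ``bounded sets of convex corners'' should be taken to mean here, via Proposition~\ref{cc1}), and in (ii) the ``non-empty relative interior of $\cl B$'' hypothesis is precisely what rescues the $\cl B_n^\sharp$ from being unbounded for large $n$ — this is the point that needs the most care. Everything else is routine semicontinuity of $\Tr$ against Kuratowski $\liminf$/$\limsup$, plus the already-established algebra of anti-blockers.
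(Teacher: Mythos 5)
There is a genuine gap in your treatment of part (i). Your plan derives the continuity of $M$ and $N$ from that of $\gamma$ by passing to anti-blockers, and you then concede that this only works ``if moreover the $\cl A_n^\sharp$ stay uniformly bounded'', proposing to re-read the statement with an extra non-degeneracy hypothesis. But the corollary is true exactly as stated, and the paper sidesteps the issue by proving the continuity of $N$ directly, which is elementary: if $\cup_n\cl A_n$ is bounded then the numbers $\mu_n=N(\cl A_n)$ are bounded, $\mu_n I\in\cl A_n$, and any cluster point $\mu$ gives $\mu I\in\limsup_n\cl A_n\subseteq\cl A$; conversely, if $\mu I\in\cl A$ and $A_n\in\cl A_n$ with $A_n\to\mu I$, then $\lambda_{\min}(A_n)I\in\cl A_n$ by hereditarity and $\lambda_{\min}(A_n)\to\mu$ by continuity of the spectrum. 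With Theorem \ref{NMgamma2} ($M=1/N$, $\gamma(\cl A)=M(\cl A^{\sharp})$) and Proposition \ref{p_ccc} this yields all three parameters with no assumption on the anti-blockers. Alternatively, your direct semicontinuity arguments for $\gamma$ can themselves be freed of boundedness: if $\gamma(\cl A_{n_k})\geq c$, replace a near-maximizer $A_k$ by $(c/\Tr A_k)A_k\in\cl A_{n_k}$, which has trace exactly $c$ and hence (positivity) lies in a compact set, so a cluster point witnesses $\gamma(\cl A)\geq c$. Either repair is needed; as written, your (i) establishes a weaker statement than the one asserted.

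In part (ii) there are two further problems. First, your justification that the $\cl B_n^{\sharp}$ are uniformly bounded ``because $\cl B$ has non-empty relative interior forces $rI\in\cl B_n$-type estimates'' fails in the case $\limsup_n\cl P_n\subseteq\cl P$: that inclusion gives no lower bound on $N(\cl B_n)$ (each $\cl P_n$ could consist of a single rank-one projection from $\cl P$), so $\cl B_n^{\sharp}$ may be unbounded and $\Gamma_{\rm f}(\cl B_n)=\infty$; the desired inequality still holds, but because the direction you need there is lower semicontinuity of $\gamma$ along $\cl B^{\sharp}\subseteq\liminf_n\cl B_n^{\sharp}$, which requires no boundedness at all --- that is the argument you should give. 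Second, the actual technical content of (ii) is the transfer from Kuratowski convergence of $\cl P_n$ to that of $\cl B_n={\rm C}(\cl P_n)$, which you wave through with ``one gets'' and ``entirely dual''. The limsup direction needs a Carath\'eodory bound on the number of summands to pass from $\limsup_n\cl P_n\subseteq\cl P$ to $\limsup_n\overline{\conv}(\cl P_n)\subseteq\overline{\conv}(\cl P)$, plus a compactness argument for the hereditary cover; and the liminf direction is not dual: to get $\cl B\subseteq\liminf_n\cl B_n$ one approximates $B\in\overline{\conv}(\cl P)$ by $B_n\in\overline{\conv}(\cl P_n)$ and corrects $A\leq B$ to $A_n:=B_n-(B-A)$, which is only eventually positive when $A$ is strictly positive, so one needs the density of strictly positive elements of $\cl B$ via Lemma \ref{l_ri} --- this is precisely where the hypothesis that $\cl B$ has non-empty relative interior enters the paper's proof, not through boundedness of anti-blockers. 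Without these steps your (ii) is a plan rather than a proof.
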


\begin{proof}
(i) By Propositions \ref{NMgamma2} and \ref{p_ccc}, it suffices to show the continuity of $N$. 
Suppose that $\cl A, \cl A_n$, $n\in \bb{N}$, are convex $M_d$-corners such that $\cup_{n\in \bb{N}}\cl A_n$ is bounded
and $\limsup_{n\in \bb{N}} \cl A_n \subseteq \cl A$. 
Let $\mu_n = N(\cl A_n)$; then $\mu_n I\in \cl A_n$, $n\in \bb{N}$. Selecting a convergent subsequence 
$(\mu_{n_k})_{k\in \bb{N}}$ with limit $\mu$, the assumption implies that $\mu I\in \cl A$, and hence 
$N(\cl A)\geq \mu$, showing that $\limsup_{n\in \bb{N}} N(\cl A_n) \leq N(\cl A)$. 

Now suppose that $\cl A\subseteq \liminf_{n\in \bb{N}} \cl A_n$ and let $\mu\in \bb{R}_+$ be such that 
$\mu I \in \cl A$. Let $A_n\in \cl A_n$, $n\in \bb{N}$, be such that $A_n\to_{n\to\infty} \mu I$.
By the continuity of the spectrum, there exist $\mu_n\in \bb{R}_+$ with $\mu_n I \leq A_n$, $n\in \bb{N}$, 
such that $\mu_n\to_{n\to\infty} \mu$. 
It follows that $N(\cl A)\leq \liminf_{n\in \bb{N}} N(\cl A_n)$.

(ii) 
By Theorem \ref{NMgamma2} and the proof of (i), it suffices to show that that 
$$\cl P \subseteq \liminf_{n\in \bb{N}}\cl P_n  \Rightarrow \cl B \subseteq \liminf_{n\in \bb{N}}\cl B_n
\mbox{ and } 
\limsup_{n\in \bb{N}}\cl P_n \subseteq \cl P  \ \Rightarrow \ \limsup_{n\in \bb{N}}\cl B_n \subseteq \cl B.$$
Suppose that $\cl P \subseteq \liminf_{n\in \bb{N}}\cl P_n$. It is clear that 
$$\overline{{\rm conv}}(\cl P) \subseteq \liminf_{n\in \bb{N}}\overline{{\rm conv}}(\cl P_n).$$ 
Suppose that $0 < A\leq B$ for some $B\in \overline{{\rm conv}}(\cl P)$, and let $B_n\in \overline{{\rm conv}}(\cl P_n)$, 
$n\in \bb{N}$, converge to $B$. Then 
$A_n := B_n - (B-A) \to_{n\to \infty} A$, $A_n \leq B_n$ for each $n$ and,
eventually, $A_n \geq 0$. It follows that $A\in \liminf_{n\in \bb{N}}\cl B_n$. 
Since $\cl B$ has non-empty interior, 
Lemma \ref{l_ri} shows that any $A\in \cl B$ is the limit of strictly positive elements of $\cl B$, 
and the first implication is proved. 

Suppose that $\limsup_{n\in \bb{N}}\cl P_n \subseteq \cl P$. 
Using the Carath\'{e}odory Theorem, we can express
every element of ${\rm conv}(\cl P_n)$ as a convex combination of at most 
$2d^2 + 1$ elements of $\cl P_n$. It readily follows that
$\limsup_{n\in \bb{N}}{\rm conv}(\cl P_n) \subseteq \overline{{\rm conv}}(\cl P)$, 
and hence $\limsup_{n\in \bb{N}}\overline{{\rm conv}}(\cl P_n) \subseteq \overline{{\rm conv}}(\cl P)$.
Let $A_k\in \cl B_{n_k}$, $k\in \bb{N}$, converge to $A\in M_d^+$, and $B_k\in \overline{{\rm conv}}(\cl P_{n_k})$, 
with $A_k\leq B_k$, $k\in \bb{N}$. 
Passing to a subsequence if necessary, we can assume that $(B_k)_{k\in \bb{N}}$ converges to 
an element $B$ of $\overline{{\rm conv}}(\cl P)$. Now $A\leq B$ and hence $A\in \cl B$. 
\end{proof}


\section{Non-commutative lifts} \label{sssde}

In this section, we discuss the connection between 
convex $\bb{R}^d$-corners and convex $M_d$-corners. We show that, 
a given convex $\bb{R}^d$-corner has two extremal quantisations and 
establish several results that will be used in the next section.

For an orthonormal basis $V= \{v_1, \ldots, v_d\}$ of $\bb C^d$, 
we let 
$$\cl D_V= \Span\left\{v_iv_i^*: i \in [d]\right\}$$ 
be the algebra of matrices diagonal with respect to $V$.  
We write $\cl D_V^+=\cl D_V \cap M_d^+$, and set 
$$\Delta_V(A) =  \sum_{i=1}^d \ip{Av_i}{v_i}v_iv_i^*, \ \ \  A\in M_d;$$   
thus, $\Delta_V : M_d\to \cl D_V$ is the \emph{diagonal expectation} with respect to $V$.
We write $\Delta$ for the diagonal expectation with respect to the canonical basis $\{e_1, \ldots, e_d\}$.

\begin{definition}\label{d_nclift}
Let $\cl A$ be a diagonal convex corner in $M_d$. 
The convex $M_d$-corner $\cl B$ is called a \emph{non-commutative lift} of $\cl A$ if 
$\Delta(\cl B) = \cl D_d\cap \cl B = \cl A$.
\end{definition}

\begin{remark} \label{diagon} 
{\rm 
Let $V = \{v_1, \ldots , v_d\}$ be an orthonormal basis of $\bb{C}^d$. The following hold:
\begin{itemize}  
\item [(i)] 
If $M,N \in M_d$ then $\Tr ((\Delta_V(M)) N) = \Tr(M \Delta_V (N))$;   
\item [(ii)] 
If $\cl B$ is a convex $M_d$-corner, we have that 
$\cl D_V \cap \cl B = \Delta_V (\cl B)$ if and only if $\Delta_V (\cl B) \subseteq \cl B$, if and only if 
$\Delta_V ( \cl B^\sharp) \subseteq  \cl B^\sharp$;  
\item [(iii)]
If $\cl B$ is a convex $M_d$-corner and $\cl A = \Delta(\cl B)$, then $\gamma(\cl A)= \gamma (\cl B)$. 
\end{itemize} 
}
\end{remark}

\begin{proof}
(i) is straightforward.

(ii) The first equivalence is trivial. Assume $\Delta_V (\cl B) \subseteq \cl B$ and let $B \in \cl B^\sharp$.
Then
$\ip{\Delta_V(B)}{A} = \ip{B}{\Delta_V(A)} \le 1$ for all $A \in \cl B$.  
This shows that $\Delta_V(B) \in \cl B^\sharp$, and hence $\Delta_V ( \cl B^\sharp) \subseteq  \cl B^\sharp$.  
The converse implication now follows from Theorem \ref{2ndABe}.  

(iii)
We have 
$$\gamma( \cl B) 
= 
\max \{ \Tr T : T \in \cl B \} = \max \{ \Tr (\Delta (T)): T \in \cl B\} = \gamma (\cl A).$$
\end{proof}

\begin{lemma} \label{Deltasharp1}  
Let $V$ be an orthonormal basis and $\cl B$ be a non-empty subset of $M_d^+$. 
\begin{itemize}
\item[(i)]
If $\cl D_V \cap \cl B = \Delta_V (\cl B)$ then
\begin{equation} \label{AB3} 
\cl D_V \cap \left(\Delta_V (\cl B)\right)^\sharp = \cl D_V \cap \cl B^\sharp =  \Delta_V(\cl B^\sharp).
\end{equation}  
\item[(ii)]
Suppose that $\cl B$ is a convex corner. Then 
$\cl D_V \cap \cl B = \Delta_V (\cl B)$ if and only if $\cl D_V \cap \cl B^\sharp =  \Delta_V \left(\cl B^\sharp\right)$.
\end{itemize}
\end{lemma}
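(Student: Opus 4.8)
The plan is to prove Lemma \ref{Deltasharp1} in the order stated, deriving part (ii) from part (i) together with the reflexivity theorem. For part (i), I assume the hypothesis $\cl D_V \cap \cl B = \Delta_V(\cl B)$, which by Remark \ref{diagon}(ii) is equivalent to $\Delta_V(\cl B) \subseteq \cl B$. The plan is to prove the chain of equalities in \eqref{AB3} by a sequence of inclusions. First I would note that since $\Delta_V(\cl B) \subseteq \cl B$, Lemma \ref{ccinM_d}(i) gives $\cl B^\sharp \subseteq \Delta_V(\cl B)^\sharp$, hence $\cl D_V \cap \cl B^\sharp \subseteq \cl D_V \cap \Delta_V(\cl B)^\sharp$. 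Conversely, if $T \in \cl D_V \cap \Delta_V(\cl B)^\sharp$ and $B \in \cl B$, then $\ip{T}{B} = \ip{\Delta_V(T)}{B} = \ip{T}{\Delta_V(B)} \le 1$ using Remark \ref{diagon}(i) and $T = \Delta_V(T)$; this shows $T \in \cl B^\sharp$, so $\cl D_V \cap \Delta_V(\cl B)^\sharp \subseteq \cl D_V \cap \cl B^\sharp$. This settles the first equality in \eqref{AB3}.

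Next I would establish $\cl D_V \cap \cl B^\sharp = \Delta_V(\cl B^\sharp)$. The inclusion $\Delta_V(\cl B^\sharp) \subseteq \cl D_V \cap \cl B^\sharp$ follows because $\Delta_V(\cl B^\sharp) \subseteq \cl D_V$ trivially, and $\Delta_V(\cl B^\sharp) \subseteq \cl B^\sharp$ by the argument already given in the proof of Remark \ref{diagon}(ii): for $B \in \cl B^\sharp$ and $A \in \cl B$, $\ip{\Delta_V(B)}{A} = \ip{B}{\Delta_V(A)} \le 1$ since $\Delta_V(A) \in \cl D_V \cap \cl B = \Delta_V(\cl B) \subseteq \cl B$. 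The reverse inclusion $\cl D_V \cap \cl B^\sharp \subseteq \Delta_V(\cl B^\sharp)$ is immediate because any $T \in \cl D_V$ satisfies $T = \Delta_V(T)$, so $T \in \cl B^\sharp$ forces $T = \Delta_V(T) \in \Delta_V(\cl B^\sharp)$. That completes part (i).

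For part (ii), one direction is contained in part (i): if $\cl D_V \cap \cl B = \Delta_V(\cl B)$ then \eqref{AB3} gives $\cl D_V \cap \cl B^\sharp = \Delta_V(\cl B^\sharp)$. For the converse, suppose $\cl B$ is a convex corner with $\cl D_V \cap \cl B^\sharp = \Delta_V(\cl B^\sharp)$. By Lemma \ref{ccinM_d}(iii), $\cl B^\sharp$ is a convex corner, and by Theorem \ref{2ndABe} it is reflexive, so $(\cl B^\sharp)^{\sharp\sharp} = \cl B^\sharp$, i.e. applying part (i) to $\cl B^\sharp$ in place of $\cl B$ yields $\cl D_V \cap \cl B^{\sharp\sharp} = \Delta_V(\cl B^{\sharp\sharp})$; but $\cl B$ is itself a convex corner, hence reflexive by Theorem \ref{2ndABe}, so $\cl B^{\sharp\sharp} = \cl B$ and the conclusion $\cl D_V \cap \cl B = \Delta_V(\cl B)$ follows.

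I expect no serious obstacle here; the only point requiring a little care is making sure the hypothesis of part (i) is used exactly where needed, namely to guarantee $\Delta_V(A) \in \cl B$ whenever $A \in \cl B$ (so that testing against diagonal elements of $\cl B^\sharp$ stays inside $\cl B$), and being careful in part (ii) that the converse is genuinely obtained by applying part (i) to the reflexive corner $\cl B^\sharp$ rather than re-proving everything. All the reflexivity needed is supplied by Theorem \ref{2ndABe}, and the identity $\ip{\Delta_V(M)}{N} = \ip{M}{\Delta_V(N)}$ is Remark \ref{diagon}(i).
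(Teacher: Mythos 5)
Your proof is correct and follows essentially the same route as the paper: both equalities in (i) are obtained via the duality identity $\ip{\Delta_V(M)}{N}=\ip{M}{\Delta_V(N)}$ together with the hypothesis that $\Delta_V$ maps $\cl B$ into $\cl B$, and (ii) is deduced by applying (i) to $\cl B^\sharp$ and invoking reflexivity (Theorem \ref{2ndABe}). The only cosmetic difference is your appeal to Remark \ref{diagon}(ii) (stated for convex corners) for a general subset $\cl B$, but you only use the trivial direction of that equivalence, so nothing is affected.
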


\begin{proof} 
(i) 
Write $\cl A = \Delta_V (\cl B)$, and suppose that $\cl D_V \cap \cl B = \cl A$. 
Then $\cl A \subseteq \cl B$; thus, $ \cl B^\sharp \subseteq  \cl A^\sharp$ and so 
$\cl D_V \cap  \cl B^\sharp \subseteq \cl D_V \cap \cl A^\sharp$.   
Let $T \in \cl D_V \cap \cl A^\sharp$ and $N \in  \cl B$. 
Using Remark \ref{diagon}, we have 
$$\Tr (TN) = \Tr \left((\Delta_V (T))N\right) = \Tr \left(T\Delta_V (N)\right) \le 1,$$ 
and so $T \in \cl D_V \cap \cl B^\sharp$.  Thus  $\cl D_V \cap \cl A^\sharp \subseteq  \cl D_V \cap  \cl B^\sharp,$ 
and  the first equality in \eqref{AB3} is proved.

Let $R \in  \cl B^\sharp$, $M = \Delta_V (R)$ and $Q \in  \cl B$.  
By assumption, $\Delta_V (Q) \in  \cl B$ and hence
$$\Tr \left(MQ\right) = \Tr \left((\Delta_V (R))Q\right) = \Tr \left(R \Delta_V(Q)\right) \le 1.$$ 
Thus, $M \in  \cl B^\sharp$ and so $\Delta_V (\cl B^\sharp) \subseteq   \cl B^\sharp$; 
\eqref{AB3} now follows from Remark \ref{diagon}.

(ii)
Suppose that $\cl B$ is a convex corner such that 
$\cl D_V \cap \cl B^\sharp =  \Delta_V(\cl B^\sharp)$. 
By (i), 
$\cl D_V \cap \cl B^{\sharp \sharp} =  \Delta_V ( \cl B^{\sharp \sharp})$; 
now Theorem \ref{2ndABe} implies $\cl D_V \cap \cl B= \Delta_V (\cl B)$. 
\end{proof}

\begin{prop} \label{Deltasharp}
Let $\cl A$ be a diagonal convex corner, and $\cl B$ be a convex corner, in $M_d$. 
The following are equivalent:
\begin{itemize}
\item[(i)] 
$\cl B$ is a non-commutative lift of $\cl A$;
\item[(ii)]  
$\cl B^{\sharp}$ is a non-commutative lift of $\cl A^{\flat}$. 
\end{itemize}
\end{prop}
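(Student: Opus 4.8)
The plan is to prove the equivalence of (i) and (ii) by unwinding the two defining conditions in Definition \ref{d_nclift} and translating each between $\cl B$ and $\cl B^{\sharp}$ using the tools already in place, namely Lemma \ref{Deltasharp1}, Remark \ref{diagon}, Remark \ref{dcc1}, and the reflexivity theorem (Theorem \ref{2ndABe}). Recall that $\cl B$ is a non-commutative lift of $\cl A$ means $\Delta(\cl B) = \cl D_d \cap \cl B = \cl A$; this is two assertions: first, that $\Delta(\cl B) = \cl D_d \cap \cl B$ (equivalently, by Remark \ref{diagon}(ii), that $\Delta(\cl B) \subseteq \cl B$), and second, that this common diagonal set equals $\cl A$.

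First I would handle the implication (i)$\Rightarrow$(ii). Assume $\cl B$ is a non-commutative lift of $\cl A$. By Lemma \ref{Deltasharp1}(i) applied with $V$ the canonical basis, the hypothesis $\cl D_d \cap \cl B = \Delta(\cl B)$ gives
$$\cl D_d \cap \cl B^{\sharp} = \Delta(\cl B^{\sharp}) = \cl D_d \cap \big(\Delta(\cl B)\big)^{\sharp} = \cl D_d \cap \cl A^{\sharp}.$$
Now I identify $\cl D_d \cap \cl A^{\sharp}$ with $\cl A^{\flat}$: since $\cl A$ is a diagonal convex corner, $\cl A = \phi(\phi^{-1}(\cl A))$ and by Definition \ref{dcc2} (and the discussion around Remark \ref{dcc1}) $\cl A^{\flat} = \cl D_d \cap \cl A^{\sharp} = \phi(\phi^{-1}(\cl A)^{\flat})$, which is itself a diagonal convex corner. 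Thus $\Delta(\cl B^{\sharp}) = \cl D_d \cap \cl B^{\sharp} = \cl A^{\flat}$, which is precisely the statement that $\cl B^{\sharp}$ is a non-commutative lift of $\cl A^{\flat}$.

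For the converse (ii)$\Rightarrow$(i), I would run the same argument in reverse, now using Lemma \ref{Deltasharp1}(ii). Assume $\cl B^{\sharp}$ is a non-commutative lift of $\cl A^{\flat}$, so in particular $\cl D_d \cap \cl B^{\sharp} = \Delta(\cl B^{\sharp})$; since $\cl B$ is a convex corner, Lemma \ref{Deltasharp1}(ii) yields $\cl D_d \cap \cl B = \Delta(\cl B)$. It remains to identify this diagonal set as $\cl A$. From $\cl D_d \cap \cl B^{\sharp} = \cl A^{\flat}$ and $\cl D_d \cap \cl B = \Delta(\cl B)$, apply Lemma \ref{Deltasharp1}(i) to $\cl B$ to get $\cl D_d \cap \cl B^{\sharp} = \cl D_d \cap (\Delta(\cl B))^{\sharp}$, i.e. $\Delta(\cl B)^{\flat} = \cl A^{\flat}$ as diagonal convex corners (here I use $\cl D_d \cap (\Delta(\cl B))^{\sharp} = (\Delta(\cl B))^{\flat}$); then the second anti-blocker theorem for diagonal convex corners (Remark \ref{dcc1}, i.e. $\cl C^{\flat\flat} = \cl C$) gives $\Delta(\cl B) = \Delta(\cl B)^{\flat\flat} = (\cl A^{\flat})^{\flat} = \cl A$. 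Hence $\Delta(\cl B) = \cl D_d \cap \cl B = \cl A$, as required.

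The only genuinely delicate point is the bookkeeping of the three flavours of anti-blocker — $\cdot^{\sharp}$ on subsets of $M_d^+$, $\cdot^{\flat}$ on diagonal convex corners, and $\cdot^{\flat}$ on convex $\bb{R}^d$-corners — and making sure each invocation of Lemma \ref{Deltasharp1} is under its correct hypothesis (part (i) needs only $\cl D_V \cap \cl B = \Delta_V(\cl B)$, while part (ii) needs $\cl B$ to be a convex corner, which holds here). Everything else is a direct chain of equalities; no genuinely new estimate is needed, the reflexivity theorem having already done the heavy lifting.
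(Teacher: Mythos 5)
Your proof is correct and follows essentially the paper's argument: both directions rest on Lemma \ref{Deltasharp1} combined with the reflexivity results, exactly as in the paper. The only cosmetic difference is in (ii)$\Rightarrow$(i), where the paper simply applies the forward implication to $\cl B^{\sharp}$ and concludes via $\cl B^{\sharp\sharp}=\cl B$ (Theorem \ref{2ndABe}), while you route through Lemma \ref{Deltasharp1}(ii) and the diagonal identity $\cl A^{\flat\flat}=\cl A$ — an equivalent bookkeeping, since that part of the lemma is itself proved using Theorem \ref{2ndABe}.
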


\begin{proof}
(i)$\Rightarrow$(ii) 
By Lemma \ref{Deltasharp1}, 
$\cl D_d \cap \cl B^\sharp =  \Delta(\cl B^\sharp)$, while the equality 
$\cl A^\flat = \cl D_d \cap  \cl B^\sharp$ is immediate from the definitions of the anti-blockers.

(ii)$\Rightarrow$(i) 
By the previous paragraph,
$\cl A^{\flat \flat} = \cl D_d\cap \cl B^{\sharp\sharp} = \Delta(\cl B^{\sharp\sharp})$. 
The claim now follows from Theorem \ref{2ndABe}. 
\end{proof}

\begin{lemma} \label{flatsharp=} 
Let $\cl A$ be a non-zero diagonal convex corner in $M_d$. Then
\begin{itemize} 
\item[(i)]
$(\cl A^\flat)^\sharp = \left\{M \in M_d^+ : \Delta (M) \in \cl A\right\};$

\item[(ii)]
If $\cl A$ is bounded
then 
$$\her(\cl A) \subseteq (\cl A^\flat)^\sharp=(\her (\cl A^\flat))^\sharp$$  
and, if $d > 1$, the inclusion is proper.
\end{itemize}
\end{lemma}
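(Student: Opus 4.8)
For part (i), the key observation is that $\cl A^\flat = \cl D_d \cap \cl A^\sharp$ by definition of the diagonal anti-blocker, and $\cl A$ is a diagonal convex corner satisfying $\cl D_d \cap \cl A = \Delta(\cl A) = \cl A$, so Lemma \ref{Deltasharp1}(i) applies with $V$ the canonical basis and gives $\cl D_d \cap (\cl A^\flat)^\sharp = \Delta\big((\cl A^\flat)^\sharp\big)$; equivalently, $\Delta$ maps $(\cl A^\flat)^\sharp$ into itself. Now I would compute directly: for $M \in M_d^+$, we have $M \in (\cl A^\flat)^\sharp$ iff $\langle M, T\rangle \le 1$ for all $T \in \cl A^\flat = \cl D_d \cap \cl A^\sharp$. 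Since each such $T$ is diagonal, $\langle M, T\rangle = \langle \Delta(M), T\rangle$, so the condition reads: $\langle \Delta(M), T\rangle \le 1$ for all $T \in \cl A^\flat$, i.e. $\Delta(M) \in (\cl A^\flat)^\flat$ (the diagonal anti-blocker of $\cl A^\flat$, computed inside $\cl D_d$). By Remark \ref{dcc1}, $(\cl A^\flat)^{\flat\flat} = \cl A^\flat$ applied once more — more precisely $\cl B^{\flat\flat} = \cl B$ for diagonal convex corners, so $(\cl A^\flat)^\flat = \cl A$. Hence $M \in (\cl A^\flat)^\sharp$ iff $\Delta(M) \in \cl A$, which is (i).

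For part (ii), the equality $(\cl A^\flat)^\sharp = (\her(\cl A^\flat))^\sharp$ is immediate from Proposition \ref{l_gen}(ii), since $\her$ does not change the anti-blocker (and $\cl A^\flat$ is bounded because $\cl A$ has non-empty relative interior — here one uses that $\cl A$ is non-zero, hence contains some $\beta I$ with $\beta > 0$ by Lemma \ref{l_ri} and boundedness is not even needed, but boundedness of $\cl A^\flat$ follows since $\cl A \ne \{0\}$ forces $\cl A^\flat$ to be bounded... actually we need $\cl A$ to have non-empty interior, which may fail; I should instead just invoke Proposition \ref{l_gen}(ii) directly, which holds for any non-empty subset). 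The inclusion $\her(\cl A) \subseteq (\cl A^\flat)^\sharp$ follows from (i): if $M \le N$ with $N \in \cl A$, then $\Delta(M) \le \Delta(N) = N \in \cl A$ (using that $N$ is diagonal and $\cl A$ hereditary), so $M \in (\cl A^\flat)^\sharp$ by (i).

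Finally, for properness when $d > 1$: since $\cl A$ is non-zero, by Lemma \ref{l_ri} (or hereditarity) there is some $\beta > 0$ with $\beta e_i e_i^* \in \cl A$ for the index $i$ realizing $\gamma$, hence $\beta e_1 e_1^* \in \cl A$ say; then $\beta e_2 e_2^* $ need not be in $\cl A$, so I should argue more carefully. The cleanest route: pick $i$ with $\beta := N'(\cl A) > 0$ where I take the largest scalar so that $\beta e_i e_i^* \in \cl A$ (positive since $\cl A \ne \{0\}$, using hereditarity from any nonzero element), and set $u = \frac{1}{\sqrt 2}(e_i + e_j)$ for $j \ne i$; consider $M = c\, uu^*$ for suitable $c > 0$. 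Then $\Delta(M) = \frac{c}{2}(e_i e_i^* + e_j e_j^*)$, and choosing $c$ small enough this lies in $\cl A$, so $M \in (\cl A^\flat)^\sharp$ by (i); but $M = c\, uu^*$ is rank one with range not spanned by any $e_k$, so $M \notin \her(\cl A)$ because every element of $\her(\cl A) \subseteq \her(\text{diagonal set})$ is dominated by a diagonal matrix, forcing its off-diagonal entries to respect... hmm, that last step is the subtle one — a non-diagonal $M$ can still be $\le$ a diagonal $N$. The correct obstruction is: if $M \le N$ with $N$ diagonal, then comparing entries gives constraints, but the real point is that $\her(\cl A)$ is bounded (as $\cl A$ is), whereas $(\cl A^\flat)^\sharp$ contains $c\, uu^*$ for arbitrarily large $c$ whenever... no. I expect the actual argument to be: take the specific $\cl A = \her(\phi(\cl C_d)) \cap \cl D_d$-type example, or more robustly, exhibit a rank-one $c\, uu^*$ with $u$ not a coordinate vector that is forced into $(\cl A^\flat)^\sharp$ by (i) but whose presence in $\her(\cl A)$ would require $c\, uu^* \le N$ for some diagonal $N \in \cl A$ — and then note that $c\, uu^* \le N = \operatorname{diag}(n_1,\dots,n_d)$ with $u = \frac{1}{\sqrt2}(e_i+e_j)$ forces $n_i, n_j \ge c/2$ and the $2\times 2$ principal minor condition $\frac{c}{2} \cdot \frac c2 \le (n_i - \frac c2)(n_j - \frac c2)$... which is satisfiable. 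So properness must instead be shown via a clever choice where $\cl A$ is "small" in a direction: e.g. take $\cl A$ with $N(\cl A)$ small but some $\beta e_i e_i^*$ large.

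\textbf{Expected main obstacle.} The routine parts (the equalities in (i) and the two displayed equalities in (ii)) are bookkeeping with $\Delta$ and Proposition \ref{l_gen}. The genuine difficulty is the properness claim for $d>1$: one must produce a positive matrix $M$ with $\Delta(M) \in \cl A$ but $M \notin \her(\cl A)$, and since $\her$ of a diagonal corner can still contain many non-diagonal matrices, the example has to exploit the anisotropy of $\cl A$ — I anticipate the author chooses $\cl A$ (or rather argues for general non-zero $\cl A$) by taking $u$ built from the two coordinate directions where $\cl A$ is "most unequal", so that a diagonal dominator of a large multiple of $uu^*$ would have to lie outside $\cl A$ by hereditarity. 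That balancing of eigenvalues versus diagonal entries of $uu^*$ (the $2\times2$ minor inequality) is where the real work sits.
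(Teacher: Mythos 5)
Your part (i), the equality $(\cl A^\flat)^\sharp=(\her(\cl A^\flat))^\sharp$, and the inclusion $\her(\cl A)\subseteq(\cl A^\flat)^\sharp$ are all correct and essentially the paper's own argument: the self-adjointness of $\Delta$ plus $\cl A^{\flat\flat}=\cl A$ (Remark \ref{dcc1}) gives (i), Proposition \ref{l_gen} gives the equality, and your derivation of the inclusion directly from (i) is a harmless variant of the paper's route through Corollary \ref{2315}.

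The properness claim for $d>1$, however, is not proved, and this is precisely the part carrying the content of the lemma. You correctly diagnose that your primal attempt fails: a rank-one $c\,uu^*$ with $u=\tfrac{1}{\sqrt2}(e_i+e_j)$ and $\Delta(c\,uu^*)\in\cl A$ can indeed be dominated by a diagonal matrix in $\cl A$ (your $2\times2$ minor computation), so membership in $(\cl A^\flat)^\sharp\setminus\her(\cl A)$ cannot be forced this way; but the sketch then stops at the guess that one should exploit ``anisotropy'' of $\cl A$, which is not the mechanism the paper uses and which you never turn into an argument. The paper instead dualizes: invoking Corollary \ref{reflexiveiff} (hence the second anti-blocker theorem) together with $\her(\cl A)^\sharp=\cl A^\sharp$ from Proposition \ref{l_gen}, it reduces properness to exhibiting some $M\in\cl A^\sharp$ with $M\notin\her(\cl A^\flat)$. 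Since $\cl A$ is bounded and non-zero, Proposition \ref{cc1} and Lemma \ref{l_ri} give $0<\mu:=N(\cl A^\sharp)=N(\cl A^\flat)<\infty$, and the witness is $M=\mu J$, where $J$ is the all-ones matrix: because every $A\in\cl A$ is diagonal, $\Tr(MA)=\Tr(\Delta(M)A)$, so $\Delta(\mu J)=\mu I\in\cl A^\sharp$ forces $\mu J\in\cl A^\sharp$. On the other hand, if $\mu J\le N$ for some diagonal $N\in\cl A^\flat$, then all diagonal entries of $N$ are at least $\mu$, and Lemma \ref{PSD2} applied to $N-\mu J$ rules out equality in any entry (the off-diagonal entries $-\mu\neq0$ would have to vanish, using $d>1$); hence all diagonal entries are at least $\mu+\epsilon$ for some $\epsilon>0$, so $(\mu+\epsilon)I\in\cl A^\flat$ by hereditarity, contradicting the maximality of $\mu=N(\cl A^\flat)$. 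So the missing idea is not a clever two-coordinate choice inside $\cl A$ but the passage to the anti-blocker side, where the maximal multiple of the identity provides a rigidity that the all-ones matrix violates; note also that this is where the boundedness of $\cl A$ is actually used (to guarantee $N(\cl A^\sharp)>0$), a hypothesis your sketch never exploits.
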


\begin{proof} 
(i) 
Let $A \in \cl A^\flat$ and $M \in M_d^+$ with $\Delta (M) \in \cl A$.  
Then 
$$\ip{A}{M}= \ip{\Delta (A)}{M}= \ip{A}{\Delta (M)} \le 1$$ 
and so $M \in (\cl A^\flat)^\sharp$.
On the other hand, suppose that $M \in (\cl A^\flat)^\sharp$. 
If $A\in \cl A^{\flat}$ then 
$$\ip{\Delta(M)}{A} = \ip{M}{\Delta(A)} = \ip{M}{A}\leq 1;$$
thus, $\Delta(M)\in \cl A^{\flat\flat}$. By Remark \ref{dcc1}, 
$\Delta (M) \in \cl A$.

(ii)
By Proposition \ref{l_gen}, $(\cl A^\flat)^\sharp = (\her(\cl A^\flat))^\sharp$. 
Clearly, $\cl A^\flat \subseteq \cl A^\sharp$;
Corollary \ref{2315} and Lemma \ref{ccinM_d} imply
$$\her(\cl A)  = \cl A^{\sharp \sharp} \subseteq  (\cl A^\flat)^\sharp.$$
We show that if $d > 1$ then $\her(\cl A) \subsetneq \her (\cl A^\flat)^\sharp$.  
By Corollary \ref{reflexiveiff}, it suffices to show that 
$\her(\cl A)^\sharp \supsetneq \her (\cl A^\flat)$. 
By Proposition \ref{l_gen}, $\her(\cl A)^\sharp = \cl A^\sharp$ so, to prove 
the latter inequality, we seek $M \in \cl A^\sharp$ such that $M \notin \her (\cl A^\flat)$.  
By assumption, $\cl A \ne \{0\}$; thus,
$\cl A^\sharp \ne M_d^+$ and so $N(\cl A ^\sharp) \ne \infty.$  
Since $\cl A$ is bounded,  by Proposition \ref{cc1}, $\cl A^\sharp$ has non-empty relative interior and, 
by Lemma \ref{l_ri}, $N( \cl A^\sharp) > 0$.  
Set $\mu = N(\cl A^\sharp)$. 
Since $\cl A^\flat = \cl D_d \cap \cl A^\sharp$, we have that 
\begin{equation}\label{maxmu}
\mu = N(\cl A^\flat).
\end{equation}
If $M \in M_d$ and $A \in \cl A$ then, by Remark \ref{diagon}, 
$\Tr (MA) = \Tr (M\Delta (A))= \Tr (\Delta (M)A)$.  
Thus, if $M \ge 0$ and $\Delta (M) \in \cl A^\sharp$ then $M \in \cl A^\sharp.$  
It follows that, if $J$ is the matrix in $M_d$ with all entries equal to one, then 
$\mu J \in \cl A^\sharp$.   
We show that $\mu J \notin \her(\cl A^\flat)$. 
By way of contradiction, suppose that
$$\mu J \le N = \begin{pmatrix} \mu_1 & 0 & \ldots & 0 \\ 0 & \mu_2 & \ldots & 0 \\ \vdots & \vdots & \ddots & \vdots \\ 0 & 0 & \ldots & \mu_d  \end{pmatrix} \in \cl A^\flat.$$  
Let $Q = (q_{i,j})_{i,j} = N - \mu J$; then $\mu_i \ge \mu$, $i \in [d]$.  
But if $\mu_i = \mu$, then $q_{i,i}=0$ and Lemma \ref{PSD2} implies that 
$-\mu  = q_{i,j} = q_{j,i} = 0$ for all $j \ne i$, contradicting the fact that $\mu > 0$.   
Thus there exists $\epsilon > 0$ such that $\mu_i \ge \mu + \epsilon$, $i \in [d]$.  
Then $(\mu + \epsilon)I \le N$; by hereditarity, $(\mu + \epsilon)I \in \cl A^\flat$ contradicting  \eqref{maxmu}.  
\end{proof}

We can now prove the main result of this section. 
It provides a characterisation of the non-commutative lifts of a given diagonal convex corner, 
showing that there are two extreme such lifts which, in the case where $d > 1$, do not  coincide.

\begin{theorem} \label{diagdelta} 
Let $\cl A$ be a diagonal convex corner,
$\cl B_1= \her(\cl A)$ and $\cl B_2 = (\cl A^\flat)^\sharp$. Then $\cl B_1$ and $\cl B_2$ are 
convex $M_d$-corners. Moreover,  
the following are equivalent for a convex $M_d$-corner $\cl B$: 
\begin{itemize}
\item[(i)] $\cl B$ is a non-commutative lift of $\cl A$;
\item[(ii)] $\cl B_1 \subseteq \cl B \subseteq \cl B_2$.  
\end{itemize}
\end{theorem}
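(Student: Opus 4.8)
The plan is to dispose of the two structural assertions first and then prove the equivalence, whose whole content is an unwinding of the concrete description of $\cl B_2$ furnished by Lemma \ref{flatsharp=}. For the structural part: since $\cl A$ is a diagonal convex corner, Corollary \ref{555555} gives $\cl B_1 = \her(\cl A) = {\rm C}(\cl A)$, which is a convex $M_d$-corner by construction; and $\cl B_2 = (\cl A^\flat)^\sharp$ is the $M_d$-anti-blocker of the non-empty set $\cl A^\flat = \cl D_d \cap \cl A^\sharp \subseteq M_d^+$, hence a (reflexive) convex $M_d$-corner by Lemma \ref{ccinM_d}(iii). If $\cl A = \{0\}$ one checks directly that $\cl A^\flat = \cl D_d^+$, so $\cl B_1 = \cl B_2 = \{0\}$, and a convex corner $\cl B$ is a lift of $\{0\}$ iff $\Delta(\cl B) = \{0\}$ iff $\cl B = \{0\}$; the equivalence is then trivial. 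So I would assume $\cl A \neq \{0\}$ and quote Lemma \ref{flatsharp=}(i), which states
$$\cl B_2 = (\cl A^\flat)^\sharp = \left\{M \in M_d^+ : \Delta(M) \in \cl A\right\}.$$
I would also recall Remark \ref{diagon}(ii): for a convex corner $\cl B$, the equality $\Delta(\cl B) = \cl D_d \cap \cl B$ holds as soon as $\Delta(\cl B) \subseteq \cl B$, so that being a non-commutative lift of $\cl A$ (Definition \ref{d_nclift}) amounts to $\Delta(\cl B) = \cl D_d \cap \cl B = \cl A$.

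For (i)$\Rightarrow$(ii): if $\cl B$ is a lift, then $\cl A = \cl D_d \cap \cl B \subseteq \cl B$, and hereditarity of $\cl B$ gives $\cl B_1 = \her(\cl A) \subseteq \her(\cl B) = \cl B$; while for any $M \in \cl B$ one has $\Delta(M) \in \Delta(\cl B) = \cl A$, so $M$ lies in the set displayed above, i.e. $\cl B \subseteq \cl B_2$. For (ii)$\Rightarrow$(i): if $\cl B_1 \subseteq \cl B \subseteq \cl B_2$, then $\cl A \subseteq \her(\cl A) = \cl B_1 \subseteq \cl B$, and since $\cl A \subseteq \cl D_d$ this yields $\cl A = \Delta(\cl A) \subseteq \Delta(\cl B)$ and $\cl A \subseteq \cl D_d \cap \cl B$; conversely every $M \in \cl B \subseteq \cl B_2$ has $\Delta(M) \in \cl A$, so $\Delta(\cl B) \subseteq \cl A$, and if moreover $M \in \cl D_d$ then $M = \Delta(M) \in \cl A$, so $\cl D_d \cap \cl B \subseteq \cl A$. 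Hence $\Delta(\cl B) = \cl D_d \cap \cl B = \cl A$, i.e. $\cl B$ is a non-commutative lift of $\cl A$.

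I do not anticipate a genuine obstacle: the real work — in particular the identification of $(\cl A^\flat)^\sharp$ — is already packaged into Lemma \ref{flatsharp=} (which itself rests on the second anti-blocker theorem, Theorem \ref{2ndABe}), and the present argument is pure definition-chasing. The one point requiring a little care is the degenerate bookkeeping: isolating $\cl A = \{0\}$ at the outset, and checking that no boundedness hypothesis is needed, which is why I rely only on part (i) of Lemma \ref{flatsharp=} — valid for every non-zero diagonal convex corner, bounded or not — rather than on its part (ii).
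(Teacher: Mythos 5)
Your proof is correct, but it takes a different route from the paper's. The paper does not use Lemma \ref{flatsharp=} here at all: it first shows directly that $\cl B_1$ is a lift of $\cl A$ (via $0\le\Delta(T)\le\Delta(N)=N$ and hereditarity), then deduces that $\cl B_2$ is a lift by duality — computing $\cl B_2^\sharp=\her(\cl A^\flat)$ from Proposition \ref{l_gen} and reflexivity, and invoking Proposition \ref{Deltasharp}; the sandwich then gives (ii)$\Rightarrow$(i), and for (i)$\Rightarrow$(ii) it again uses Proposition \ref{Deltasharp} ($\cl A^\flat=\cl D_d\cap\cl B^\sharp$), hereditarity of $\cl B^\sharp$, and $\cl B=\cl B^{\sharp\sharp}$ from Theorem \ref{2ndABe}. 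You instead put all the work into the concrete description $\cl B_2=\{M\in M_d^+:\Delta(M)\in\cl A\}$ from Lemma \ref{flatsharp=}(i), after which both implications are definition-chasing; this is arguably cleaner, and in fact more elementary than you claim: the proof of Lemma \ref{flatsharp=}(i) rests only on the \emph{classical} second anti-blocker theorem (Remark \ref{dcc1}/Theorem \ref{th_sabt}), not on Theorem \ref{2ndABe} as you assert, so your argument for the equivalence bypasses the non-commutative reflexivity theorem entirely. What the paper's route buys in exchange is the extra structural information produced along the way, namely $\cl B_2^\sharp=\her(\cl A^\flat)$ and the fact that $\cl B_1,\cl B_2$ are exchanged (as lifts of $\cl A^\flat$) under the anti-blocker, which is what Proposition \ref{Deltasharp} packages and what later applications (e.g.\ Corollary \ref{apvpgraph}) exploit. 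Your separate treatment of $\cl A=\{0\}$ is careful and correct, though note the paper avoids the case split because its argument never needs the explicit formula for $(\cl A^\flat)^\sharp$.
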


\begin{proof} 
By Corollary \ref{555555} and Lemma \ref{ccinM_d}, $\cl B_1$ and $\cl B_2$ are convex corners. 

(ii)$\Rightarrow$(i) 
Trivially, $\cl A \subseteq \cl D_d \cap \cl B_1 \subseteq \Delta (\cl B_1)$.  
Let $T \in \cl B_1$ and $N \in \cl A$ be such that $0 \le T \le N$.  
Then
$0 \le \Delta (T) \le \Delta  (N) = N$.   
It follows that $\Delta (T) \in \cl A$ by the hereditarity of $\cl A$.    
Thus, 
\begin{equation}\label{eq_B1}
\cl A = \Delta (\cl B_1) = \cl D_d \cap \cl B_1. 
\end{equation}

Using reflexivity and Proposition \ref{l_gen}, we have
$\cl B_2^\sharp = \her(\cl A^\flat)$.
By the previous paragraph, 
$\cl A^\flat = \Delta(\cl B_2^\sharp)= \cl D_d \cap \cl B_2^\sharp$  
and, by Proposition \ref{Deltasharp}, 
\begin{equation}\label{eq_B2}
\cl A = \Delta (\cl B_2) = \cl D_d \cap \cl B_2. 
\end{equation} 
Equations (\ref{eq_B1}) and (\ref{eq_B2}) imply that any convex $M_d$-corner $\cl B$ with 
$\cl B_1 \subseteq \cl B \subseteq \cl B_2$ is a non-commutative lift of $\cl A$.

(i)$\Rightarrow$(ii) 
Suppose that $\cl B$ is a non-commutative lift of $\cl A$. 
By the hereditarity of $\cl B$, we have $\cl B_1 \subseteq \cl B$.  
By Proposition \ref{Deltasharp}, $\cl A^\flat = \cl D_d \cap \cl B^\sharp.$  Thus 
$\cl A^\flat \subseteq \cl B^\sharp$ and so $\her( \cl A^\flat) \subseteq \cl B^\sharp$ by the hereditarity of $\cl B^\sharp$.  
By Theorem \ref{2ndABe}, $\cl B= \cl B^{\sharp \sharp} \subseteq \cl B_2$, as required.
\end{proof}

\begin{remark}\label{r_AnkBk}
{\rm 
The condition $\Delta_V (\cl A) \subseteq \cl A$, or equivalently, $\Delta_V (\cl A) = \cl D_V \cap \cl A$, 
has appeared multiple times. 
We give some examples of convex corners for which it is satisfied.

\smallskip

(i) 
Let $N \in \cl D_V^+$ and $\lambda > 0$. 
Then $\Delta_V (\cl A_{N,\lambda} )\subseteq \cl A_{N,\lambda}$.  
Indeed, 
if $A \in \cl A_{N,\lambda}$ then, by Remark \ref{diagon}, 
$\ip{\Delta_V(A)}{N} = \ip{A}{\Delta_V(N)}= \ip{A}{N} \le \lambda$.  
The positivity of $N \in \cl D_V$ cannot be omitted; 
for example, if $N = \begin{pmatrix} 1 &-1 \\ -1 &1 \end{pmatrix}$ 
then $J = \begin{pmatrix} 1 & 1 \\ 1 &1 \end{pmatrix} \in \cl A_{N,1}$ but $\Delta(J) = I \notin \cl A_{N,1}.$   

(ii) 
Let $M \in \cl D_V^+$. Then $\Delta_V (\cl B_M )\subseteq \cl B_M$. 
As in (i), the condition $M \in \cl D_V^+$ is essential; for example, 
$J \in \cl B_M$, but $\Delta (J) = I \notin \cl B_M$. 
}
\end{remark}


\section{Entropy with respect to a convex corner} \label{sssmde}

In this section, we define the entropy of a quantum state with respect to a convex $M_d$-corner. 
Our motivation stems from the classical case, and parallels with it are drawn as we go along. 
We obtain non-commutative versions of several fundamental results about the entropy of a probability distribution 
with respect to a convex $\bb{R}^d$-corner \cite{Csis, Marton, Simonyi1}. 
Applications of those will be made in the subsequent sections.


\subsection{Background}

We let $$\cl R_d = \{\rho \in M_d^+: \Tr \rho = 1\}$$
be the (closed convex) set of all states in $M_d$, and recall that 
$\cl P_d$ stands for all probability distributions on $[d]$. Note that, up to a canonical 
identification, $\cl P_d = \cl R_d\cap \cl D_d$. 
If $A = \sum_{i=1}^d \lambda_i u_iu_i^*\in M_d^{++}$, 
where $\{u_1, \ldots, u_d\}$ is an orthonormal basis of $\bb C^d$ (and $\lambda_i > 0$, $i\in [d]$), 
the logarithm $\log A$ of $A$ is given by 
$$\log A = \sum_{i=1}^d (\log \lambda_i )u_iu_i^*;$$
it is clear that $\log A\in M_d^h$. 

Let $\rho, A\in M_d^+$, and write 
$A = \sum_{i=1}^d \lambda_i u_iu_i^*$, where $\{u_1, \ldots, u_d\}$ is an orthonormal basis of $\bb C^d$ and 
$\lambda_i\geq 0$, $i\in [d]$. Set
$$\Tr(\rho \log A) = 
\begin{cases}
\sum_{i=1}^d \ip{\rho u_i}{u_i} \log \lambda_i & \text{if } \ker(A) \subseteq \ker(\rho)\\
-\infty & \text{otherwise}
\end{cases}$$
(we recall the conventions $0 \log 0=0$ and $\log 0= -\infty$). 
The quantity $H(\rho) := - \Tr(\rho \log \rho)$  is the 
\emph{von Neumann entropy} of an element $\rho \in M_d^+$. 
Given $\rho, \sigma \in M_d^+$, the 
\emph{relative quantum entropy} of $\rho$ with respect to $\sigma$ is the quantity

$$D(\rho \| \sigma) = 
\begin{cases}
\Tr(\rho \log \rho) - \Tr(\rho \log \sigma) & \text{if } \ker (\sigma) \subseteq \ker (\rho)\\
+\infty & \text{otherwise.}
\end{cases} $$
We recall some basic properties of $D(\rho \| \sigma)$ that can be found as 
\cite[Theorem 11.9.2]{Wilde}, \cite[p.250]{Wehrl},  \cite[Theorem 7]{Ruskai2},
\cite[p.251]{Wehrl} and \cite{aud}.

\begin{lemma} \label{vn+}  
\begin{itemize}
\item[(i)] 
If $\rho, \sigma \in \cl R_d$ then
$D(\rho \| \sigma) \ge 0$ and equality holds if and only if $\rho = \sigma$; 

\item[(ii)] 
If $\rho = \sum_{k=1}^m \lambda_k \rho^{(k)} \in M_d^+$ and 
$\sigma = \sum_{k=1}^m \lambda_k \sigma^{(k)} \in M_d^+$, 
where $\lambda_k > 0$ and $\sum_{k=1}^m \lambda_k = 1$,
satisfy 
$\ker (\sigma^{(k)}) \subseteq \ker (\rho^{(k)})$, then 
$$D\left(\rho \| \sigma\right) \le \sum_{k=1}^m \lambda_k D\left(\rho^{(k)} \| \sigma^{(k)}\right).$$  
If $\rho^{(k)}, \sigma^{(k)} \in M_d^{++}$, $k \in [m]$, 
equality holds if and only if $ \log \rho - \log \sigma = \log \rho^{(k)} - \log \sigma^{(k)}$ for all $k\in [m]$;


\item[(iii)] 
For a fixed $\rho \in \cl R_d$, the function $\sigma \to D(\rho \| \sigma)$, from 
 $M_d^+$ to the extended real line, is convex and lower semi-continuous.
\end{itemize}
\end{lemma}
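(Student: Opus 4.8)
\emph{Overview and proof of (i).} These are the standard positivity, joint–convexity and semicontinuity properties of the Umegaki relative entropy; I would prove (i) by reducing to a classical inequality, obtain the $\sigma$–convexity of (iii) from operator concavity of the logarithm, and deduce (ii) from Lieb's concavity theorem. For (i), fix spectral decompositions $\rho=\sum_i p_i u_iu_i^*$ and $\sigma=\sum_j q_j v_jv_j^*$ with $\{u_i\}$, $\{v_j\}$ orthonormal bases. If $\ker(\sigma)\not\subseteq\ker(\rho)$ the inequality is trivial, so assume $\ker(\sigma)\subseteq\ker(\rho)$ and set $r_{i,j}=|\langle u_i,v_j\rangle|^2$; the matrix $(r_{i,j})$ is doubly stochastic, and whenever $q_j=0$ one has $v_j\in\ker(\sigma)\subseteq\ker(\rho)$, so $p_ir_{i,j}=0$. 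Hence all terms below are finite and
\[
D(\rho\|\sigma)=\sum_i p_i\log p_i-\sum_{i,j}p_ir_{i,j}\log q_j=\sum_i p_i\Bigl(\log p_i-\sum_j r_{i,j}\log q_j\Bigr).
\]
Concavity of $\log$ and Jensen's inequality give $\sum_j r_{i,j}\log q_j\le\log\tilde q_i$ with $\tilde q_i:=\sum_j r_{i,j}q_j$, and double stochasticity gives $\sum_i\tilde q_i=1$, so $D(\rho\|\sigma)\ge\sum_i p_i\log(p_i/\tilde q_i)\ge0$, the last step being the classical Gibbs inequality (itself a consequence of $\log t\le t-1$). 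For equality: $\rho=\sigma$ clearly gives $0$, and conversely equality forces $\tilde q=p$ in the Gibbs step and, by strict concavity of $\log$, equality in each Jensen step (so $q_j$ is constant on $\{j:r_{i,j}>0\}$ for every $i$ with $p_i>0$); unwinding this — or, alternatively, invoking the strict-convexity equality case of Klein's inequality $\Tr\bigl(f(\rho)-f(\sigma)-(\rho-\sigma)f'(\sigma)\bigr)\ge0$ with $f(t)=t\log t$ — yields $\rho=\sigma$.

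\emph{Proof of (iii).} Convexity in $\sigma$ is elementary: since $t\mapsto\log t$ is operator concave, $\log\bigl((1-\lambda)\sigma_1+\lambda\sigma_2\bigr)\ge(1-\lambda)\log\sigma_1+\lambda\log\sigma_2$ for $\lambda\in[0,1]$ (restricting to the support of $\rho$, or first replacing each $\sigma_i$ by $\sigma_i+\varepsilon I$ and letting $\varepsilon\to0$ to handle kernels). Pairing against $\rho\ge0$ and using that $X\ge Y$ implies $\Tr(\rho X)\ge\Tr(\rho Y)$, we get that $\sigma\mapsto-\Tr(\rho\log\sigma)$ is convex, and adding the constant $\Tr(\rho\log\rho)$ makes $D(\rho\|\cdot)$ convex. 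For lower semicontinuity I would use $D(\rho\|\sigma)=\sup_{\alpha\in(0,1)}\frac{1}{\alpha-1}\log\Tr(\rho^\alpha\sigma^{1-\alpha})$, valid because the Rényi divergences are nondecreasing in $\alpha$ and converge to $D(\rho\|\sigma)$ as $\alpha\uparrow1$; each $\sigma\mapsto\Tr(\rho^\alpha\sigma^{1-\alpha})$ is continuous on $M_d^+$ (continuity of $t\mapsto t^{1-\alpha}$) and $s\mapsto\frac{1}{\alpha-1}\log s$ is a continuous map $[0,\infty)\to(-\infty,+\infty]$, so each member of the supremum is lower semicontinuous, hence so is $D(\rho\|\cdot)$.

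\emph{Proof of (ii).} The inequality is exactly the joint convexity of the relative entropy, which I would deduce from Lieb's concavity theorem: for each $p\in(0,1)$ the map $(\rho,\sigma)\mapsto\Tr(\rho^p\sigma^{1-p})$ is jointly concave on $M_d^+\times M_d^+$, so $(\rho,\sigma)\mapsto\frac{\Tr\rho-\Tr(\rho^p\sigma^{1-p})}{1-p}$ is jointly convex; as $p\uparrow1$ this converges pointwise on $M_d^+\times M_d^+$ (with $+\infty$ allowed) to $D(\rho\|\sigma)$, and a pointwise limit of convex functions is convex. Applying joint convexity to $(\rho,\sigma)=\sum_k\lambda_k(\rho^{(k)},\sigma^{(k)})$ — all the pairs in play lying, by the kernel hypothesis, in the region where $D$ is finite — gives $D(\rho\|\sigma)\le\sum_k\lambda_k D(\rho^{(k)}\|\sigma^{(k)})$. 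For the equality case (all $\rho^{(k)},\sigma^{(k)}$ strictly positive) I would extract the condition $\log\rho-\log\sigma=\log\rho^{(k)}-\log\sigma^{(k)}$ for all $k$ from the equality case in Lieb's theorem, equivalently from the characterisation of equality in the data-processing inequality for the channel carrying the block-diagonal matrices $\bigoplus_k\lambda_k\rho^{(k)}$, $\bigoplus_k\lambda_k\sigma^{(k)}$ to $\rho$, $\sigma$.

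\emph{Main obstacle.} The only genuinely non-elementary ingredient is Lieb's concavity theorem (equivalently, the joint convexity of relative entropy, or strong subadditivity), on which part (ii) rests; its equality analysis is the second delicate point. As indicated in the text, all three items can also simply be quoted from the cited references.
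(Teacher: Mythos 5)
Your argument is correct, but note that the paper does not actually prove Lemma \ref{vn+}: it records these as known facts and cites the literature (Wilde's Theorem 11.9.2, Wehrl, Ruskai's Theorem 7 for the equality condition in (ii), Audenaert--Eisert), precisely because parts (ii) and (iii) rest on non-elementary inputs. What you do differently is to supply derivations: (i) via the doubly stochastic overlap matrix, Jensen and the Gibbs inequality --- here prefer the Klein-inequality route to the equality case, since ``unwinding'' the Jensen equalities takes extra work; (iii) via operator concavity of $\log$ for convexity, and the representation $D(\rho\|\sigma)=\sup_{\alpha\in(0,1)}D_\alpha(\rho\|\sigma)$ for lower semicontinuity, which does cover unnormalised $\sigma\in M_d^+$ as the lemma requires, after writing $\sigma=(\Tr\sigma)\tau$ with $\tau$ a state, the shift $-\log\Tr\sigma$ being independent of $\alpha$; (ii) via Lieb's concavity theorem, which is valid on $M_d^+\times M_d^+$ without normalisation, and the difference quotients $\bigl(\Tr\rho-\Tr(\rho^p\sigma^{1-p})\bigr)/(1-p)$ indeed tend to $+\infty$ when $\ker\sigma\not\subseteq\ker\rho$, so the pointwise-limit argument gives joint convexity everywhere; the equality case then follows from the Petz--Ruskai characterisation of equality in data processing applied to the block-diagonal matrices $\rho_{XB}=\bigoplus_k\lambda_k\rho^{(k)}$ and $\sigma_{XB}=\bigoplus_k\lambda_k\sigma^{(k)}$, since $\log\rho_{XB}-\log\sigma_{XB}=\bigoplus_k\bigl(\log\rho^{(k)}-\log\sigma^{(k)}\bigr)$ and the equality criterion for the partial trace is exactly the displayed identity. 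The trade-off: your route is self-contained up to two deep quoted inputs (Lieb's theorem and the equality case of data processing, the latter being essentially Ruskai's Theorem 7, one of the paper's citations), whereas the paper outsources everything; either is acceptable, but a write-up should state explicitly that those two inputs are quoted rather than proved.
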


We next state a form of the well-known von Neumann minimax theorem that will be needed in the sequel.
A proof of this version of the theorem can be obtained along the lines of \cite{Pollard}, 
and can be found in \cite{bore-thesis}.

\begin{theorem} \label{Sion} 
Let $K$ be a convex, compact subset of a normed vector space $X$, and let $C$ be a convex subset of vector space $Y$.  
Let $f: K \times C \To \bb R \cup \{ \infty \}$ be a function, satisfing the conditions
\begin{enumerate} 
\item [(i)]  $x \To f(x,y)$  is convex and lower semi-continuous for each $y \in C,$ and  
\item [(ii)] $y \To f(x,y)$ is concave for each $x \in K.$ 
\end{enumerate}
Then 
$$\inf_{x \in K}  \sup_{y \in C}  f(x,y) = \sup_{y \in C}  \inf_{x \in K}  f(x,y).$$
\end{theorem}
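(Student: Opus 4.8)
The plan is to prove the non-trivial inequality $\inf_{x\in K}\sup_{y\in C} f(x,y) \le \sup_{y\in C}\inf_{x\in K} f(x,y)$; the reverse inequality is automatic, since for every $x_0\in K$ and $y_0\in C$ we have $\inf_{x} f(x,y_0)\le f(x_0,y_0)\le\sup_{y} f(x_0,y)$, and taking first the supremum over $y_0$ and then the infimum over $x_0$ gives $\sup_y\inf_x f\le\inf_x\sup_y f$. Set $v = \sup_{y\in C}\inf_{x\in K} f(x,y)$. If $v=\infty$ there is nothing to prove, so assume $v<\infty$; note that for each fixed $y$ the function $x\mapsto f(x,y)$ is lower semi-continuous on the compact set $K$, hence attains its infimum, which is therefore either a real number or $+\infty$, so $v\in(-\infty,\infty]$, and under our assumption $v\in\bb R$.

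The core is a reduction to finite subsets of $C$ together with a finite minimax fact. Fix a finite set $F=\{y_1,\dots,y_n\}\subseteq C$, put $f_i=f(\cdot,y_i)$, and let $w=\sup_{p\in\Delta_n}\inf_{x\in K}\sum_i p_i f_i(x)$, where $\Delta_n$ is the probability simplex in $\bb R^n$. First, $w\le v$: for $p\in\Delta_n$ the point $y_p=\sum_i p_i y_i$ lies in $C$ by convexity, and concavity of $f(x,\cdot)$ gives $\sum_i p_i f_i(x)\le f(x,y_p)$ for all $x$, so $\inf_x\sum_i p_i f_i(x)\le\inf_x f(x,y_p)\le v$; take the supremum over $p$. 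Second, $\min_{x\in K}\max_i f_i(x)\le w$. Consider $S=\{t\in\bb R^n:\exists\, x\in K,\ t_i\ge f_i(x)\ \forall i\}=g(K)+\bb R^n_+$, where $g=(f_1,\dots,f_n)$; convexity of $K$ and of the $f_i$ makes $S$ convex, while lower semi-continuity of the $f_i$ together with compactness of $K$ makes $S$ closed. If $w\mathbbm 1\notin S$, strict separation in $\bb R^n$ yields $q\ne 0$ with $\langle q,w\mathbbm 1\rangle<\inf_{s\in S}\langle q,s\rangle$; since $S+\bb R^n_+=S$ this forces $q\ge 0$, and after rescaling to $q\in\Delta_n$ we get $w=\langle q,w\mathbbm 1\rangle<\inf_x\langle q,g(x)\rangle\le w$, a contradiction. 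Hence $w\mathbbm 1\in S$, i.e. there is $x^*\in K$ with $\max_i f_i(x^*)\le w$, so $\min_x\max_i f_i(x)\le w\le v$.

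To conclude, for each finite $F\subseteq C$ define $K_F=\bigcap_{y\in F}\{x\in K:f(x,y)\le v\}$. Each $K_F$ is closed, being an intersection of sublevel sets of lower semi-continuous functions, and it is non-empty: by the previous paragraph, and since $x\mapsto\max_{y\in F}f(x,y)$ is lower semi-continuous on the compact $K$ and hence attains its infimum, there is $x\in K$ with $\max_{y\in F}f(x,y)\le v$. The family $\{K_F:F\subseteq C\text{ finite}\}$ is closed under finite intersections, since $K_{F_1}\cap\cdots\cap K_{F_m}\supseteq K_{F_1\cup\cdots\cup F_m}$, so it has the finite intersection property. Compactness of $K$ then gives $x^*\in\bigcap_F K_F$, and this $x^*$ satisfies $f(x^*,y)\le v$ for every $y\in C$, whence $\inf_x\sup_y f(x,y)\le\sup_y f(x^*,y)\le v=\sup_y\inf_x f(x,y)$. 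Combined with the trivial reverse inequality, this proves the theorem.

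The step I expect to require the most care is the finite minimax fact in the second paragraph: one must genuinely use lower semi-continuity and compactness to know that $S=g(K)+\bb R^n_+$ is closed (so that the separation theorem applies), and one should verify that the possibility of $f$ taking the value $+\infty$ causes no trouble — this is precisely where the reduction $v<\infty$ is used, since it forces $w<\infty$ and keeps every quantity appearing in the separation argument finite. The remaining ingredients — the Weierstrass principle for lower semi-continuous functions on compact sets, and the finite-intersection-property formulation of compactness — are routine.
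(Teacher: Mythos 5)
Your overall architecture --- the trivial inequality, the reduction to finite subfamilies of $C$, a finite minimax fact obtained by separating $w\mathbbm 1$ from the upper set $S=g(K)+\mathbb{R}^n_+$, and then the finite-intersection-property argument on the compact sublevel sets $K_F$ --- is the standard route; the paper itself gives no in-text proof (it defers to Pollard's notes and the thesis), and those follow the same scheme. The trivial inequality, the convexity and closedness of $S$, the bound $w\le v$, and the FIP conclusion are all fine.

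There is, however, a genuine gap in the separation step, and it sits exactly at the point you wave away at the end. The hypothesis allows $f$ to take the value $+\infty$, and $v<\infty$ (hence $w<\infty$) does not keep ``every quantity in the separation argument finite'': it only guarantees that \emph{some} $x$ has all $f_i(x)<\infty$ (which is what makes $S\ne\emptyset$), not that all do. Note that your proof of $w\le v$ forces the convention $0\cdot\infty=0$ in $\sum_i p_i f_i(x)$, since concavity only yields $f(x,y_p)\ge\sum_{i:p_i>0}p_i f_i(x)$. With that convention, the inference from $\langle q,w\mathbbm 1\rangle<\inf_{s\in S}\langle q,s\rangle$ to $w<\inf_{x\in K}\langle q,g(x)\rangle$ is unjustified whenever the separating vector $q$ has zero coordinates: a point $x_0$ with $f_j(x_0)=+\infty$ only at indices $j$ with $q_j=0$ contributes no point of $S$ at all, yet $\langle q,g(x_0)\rangle=\sum_{i:q_i>0}q_if_i(x_0)$ is finite and may be $\le w$, so the separation inequality says nothing about it and the chain $w<\inf_x\langle q,g(x)\rangle\le w$ collapses. (Adopting the opposite convention $0\cdot\infty=+\infty$ rescues this step but then breaks both $\inf_x\langle q,g(x)\rangle\le w$ and your proof of $w\le v$.) The repair is standard but needs an actual argument: each $f_i$ is lower semi-continuous on the compact $K$, hence bounded below by some $-M$; replace $q$ by $q_\lambda=(1-\lambda)q+\lambda\left(\tfrac1n,\dots,\tfrac1n\right)$ (equivalently, mix a small positive weight of every $y_i$ into the test point of $C$). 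Since all coordinates of $q_\lambda$ are positive, any $x$ with some $f_i(x)=+\infty$ gives $\sum_i(q_\lambda)_if_i(x)=+\infty$, while for $x$ with $g(x)$ finite the separation gives $\sum_i(q_\lambda)_if_i(x)\ge(1-\lambda)\alpha-\lambda M$, where $\alpha=\inf_{s\in S}\langle q,s\rangle>w$; hence $w\ge(1-\lambda)\alpha-\lambda M$ for every $\lambda\in(0,1)$, and letting $\lambda\to0$ yields $w\ge\alpha>w$, the desired contradiction. With this insertion your proof is complete.
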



\subsection{Quantisation of entropy}

We use the notation of  
\eqref{ccAX} and \eqref{ccAX3} to write  the  $M_d$-unit corner as  
$\cl A_{I_d} = \{ T \in M_d^+: \Tr T \le 1 \}$, and the $M_d$-unit cube as 
$\cl B_{I_d} = \{ T \in M_d^+: T \le I \}$.

\begin{lemma} \label{minA2} 
Let $\rho \in \cl R_d$ and $\cl A$ be a bounded convex $M_d$-corner.
The function $f : \cl A\rightarrow \bb R \cup \{+\infty\}$, given by $f(A)= -\Tr (\rho \log A)$, 
attains a minimum value $f(A_0)$ for some $A_0 \in \cl A$.  
If $\rho > 0$ and $\cl A$ has non-empty relative interior then $A_0$ is unique 
and $f(A_0) < + \infty$. 
\end{lemma}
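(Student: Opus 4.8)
The plan is to prove existence of a minimiser by a compactness argument and uniqueness by strict convexity. First I would restrict attention to where $f$ is finite. Writing $A = \sum_i \lambda_i u_i u_i^*$, the value $f(A) = -\Tr(\rho\log A)$ is finite exactly when $\ker(A)\subseteq\ker(\rho)$; note that $f(A)\in(-\infty,+\infty]$ always, since the ``$-\infty$'' possibility in the definition of $\Tr(\rho\log A)$ does not arise here (it would require $\ker(A)\not\subseteq\ker(\rho)$, giving $\Tr(\rho\log A)=-\infty$, hence $f(A)=+\infty$). If $f\equiv +\infty$ on $\cl A$, the claim is vacuous, so assume some $A_1\in\cl A$ has $f(A_1)<+\infty$.

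The key step is lower semicontinuity of $f$ on $\cl A$, after which compactness of $\cl A$ (it is closed and bounded in $M_d$, hence compact) finishes the existence part: a lower semicontinuous function on a compact set attains its minimum. To get lower semicontinuity I would express $f$ in terms of the relative entropy: for $A\in\cl A$ with $\Tr A>0$, one has $f(A) = -\Tr(\rho\log A) = -H(\rho) + D(\rho\,\|\,A)$ when $A$ is a state, and in general, using $\log(\Tr A \cdot \frac{A}{\Tr A}) = (\log\Tr A)I + \log\frac{A}{\Tr A}$ on $\ran(A)$, one gets
$$f(A) = -H(\rho) + D\!\left(\rho\,\Big\|\,\tfrac{A}{\Tr A}\right) - \log\Tr A.$$
By Lemma \ref{vn+}(iii), $\sigma\mapsto D(\rho\,\|\,\sigma)$ is lower semicontinuous on $M_d^+$, and $A\mapsto A/\Tr A$ is continuous where $\Tr A>0$, while $-\log\Tr A$ is continuous there; so $f$ is lower semicontinuous on $\{A\in\cl A:\Tr A>0\}$. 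Near a point $A$ with $\Tr A=0$, i.e.\ $A=0$, we have $f(A)=+\infty$ (since $\ker A = \bb C^d \not\subseteq\ker\rho$ as $\rho\neq 0$), and any sequence $A_n\to 0$ has $\Tr A_n\to 0$, forcing $D(\rho\,\|\,A_n/\Tr A_n)\ge 0$ and $-\log\Tr A_n\to+\infty$, so $\liminf f(A_n)=+\infty = f(0)$; thus $f$ is lower semicontinuous on all of $\cl A$. This yields $A_0\in\cl A$ with $f(A_0)=\min_{\cl A} f \le f(A_1) < +\infty$.

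For the final clause, assume $\rho>0$ and $\cl A$ has non-empty relative interior. By Lemma \ref{l_ri} there is $r>0$ with $rI\in\cl A$, and $f(rI) = -\Tr(\rho\log(rI)) = -\log r < +\infty$, so $f(A_0)<+\infty$. For uniqueness I would use strict convexity of $f$ on the convex set $\{A\in\cl A : f(A)<+\infty\}$. Since $\rho>0$, finiteness of $f(A)$ forces $\ker A=\{0\}$, i.e.\ $A\in M_d^{++}$, and on $M_d^{++}$ the map $A\mapsto -\Tr(\rho\log A)$ is the composition of the operator-concave function $\log$ (so $A\mapsto \Tr(\rho\log A)$ is concave, as $\rho>0$) with negation; strict concavity of $\log$ on $(0,\infty)$ together with $\rho>0$ upgrades this to strict convexity of $f$, so the minimiser is unique. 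Alternatively, one can invoke the equality case of Lemma \ref{vn+}(ii): if $A_0,A_0'$ were two minimisers, then $\frac12(A_0+A_0')\in\cl A$ with $f$ at least as small, and the equality condition there forces $\log\rho-\log A_0 = \log\rho - \log A_0'$ on the relevant pieces, hence $A_0=A_0'$.

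\textbf{Main obstacle.} The delicate point is handling the boundary behaviour of $f$ cleanly — establishing lower semicontinuity across the locus where $A$ becomes singular (in particular at $A=0$ for a general bounded $\cl A$, and along $\partial M_d^{++}$ when $\rho>0$) — which is exactly where the $+\infty$ values sit; the relative-entropy reformulation plus Lemma \ref{vn+}(iii) is designed to absorb this, and the $-\log\Tr A$ term conveniently dominates near $A=0$.
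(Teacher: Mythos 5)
Your proof is correct and takes essentially the same route as the paper: lower semicontinuity of $f$ via the relative-entropy identity and Lemma \ref{vn+}(iii) together with compactness of $\cl A$ for existence, Lemma \ref{l_ri} for finiteness of the minimum, and (in your ``alternative'' route) the equality case of Lemma \ref{vn+}(ii) for uniqueness, which is exactly the paper's argument. Two small remarks: the identity should read $f(A)=D(\rho\|A)+H(\rho)$ (your sign on $H(\rho)$ is flipped, though this constant is harmless, and since Lemma \ref{vn+}(iii) is stated on all of $M_d^+$ the normalisation by $\Tr A$ and the separate treatment of $A=0$ are unnecessary), and your primary uniqueness argument via ``strict operator concavity of $\log$'' needs more than scalar strict concavity (the operator equality case requires justification), so you should lean on the Lemma \ref{vn+}(ii) equality condition, observing that $\rho>0$ and finiteness of $f$ place both minimisers in $M_d^{++}$ as that lemma requires.
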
  

\begin{proof} 
Let  
$$\cl A^0(\rho) = \left\{A \in \cl A:  \ker(A)\subseteq \ker(\rho)\right\}.$$ 
By Lemma \ref{vn+}, $f$ is lower semi-continuous, and since $\cl A$ is compact, it attains a 
minimum. 
Suppose that $\rho > 0$ and $\cl A$ has non-empty relative interior. 
Then $\cl A^0 (\rho) = \cl A \cap M_d^{++}$; by Lemma \ref{l_ri}, $\cl A^0 (\rho) \ne \emptyset$. 
Since $A_0\in \cl A^0 (\rho)$, we have that $f(A_0) < + \infty$. 

Assume, towards a contradiction, that there exist distinct $A_0, B_0 \in \cl A^0 (\rho)$ 
satisfying $f(A_0) = f(B_0) = \min_{A \in  \cl A} f(A)$.  
Since 
$$f(A) = D( \rho \| A) - \Tr (\rho \log \rho), \ \ \ A\in \cl A,$$
Lemma \ref{vn+} (ii) implies
$$f \left(\frac{A_0 +B_0}{2}\right) < \frac{1}{2} f(A_0) + \frac{1}{2} f(B_0) = \min_{A \in  \cl A} f(A).$$
Since $\cl A$ is convex, $\frac{A_0+B_0}{2} \in \cl A$, yielding a contradiction.  
It follows that the minimum is achieved for a unique $A_0 \in \cl A$.  
\end{proof}

\begin{definition} \label{minAr} 
Let $\cl A$ be a bounded convex $M_d$-corner and  $\rho \in \cl R_d$ be a state.  
The parameter
$$H_{ \cl A}(\rho) = \min_{A \in \cl A} -\Tr \rho  \log A$$  
is called the \emph{entropy} of $\rho$ over $\cl A$.
\end{definition}

Let $\cl A$ be a convex $\bb{R}^d$-corner and $p\in \cl P_d$. The entropy of $p$ with respect to $\cl A$
was introduced in \cite{Csis} as the quantity
$$H_{ \cl A}(p) = \min\left\{\sum_{i=1}^d p_i \log\frac{1}{v_i} : v\in \cl A, v > 0\right\}.$$  
Thus, the parameter 
$H_{ \cl A}(\rho)$, introduced in Definition \ref{minAr}, can be viewed as a non-commutative version of $H_{ \cl A}(p)$.
This viewpoint will be made more rigorous in Theorem \ref{noncomm15} below.

\begin{remark}\label{HAmin}
(i) It is clear that, if 
$\cl A$ and $\cl B$ are convex $M_d$-corners and 
$\cl A \subseteq \cl B$ then $H_{\cl A}(\rho) \ge H_{\cl B}(\rho)$, $\rho \in \cl R_d$.  

\smallskip

\noindent (ii)
Let $\rho\in \cl R_d$ and $\cl A$ be a convex $M_d$-corner. 
We have that $H_{\cl A}(\rho) = +\infty$ if and only if $\cl A^0(\rho) = \emptyset$.

\smallskip

\noindent (iii)
If $\cl A$ has empty relative interior
then, by Lemma \ref{l_ri}, $\cl A$ has no strictly positive element, 
and there exists $\rho \in \cl R_d$, for example the maximally mixed state $\frac{1}{d}I$, 
such that $\cl A^0(\rho) = \emptyset$. 
In this case, $H_{\cl A}(\rho) = +\infty$.  
On the other hand, if $\cl A$ has non-empty relative interior
then $\cl A^0(\rho) \ne \emptyset$ and, by (ii), 
$H_{\cl A}(\rho)$ is finite for every $\rho \in \cl R_d$.

\smallskip

\noindent (iv) 
If $\cl A$ is a standard convex corner then 
\begin{equation}\label{eq_for++}
H_{ \cl A}(\rho) = \inf_{A \in \cl A^{++}} -\Tr \rho  \log A.
\end{equation}
Indeed, by (iii), $H_{ \cl A}(\rho)$ is finite and hence 
there exists a minimiser $A$ for $H_{ \cl A}(\rho)$ in $\cl A^0(\rho)$. 
Setting $A_n = \left(1-\frac{1}{n}\right)A + \frac{1}{n} I$, we have that 
$A_n\in \cl A^{++}$, $n\in \bb{N}$, and 
$$\lim_{n\to\infty} {\rm Tr}(\rho \log(A_n)) = {\rm Tr}(\rho \log(A)),$$
implying (\ref{eq_for++}). 

\smallskip

\noindent (v) 
Fix $\rho\in \cl R_d$.
It is not difficult to see that the minimising element of 
$\cl A_{I_d}$ in the definition of $H_{\cl A_{I_d}}(\rho)$ has unit trace.
Thus, 
$$H_{\cl A_{I_d}}(\rho) = \min_{\sigma \in \cl R_d} - \Tr (\rho \log \sigma)$$
and hence it coincides with the von Neumann entropy 
$H(\rho)$ of $\rho$ (see e.g. \cite{opetz}). 

\smallskip

\noindent (vi) 
Since the elements of $\cl B_{I_d}$ have eigenvalues in the interval $[0,1]$, 
we have  $-\Tr (\rho \log A) \ge 0$ for all $A \in \cl B_{I_d}$.  
Thus, 
$H_{\cl B_{I_d}}(\rho) = \Tr(\rho \log I) = 0$.

\smallskip

\noindent (vii) 
By (i), (iv) and (v), 
\begin{equation} \label{genent} 
0 \le H_{\cl A}(\rho) \le H(\rho) \mbox{ whenever } \cl A_{I_d} \subseteq \cl A \subseteq \cl B_{I_d}. 
\end{equation}
There exist convex $M_d$-corners $\cl B$ and $\cl C$ 
satisfying $H_{\cl B}(\rho) < 0$ and $H_{\cl C}(\rho) > H(\rho)$ for all $\rho \in \cl R_d$. 
For an example, let $\lambda > 1$ and $\cl B = \lambda\cl B_{I_d}$; 
then 
$$H_{\cl B}(\rho) = H_{\cl B_{I_d}}(\rho) - \log \lambda = -\log \lambda < 0.$$
Similarly, if $\cl C = \frac{1}{\lambda} \cl A_{I_d}$ then 
$$H_{\cl C}(\rho) = H_{\cl A_{I_d}}(\rho) + \log \lambda = H(\rho) + \log \lambda > H(\rho).$$
\end{remark}

For the next theorem, note that, if $\cl A$ is a standard convex corner then $N(\cl A) > 0$ and hence, 
by Theorem \ref{NMgamma2}, the logarithms 
in its statement are well-defined. 

\begin{theorem} \label{noncomm13} 
Let $\cl A$ be a standard convex $M_d$-corner. Then
$$\max_{\rho \in \cl R_d}  H_{\cl A} (\rho) = -\log N(\cl A) = \log M(\cl A) = \log \gamma (\cl A^\sharp).$$
\end{theorem}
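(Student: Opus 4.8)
The plan is to prove the three equalities in turn, noting first that the last two, $\log M(\cl A) = \log\gamma(\cl A^{\sharp})$ and $-\log N(\cl A)=\log M(\cl A)$, are immediate from Theorem \ref{NMgamma2}(ii) once we know $M(\cl A)=1/N(\cl A)=\gamma(\cl A^{\sharp})$; so the real content is the identity $\max_{\rho\in\cl R_d} H_{\cl A}(\rho) = -\log N(\cl A)$, and since $\cl A$ is standard, $N(\cl A)>0$ by Lemma \ref{l_ri}, so all the logarithms make sense and (by Remark \ref{HAmin}(iii)) $H_{\cl A}(\rho)$ is finite for every $\rho$. Because $\cl A$ is compact, $H_{\cl A}(\rho)=\min_{A\in\cl A}-\Tr(\rho\log A)$ is indeed attained (Lemma \ref{minA2}), and I will rewrite this using Remark \ref{HAmin}(iv) as an infimum over $A\in\cl A^{++}$, which makes $-\Tr(\rho\log A)$ jointly nice.

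First I would establish the easy inequality $\max_\rho H_{\cl A}(\rho)\le -\log N(\cl A)$. Put $\mu=N(\cl A)$, so $\mu I\in\cl A$. Then for any $\rho\in\cl R_d$,
$$H_{\cl A}(\rho) \le -\Tr(\rho\log(\mu I)) = -\log\mu\cdot\Tr\rho = -\log\mu = -\log N(\cl A),$$
since $\log(\mu I)=(\log\mu)I$. This already gives one direction with no minimax needed.

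For the reverse inequality I would use the von Neumann minimax theorem (Theorem \ref{Sion}) applied to the function $f(\rho,A) = -\Tr(\rho\log A)$ on $K\times C$ where $K=\cl R_d$ (convex, compact) and $C=\cl A^{++}$ (convex). For fixed $A\in\cl A^{++}$, the map $\rho\mapsto -\Tr(\rho\log A) = \Tr(\rho\cdot(-\log A))$ is affine, hence convex and (being a trace against a fixed Hermitian matrix) continuous, so condition (i) holds; for fixed $\rho$, $A\mapsto -\Tr(\rho\log A)$ is convex — not concave — which is the wrong direction, so I expect the main obstacle here: one must instead apply the minimax theorem in a form suited to a convex–convex (inf–sup with a sign flip) situation, or equivalently recognize that we want $\sup_\rho\inf_A f = \inf_A\sup_\rho f$ where both slots are convex, which is \emph{not} von Neumann minimax. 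The correct route is: $\sup_{\rho\in\cl R_d}-\Tr(\rho\log A) = -\lambda_{\min}(\log A) = -\log\lambda_{\min}(A)$ for $A\in\cl A^{++}$ (the supremum of a linear functional over the state space is the largest eigenvalue of $-\log A$), so
$$\max_{\rho}H_{\cl A}(\rho) \;\ge\; \text{(we want)} \; = \inf_{A\in\cl A^{++}}\sup_{\rho}(-\Tr\rho\log A)\ \text{only after a minimax swap};$$
so I would instead argue directly: for the lower bound, it suffices to exhibit a single state $\rho$ with $H_{\cl A}(\rho)\ge -\log N(\cl A)$, i.e. $-\Tr(\rho\log A)\ge -\log N(\cl A)$ for all $A\in\cl A^{++}$, equivalently $\Tr(\rho\log A)\le\log N(\cl A)$ for all such $A$. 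By concavity of $\log$ and $\Tr(\rho\log A)\le\log\Tr(\rho A)$ (operator Jensen), it would be enough to find $\rho\in\cl R_d$ with $\Tr(\rho A)\le N(\cl A)$ for all $A\in\cl A$; but $\frac{1}{N(\cl A)}\rho\in\cl A^{\sharp}$ would then say $\rho$ realizes $\gamma(\cl A^{\sharp})=1/N(\cl A)$ up to scaling. Concretely: since $\gamma(\cl A^{\sharp})=1/N(\cl A)$ by Theorem \ref{NMgamma2}, pick $B_0\in\cl A^{\sharp}$ with $\Tr B_0 = 1/N(\cl A)$; then $\rho_0 := N(\cl A)\,B_0\in\cl R_d$, and $\Tr(\rho_0 A) = N(\cl A)\Tr(B_0 A)\le N(\cl A)$ for all $A\in\cl A$. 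Hence for $A\in\cl A^{++}$,
$$-\Tr(\rho_0\log A) \ge -\log\Tr(\rho_0 A) \ge -\log N(\cl A),$$
so $H_{\cl A}(\rho_0) = \inf_{A\in\cl A^{++}}(-\Tr\rho_0\log A)\ge -\log N(\cl A)$, completing the proof. I would close by remarking that the minimax theorem \ref{Sion} was stated for later use and is not in fact needed here; the operator Jensen inequality $\Tr(\rho\log A)\le\log\Tr(\rho A)$ (valid since $\rho$ is a state and $\log$ is operator concave / by the standard scalar Jensen applied to the spectral measure of $A$ weighted by $\rho$) is the one nontrivial analytic input, and verifying it cleanly — especially pinning down the equality conditions if one wants them — is the step that needs the most care.
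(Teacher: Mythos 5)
Your proof is correct, but it takes a genuinely different route from the paper. The paper proves $\max_{\rho}H_{\cl A}(\rho)=-\log N(\cl A)$ by applying the minimax theorem (Theorem \ref{Sion}) to $g(\rho,A)=-\Tr(\rho\log A)$ with $K=\cl A$ (compact, convex; $g$ is convex and lower semi-continuous in $A$ by Lemma \ref{vn+}) and $C=\cl R_d$ ($g$ is linear, hence concave, in $\rho$), then computes $\inf_{A\in\cl A^{++}}\sup_\rho g(\rho,A)=\inf_{A\in\cl A^{++}}(-\log\lambda_{\min}(A))=-\log N(\cl A)$. Your aside that von Neumann minimax is ``not'' applicable because both slots are convex is therefore a misreading: affine in $\rho$ means concave in $\rho$, and the hypotheses of Theorem \ref{Sion} are met exactly as the paper uses them. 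This slip does no harm, since you abandon that route and argue directly: the upper bound $H_{\cl A}(\rho)\le-\log N(\cl A)$ from $N(\cl A)I\in\cl A$, and the lower bound by exhibiting the explicit state $\rho_0=N(\cl A)B_0$, where $B_0\in\cl A^\sharp$ attains $\gamma(\cl A^\sharp)=1/N(\cl A)$ (attained since $\cl A^\sharp$ is compact by Proposition \ref{cc1}), combined with the scalar Jensen inequality $\Tr(\rho_0\log A)\le\log\Tr(\rho_0 A)\le\log N(\cl A)$ for $A\in\cl A^{++}$ and the reduction to $\cl A^{++}$ from Remark \ref{HAmin}(iv). This is sound and more elementary on the analytic side (no minimax, no lower semicontinuity of relative entropy), and it has the bonus of explicitly producing a maximising state. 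The trade-off is that your lower bound leans on the nontrivial direction $\gamma(\cl A^\sharp)\ge 1/N(\cl A)$ of Theorem \ref{NMgamma2}(ii), i.e.\ ultimately on the second anti-blocker theorem, whereas the paper's minimax argument establishes $\max_\rho H_{\cl A}(\rho)=-\log N(\cl A)$ without invoking reflexivity, using Theorem \ref{NMgamma2} only for the remaining two equalities. Since Theorem \ref{NMgamma2} precedes this theorem in the paper, your dependence on it is legitimate.
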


\begin{proof}
Note that $\cl R_d$ and $\cl A$ are compact and convex subsets of $M_d^+$.
Let $g : \cl R_d \times \cl A \To \bb R\cup \{+\infty\}$ be the function, given by
$g(\rho,A) = -\Tr(\rho \log A)$.  
For a fixed $A \in \cl A$, the function $\rho \to g(\rho, A)$ is linear, and hence concave.   
On the other hand, 
$g(\rho, A) = D( \rho \| A) - \Tr \rho \log \rho$ and so, by Lemma \ref{vn+},  
for a fixed $\rho \in \cl R_d$, the function $A \to g(\rho,A)$ 
is convex and lower semi-continuous.  

Let $\lambda_{{\rm min}}(A)$ denote the smallest eigenvalue of a positive matrix $A$ and set 
$\mu = \sup_{A\in \cl A} \lambda_{\min}(A)$. 
Since $N(\cl A)I \in \cl A$, we have that $\mu \ge N(\cl A)$.
On the other hand, for every  $\ep > 0$, 
there exists $A \in \cl A$ such that $\mu - \ep <  \lambda_{{\rm min}}(A)$ and hence $(\mu - \ep) I \le A$.  
By hereditarity, $(\mu - \ep)I \in \cl A$.  
Thus $N(\cl A) \ge \mu - \ep$ for all $\ep > 0$, and so $N(\cl A) \ge \mu$.  
Hence $\mu = N (\cl A)$.  
Using Theorem \ref{Sion}, we now have
\begin{eqnarray*}
\max_{\rho \in \cl R_d} H_{\cl A}(\rho) 
& = & 
\sup_{\rho \in \cl R_d} \inf_{A \in \cl A} g(\rho, A)
= 
\inf_{A \in \cl A}\sup_{\rho \in \cl R_d}  g(\rho, A)\\
& = & 
\inf_{A \in \cl A^{++}}\sup_{\rho \in \cl R_d}  g(\rho, A)
= 
\inf_{A \in \cl A^{++}}\sup_{\rho \in \cl R_d} \Tr (\rho \log A^{-1})\\
& = &  
\inf_{A \in \cl A^{++}} \left\|\log A^{-1}\right\| 
= 
\inf_{A \in \cl A^{++}} - \log \lambda_{\min}(A)\\
& = & 
- \log \left({\sup_{A \in \cl A^{++}} \lambda_{{\rm min}}(A)}\right)
= 
- \log \left({\sup_{A \in \cl A} \lambda_{{\rm min}}(A)}\right)\\
& = & 
-\log N(\cl A).
\end{eqnarray*} 
The remaining equalities follow from Theorem \ref{NMgamma2}.
\end{proof}

Recall that $\phi : \bb R_d^+ \to \cl D_d^+$ is the canonical bijection, given by \eqref{phi}.


\begin{theorem} \label{noncomm15} 
Let $\cl A$ be a standard diagonal convex corner in $M_d$ and $\cl B$ be a non-commutative lift of $\cl A$. 
If $p \in \cl P_d$ and $\rho = \sum_{i=1}^d p_i e_ie_i^*$ then 
$H_{\phi^{-1}(\cl A)}(p) = H_{\cl B}(\rho).$
\end{theorem}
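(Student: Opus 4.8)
The plan is to prove the inequalities $H_{\cl B}(\rho)\le H_{\phi^{-1}(\cl A)}(p)$ and $H_{\cl B}(\rho)\ge H_{\phi^{-1}(\cl A)}(p)$ separately. I would first record that $\cl B$ is a standard convex $M_d$-corner: by Theorem \ref{diagdelta} we have $\her(\cl A)\subseteq\cl B\subseteq(\cl A^\flat)^\sharp$; since $\cl A$ is a standard diagonal convex corner, so is $\cl A^\flat$, and Corollary \ref{dcc3} shows that $\her(\cl A)$ and $\her(\cl A^\flat)$ are standard. Hence $\cl B$ has non-empty relative interior, as it contains the standard corner $\her(\cl A)$, and $\cl B$ is bounded, being contained in $(\cl A^\flat)^\sharp=\her(\cl A^\flat)^\sharp$, which is bounded by Propositions \ref{l_gen} and \ref{cc1}. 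Consequently Remark \ref{HAmin}(iv) applies, and --- identifying the convex $\bb R^d$-corner $\phi^{-1}(\cl A)$ with the diagonal convex corner $\cl A$ and using $-\Tr(\rho\log\phi(v))=\sum_{i=1}^d p_i\log(1/v_i)$ --- both entropies in question become infima over strictly positive matrices:
\[
H_{\cl B}(\rho)=\inf_{A\in\cl B^{++}}\bigl(-\Tr(\rho\log A)\bigr),\qquad
H_{\phi^{-1}(\cl A)}(p)=\inf_{D\in\cl A^{++}}\bigl(-\Tr(\rho\log D)\bigr).
\]

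Since $\cl B$ is a non-commutative lift of $\cl A$, we have $\cl A=\cl D_d\cap\cl B\subseteq\cl B$, hence $\cl A^{++}\subseteq\cl B^{++}$, and the inequality $H_{\cl B}(\rho)\le H_{\phi^{-1}(\cl A)}(p)$ is immediate. For the reverse inequality the key point is the \emph{pinching inequality}
\[
-\Tr(\rho\log\Delta(A))\le-\Tr(\rho\log A),\qquad A\in M_d^{++}.
\]
To prove it I would write $\Delta$ as an average of unitary conjugations: with $\omega=e^{2\pi i/d}$ and $U_k=\mathrm{diag}(1,\omega^k,\dots,\omega^{(d-1)k})$ for $k\in[d]$ one has $\Delta(A)=\frac1d\sum_{k=1}^d U_kAU_k^*$. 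The function $A\mapsto-\Tr(\rho\log A)$ equals $D(\rho\|A)+H(\rho)$, so it is convex by Lemma \ref{vn+}(iii); moreover $\log(U_kAU_k^*)=U_k(\log A)U_k^*$, and since $\rho$ is diagonal it commutes with each $U_k$, whence $-\Tr(\rho\log(U_kAU_k^*))=-\Tr(U_k^*\rho U_k\log A)=-\Tr(\rho\log A)$ by cyclicity of the trace. Jensen's inequality applied to the convex combination $\Delta(A)=\frac1d\sum_k U_kAU_k^*$ then yields the claim. Finally, for $A\in\cl B^{++}$ the matrix $\Delta(A)$ lies in $\Delta(\cl B)=\cl A$ and is strictly positive (its diagonal entries are bounded below by $\lambda_{\min}(A)>0$), so $\Delta(A)\in\cl A^{++}$; thus $-\Tr(\rho\log A)\ge-\Tr(\rho\log\Delta(A))\ge H_{\phi^{-1}(\cl A)}(p)$, and taking the infimum over $A\in\cl B^{++}$ gives $H_{\cl B}(\rho)\ge H_{\phi^{-1}(\cl A)}(p)$.

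The only step with genuine content is the pinching inequality; the rest is unwinding definitions. The one place requiring a little care is the identification $-\Tr(\rho\log A)=D(\rho\|A)+H(\rho)$ together with its $\pm\infty$ conventions, but working throughout with strictly positive matrices --- legitimate by Remark \ref{HAmin}(iv) since $\cl B$ and $\cl A$ are standard --- sidesteps this entirely.
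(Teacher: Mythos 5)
Your proof is correct, and its overall skeleton coincides with the paper's: the inequality $H_{\cl B}(\rho)\le H_{\phi^{-1}(\cl A)}(p)$ comes from $\cl A\subseteq\cl B$, and the reverse from the fact that the pinching $\Delta$ maps $\cl B$ into $\cl A$ without increasing $-\Tr(\rho\log(\cdot))$. Where you differ is in the proof of that key monotonicity and in how degeneracies are handled. The paper takes an actual minimiser $B\in\cl B$ (possibly singular), diagonalises it, deals explicitly with zero eigenvalues through the support condition $\langle\rho v_k,v_k\rangle=0$, and then obtains $-\Tr(\rho\log\Delta(B))\le-\Tr(\rho\log B)$ by scalar concavity of the logarithm applied to the diagonal entries $\sum_j b_j|\langle v_j,e_i\rangle|^2$. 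You instead invoke Remark \ref{HAmin} (iv) to restrict to strictly positive elements — legitimate, since your preliminary argument that $\cl B$ is a standard convex corner is sound (the paper asserts this without detail) — and you prove the pinching inequality structurally: writing $\Delta$ as an average of conjugations by the diagonal unitaries $U_k$, using unitary covariance of the logarithm, commutation of the diagonal $\rho$ with the $U_k$, and convexity of $\sigma\mapsto D(\rho\|\sigma)$ from Lemma \ref{vn+} (iii). Your route buys a cleaner argument that avoids all zero-eigenvalue bookkeeping and isolates a reusable operator fact (pinching does not increase $-\Tr(\rho\log(\cdot))$ when $\rho$ is diagonal), at the cost of invoking convexity of the relative entropy, a heavier input than the elementary scalar Jensen step the paper uses. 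Both arguments are complete.
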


\begin{proof} 
Since $\cl A \subseteq \cl B$, we have 
$H_{\cl B}(\rho) \leq H_{\phi^{-1}(\cl A)}(p)$.
Since $\cl A$ is standard, so is $\cl B$ and, by Remark \ref{HAmin}, 
$H_{\cl B}(\rho) < + \infty$.
Let $B \in \cl B$ be a minimiser for $H_{\cl B}(\rho)$.
Write $B = \sum_{i=1}^d b_i v_i v_i^*$, 
where $\{v_1,\dots,v_d\}$ an orthonormal basis of $\bb C^d$ and $b_i \geq 0$, $i\in [d]$;
thus, 
$H_{\cl B}(\rho) = - \sum_{i=1}^d \langle \rho v_i, v_i\rangle \log b_i$.
Suppose that $b_k = 0$ for some $k$.
Then 
$\sum_{j=1}^d p_j |\langle v_k,e_j\rangle|^2 = \langle \rho v_k,v_k\rangle = 0$.
Thus, for all $j\in [d]$, either $p_j = 0$ or $\langle v_k,e_j\rangle = 0$. 

Note that
$$\Delta (B) = \sum_{i=1}^d \ip{Be_i}{e_i}e_ie_i^*= \sum_{i=1}^d \left( \sum_{j=1}^d b_j |\langle v_j,e_i\rangle|^2 \right) e_ie_i^*.$$
By the concavity of the logarithm and the fact that 
$\sum_{j = 1}^d |\langle v_j,e_i\rangle|^2 = 1$, $i\in [d]$, we have
\begin{align*} -\Tr(\rho \log (\Delta (B)))
=& - \sum_{i=1}^d p_i \log \left( \sum_{j=1}^d b_j |\langle v_j,e_i\rangle|^2 \right) \\ 
=& - \sum_{i : p_i > 0} p_i \log \left( \sum_{j=1}^d b_j |\langle v_j,e_i\rangle|^2 \right) \\
=& - \sum_{i : p_i > 0} p_i \log \left( \sum_{j : \langle v_j,e_i\rangle \neq 0} b_j |\langle v_j,e_i\rangle|^2 \right) \\
\le & - \sum_{i=1}^d p_i \sum_{j=1}^d |\langle v_j,e_i\rangle|^2 \log b_j = - \Tr ( \rho \log B).
\end{align*}
Since $\Delta (B) \in \cl A$, we have $H_{\phi^{-1}(\cl A)}(p) \le H_{\cl B} ( \rho)$, 
and the proof is complete. 
\end{proof}

In the special cases where $\cl A = {\rm vp}(G)$ and $\cl A = {\rm thab}(G)$, the 
next result was given in \cite{Simonyi1} and \cite{Marton}, respectively
(we refer the reader to Subsection \ref{ss_canonicalcc} for the definition of the latter convex corners).

\begin{cor} \label{maxent} 
Let $\cl A $ be a standard convex $\bb R^d$-corner.  Then
$$\max_{p \in \cl P_d}  H_{\cl A}(p) =- \log N(\cl A).$$  
\end{cor}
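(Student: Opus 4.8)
The plan is to deduce the statement from Theorems~\ref{noncomm13} and~\ref{noncomm15} by passing to a canonical non-commutative lift. Identify the standard convex $\bb R^d$-corner $\cl A$ with the standard diagonal convex corner $\phi(\cl A)\subseteq\cl D_d^+$ and set $\cl B=\her(\phi(\cl A))$. By Corollary~\ref{dcc3}, $\cl B$ is a standard convex $M_d$-corner, and by Theorem~\ref{diagdelta} it is a non-commutative lift of $\phi(\cl A)$; in particular $\cl D_d\cap\cl B=\phi(\cl A)$, so $\beta I\in\cl B$ exactly when $\beta\mathbbm 1\in\cl A$, whence $N(\cl B)=N(\cl A)$. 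Theorem~\ref{noncomm15}, applied with the diagonal corner $\phi(\cl A)$ and its lift $\cl B$, gives $H_{\cl A}(p)=H_{\cl B}(\rho_p)$ for every $p\in\cl P_d$, where $\rho_p=\sum_{i=1}^d p_ie_ie_i^*$. Hence, using Theorem~\ref{noncomm13} for $\cl B$,
$$\max_{p\in\cl P_d}H_{\cl A}(p)=\max_{p\in\cl P_d}H_{\cl B}(\rho_p)\le\max_{\rho\in\cl R_d}H_{\cl B}(\rho)=-\log N(\cl B)=-\log N(\cl A),$$
which is one of the two required inequalities.

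For the reverse inequality I would show that the maximum of $H_{\cl B}$ over all states is already attained at a diagonal state. Two facts are needed. First, $\rho\mapsto H_{\cl B}(\rho)$ is concave on $\cl R_d$: since $\cl B$ is standard, Remark~\ref{HAmin}(iv) allows us to write $H_{\cl B}(\rho)=\inf_{A\in\cl B^{++}}\big(-\Tr(\rho\log A)\big)$, an infimum of real-valued functions affine in $\rho$. Second, $H_{\cl B}$ is invariant under conjugation of the state by any diagonal unitary, because $\cl B=\her(\phi(\cl A))$ is invariant under such conjugations (they commute with every diagonal matrix and preserve positivity and the order); concretely, for $D_\epsilon={\rm diag}(\epsilon_1,\dots,\epsilon_d)$ with $\epsilon\in\{\pm1\}^d$ one has $D_\epsilon\cl B D_\epsilon=\cl B$ and hence $H_{\cl B}(D_\epsilon\rho D_\epsilon)=H_{\cl B}(\rho)$. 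Writing the diagonal pinching as the finite average $\Delta(\rho)=2^{-d}\sum_{\epsilon\in\{\pm1\}^d}D_\epsilon\rho D_\epsilon$ and invoking concavity yields $H_{\cl B}(\Delta(\rho))\ge H_{\cl B}(\rho)$ for all $\rho\in\cl R_d$. Now take a maximiser $\rho^\ast\in\cl R_d$ for $H_{\cl B}$ (which exists by compactness, as used in Theorem~\ref{noncomm13}); then $\Delta(\rho^\ast)=\rho_{p^\ast}$ with $p^\ast=(\langle\rho^\ast e_i,e_i\rangle)_{i\in[d]}\in\cl P_d$, so $H_{\cl A}(p^\ast)=H_{\cl B}(\rho_{p^\ast})=H_{\cl B}(\Delta(\rho^\ast))\ge H_{\cl B}(\rho^\ast)=-\log N(\cl A)$. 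Combined with the first inequality, this gives $\max_{p\in\cl P_d}H_{\cl A}(p)=-\log N(\cl A)$.

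The reduction and the surrounding bookkeeping — invoking Corollary~\ref{dcc3}, Theorem~\ref{diagdelta}, the identity $N(\cl B)=N(\cl A)$, and Theorem~\ref{noncomm15} — I expect to be entirely routine; the one genuinely substantive point is the reverse inequality, i.e.\ that enlarging the maximisation from diagonal states to all states does not help, and the pinching argument above is what settles it. As a self-contained alternative that avoids the lift at this stage, one can simply rerun, in the commutative setting, the minimax computation from the proof of Theorem~\ref{noncomm13} with $\cl R_d$ replaced by $\cl P_d$: the corresponding $\inf$--$\sup$ equals $-\log\big(\sup\{\min_{i\in[d]}v_i:v\in\cl A\}\big)$, and this supremum is $N(\cl A)$ because $(\min_i v_i)\mathbbm 1\le v$ together with hereditarity forces $\min_i v_i\le N(\cl A)$, while $N(\cl A)\mathbbm 1\in\cl A$.
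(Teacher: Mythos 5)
Your argument is correct and follows the route the paper intends: Corollary \ref{maxent} is stated there without a separate proof, as a direct consequence of Theorems \ref{noncomm15} and \ref{noncomm13} applied to a non-commutative lift of $\phi(\cl A)$, which is exactly your first paragraph (including the identification $N(\cl B)=N(\cl A)$ via $\cl D_d\cap\cl B=\phi(\cl A)$). The one step the paper leaves implicit — that enlarging the maximisation from diagonal states to all of $\cl R_d$ does not increase the value — is precisely what your pinching argument supplies, and it is sound: $\cl B=\her(\phi(\cl A))$ is invariant under the diagonal sign unitaries $D_\epsilon$, the average $2^{-d}\sum_\epsilon D_\epsilon\rho D_\epsilon$ is the diagonal pinching, and concavity of $H_{\cl B}$ (Proposition \ref{Hconcave}, or your infimum-of-affine-functions argument valid since $\cl B$ is standard) yields $H_{\cl B}(\Delta(\rho))\ge H_{\cl B}(\rho)$; your self-contained commutative minimax alternative, rerunning the proof of Theorem \ref{noncomm13} over $\cl P_d\times\cl A$ and using hereditarity to identify $\sup_{v\in\cl A}\min_i v_i$ with $N(\cl A)$, is equally valid.
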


The next two propositions 
give straightforward but useful characterisations 
of the extreme values for the entropy over the convex corners lying between $\cl A_{I_d}$ and $\cl B_{I_d}$.

\begin{prop} \label{H_A=0} 
Let $\cl A$ be a convex $M_d$-corner with $\cl A_{I_d} \subseteq \cl A \subseteq \cl B_{I_d}$.  
The following are equivalent:
\begin{enumerate}  
\item[(i)] $H_{\cl A}( \rho) = 0$ for all $\rho\in \cl R_d$; 
\item[(ii)] $\gamma( \cl A^\sharp) = 1$; 
\item[(iii)]  $I \in \cl A$; 
\item[(iv)] $\cl A = \cl B_{I_d}$;
\item[(v)] $\gamma (\cl A) = d$.
\end{enumerate}
\end{prop}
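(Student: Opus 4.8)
The plan is to establish the equivalences by a cycle together with a couple of side implications, exploiting the constraint $\cl A_{I_d}\subseteq\cl A\subseteq\cl B_{I_d}$ throughout. First I would record that, since $\cl A_{I_d}\subseteq\cl A$, the corner $\cl A$ is standard (it has non-empty relative interior by Lemma~\ref{l_ri}, as $I_d/d\in\cl A_{I_d}$, and it is bounded since $\cl A\subseteq\cl B_{I_d}$), so Theorem~\ref{noncomm13} applies and gives $\max_{\rho\in\cl R_d}H_{\cl A}(\rho)=\log\gamma(\cl A^\sharp)$. This instantly yields the equivalence (i)$\iff$(ii): $H_{\cl A}(\rho)=0$ for all $\rho$ is the same as $\max_\rho H_{\cl A}(\rho)=0$ (the lower bound $H_{\cl A}(\rho)\ge 0$ holds by Remark~\ref{HAmin}(vii), since $\cl A\subseteq\cl B_{I_d}$), i.e. $\log\gamma(\cl A^\sharp)=0$, i.e. $\gamma(\cl A^\sharp)=1$.

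Next I would close the loop (ii)$\Rightarrow$(iv)$\Rightarrow$(iii)$\Rightarrow$(ii) and fold in (v). For (ii)$\Rightarrow$(iv): by Lemma~\ref{ccinM_d2}(i), $\cl B_{I_d}^\sharp=\cl B_{I_d}^\sharp$, more precisely $\cl A_{I_d}^\sharp=\cl B_{I_d}$ and $\cl B_{I_d}^\sharp=\cl A_{I_d}$; since $\cl A_{I_d}\subseteq\cl A\subseteq\cl B_{I_d}$, Corollary~\ref{reflexiveiff} gives $\cl A_{I_d}\subseteq\cl A^\sharp\subseteq\cl B_{I_d}$. Now $\gamma(\cl A^\sharp)=1$ together with $\cl A_{I_d}\subseteq\cl A^\sharp$ forces $\cl A^\sharp=\cl A_{I_d}$: indeed any $M\in\cl A^\sharp$ has $\Tr M\le\gamma(\cl A^\sharp)=1$, so $\cl A^\sharp\subseteq\cl A_{I_d}$, while the reverse inclusion was just noted. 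Taking anti-blockers and using reflexivity (Theorem~\ref{2ndABe}), $\cl A=\cl A^{\sharp\sharp}=\cl A_{I_d}^\sharp=\cl B_{I_d}$. The implication (iv)$\Rightarrow$(iii) is trivial since $I\in\cl B_{I_d}$. For (iii)$\Rightarrow$(ii): if $I\in\cl A$ then for every $M\in\cl A^\sharp$ we have $\Tr M=\ip{M}{I}\le 1$, so $\gamma(\cl A^\sharp)\le 1$; the reverse inequality $\gamma(\cl A^\sharp)\ge 1$ follows from $\cl A_{I_d}\subseteq\cl A^\sharp$ (the maximally mixed state $I/d$ lies in $\cl A_{I_d}$ and has trace $1$, or simply $\cl A\subseteq\cl B_{I_d}$ gives $\gamma(\cl A^\sharp)\ge\gamma(\cl B_{I_d}^\sharp)=\gamma(\cl A_{I_d})=1$). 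Finally (iv)$\Leftrightarrow$(v): $\gamma(\cl B_{I_d})=\max\{\Tr M:0\le M\le I\}=d$, so (iv) gives (v); conversely, if $\gamma(\cl A)=d$, pick $M\in\cl A$ with $\Tr M=d$; since $M\le I$ (as $\cl A\subseteq\cl B_{I_d}$) and $\Tr(I-M)=0$ with $I-M\ge 0$, we get $M=I$, hence $I\in\cl A$, which is (iii), already shown equivalent to the rest.

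The individual steps are all short, and I do not anticipate a serious obstacle; the one point requiring a little care is the direction (ii)$\Rightarrow$(iv), where one must be sure that $\gamma(\cl A^\sharp)=1$ genuinely pins down $\cl A^\sharp$ as the unit corner rather than merely bounding its trace — this is exactly where the hypothesis $\cl A_{I_d}\subseteq\cl A$ (equivalently $\cl A^\sharp\subseteq\cl B_{I_d}$, hence $\cl A_{I_d}\subseteq\cl A^\sharp$) is used, and it is the reason the statement is restricted to corners between $\cl A_{I_d}$ and $\cl B_{I_d}$. Everything else is bookkeeping with anti-blockers, reflexivity (Theorem~\ref{2ndABe}), Corollary~\ref{reflexiveiff} and the elementary computation $\gamma(\cl B_{I_d})=d$.
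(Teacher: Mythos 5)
Your proof is correct and follows essentially the same route as the paper: the substantive equivalence (i)$\Leftrightarrow$(ii) comes from Theorem \ref{noncomm13} (with the nonnegativity of $H_{\cl A}$ from $\cl A\subseteq\cl B_{I_d}$), and the remaining implications are elementary bookkeeping with $\cl A_{I_d}$, $\cl B_{I_d}$ and their anti-blockers. The only cosmetic difference is that for (ii)$\Rightarrow$(iv) you pin down $\cl A^{\sharp}=\cl A_{I_d}$ and dualise via Theorem \ref{2ndABe}, whereas the paper gets $I\in\cl A$ directly from $N(\cl A)=1/\gamma(\cl A^{\sharp})=1$ (Theorem \ref{NMgamma2}) and then uses hereditarity; both arguments rest on the same duality, so this is a matter of ordering rather than method.
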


\begin{proof}
(i)$\Leftrightarrow$(ii) follows from Theorem \ref{noncomm13}.

(ii)$\Rightarrow$(iii) By Proposition \ref{NMgamma2}, $N(\cl A) = 1$ and hence $I\in \cl A$. 

(iii)$\Rightarrow$(iv) By hereditarity, $\cl B_{I_d}\subseteq \cl A$, and now by assumption $\cl A = \cl B_{I_d}$.

(iv)$\Rightarrow$(v) is trivial.

(v)$\Rightarrow$(iv) The assumption implies that $I\in \cl A$ and hence 
$\cl A = \cl B_{I_d}$. 

(iv)$\Rightarrow$(ii) follows from the fact that $\cl B_{I_d}^{\sharp} = \cl A_{I_d}$.
\end{proof}

\begin{prop}\label{m(A)=d} 
Let $\cl A$ be a convex $M_d$-corner with $\cl A_{I_d} \subseteq \cl A \subseteq \cl B_{I_d}$.  
The following are equivalent:
\begin{enumerate} 
\item [(i)] $H_{\cl A}(\rho) = H(\rho)$ for all $\rho \in \cl R_d$; 
\item [(ii)] $\gamma( \cl A^\sharp) = d$; 
\item [(iii)] $\cl A = \cl A_{I_d}$; 
\item [(iv)] $\gamma (\cl A) = 1$.
\end{enumerate}
\end{prop}

\begin{proof}
(i)$\Rightarrow$(iv)   
Suppose that there exists $B \in \cl A$ with $\Tr B = t > 1$;
we have that $t^{-1}B \in \cl R_d$.  
Since $\frac{1}{d}I_d \in \cl A$, there exists $\ep > 0$ such that
$B' := (1 - \ep)B + \frac{\ep}{d}I_d \in \cl A \cap M_d^{++}$ satisfies $\Tr B' > 1$. 
Thus, without loss of generality, we may assume that $B \in M_d^{++}$.   
We have  
$$H(t^{-1}B) = - \Tr(t^{-1}B \log (t^{-1}B)) = \log t - \Tr (t^{-1}B \log B),$$ 
and
$$H_{\cl A}(t^{-1}B) = \min_{A \in \cl A} - \Tr(t^{-1}B \log A)  \le - \Tr(t^{-1}B \log B) < H(t^{-1}B),$$  
contradicting (i).

(iv)$\Rightarrow$(iii) follows from the assumption that $\cl A_{I_d} \subseteq \cl A$.

(iii) $\Rightarrow$ (i) This was proved in Remark \ref{HAmin} (iv).

(ii)$\Leftrightarrow$(iii)  
We have $\cl A_{I_d} \subseteq \cl A^\sharp \subseteq \cl B_{I_d}$. 
Thus, $\cl A = \cl A_{I_d} \iff \cl A^\sharp = \cl B_{I_d} \iff \gamma (\cl A^\sharp)=d$ by Proposition \ref{H_A=0}.
\end{proof}


\subsection{Dependence on the state and on the convex corner}

In this subsection, we examine the properties of the entropy as a function of the state and of the convex corner. 

\begin{prop} \label{Hconcave}
Let $\cl A$ be a bounded convex $M_d$-corner. 
Then the function 
$H_{\cl A} : \cl R_d \To \bb{R} \cup\{+\infty\}$, $\rho \to H_{\cl A}( \rho)$, 
is concave.  
If $\cl A$ is standard then $H_{\cl A}$
is upper semi-continuous and attains a finite maximum. 
\end{prop}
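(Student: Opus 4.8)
The plan is to prove each assertion by reducing to the properties of the relative entropy collected in Lemma \ref{vn+}, together with the minimax input recorded in the proof of Theorem \ref{noncomm13}. First I would establish concavity. Fix states $\rho_0,\rho_1\in\cl R_d$ and $t\in[0,1]$, and write $\rho_t = (1-t)\rho_0 + t\rho_1$. For an arbitrary $A\in\cl A$ we have $-\Tr(\rho_t\log A) = (1-t)(-\Tr(\rho_0\log A)) + t(-\Tr(\rho_1\log A)) \ge (1-t)H_{\cl A}(\rho_0) + tH_{\cl A}(\rho_1)$, simply because $\rho\mapsto-\Tr(\rho\log A)$ is affine (with the convention that the right-hand side is $+\infty$ whenever either summand is). Taking the minimum over $A\in\cl A$ on the left — which exists by Lemma \ref{minA2} — yields $H_{\cl A}(\rho_t)\ge (1-t)H_{\cl A}(\rho_0)+tH_{\cl A}(\rho_1)$, i.e. concavity. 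No boundedness beyond that needed for Lemma \ref{minA2} is used here.

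Next I would treat the standard case, where Remark \ref{HAmin}(iii) already gives that $H_{\cl A}(\rho)$ is finite for every $\rho$, so $H_{\cl A}:\cl R_d\to\bb R$ is a real-valued concave function on the convex set $\cl R_d$. For upper semi-continuity I would exhibit $H_{\cl A}$ as an infimum of continuous functions: by Remark \ref{HAmin}(iv), $H_{\cl A}(\rho) = \inf_{A\in\cl A^{++}} -\Tr(\rho\log A)$, and for each fixed $A\in\cl A^{++}$ the map $\rho\mapsto -\Tr(\rho\log A) = \Tr(\rho\log A^{-1})$ is linear, hence continuous on $\cl R_d$. A pointwise infimum of a family of continuous (indeed linear) functions is upper semi-continuous, which is the claim.

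Finally, attainment of a finite maximum follows because an upper semi-continuous real-valued function on the compact set $\cl R_d$ attains its supremum, and finiteness of the maximum is immediate since $H_{\cl A}$ is everywhere finite; alternatively one may simply cite Theorem \ref{noncomm13}, which identifies $\max_{\rho}H_{\cl A}(\rho) = -\log N(\cl A)$ as a finite quantity (here $N(\cl A)>0$ because $\cl A$ is standard). I expect the only point requiring a little care to be the bookkeeping of the $+\infty$ values in the concavity argument in the general (merely bounded) case — one must observe that the inequality $-\Tr(\rho_t\log A)\ge(1-t)H_{\cl A}(\rho_0)+tH_{\cl A}(\rho_1)$ holds trivially when the right side is $+\infty$, and that the affine identity for $\rho\mapsto-\Tr(\rho\log A)$ is valid with the standing conventions $0\log 0=0$, $\log 0=-\infty$ (so that the ``$-\infty$ branch'' of $\Tr(\rho\log A)$ is inherited consistently). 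Everything else is a direct application of the already-established lemmas.
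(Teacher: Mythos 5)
Your proof is correct. The concavity argument is the same as the paper's (the paper evaluates at the minimiser $A_0$ for the convex combination and then bounds each term by the respective minimum; you bound first for arbitrary $A$ and then minimise — the same affine-in-$\rho$ observation, and your bookkeeping of the $+\infty$ case is sound because, $\cl A$ being bounded, $-\Tr(\rho\log A)$ is bounded below and $\ker(\lambda_0\rho_0+\lambda_1\rho_1)=\ker\rho_0\cap\ker\rho_1$). Where you genuinely diverge is the upper semi-continuity: you invoke Remark \ref{HAmin}(iv) to write $H_{\cl A}(\rho)=\inf_{A\in\cl A^{++}}-\Tr(\rho\log A)$ and conclude that $H_{\cl A}$, as a pointwise infimum of continuous linear functionals of $\rho$, is upper semi-continuous. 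The paper instead argues directly with a convergent sequence $\rho^{(n)}\to\rho$, perturbing a minimiser $A$ to $B_\mu=(1-\mu)A+\mu rI\in\cl A\cap M_d^{++}$ (with $rI\in\cl A$ from Lemma \ref{l_ri}) and chasing a $\delta$. Your route is shorter and conceptually cleaner; the price is that it leans on Remark \ref{HAmin}(iv), whose own justification is essentially the same perturbation-towards-$rI$ device (and, as stated there, should really use $\frac{1}{n}rI$ rather than $\frac{1}{n}I$), so the two proofs have the same underlying content, just packaged differently — citing the remark is legitimate since it precedes the proposition. For the attainment of a finite maximum both you and the paper use compactness of $\cl R_d$ together with finiteness from Remark \ref{HAmin}(iii); your primary argument is preferable to the alternative of citing Theorem \ref{noncomm13}, since the attainment of the maximum asserted there is most cleanly justified by exactly the semi-continuity you are proving here.
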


\begin{proof} 
Let $\rho_{i} \in \cl R_d$ and $\lambda_{i} \in \bb R^+, \, i=1,2$, with $\lambda_1+ \lambda_2=1$.
By Lemma \ref{minA2}, there exists $A_0 \in \cl A$ such that
\begin{align*} 
H_{\cl A}(\lambda_1 \rho_1 + \lambda_2 \rho_2)
=& 
\lambda_1 \Tr (- \rho_1 \log A_0) + \lambda_2 \Tr (- \rho_2 \log A_0) \\ 
\ge& 
\lambda_1 \min_{A \in \cl A}  \Tr(- \rho_1 \log A)+\lambda_2 \min_{A \in \cl A}  \Tr(- \rho_2 \log A) \\
=& 
\lambda_1 H_{\cl A}(\rho_1) + \lambda_2 H_{\cl A}(\rho_2).
\end{align*}

Assume $\cl A$ is standard. 
For $\rho \in \cl R_d$ and $B \in M_d^{+}$ satisfying $\ker(B) \subseteq \ker \rho$, let $g(\rho, B)= -\Tr (\rho \log B).$   
By Remark \ref{HAmin},
$H_{\cl A}(\rho) < +\infty$ for all $\rho \in \cl R_d$.
Let $(\rho^{(n)})_{n \in \bb N}$ be a sequence in $\cl R_d$ converging to $\rho \in \cl R_d$. 
Let $A \in \cl A $ and $A^{(n)}\in \cl A$, $n\in \bb{N}$, be the elements of 
$\cl A$ such that 
$H_{\cl A}( \rho) = g(\rho, A)$ and $H_{\cl A}( \rho^{(n)}) = g(\rho^{(n)}, A^{(n)})$, $n\in \bb{N}$. 
By Lemma \ref{l_ri}, there exists $r > 0$ such that $rI \in \cl A$.
Since $\cl A$ is convex, 
$B_{\mu} := (1-\mu)A + \mu r I \in \cl A\cap M_d^{++}$ for every $\mu \in (0,1)$. 

Write $A = \sum_{i=1}^d \lambda_i v_i v_i^*$, 
where $\{v_1, \ldots, v_d\}$ is an orthonormal basis of $\bb C^d$ and $\lambda_i \ge 0$, $i\in [d]$. 
Setting $\rho_{i,i} = \ip{ \rho v_i}{v_i}$, $i\in [d]$, we have that
$g(\rho,A) = - \sum_{i=1}^d \rho_{i,i} \log \lambda_i$.
Since $A_0\in \cl A^0(\rho)$ (see the proof of Lemma \ref{minA2}), 
\begin{equation}\label{eq_lrhoi}
\lambda_i = 0 \ \Longrightarrow \ \rho_{i,i} = 0, \ \ \ i\in [d].
\end{equation}
We have that
\begin{align*} g(\rho, A) \le g(\rho,B_{\mu}) 
=& -\Tr \left( \rho \log \left( (1-\mu)A+ {\mu} rI\right) \right) \\ 
=& -\sum_{i=1}^d \rho_{i,i} \log \left((1-\mu) \lambda_i + {\mu}{r} \right).
\end{align*}  
By (\ref{eq_lrhoi}), $g(\rho,B_{\mu}) \To_{\mu \To 0} g(\rho,A)$.  
For $\delta > 0$, let $\mu \in (0,1)$ be such that 
\begin{equation}\label{eq_grhoAB}
g(\rho,A) \le g(\rho,B_{\mu}) \le g(\rho,A) + \delta.  
\end{equation}

On the other hand,
$$H_{\cl A}( \rho^{(n)}) = g(\rho^{(n)},A^{(n)}) \le g(\rho^{(n)},B_{\mu}), \ \ \ n\in \bb{N}.$$  
Since $B_{\mu} > 0$, we have 
$$\limsup_{n \To \infty} H_{\cl A}( \rho^{(n)}) \le \limsup_{n \To \infty}g(\rho^{(n)},B_{\mu}) = g(\rho, B_{\mu}).$$  
By (\ref{eq_grhoAB}), 
$\limsup_{n \To \infty} H_{\cl A}( \rho^{(n)}) \le H_{\cl A}(\rho) + \delta,$
and $H_{\cl A}$ is upper semi-continuous as stated. 
By \cite[Theorem 2.43]{alip}, the compactness of $\cl R_d$ implies that a maximum value is attained.    
\end{proof}

\begin{theorem}\label{th_cecc}
Let $\rho\in \cl R_d$, and $\cl A$ and $\cl A_n$ be convex $M_d$-corners, $n\in \bb{N}$, such that 
$\cup_{n\in \bb{N}}\cl A_n$ is bounded. 
\begin{itemize}
\item[(i)] 
If $\limsup_{n\in \bb{N}} \cl A_n\subseteq \cl A$ then 
$H_{\cl A}(\rho) \leq \liminf_{n\in \bb{N}} H_{\cl A_n}(\rho)$;
\item[(ii)] 
If $\rho > 0$, $\cl A$ has non-empty relative interior and 
$\cl A \subseteq \liminf_{n\in \bb{N}} \cl A_n$ 
then 
$\limsup_{n\in \bb{N}} H_{\cl A_n}(\rho) \leq H_{\cl A}(\rho)$;
\item[(iii)] 
If $\rho > 0$, $\cl A$ has non-empty relative interior and
$\cl A = \lim_{n\in \bb{N}} \cl A_n$ then 
$H_{\cl A}(\rho) = \lim_{n\in \bb{N}} H_{\cl A_n}(\rho)$.
\end{itemize}
\end{theorem}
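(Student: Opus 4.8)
The plan is to establish (i) and (ii) separately and then read off (iii), since $\cl A = \lim_{n\in\bb{N}} \cl A_n$ means precisely $\limsup_{n\in\bb{N}} \cl A_n \subseteq \cl A \subseteq \liminf_{n\in\bb{N}} \cl A_n$: part (i) then gives $H_{\cl A}(\rho) \le \liminf_{n\in\bb{N}} H_{\cl A_n}(\rho)$, part (ii) gives $\limsup_{n\in\bb{N}} H_{\cl A_n}(\rho) \le H_{\cl A}(\rho)$, and the resulting chain of inequalities collapses to equality. The analytic backbone of both parts is that, for a fixed state $\rho \in \cl R_d$, the function $f : M_d^+ \to \bb{R} \cup \{+\infty\}$, $f(A) = -\Tr(\rho \log A) = D(\rho\|A) - \Tr(\rho\log\rho)$, is lower semi-continuous (Lemma \ref{vn+}), and that by Lemma \ref{minA2} the value $H_{\cl A_n}(\rho)$ is attained at some $A_n \in \cl A_n$.

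For (i), assume $\limsup_{n\in\bb{N}} \cl A_n \subseteq \cl A$; we may assume $\liminf_{n\in\bb{N}} H_{\cl A_n}(\rho) < +\infty$, as otherwise the claim is trivial. Pass to a subsequence $(n_k)$ with $H_{\cl A_{n_k}}(\rho) \to \liminf_{n\in\bb{N}} H_{\cl A_n}(\rho)$, and pick minimisers $A_{n_k} \in \cl A_{n_k}$, so $f(A_{n_k}) = H_{\cl A_{n_k}}(\rho)$. Since $\cup_{n\in\bb{N}} \cl A_n$ is bounded, $(A_{n_k})_k$ has a convergent subsequence $A_{n_{k_j}} \to A$ in $M_d^+$; filling in the remaining indices with $0$ (which lies in every convex corner by hereditarity), $A$ is a cluster point of a sequence in $\Pi_{n\in\bb{N}} \cl A_n$, hence $A \in \limsup_{n\in\bb{N}} \cl A_n \subseteq \cl A$. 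Lower semi-continuity of $f$ now yields $H_{\cl A}(\rho) \le f(A) \le \liminf_j f(A_{n_{k_j}}) = \liminf_{n\in\bb{N}} H_{\cl A_n}(\rho)$.

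For (ii), assume $\rho > 0$, $\cl A$ has non-empty relative interior, and $\cl A \subseteq \liminf_{n\in\bb{N}} \cl A_n$. By Lemma \ref{minA2} the minimiser $A_0 \in \cl A$ lies in $\cl A^0(\rho) = \cl A \cap M_d^{++}$; in particular $A_0$ is strictly positive and $H_{\cl A}(\rho) = f(A_0) < +\infty$. Since $A_0 \in \cl A \subseteq \liminf_{n\in\bb{N}} \cl A_n$, choose $B_n \in \cl A_n$ with $B_n \to A_0$. Because $A_0$ is invertible, the $B_n$ are invertible for large $n$ and $\log B_n \to \log A_0$, so $f(B_n) \to f(A_0)$. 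As $B_n \in \cl A_n$, we have $H_{\cl A_n}(\rho) \le f(B_n)$, and taking $\limsup$ gives $\limsup_{n\in\bb{N}} H_{\cl A_n}(\rho) \le \lim_{n\to\infty} f(B_n) = H_{\cl A}(\rho)$. Part (iii) is then immediate.

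The main obstacle is conceptual rather than computational: in (i) one must make the limit matrix land in $\cl A$ through the Kuratowski $\limsup$, which requires both the boundedness hypothesis (to extract a convergent subsequence) and a careful unpacking of the definition, while in (ii) the point is that lower semi-continuity of $f$ alone is not enough to pass to the limit along $B_n \to A_0$ — one needs \emph{continuity} of $f$ at $A_0$, which is supplied by the strict positivity of $A_0$ guaranteed by Lemma \ref{minA2} exactly under the standing hypotheses $\rho > 0$ and non-empty relative interior of $\cl A$. This is why those extra hypotheses appear in (ii) and (iii) but not in (i).
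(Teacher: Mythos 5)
Your proposal is correct and follows essentially the same route as the paper: in (i) boundedness gives a convergent subsequence of minimisers whose limit lies in $\limsup_{n\in\bb{N}}\cl A_n\subseteq\cl A$ and lower semi-continuity of $A\mapsto -\Tr(\rho\log A)$ finishes the argument, while in (ii) the strictly positive minimiser supplied by Lemma \ref{minA2} under $\rho>0$ and non-empty relative interior is approximated by $B_n\in\cl A_n$, along which the functional is continuous, and (iii) follows by combining the two. The only (welcome) deviation is that you treat the case $H_{\cl A}(\rho)=+\infty$ in (i) uniformly by invoking the extended-real-valued lower semi-continuity of $D(\rho\|\cdot)$ from Lemma \ref{vn+}(iii), whereas the paper disposes of that case separately via an explicit spectral-decomposition contradiction.
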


\begin{proof}
(i) 
Assume first that $H_{\cl A}(\rho) = \infty$. By Remark \ref{HAmin} (ii), 
$\cl A^0(\rho) = \emptyset$. 
Suppose, towards a contradiction, that there exists $C > 0$ and a sequence
$(A_k)_{k\in \bb{N}}\subseteq M_d^+$, such that $A_k\in \cl A_{n_k}$ 
and 
\begin{equation}\label{eq_AkC}
- \Tr \rho\log A_k \leq C, \ \ \ k\in \bb{N}.
\end{equation}
Assume, without loss of generality, that $A_k\to_{k\to \infty} A$ for some $A\in M_d$; 
by assumption, $A\in \cl A$. 
Write $A = \sum_{r=1}^l \lambda_r P_r$ in its spectral decomposition, where $(\lambda_r)_{r=1}^l$ is the 
family of distinct eigenvalues of $A$ in increasing order and 
$A_k = \sum_{r=1}^{l_k} \lambda_r^{(k)} P_r^{(k)}$ analogously. 
We have that, eventually, $l_k = l$, and hence we assume the latter equality holds for all $k\in \bb{N}$. 
By the continuity of the functional calculus, 
$P_r^{(k)}\to_{k\to\infty} P_r$ and $\lambda_r^{(k)} \to_{k\to\infty} \lambda_r$, $r\in [l]$.
Decomposing further $A = \sum_{r=1}^l \sum_{i=1}^{s_r} \lambda_r v_{r,i} v_{r,i}^*$, where $\{v_{r,i}\}_{i=1}^{s_r}$
is an orthonormal basis for the range of $P_r$, assume that 
$\delta := \langle \rho v_{r,i},v_{r,i}\rangle > 0$ but $\lambda_r = 0$, for some $r$ and $i$. 
We have that $\Tr(\rho P_r^{(k)}) > \frac{\delta}{2}$ for sufficiently large $k$, while 
$\lambda_r^{(k)}\to_{k\to \infty} 0$, contradicting (\ref{eq_AkC}).

Now suppose that $H_{\cl A}(\rho) < \infty$.  
If $\liminf_{n\in \bb{N}} H_{\cl A_n}(\rho) = \infty$ then the conclusion holds trivially, 
so suppose that $A_k\in \cl A_{n_k}$, $k\in \bb{N}$, satisfy (\ref{eq_AkC}) for some $C < \infty$. 
Assume, without loss of generality, that 
$A_k$ is the minimiser of $H_{\cl A_{n_k}}(\rho)$ and that 
$A_k\to_{k\to \infty} A$ for some $A\in \cl A$; 
thus, $A_k\in \cl A^0(\rho)$ for all $k$. 
Since $\cl A^0(\rho)$ is closed, the continuity of the functional calculus implies that 
$A\in \cl A^0(\rho)$. Now the lower semi-continuity of the function $X\to \Tr - \rho\log X$ implies that 
$H_{\cl A}(\rho) \leq C$. 
Thus, $H_{\cl A}(\rho) \leq \liminf_{n\in \bb{N}} H_{\cl A_n}(\rho)$. 

(ii) 
If $H_{\cl A}(\rho) = \infty$, the conclusion holds trivially; suppose thus that 
$H_{\cl A}(\rho) < \infty$. 
Let $A\in \cl A\cap M_d^{++}$ be such that $H_{\cl A}(\rho) = - \Tr \rho \log A$. 
Let $(A_n)_{n\in \bb{N}}$ be a sequence such that $A_n \in \cl A_n$, $n\in \bb{N}$, and 
$A_n \to_{n\to \infty} A$. We have that $A_n \in M_d^{++}$ eventually. 
Suppose that $H_{\cl A_{n_k}}(\rho) \to_{k\to \infty} \delta$
for some $\delta\in \bb{R}$. Then
$$H_{\cl A}(\rho) = - \lim_{n\to\infty} \Tr \rho \log A_n \geq \delta.$$

(iii) is a direct consequence of (i) and (ii). 
\end{proof}


\subsection{Entropy splitting}\label{ss_es}

This subsection is motivated by \cite[Section 2]{Csis}, and contains 
non-commutative analogues of the entropy splitting results obtained therein. 
If $V$ is an orthonormal basis of $\bb{C}^d$, we call a convex $M_d$-corner 
\emph{$V$-aligned} if $\Delta_V (\cl A) \subseteq \cl A$. 
Recall that $\cl A^{++}$ is the set of all invertible elements of a convex corner $\cl A$. 
We define the set
$$\log \cl A^{++} = \left\{ \log A: A \in \cl A^{++}\right\}.$$

\begin{lemma} \label{noncomm'} 
Let $V$ be an orthonormal basis of $\bb{C}^d$ and 
$\cl A$ be a bounded $V$-aligned convex $M_d$-corner. 
Then
\begin{itemize}
\item[(i)] $\Delta_V (\log \cl A^{++}) \subseteq \log \cl A^{++}$; 

\item[(ii)] If $\rho \in \cl R_d\cap \cl D_V$ then
there exists $A \in \cl A \cap \cl D_V$, such that $H_{\cl A}(\rho) = - \Tr (\rho \log A)$.  
\end{itemize}
\end{lemma}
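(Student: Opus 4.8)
The plan is to deduce (i) from a Jensen-type inequality for diagonal entries, and to deduce (ii) by applying $\Delta_V$ to an arbitrary minimiser of $H_{\cl A}(\rho)$.

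For part (i), fix $A\in\cl A^{++}$ (if $\cl A^{++}=\emptyset$ there is nothing to prove), write $V=\{v_1,\dots,v_d\}$ and $L=\log A\in M_d^h$. First I would record, for each $i\in[d]$, the scalar inequality $e^{\langle Lv_i,v_i\rangle}\leq\langle e^{L}v_i,v_i\rangle=\langle Av_i,v_i\rangle$: expanding $L=\sum_k\lambda_k w_kw_k^*$ in an orthonormal eigenbasis $\{w_k\}$, this is exactly Jensen's inequality for the convex function $\exp$ and the probability vector $\big(|\langle v_i,w_k\rangle|^2\big)_k$. Since $\Delta_V(L)=\sum_i\langle Lv_i,v_i\rangle\,v_iv_i^*$ is diagonal with respect to $V$, summing these inequalities yields
$$e^{\Delta_V(L)}=\sum_{i=1}^d e^{\langle Lv_i,v_i\rangle}\,v_iv_i^*\leq\sum_{i=1}^d\langle Av_i,v_i\rangle\,v_iv_i^*=\Delta_V(A).$$
Because $\cl A$ is $V$-aligned, $\Delta_V(A)\in\cl A$, so hereditarity gives $e^{\Delta_V(L)}\in\cl A$; being the exponential of a Hermitian matrix, $e^{\Delta_V(L)}$ is strictly positive, hence lies in $\cl A^{++}$, and therefore $\Delta_V(\log A)=\Delta_V(L)=\log\big(e^{\Delta_V(L)}\big)\in\log\cl A^{++}$.

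For part (ii), the key auxiliary fact is that $-\Tr\big(\rho\log\Delta_V(B)\big)\leq-\Tr(\rho\log B)$ for every $\rho\in\cl R_d\cap\cl D_V$ and every $B\in M_d^+$. I would prove this exactly as the computation inside the proof of Theorem~\ref{noncomm15}: writing $\rho=\sum_i\rho_i v_iv_i^*$ and $B=\sum_k\mu_k w_kw_k^*$, for each $i$ with $\rho_i>0$ one has $\langle Bv_i,v_i\rangle=\sum_k\mu_k|\langle v_i,w_k\rangle|^2$, and concavity of $\log$ with $\sum_k|\langle v_i,w_k\rangle|^2=1$ gives $\log\langle Bv_i,v_i\rangle\geq\sum_k|\langle v_i,w_k\rangle|^2\log\mu_k$; weighting by $\rho_i$, summing, and interchanging the two sums produces the inequality. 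The boundary cases are handled by the conventions $0\log 0=0$ and $\log 0=-\infty$: if $\ker B\not\subseteq\ker\rho$ the right-hand side is $+\infty$, while if $\ker B\subseteq\ker\rho$ then $\rho_i>0$ forces $\langle Bv_i,v_i\rangle>0$, so $\ker\Delta_V(B)\subseteq\ker\rho$ and the left-hand side is finite. Granting this, I would pick by Lemma~\ref{minA2} a minimiser $A_0\in\cl A$ with $H_{\cl A}(\rho)=-\Tr(\rho\log A_0)$ and set $A=\Delta_V(A_0)$; since $\cl A$ is $V$-aligned, $A\in\cl A\cap\cl D_V$, the auxiliary inequality gives $-\Tr(\rho\log A)\leq-\Tr(\rho\log A_0)=H_{\cl A}(\rho)$, and the reverse inequality is trivial because $A\in\cl A$, whence $H_{\cl A}(\rho)=-\Tr(\rho\log A)$.

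Neither part is deep. The only real care is the bookkeeping with the extended-real-line conventions and the kernel conditions in the auxiliary inequality of (ii), together with the observation that $\Delta_V(B)$ inherits $\ker B\subseteq\ker\rho$ when $\rho$ is $V$-diagonal; the inequality $e^{\langle Lv,v\rangle}\leq\langle e^Lv,v\rangle$ used in (i) is a standard form of the Peierls--Bogoliubov / Jensen inequality, so no genuinely new estimate is required.
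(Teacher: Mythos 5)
Your proof is correct. Part (i) is essentially the paper's own argument: the paper writes out $A'=\sum_j \exp\bigl(\sum_i |\langle u_i,v_j\rangle|^2\log\lambda_i\bigr)v_jv_j^*$ and invokes convexity of $\exp$ to get $A'\le\Delta_V(A)$, which is exactly your operator $e^{\Delta_V(\log A)}$ and your Peierls--Bogoliubov/Jensen step $e^{\langle Lv_i,v_i\rangle}\le\langle Av_i,v_i\rangle$; after that both arguments finish with $V$-alignedness and hereditarity in the same way. In part (ii) you deviate mildly: the paper reuses the operator $A'$ from (i), i.e.\ the element with $\log A'=\Delta_V(\log A)$ for the minimiser $A$, and gets the \emph{exact} identity $-\Tr(\rho\log A)=-\Tr(\rho\,\Delta_V(\log A))=-\Tr(\rho\log A')$ directly from $\Delta_V(\rho)=\rho$ and the trace-invariance property of $\Delta_V$ (Remark \ref{diagon}), with a separate remark for the $H_{\cl A}(\rho)=+\infty$ case; you instead take $A=\Delta_V(A_0)$ for a minimiser $A_0$, prove the one-sided inequality $-\Tr(\rho\log\Delta_V(B))\le-\Tr(\rho\log B)$ by the log-concavity computation of Theorem \ref{noncomm15}, and close with minimality of $H_{\cl A}(\rho)$. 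Both routes are sound; yours does not need part (i) at all and absorbs the infinite-entropy case uniformly (the reverse inequality from minimality forces equality even when both sides are $+\infty$), while the paper's route avoids any appeal to minimality by producing a diagonal element attaining exactly the same trace value, at the cost of the explicit case split and of extending (i) by convention to non-invertible minimisers. Your kernel bookkeeping ($\ker B\subseteq\ker\rho$ implies $\ker\Delta_V(B)\subseteq\ker\rho$ when $\rho\in\cl D_V$) is the right check and is handled correctly.
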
 

\begin{proof} 
(i) 
Write $V = \{v_1, \ldots, v_d\}$ 
and $A = \sum_{i=1}^d \lambda_i u_iu_i^* \in \cl A$, 
for a set $\{u_1,\ldots,u_d\}$ of orthonormal eigenvectors of $A$ and some $\lambda_i > 0$, $i\in [d]$. 
Then $\Delta_V (A) = \sum_{i,j=1}^d \lambda_i |\ip{u_i}{v_j}|^2 v_jv_j^*$
and $\log A = \sum_{i=1}^d \log\lambda_i u_iu_i^*$. 
Thus, 
$$\Delta_V (\log A) = \sum_{i,j=1}^d \log \lambda_i |\ip{u_i}{v_j}|^2 v_jv_j^*.$$ 
Set
$$A' = \sum_{j=1}^d  \exp\left(\sum_{i=1}^d|\ip{u_i}{v_j}|^2 \log \lambda_i \right) v_jv_j^*$$
and note that $\Delta_V (\log A) = \log A'$.
Since $\sum_{i=1}^d|\ip{u_i}{v_j}|^2=\|v_j\|^2=1$, the convexity of the exponential function in the extended real line 
implies that 
$A'\leq \Delta_V (A)$.
Since $\Delta_V (A) \subseteq \cl A$, it follows by hereditarity 
that $A' \in \cl A$, and hence $\Delta_V(\log A)= \log A' \in \log \cl A^{++}.$ 


(ii) 
If $H_{\cl A}(\rho) = +\infty$ then $\cl A^0(\rho) = \emptyset$, and we can pick any $A$ in $\Delta_V(\cl A)$. 
Suppose that $H_{\cl A}(\rho)$ is finite. 
Working with the extended real line $[-\infty,+\infty]$ and the conventions 
$0 \log 0 = 0$, $\log 0 = -\infty$ and ${\rm exp}(-\infty) = 0$, the operators
$\Tr(\rho\log A)$ and $\Tr(\rho\Delta_V(\log A))$ can be defined for any $\rho$ and $A$
(see e.g. \cite{opetz}). 
By Lemma \ref{minA2} and its proof, there exists $A \in \cl A$ such that 
$$-\Tr (\rho \log A)= H_{\cl A}(\rho).$$  
The operator $A' \in \cl A$ from (i) 
belongs to $\cl D_V$ and hence commutes with $\rho$. 
We have $\Delta_V (\rho) = \rho$ and so, by Remark \ref{diagon}, 
$$-\Tr (\rho \log A) = -\Tr \big (\Delta_V (\rho) \log A\big ) = - \Tr (\rho \log A').$$   
\end{proof}

The following result was proved in \cite{Csis} and will be needed below.

\begin{theorem} \label{noncomm20} \cite[Theorem 1]{Csis} 
If $\cl A, \cl B$ are convex $\bb R^d$-corners with $\cl A^\flat \subseteq \cl B$ 
then for any $p \in \cl P_d$ there exist $a \in \cl A$ and $b \in \cl B$ 
such that $p = ab$.  
\end{theorem}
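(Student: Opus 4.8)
The plan is to realise $a$ as a minimiser of the entropy functional of $\cl A$ at $p$ and to read $b$ off from the first--order optimality condition. First I would reduce to the case where $p$ is strictly positive and $\cl A$ is standard: restricting every vector to the support $S=\supp p$, the coordinate projection $\pi_S(\cl A)$ is again a convex $\bb R^S$-corner, its anti-blocker in $\bb R^S$ is still contained in $\pi_S(\cl B)$ (because $\langle w,\pi_S v\rangle=\langle\iota_S w,v\rangle$, where $\iota_S$ is the zero-padding inclusion $\bb R^S\hookrightarrow\bb R^d$, so $w\in(\pi_S\cl A)^\flat$ forces $\iota_S w\in\cl A^\flat\subseteq\cl B$), and a solution there lifts back to $\bb R^d$ by taking any $a\in\cl A$ projecting to the given one, setting the off-support coordinates of $b$ to $0$, and invoking hereditarity of $\cl B$. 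Standardness of $\cl A$ is both preserved by this projection and genuinely needed: if $\cl A$ has no strictly positive vector supported on $\supp p$ the conclusion fails.

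So assume $p>0$ and $\cl A$ standard, and set $f(v)=-\sum_{i=1}^d p_i\log v_i$ for $v\in\cl A$, so that $f(v)=+\infty$ exactly when some $v_i=0$; thus $\min_{v\in\cl A}f(v)$ is the quantity $H_{\cl A}(p)$. Since $p>0$, the function $f$ is convex and lower semi-continuous on the compact set $\cl A$, so it attains its minimum at some $a\in\cl A$. By Lemma~\ref{nonemptyint} we may pick $r>0$ with $r\mathbbm 1\in\cl A$, whence $f(a)\le f(r\mathbbm 1)=-\log r<+\infty$, and this forces $a_i>0$ for every $i$. This automatic strict positivity of the minimiser is the crux of the argument: it places $a$ in the open set $\{v:v_i>0\ \forall i\}$ on which $f$ is differentiable, so that $\nabla f(a)=(-p_1/a_1,\dots,-p_d/a_d)$ makes sense.

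Next I would apply the standard first-order condition for minimising the convex function $f$ over the convex set $\cl A$, namely $\langle\nabla f(a),\,v-a\rangle\ge 0$ for every $v\in\cl A$. Expanding this gives $\sum_{i=1}^d (p_i/a_i)v_i\le\sum_{i=1}^d (p_i/a_i)a_i=\sum_{i=1}^d p_i=1$ for all $v\in\cl A$. Setting $b=(p_1/a_1,\dots,p_d/a_d)\in\bb R^d_+$, this is exactly the statement that $\langle b,v\rangle\le 1$ for all $v\in\cl A$, i.e.\ $b\in\cl A^\flat\subseteq\cl B$; and $a_ib_i=p_i$ by construction, so $p=ab$, which completes the proof.

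I expect the variational computation to be entirely routine; the two places needing real care are the reductions in the first paragraph (one cannot drop the standardness hypothesis) and the assertion---used tacitly above---that the entropy-minimising $a$ automatically avoids the boundary of $\bb R^d_+$, which is precisely what makes $\nabla f(a)$, and hence the candidate $b$, meaningful. Note that only the elementary inclusion $\cl A^\flat\subseteq\cl B$ is used here, not the full second anti-blocker theorem.
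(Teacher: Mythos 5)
The paper does not actually prove this statement---it is quoted from \cite{Csis}---so there is no internal proof to compare against; your argument is correct and is in essence the original variational argument: minimise $-\sum_i p_i\log v_i$ over $\cl A$, note that the minimiser $a$ is automatically strictly positive (on $\supp p$), and obtain $b=p/a\in\cl A^\flat\subseteq\cl B$ from the first-order optimality condition. Your caveat about hypotheses is also well taken: with this paper's general notion of convex corner (possibly unbounded or with empty interior) the statement as displayed fails (e.g.\ $\cl A=\{0\}$ forces $\cl B=\bb R^d_+$ and no factorisation of $p$ exists), and one must work with corners as defined in \cite{Csis}, i.e.\ standard ones, exactly as you assume; your reduction to $\supp p$, using hereditarity to stay inside $\cl A$ and $\cl B$, correctly handles distributions with zero entries.
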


\begin{prop} \label{noncomm10}    
Let $V$ be an orthonormal basis of $\bb{C}^d$,
$\rho \in \cl R_d \cap \cl D_V$ and 
$\cl A$ and $\cl B $ be convex $M_d$-corners.
\begin{itemize}
\item[(i)]
If $A \in \cl A$ and $B \in \cl B$, and $\rho = AB$ then
$H(\rho) \ge H_{\cl A}(\rho) + H_{\cl B}(\rho)$. 
Equality holds if and only if $A$ and $B$ are elements of $\cl A$ and $\cl B$ achieving the respective minima in Definition \ref{minAr}.   

\item[(ii)]
If $\cl A$ and $\cl B$ are $V$-aligned and
$\cl A^\sharp \subseteq \cl B$
then there exist $A \in \cl A$ and $B \in \cl B$ such that $\rho = AB$. 
\end{itemize}
\end{prop}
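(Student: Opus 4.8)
The plan is to establish (i) by a direct spectral computation and (ii) by transporting the classical entropy--splitting theorem, Theorem \ref{noncomm20}, to the algebra $\cl D_V$.

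For (i), the starting point is that $\rho = AB$ is a state, hence self-adjoint, so $AB = (AB)^* = BA$ and $A,B$ commute. Fix an orthonormal basis $\{u_i\}_{i=1}^d$ of common eigenvectors, with $Au_i = \alpha_i u_i$ and $Bu_i = \beta_i u_i$ ($\alpha_i,\beta_i \ge 0$), so that $\rho u_i = \alpha_i\beta_i u_i$ and $\langle \rho u_i, u_i\rangle = \alpha_i\beta_i$. If $v \in \ker B$ then $\rho v = ABv = 0$, so $\ker B \subseteq \ker\rho$, and symmetrically $\ker A \subseteq \ker\rho$ (using $\rho = BA$); thus $A \in \cl A^0(\rho)$, $B \in \cl B^0(\rho)$, and the quantities $-\Tr(\rho\log A)$ and $-\Tr(\rho\log B)$ are finite. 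Expanding the three traces in the basis $\{u_i\}$ and using the conventions $0\log 0 = 0$, $\log 0 = -\infty$, one sees that an index $i$ contributes to $-\Tr(\rho\log A)$, to $-\Tr(\rho\log B)$, or to $-\Tr(\rho\log\rho)$ precisely when $\alpha_i>0$ and $\beta_i>0$, and for such $i$ one has $\log(\alpha_i\beta_i) = \log\alpha_i + \log\beta_i$; hence
$$H(\rho) = -\Tr(\rho\log\rho) = -\Tr(\rho\log A) - \Tr(\rho\log B).$$
By Definition \ref{minAr}, $-\Tr(\rho\log A) \ge H_{\cl A}(\rho)$ and $-\Tr(\rho\log B) \ge H_{\cl B}(\rho)$, which yields $H(\rho) \ge H_{\cl A}(\rho) + H_{\cl B}(\rho)$; since each summand is bounded below by the corresponding minimum, equality in the sum forces equality term by term, i.e. that $A$ and $B$ are minimisers in Definition \ref{minAr}, and conversely if $A,B$ are minimisers the displayed identity gives equality.

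For (ii), after conjugating by the unitary taking $V$ to the canonical basis --- under which all the hypotheses, the trace pairing and the product of matrices are preserved --- we may assume $V$ is the canonical basis, so that $\cl A,\cl B$ are $\Delta$-aligned and $\rho \in \cl R_d \cap \cl D_d$; write $\rho = \phi(p)$ with $p \in \cl P_d$. Set $\cl A_0 = \cl D_d \cap \cl A$ and $\cl B_0 = \cl D_d \cap \cl B$; by Remark \ref{diagon}(ii) these coincide with $\Delta(\cl A)$ and $\Delta(\cl B)$, and by Remark \ref{dcc1} they are diagonal convex corners, so $\phi^{-1}(\cl A_0)$ and $\phi^{-1}(\cl B_0)$ are convex $\bb R^d$-corners. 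The key step is the anti-blocker inclusion: applying Lemma \ref{Deltasharp1}(i) to the $\Delta$-aligned corner $\cl A$ gives $\cl D_d \cap \cl A_0^\sharp = \cl D_d \cap \cl A^\sharp$, whence
$$\cl A_0^\flat = \cl D_d \cap \cl A_0^\sharp = \cl D_d \cap \cl A^\sharp \subseteq \cl D_d \cap \cl B = \cl B_0,$$
the inclusion being the hypothesis $\cl A^\sharp \subseteq \cl B$; via the identity $\cl A_0^\flat = \phi\bigl(\phi^{-1}(\cl A_0)^\flat\bigr)$ from Remark \ref{dcc1} this reads $\phi^{-1}(\cl A_0)^\flat \subseteq \phi^{-1}(\cl B_0)$. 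Theorem \ref{noncomm20} now provides $a \in \phi^{-1}(\cl A_0)$ and $b \in \phi^{-1}(\cl B_0)$ with $p = ab$; putting $A = \phi(a) \in \cl A_0 \subseteq \cl A$ and $B = \phi(b) \in \cl B_0 \subseteq \cl B$ gives $AB = \phi(a)\phi(b) = \phi(ab) = \phi(p) = \rho$.

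I do not expect a genuine obstacle. In (i) the only care needed is the bookkeeping with the kernel conditions and the $\log 0$ conventions that make the trace identity hold; in (ii) the one slightly delicate point is the reduction to the canonical basis and the identification of $\cl A_0^\flat$ with $\cl D_d \cap \cl A^\sharp$, which is exactly the content of Lemma \ref{Deltasharp1}(i), everything else being a direct translation of the classical result.
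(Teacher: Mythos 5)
Your proposal is correct and follows essentially the same route as the paper: in (i) the commutation $AB=BA$ and the additivity of $\log$ on the common eigenbasis (the paper states this more tersely, while you spell out the kernel bookkeeping), and in (ii) the reduction to the diagonal algebra, the identity $\cl D_V\cap\cl A_0^\sharp=\cl D_V\cap\cl A^\sharp$ (which the paper proves directly by the same computation you invoke via Lemma \ref{Deltasharp1}(i)), and the application of Theorem \ref{noncomm20}. The only cosmetic difference is that you conjugate $V$ to the canonical basis, whereas the paper works with a $V$-relative version of the map $\phi$.
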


\begin{proof}
(i) 
Since $\rho = \rho^*$, we have that $AB = BA$.
Thus, 
\begin{align*} H_{\cl A}(\rho) +H_{\cl B}(\rho) &\le -\Tr (\rho \log A) - \Tr (\rho \log B) \\ &= -\Tr \big(\rho \log (AB)\big) 
= - \Tr \rho \log \rho = H(\rho).
\end{align*} 
The equality condition holds trivially. 

(ii) 
Let $\cl A_0 = \Delta_V(\cl A)$ and $\cl B_0 = \Delta_V(\cl B)$. 
By Remark \ref{diagon}, $\cl A_0 = \cl D_V \cap \cl A$ and $\cl B_0 = \cl D_V \cap \cl B$.
Let $\phi: \bb R_+^d \To \cl D_V \cap M_d^+$ be the bijection defined analogously to \eqref{phi};
then $\phi^{-1} (\cl A_0)$ and $\phi^{-1} (\cl B_0)$ are convex $\bb R^d$-corners. 
We claim that 
\begin{equation} \label{varphi} 
\cl D_V \cap \cl A_0^\sharp= \cl D_V \cap \cl A^\sharp. 
\end{equation}  
Since $\cl A_0 \subseteq \cl A$, we have $\cl D_V \cap \cl A^\sharp \subseteq \cl D_V \cap \cl A_0^\sharp$.  
Fix $M \in \cl D_V \cap \cl A_0^\sharp$ and $A \in \cl A.$  
By Remark \ref{diagon}, 
$$\ip{M}{A} = \ip{\Delta_V(M)}{A}= \ip{M}{\Delta_V(A)} \le 1.$$  
Thus, $M \in \cl D_V \cap \cl A^\sharp$ and \eqref{varphi} follows.  
We therefore have 
\begin{equation} \label{varphi2} 
\cl D_V \cap \cl A_0^\sharp \subseteq \cl D_V \cap \cl B = \cl B_0.
\end{equation}  
It is clear that 
$\phi^{-1}(\cl A_0)^\flat= \phi^{-1}(\cl D_V \cap \cl A_0^\sharp).$  
By \eqref{varphi2},
$\phi^{-1} ( \cl A_0)^\flat \subseteq \phi^{-1}(\cl B_0)$. 
For a state $\rho = \sum_{i=1}^d \rho_i v_iv_i^* \in \cl D_V$, 
we set $p = \phi^{-1}(\rho) \in \cl P_d$.
By Theorem \ref{noncomm20}, there exist $a \in \phi^{-1} (\cl A_0)$ and $b \in \phi^{-1} (\cl B_0)$ such that 
$\rho_i = a_i b_i$, $i\in [d]$.    
Then $\phi(a) = \sum_{i=1}^da_i v_iv_i^*\in\cl A_0 \subseteq \cl A$ and $\phi(b) = \sum_{i=1}^db_i v_iv_i^*\in \cl B_0 \subseteq \cl B$ 
satisfy $\phi(a)\phi(b) = \rho$  as required.
 \end{proof}

It was shown in \cite[Section 2]{Csis}
that if $\cl A$ is a convex $\bb R^d$-corner then
$$H(p) = H_{\cl A}(p) + H_{\cl A^\flat}(p) \mbox{ for all } p \in \cl P_n.$$
We provide a non-commutative version of this result.

\begin{theorem} \label{noncomm23}  
Let $V$ be an orthonormal basis of $\bb C^d$, $\rho \in \cl R_d \cap \cl D_V$ 
and $\cl A$ and $\cl B$ be $V$-aligned bounded convex $M_d$-corners.
\begin{itemize}
\item[(i)]
If $\cl B \subseteq \cl A^\sharp$ then $H(\rho) \le H_{\cl A}(\rho) + H_{\cl B}(\rho)$;

\item[(ii)]
If $\cl A^\sharp \subseteq \cl B$ then $H(\rho) \ge H_{\cl A}(\rho) + H_{\cl B}(\rho)$;

\item[(iii)]
$H(\rho) = H_{\cl A}(\rho) + H_{\cl A^\sharp}(\rho)$.
\end{itemize}
\end{theorem}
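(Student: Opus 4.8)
The plan is to prove Theorem \ref{noncomm23} by deducing (iii) from parts (i) and (ii), and establishing the latter two using the results already assembled, particularly Proposition \ref{noncomm10} and Lemma \ref{noncomm'}. Note first that since $\cl A$ and $\cl B$ are $V$-aligned and $\cl A^\sharp$ is automatically $V$-aligned when $\cl A$ is (by Remark \ref{diagon} (ii)), part (iii) follows immediately from (i) and (ii) applied with $\cl B = \cl A^\sharp$ (using also that $\cl A^\sharp$ is bounded, since $\cl A$, being bounded, need only have... actually we need $\cl A^\sharp$ bounded, which holds iff $\cl A$ has non-empty relative interior; if $\cl A$ has empty relative interior then $H_{\cl A}(\rho)=+\infty$ for the relevant $\rho$ and the inequalities are handled separately — this edge case needs a remark). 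The substance is thus in (i) and (ii).

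For part (ii), the plan is a direct application of Proposition \ref{noncomm10}. The hypotheses there are exactly that $\cl A, \cl B$ are $V$-aligned with $\cl A^\sharp \subseteq \cl B$ and $\rho \in \cl R_d \cap \cl D_V$; Proposition \ref{noncomm10} (ii) then yields $A \in \cl A$, $B \in \cl B$ with $\rho = AB$, and Proposition \ref{noncomm10} (i) gives $H(\rho) \ge H_{\cl A}(\rho) + H_{\cl B}(\rho)$, which is precisely the claim. So (ii) is essentially immediate.

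For part (i), where $\cl B \subseteq \cl A^\sharp$, the plan is as follows. By Lemma \ref{noncomm'} (ii), since $\cl A$ is $V$-aligned and bounded and $\rho \in \cl R_d \cap \cl D_V$, there is a minimiser $A \in \cl A \cap \cl D_V$ with $H_{\cl A}(\rho) = -\Tr(\rho \log A)$; similarly there is $B \in \cl B \cap \cl D_V$ with $H_{\cl B}(\rho) = -\Tr(\rho \log B)$. If either entropy is $+\infty$ the inequality is trivial, so assume both are finite, so $A, B \in M_d^{++}$ after a harmless adjustment (or directly, since finiteness of $H_{\cl A}(\rho)$ forces $\ker A \subseteq \ker\rho$ and we can work on $\ran\rho$; more cleanly, perturb $A$ to $A_\epsilon = (1-\epsilon)A + \epsilon rI$ within $\cl A$ and let $\epsilon \to 0$ as in Remark \ref{HAmin} (iv)). Now $A, B \in \cl D_V$ so they commute and $AB = BA \in \cl D_V \cap M_d^{++}$. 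Since $\cl B \subseteq \cl A^\sharp$, we can relate this to the classical entropy splitting: setting $p = \phi^{-1}(\rho)$, $a = \phi^{-1}(\Delta_V(A))$ and $b = \phi^{-1}(\Delta_V(B))$ in $\bb R^d_+$ via the basis $V$, the diagonal corners $\cl A_0 = \cl D_V \cap \cl A$ and $\cl B_0 = \cl D_V \cap \cl B$ satisfy $\phi^{-1}(\cl B_0)^\flat \supseteq \phi^{-1}(\cl A_0)$ after antiblocking (mirroring the computation \eqref{varphi}–\eqref{varphi2} in the proof of Proposition \ref{noncomm10}, with the inclusion reversed). The cleanest route, however, is to observe directly that the product $AB$, as a positive diagonal (w.r.t. $V$) matrix with entries $a_i b_i$, satisfies $AB \in \cl D_V$ with $\Tr(\rho \log(AB)) = \Tr(\rho\log A) + \Tr(\rho\log B) = -H_{\cl A}(\rho) - H_{\cl B}(\rho)$, and it remains to show $H(\rho) \le -\Tr(\rho \log(AB))$, i.e. that $AB$ (suitably normalised, or its relation to $\rho$) does not "beat" $\rho$. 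This is where the hypothesis $\cl B \subseteq \cl A^\sharp$ enters: for $A \in \cl A$ and $B \in \cl B \subseteq \cl A^\sharp$ we have $\Tr(AB) \le 1$, so $\Tr(AB) \le 1$; writing $\sigma = AB/\Tr(AB)$ and using $H(\rho) = \min_{\tau \in \cl R_d} -\Tr(\rho\log\tau)$ (Remark \ref{HAmin} (v)) together with $-\Tr(\rho\log(AB)) = -\Tr(\rho\log\sigma) - \log\Tr(AB) \ge H(\rho) - \log\Tr(AB) \ge H(\rho)$ gives exactly $H_{\cl A}(\rho) + H_{\cl B}(\rho) \ge H(\rho)$.

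The main obstacle I anticipate is managing the diagonalisability and the $+\infty$ cases carefully: one must ensure that the minimisers of $H_{\cl A}(\rho)$ and $H_{\cl B}(\rho)$ can simultaneously be taken in $\cl D_V$ (this is exactly Lemma \ref{noncomm'} (ii), so it is already available), and that when passing from $A$ to an invertible element one does not inflate the value of $-\Tr(\rho\log A)$ — which is controlled by the upper-semicontinuity/limit argument of Remark \ref{HAmin} (iv). The key insight making part (i) work is that the product of the two diagonal minimisers lies in $\cl A \cdot \cl A^\sharp$, hence has trace at most $1$, and therefore is dominated (in the entropy sense) by the optimal state $\rho$ itself. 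Everything else is bookkeeping with Remark \ref{diagon} and the conventions on $\Tr(\rho \log \cdot)$.
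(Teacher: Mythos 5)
Your proposal is correct and follows essentially the same route as the paper's proof: diagonal minimisers supplied by Lemma \ref{noncomm'}, the pairing bound $\ip{A}{B}\le 1$ coming from $\cl B\subseteq\cl A^\sharp$ for part (i), a direct appeal to Proposition \ref{noncomm10} for part (ii), and specialising $\cl B=\cl A^\sharp$ (which is $V$-aligned by Remark \ref{diagon}) for part (iii). Your step of normalising $AB$ to a state and invoking $H(\rho)=\min_{\sigma\in\cl R_d}(-\Tr(\rho\log\sigma))$ is just the paper's concavity-of-$\log$ (Jensen) computation rephrased as nonnegativity of relative entropy, so it is the same argument in different clothing.
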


\begin{proof} 
(i)
By Lemma \ref{noncomm'}, there exist  $A \in \cl A\cap \cl D_V$ and $B \in \cl B \cap \cl D_V$ such that 
$H_{\cl A}(\rho)=-\Tr (\rho \log A)$ and $H_{\cl B}(\rho)=-\Tr (\rho \log B)$.
Write $V = \{v_1,\dots,v_d\}$, 
$$\rho = \sum_{i=1}^d p_i v_iv_i^*, \  A = \sum_{i=1}^d \lambda_i v_iv_i^* \mbox{ and } B = \sum_{i=1}^d \mu_i v_iv_i^*.$$  
We have
\begin{align*} 
H(\rho)- H_{\cl A}(\rho)- H_{\cl B}(\rho)
=& \Tr ( \rho  \log A) + \Tr ( \rho  \log B) - \Tr ( \rho  \log \rho)    \\ 
=& \sum_{i : p_i > 0} p_i \log \left( \frac{\lambda_i \mu_i}{p_i} \right) \le \log \left( \sum_{i : p_i > 0} \lambda_i \mu_i\right) \le 0,
\end{align*} 
where the first inequality follows from the concavity of the log function and the fact that 
$\sum_{i=1}^d p_i =1$, while the second one from the fact that 
$\sum_{i=1}^d \lambda_i \mu_i = \ip{A}{B} \le 1$.

(ii) follows from Proposition \ref{noncomm10}.

(iii) By Remark \ref{diagon},  $\Delta_V (\cl A^\sharp) \subseteq  \cl A^\sharp$.  
The result follows by setting 
$\cl B = \cl A^\sharp$ in (i) and (ii).  
\end{proof}

The following result is the non-commutative analogue of a bound established in 
\cite{bore} and \cite{Korner2}.

\begin{prop}  \label{lbAncg}  
Let $V$ be an orthonormal basis of $\bb{C}^d$, $\rho \in \cl R_d\cap \cl D_V$,  
and $\cl A$ be a $V$-aligned bounded convex $M_d$-corner.
Then 
\begin{equation}\label{eq_nbq}
H_{\cl A}(\rho) \ge H(\rho) - \log \gamma(\cl A).
\end{equation}
Equality holds in (\ref{eq_nbq}) 
if and only if $\gamma(\cl A) \rho \in \cl A.$ 
\end{prop}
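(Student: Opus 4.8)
The plan is to diagonalise everything with respect to $V$, thereby reducing the statement to the classical log-sum inequality (equivalently, to the non-negativity of the relative entropy from Lemma \ref{vn+}(i)).

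First I would dispose of the degenerate situations. If $\cl A = \{0\}$, both sides of \eqref{eq_nbq} equal $+\infty$ and the equality clause reads $\gamma(\cl A)\rho = 0 \in \cl A$, which holds; and if $H_{\cl A}(\rho) = +\infty$ while $\gamma(\cl A) > 0$, then \eqref{eq_nbq} is trivial and equality cannot occur, since $\gamma(\cl A)\rho \in \cl A$ would force $H_{\cl A}(\rho) \le H(\rho) - \log\gamma(\cl A) < \infty$. So assume $\gamma := \gamma(\cl A) \in (0,\infty)$ and $H_{\cl A}(\rho) < \infty$. Because $\cl A$ is $V$-aligned, Lemma \ref{noncomm'}(ii) provides a minimiser $A \in \cl A \cap \cl D_V$ with $H_{\cl A}(\rho) = -\Tr(\rho\log A)$, and finiteness of this value forces $\ker A \subseteq \ker\rho$. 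Writing $V = \{v_1,\dots,v_d\}$, $\rho = \sum_{i=1}^d p_i v_iv_i^*$, $A = \sum_{i=1}^d \lambda_i v_iv_i^*$ and $J = \{i : p_i > 0\}$, the kernel inclusion gives $\lambda_i > 0$ for $i \in J$, while $A \in \cl A$ gives $\sum_{i \in J}\lambda_i \le \Tr A \le \gamma$.

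Next I would run the one-dimensional estimate. Since $H_{\cl A}(\rho) = -\sum_{i\in J} p_i\log\lambda_i = H(\rho) + \sum_{i\in J}p_i\log\frac{p_i}{\lambda_i}$, it suffices to show $\sum_{i\in J}p_i\log\frac{p_i}{\lambda_i} \ge -\log\gamma$. Setting $S = \sum_{i\in J}\lambda_i$ and $\sigma = \frac{1}{S}\sum_{i\in J}\lambda_i v_iv_i^* \in \cl R_d$, one has $\sum_{i\in J}p_i\log\frac{p_i}{\lambda_i} = D(\rho\|\sigma) - \log S$, with $D(\rho\|\sigma)\ge 0$ by Lemma \ref{vn+}(i) and $\log S \le \log\gamma$, which yields \eqref{eq_nbq}.

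Finally I would handle the equality clause. Equality in \eqref{eq_nbq} forces both $D(\rho\|\sigma) = 0$ and $S = \gamma$. The second, combined with $S \le \Tr A \le \gamma$, forces $\Tr A = \gamma$ and $\lambda_i = 0$ for $i \notin J$; the first forces $\rho = \sigma$, i.e.\ $\lambda_i = \gamma p_i$ for $i \in J$. Hence $A = \gamma\rho$, so $\gamma\rho \in \cl A$. Conversely, if $\gamma\rho \in \cl A$, substituting $A = \gamma\rho$ in Definition \ref{minAr} gives $H_{\cl A}(\rho) \le -\Tr(\rho\log(\gamma\rho)) = H(\rho) - \log\gamma$, which together with \eqref{eq_nbq} is an equality. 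I do not expect a serious obstacle: the only point needing care is the bookkeeping around the support set $J$ and the inclusion $\ker A \subseteq \ker\rho$ (this is exactly what guarantees no $\log 0$ or $0/0$ term appears when applying the log-sum inequality), together with correctly unpacking ``$S = \gamma$'' into the two independent facts $\Tr A = \gamma$ and $\lambda_i = 0$ for $i \notin J$.
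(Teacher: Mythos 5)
Your proof is correct and follows essentially the same route as the paper's: Lemma \ref{noncomm'} supplies a diagonal minimiser $A\in\cl A\cap\cl D_V$, and the bound reduces to the estimate $\sum_i p_i\log(p_i/\lambda_i)\ge -\log\big(\sum_i\lambda_i\big)\ge-\log\gamma(\cl A)$, which you phrase equivalently as $D(\rho\|\sigma)\ge 0$ for the normalisation $\sigma$ of $A$. The only difference is that you spell out the equality case (equality forces $D(\rho\|\sigma)=0$ and $\Tr A=\gamma(\cl A)$, hence $A=\gamma(\cl A)\rho$), which the paper delegates to the citation of \cite{bore}.
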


\begin{proof}  
By Lemma \ref{noncomm'}, there exists 
$B \in \cl A\cap \cl D_V$ such that $H_{\cl A}(\rho)= - \Tr \rho \log B$.  
Write $\rho = \sum_{i=1}^d p_i v_iv_i^*$ and $B = \sum_{i=1}^d \mu_i v_iv_i^*$ with $p_i \geq 0$, $\mu_i \ge 0$, $i\in [d]$.  
Then 
$$H(\rho) = - \sum_{i=1}^d p_i \log p_i \ \mbox{ and } \ 
H_{\cl A}(\rho) =  -\sum_{i=1}^d p_i \log \mu_i.$$   
Hence 
$$\sum_{i=1}^d p_i \log \left( \frac {p_i}{\mu_i}\right)  \ge - \log \left(\sum_{i=1}^d \mu_i \right) \ge - \log \gamma(\cl A).$$ 
The equality condition follows as in \cite{bore}.  
\end{proof}

\noindent As in \cite{bore}, the lower bound (\ref{eq_nbq}) is attained.


\section{Tensor products of convex corners}\label{ss_pcc}

The behaviour of the entropy with respect to tensor products of convex $\bb{R}^d$-corners was 
examined in \cite[Section 5]{Csis}. In this section, we introduce tensor products of non-commutative convex corners, 
and discuss their behaviour in relation to the parameters defined earlier.

\begin{definition}\label{d_pcco}
Let $\cl A_i$ be a convex $M_{d_i}$-corner, $i = 1,2$. 
\begin{itemize}
\item[(i)] 
The \emph{maximal tensor product} of $\cl A_1$ and $\cl A_2$
is the convex $M_{d_1 d_2}$-corner
$$\cl A_1\otimes_{\max}\cl A_2 = {\rm C}\left(\{A_1\otimes A_2 : A_i\in \cl A_i, i = 1,2\}\right);$$

\item[(ii)] 
The \emph{minimal tensor product} of $\cl A_1$ and $\cl A_2$
is the convex $M_{d_1 d_2}$-corner
$$\cl A_1\otimes_{\min}\cl A_2 = \left(\cl A_1^{\sharp} \otimes_{\max}\cl A_2^{\sharp}\right)^{\sharp}.$$
\end{itemize}
\end{definition}

We note that 
\begin{equation}\label{eq_maxinmin}
\cl A_1\otimes_{\max}\cl A_2\subseteq \cl A_1\otimes_{\min}\cl A_2;
\end{equation}
the somewhat counterintuitive choice of notation becomes natural in view of the close
resemblance of these tensor products with the tensor products of operator systems 
as defined in \cite{kptt}. 
One defines tensor products of diagonal convex corners in an analogous way \cite[Section 5]{Csis}.

\begin{theorem}\label{th_partp}
Let $\cl A_i$ be a bounded convex $M_{d_i}$-corner, $i = 1,2$, $\tau\in \{\min, \max\}$,
and $\delta \in \{M,N,\gamma\}$. Then
$$\delta(\cl A_1\otimes_{\tau}\cl A_2) = \delta(\cl A_1) \delta(\cl A_2).$$
In addition, 
$$\Gamma(\cl A_1\otimes_{\min}\cl A_2) \leq \Gamma(\cl A_1\otimes_{\max}\cl A_2) 
\leq \Gamma(\cl A_1) \Gamma(\cl A_2).$$
\end{theorem}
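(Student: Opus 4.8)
The plan is to prove the multiplicativity $\delta(\cl A_1 \otimes_\tau \cl A_2) = \delta(\cl A_1)\delta(\cl A_2)$ for $\delta \in \{M, N, \gamma\}$ and both $\tau \in \{\min,\max\}$, and then the inequalities for $\Gamma$. Throughout, I would use Theorem \ref{NMgamma2}(ii), which gives $M(\cl A) = 1/N(\cl A) = \gamma(\cl A^\sharp)$; since taking anti-blockers interchanges $\otimes_{\min}$ and $\otimes_{\max}$ (by definition $\cl A_1 \otimes_{\min}\cl A_2 = (\cl A_1^\sharp \otimes_{\max}\cl A_2^\sharp)^\sharp$, and by reflexivity $(\cl A_1 \otimes_{\max}\cl A_2)^\sharp = \cl A_1^\sharp \otimes_{\min}\cl A_2^\sharp$), it suffices to establish multiplicativity of $\gamma$ for $\otimes_{\max}$ and for $\otimes_{\min}$; the statements for $M$ and $N$ then follow by duality and the identity $1/(xy) = (1/x)(1/y)$.

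\smallskip

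\textbf{Multiplicativity of $\gamma$ for $\otimes_{\max}$.} By Proposition \ref{l_gen}(i), $\cl A_1 \otimes_{\max}\cl A_2 = \her(\overline{\conv}\{A_1 \otimes A_2 : A_i \in \cl A_i\})$, and since $\gamma$ is a supremum of the linear functional $\Tr$, which is unchanged by passing to the hereditary cover and the closed convex hull, we get $\gamma(\cl A_1 \otimes_{\max}\cl A_2) = \sup\{\Tr(A_1 \otimes A_2) : A_i \in \cl A_i\} = \sup\{\Tr(A_1)\Tr(A_2)\} = \gamma(\cl A_1)\gamma(\cl A_2)$, using $\Tr(A_1 \otimes A_2) = \Tr(A_1)\Tr(A_2)$ and the fact that the sup of a product of nonnegative quantities over a product set is the product of the sups. (If either corner is unbounded the identity $\infty \cdot c = \infty$ for $c > 0$ needs the observation that $\gamma = 0$ only for $\{0\}$, handled separately; but here both are assumed bounded.)

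\smallskip

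\textbf{Multiplicativity of $\gamma$ for $\otimes_{\min}$.} Here I expect the main obstacle. One direction is easy: since $\cl A_1 \otimes_{\max}\cl A_2 \subseteq \cl A_1 \otimes_{\min}\cl A_2$ by \eqref{eq_maxinmin}, we get $\gamma(\cl A_1 \otimes_{\min}\cl A_2) \geq \gamma(\cl A_1)\gamma(\cl A_2)$. For the reverse, I would argue via $N$: $\gamma(\cl A_1 \otimes_{\min}\cl A_2) = 1/N((\cl A_1 \otimes_{\min}\cl A_2)^\sharp) = 1/N(\cl A_1^\sharp \otimes_{\max}\cl A_2^\sharp)$ by reflexivity. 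So it suffices to show $N(\cl B_1 \otimes_{\max}\cl B_2) = N(\cl B_1)N(\cl B_2)$ for bounded-or-not $\cl B_i = \cl A_i^\sharp$. The inequality $N(\cl B_1 \otimes_{\max}\cl B_2) \geq N(\cl B_1)N(\cl B_2)$ holds because $\beta_1 I_{d_1} \in \cl B_1$ and $\beta_2 I_{d_2} \in \cl B_2$ give $\beta_1\beta_2 I_{d_1 d_2} = (\beta_1 I)\otimes(\beta_2 I) \in \cl B_1 \otimes_{\max}\cl B_2$. For the reverse, suppose $\beta I_{d_1 d_2} \in \cl B_1 \otimes_{\max}\cl B_2 = \her(\overline{\conv}\{B_1 \otimes B_2\})$; then there is $C = \sum_k \lambda_k B_1^{(k)} \otimes B_2^{(k)}$ (a limit of such, but the estimates pass to the limit) with $C \geq \beta I$, and I would apply a slot-wise trace/partial-trace argument: taking a partial trace over the second factor, $\Tr_2(C) = \sum_k \lambda_k (\Tr B_2^{(k)}) B_1^{(k)} \leq \gamma(\cl B_2)\sum_k \lambda_k B_1^{(k)}$ lies (after normalization) in a dilate of $\cl B_1$, while $\Tr_2(\beta I_{d_1 d_2}) = \beta d_2 I_{d_1}$; combining with the analogous computation in the other slot and $\gamma(\cl B_i) = 1/N(\cl B_i^\sharp) \cdots$ — this is where care is needed. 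The cleanest route may instead be: use Theorem \ref{NMgamma2}(ii) to write everything in terms of $M$, and show $M(\cl A_1 \otimes_{\max}\cl A_2) = M(\cl A_1)M(\cl A_2)$ directly from part (i) of Theorem \ref{NMgamma2}: if $\mu_i A_i \geq I_{d_i}$ with $A_i \in \cl A_i$, then $(\mu_1\mu_2)(A_1 \otimes A_2) \geq I_{d_1 d_2}$ and $A_1 \otimes A_2 \in \cl A_1 \otimes_{\max}\cl A_2$, giving $\leq$; for $\geq$, if $\mu C \geq I$ with $C = \sum_k \lambda_k A_1^{(k)}\otimes A_2^{(k)} \in \conv\{\cdots\}$, compress $C$ against a product vector $u_1 \otimes u_2$ to reduce to a scalar inequality and extract a single product element dominating $I$ up to scalar — then invoke hereditarity.

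\smallskip

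\textbf{The inequalities for $\Gamma$.} The first inequality $\Gamma(\cl A_1 \otimes_{\min}\cl A_2) \leq \Gamma(\cl A_1 \otimes_{\max}\cl A_2)$ is immediate from \eqref{eq_maxinmin} and the monotonicity of $\Gamma$ under inclusion (a PVM in the smaller corner is a PVM in the larger). For the second, given a PVM $(P_i)_{i=1}^{m} \subseteq \cl A_1$ and a PVM $(Q_j)_{j=1}^{n} \subseteq \cl A_2$, the family $(P_i \otimes Q_j)_{i \in [m], j \in [n]}$ consists of projections, satisfies $\sum_{i,j} P_i \otimes Q_j = (\sum_i P_i)\otimes(\sum_j Q_j) = I_{d_1}\otimes I_{d_2} = I_{d_1 d_2}$, and each $P_i \otimes Q_j \in \cl A_1 \otimes_{\max}\cl A_2$ by definition of the maximal tensor product; hence it is a PVM contained in $\cl A_1 \otimes_{\max}\cl A_2$ of cardinality $mn$, so $\Gamma(\cl A_1 \otimes_{\max}\cl A_2) \leq mn$. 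Minimizing over the two PVMs gives $\Gamma(\cl A_1 \otimes_{\max}\cl A_2) \leq \Gamma(\cl A_1)\Gamma(\cl A_2)$. I would also record that one must check $\cl A_{I_{d_1 d_2}} \subseteq \cl A_1 \otimes_{\max}\cl A_2$ (so that $\Gamma$ is not vacuously $\infty$): this follows since $\cl A_{I_{d_i}} \subseteq \cl A_i$ forces $\gamma(\cl A_i^\sharp) \le 1$, whence by the multiplicativity of $\gamma$ just proved $\gamma((\cl A_1 \otimes_{\min}\cl A_2)^\sharp) = \gamma(\cl A_1^\sharp \otimes_{\max}\cl A_2^\sharp) \le 1$, and then $\cl A_{I} \subseteq \cl A_1 \otimes_{\min}\cl A_2$, a fortiori for $\otimes_{\max}$ after the analogous check — I would tighten this bookkeeping in the write-up.
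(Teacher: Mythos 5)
Your overall framework — reduce everything to $\gamma$ via Theorem \ref{NMgamma2}(ii) and the duality between $\otimes_{\min}$ and $\otimes_{\max}$ under $\sharp$ — is exactly the paper's, and your treatment of $\gamma(\cl A_1\otimes_{\max}\cl A_2)$ and of the two $\Gamma$ inequalities is correct. The genuine gap is in the step you yourself flag: the "hard direction" $N(\cl B_1\otimes_{\max}\cl B_2)\le N(\cl B_1)N(\cl B_2)$ (equivalently $M(\cl A_1\otimes_{\max}\cl A_2)\ge M(\cl A_1)M(\cl A_2)$). Neither of your two sketches for it works. The partial-trace argument yields, from $\beta I\le\sum_k\lambda_k B_1^{(k)}\otimes B_2^{(k)}$, only $\beta d_2 I_{d_1}\le \gamma(\cl B_2)B_1'$ with $B_1'\in\cl B_1$, i.e.\ $\beta\le N(\cl B_1)\gamma(\cl B_2)/d_2$; since $\gamma(\cl B_2)/d_2\ge N(\cl B_2)$ in general, this is strictly weaker than the bound you need. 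The compression argument fails because controlling $\ip{Cu_1\otimes u_2}{u_1\otimes u_2}$ over product vectors does not control $C$ on entangled vectors, and in any case one cannot extract from a convex combination a single product element dominating a multiple of $I$. The point you are missing — which is the one piece of the paper's proof you did not reproduce — is that no direct argument is needed: once you know $\gamma(\cl C_1\otimes_{\max}\cl C_2)=\gamma(\cl C_1)\gamma(\cl C_2)$ (applied to $\cl C_i=\cl A_i^\sharp$), Theorem \ref{NMgamma2} gives $M(\cl A_1\otimes_{\min}\cl A_2)=\gamma(\cl A_1^\sharp\otimes_{\max}\cl A_2^\sharp)=M(\cl A_1)M(\cl A_2)$, and then the inclusion $\cl A_1\otimes_{\max}\cl A_2\subseteq\cl A_1\otimes_{\min}\cl A_2$ forces $M(\cl A_1\otimes_{\max}\cl A_2)\ge M(\cl A_1\otimes_{\min}\cl A_2)=M(\cl A_1)M(\cl A_2)$. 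Combined with your easy inequality $M(\cl A_1\otimes_{\max}\cl A_2)\le M(\cl A_1)M(\cl A_2)$, all four hard directions close at once. (Note also that for the specific inequality $\gamma(\cl A_1\otimes_{\min}\cl A_2)\le\gamma(\cl A_1)\gamma(\cl A_2)$ you only ever needed $N(\cl A_1^\sharp\otimes_{\max}\cl A_2^\sharp)\ge N(\cl A_1^\sharp)N(\cl A_2^\sharp)$, which you already proved; you set yourself a harder target than necessary.)

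On the $\Gamma$ bookkeeping: your claim that $\cl A_{I_{d_i}}\subseteq\cl A_i$ forces $\gamma(\cl A_i^\sharp)\le 1$ is false — it only gives $\cl A_i^\sharp\subseteq\cl B_{I_{d_i}}$, hence $\gamma(\cl A_i^\sharp)\le d_i$ — so that route to verifying $\cl A_{I_{d_1d_2}}\subseteq\cl A_1\otimes_{\max}\cl A_2$ does not go through. In fact that containment can genuinely fail (an entangled rank-one projection need not be dominated by any element of $\overline{\conv}\{A_1\otimes A_2\}$ of trace at most one), so the precondition in the definition of $\Gamma$ is a real issue; the paper's own proof simply exhibits the product PVM and does not address it either. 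For the write-up I would either note that the inequality is understood with $\Gamma$ read as the minimum over PVMs, or restrict to the case where both $\Gamma(\cl A_i)$ are finite and the product PVM witnesses the bound.
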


\begin{proof}
We have
\begin{eqnarray*}
\gamma(\cl A_1)\gamma(\cl A_2) 
& = & 
\max\{\Tr(A_1\otimes A_2) : A_i\in \cl A_i, i = 1,2\}\\
& \leq &  
\max\{\Tr(A) : A\in \cl A_1\otimes_{\max}\cl A_2\}
= 
\gamma(\cl A_1\otimes_{\max}\cl A_2).
\end{eqnarray*}
The inequality 
$\gamma(\cl A_1\otimes_{\max}\cl A_2) \leq \gamma(\cl A_1)\gamma(\cl A_2)$ is straightforward from the definition of 
$\cl A_1\otimes_{\max}\cl A_2$ and Proposition \ref{l_gen}, and hence 
$\gamma(\cl A_1\otimes_{\max}\cl A_2) = \gamma(\cl A_1)\gamma(\cl A_2)$.
By Theorem \ref{NMgamma2}, 
\begin{eqnarray}\label{eq_mmin}
M\left(\cl A_1\otimes_{\min}\cl A_2\right)
& = & 
\gamma\left(\left(\cl A_1\otimes_{\min}\cl A_2\right)^{\sharp}\right) 
= 
\gamma\left(\cl A_1^{\sharp}\otimes_{\max}\cl A_2^{\sharp}\right)\\ 
& = & 
\gamma(\cl A_1^{\sharp}) \gamma(\cl A_2^{\sharp}) 
= 
M\left(\cl A_1\right) M\left(\cl A_2\right).
\end{eqnarray}

Suppose that $A_i\in \cl A_i$ and $\mu_i\geq 0$ are such that $\mu_i A_i\geq I$, $i = 1,2$.
Then $(\mu_1\mu_2)(A_1\otimes A_2) \geq I$ and hence 
$M(\cl A_1\otimes_{\max}\cl A_2) \leq \mu_1\mu_2$. 
After taking the infimum over all $\mu_1$ and $\mu_2$, we obtain
$$M(\cl A_1\otimes_{\max}\cl A_2) \leq M(\cl A_1) M(\cl A_2).$$
Inclusion (\ref{eq_maxinmin}) and equality (\ref{eq_mmin}) now imply
$$M(\cl A_1\otimes_{\max}\cl A_2) = M\left(\cl A_1\right) M\left(\cl A_2\right).$$
Thus,
$$\gamma\left(\cl A_1\otimes_{\min}\cl A_2\right)
= 
M\left(\cl A_1^{\sharp}\otimes_{\max}\cl A_2^{\sharp}\right)
=  
M(\cl A_1^{\sharp}) M(\cl A_2^{\sharp}) 
= 
\gamma(\cl A_1) \gamma(\cl A_2).$$
An application of Theorem \ref{NMgamma2} now completes the proof of the multiplicative identities.

Suppose that $(P_j^{(i)})_{j=1}^{m_i}$ is a PVM in $\cl A_i$, $i = 1,2$. 
Then $\{P_j^{(1)}\otimes P_k^{(2)} : j \in [m_1], k\in [m_2]\}$ is a PVM in $\cl A_1\otimes_{\max}\cl A_2$. 
Together with (\ref{eq_maxinmin}), this shows the inequality chain. 
\end{proof}

Suppose that $\rho$ is a state in $M_{d_1}\otimes M_{d_2}$. We denote by 
$\Tr_i\rho$ the reduced state of $\rho$ in $M_{d_i}$, $i = 1,2$; thus, 
$\Tr_1\rho \in M_{d_1}^+$,
$$\langle \Tr\mbox{}_1\rho, A_1\rangle = \langle \rho, A_1\otimes I\rangle, \ \ \ A_1\in M_{d_1},$$
and similar identities hold for $\Tr_2\rho$.

\begin{theorem}\label{th_enprod}
Let $\cl A_i$ be a standard 
convex $M_{d_i}$-corner, $i = 1,2$, $\tau\in \{\min, \max\}$, and $\rho$ be a 
state in $M_{d_1}\otimes M_{d_2}$. Then 
$$H_{\cl A_1\otimes_{\tau}\cl A_2}(\rho) \leq H_{\cl A_1}(\Tr\mbox{}_1\rho) + H_{\cl A_2}(\Tr\mbox{}_2\rho).$$
If $V_i$ is an orthonormal basis of $\bb{C}^{d_i}$, $\rho_i \in \cl R_{d_i}\cap \cl D_{V_i}$, 
and $\cl A_i$ is $V_i$-aligned, $i = 1,2$, then 
\begin{equation}\label{eq_proe2}
H_{\cl A_1\otimes_{\tau}\cl A_2}(\rho_1\otimes \rho_2) = H_{\cl A_1}(\rho_1) + H_{\cl A_2}(\rho_2).
\end{equation}
\end{theorem}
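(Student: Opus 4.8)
The plan is to establish the two statements separately, exploiting the multiplicativity results of Theorem \ref{th_partp} together with the entropy bound of Proposition \ref{lbAncg} and the monotonicity/sandwich facts in Remark \ref{HAmin}. First I would prove the inequality. Given minimisers $A_i \in \cl A_i$ for $H_{\cl A_i}(\Tr_i\rho)$ (which exist by Lemma \ref{minA2}), consider $A_1 \otimes A_2 \in \cl A_1 \otimes_{\max} \cl A_2 \subseteq \cl A_1 \otimes_{\min} \cl A_2$. Then, using $\log(A_1 \otimes A_2) = (\log A_1)\otimes I + I \otimes (\log A_2)$ together with $\Tr(\rho\,((\log A_1)\otimes I)) = \Tr((\Tr_1\rho)\log A_1)$ and the analogous identity for the second factor, one gets $-\Tr(\rho \log(A_1\otimes A_2)) = -\Tr((\Tr_1\rho)\log A_1) - \Tr((\Tr_2\rho)\log A_2) = H_{\cl A_1}(\Tr_1\rho) + H_{\cl A_2}(\Tr_2\rho)$. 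Since $H_{\cl A_1\otimes_\tau\cl A_2}(\rho)$ is a minimum over a superset of $\{A_1\otimes A_2\}$ containing this element, the desired inequality follows for $\tau \in \{\min, \max\}$. (A small subtlety: one needs the kernel condition $\ker(A_1\otimes A_2) \subseteq \ker(\rho)$ or else the bound is $+\infty$ and trivially true; if $A_i$ are strictly positive, which we may arrange since $\cl A_i$ is standard, $A_1\otimes A_2 > 0$ and the issue disappears.)

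For the equality (\ref{eq_proe2}), the upper bound $\leq H_{\cl A_1}(\rho_1) + H_{\cl A_2}(\rho_2)$ is already the first part applied to $\rho = \rho_1\otimes\rho_2$, since $\Tr_1(\rho_1\otimes\rho_2) = \rho_1$ and $\Tr_2(\rho_1\otimes\rho_2)=\rho_2$. So the work is the reverse inequality, and because $\cl A_1 \otimes_{\max}\cl A_2 \subseteq \cl A_1\otimes_{\min}\cl A_2$ forces $H_{\cl A_1\otimes_{\min}\cl A_2} \leq H_{\cl A_1\otimes_{\max}\cl A_2}$, it suffices to prove $H_{\cl A_1\otimes_{\min}\cl A_2}(\rho_1\otimes\rho_2) \geq H_{\cl A_1}(\rho_1) + H_{\cl A_2}(\rho_2)$. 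The natural tool is Proposition \ref{lbAncg}: if $V := V_1 \otimes V_2$ (the product orthonormal basis of $\bb C^{d_1}\otimes\bb C^{d_2}$) and $\cl A_1\otimes_{\min}\cl A_2$ is $V$-aligned, then
$$H_{\cl A_1\otimes_{\min}\cl A_2}(\rho_1\otimes\rho_2) \geq H(\rho_1\otimes\rho_2) - \log\gamma(\cl A_1\otimes_{\min}\cl A_2).$$
Now $H(\rho_1\otimes\rho_2) = H(\rho_1) + H(\rho_2)$ (additivity of von Neumann entropy), and by Theorem \ref{th_partp}, $\gamma(\cl A_1\otimes_{\min}\cl A_2) = \gamma(\cl A_1)\gamma(\cl A_2)$, so the right-hand side equals $\big(H(\rho_1) - \log\gamma(\cl A_1)\big) + \big(H(\rho_2) - \log\gamma(\cl A_2)\big)$. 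This is the correct target only if Proposition \ref{lbAncg} is \emph{tight} for each factor, i.e. $H_{\cl A_i}(\rho_i) = H(\rho_i) - \log\gamma(\cl A_i)$, which holds precisely when $\gamma(\cl A_i)\rho_i \in \cl A_i$ — not true in general. So this crude approach only gives the equality under an extra hypothesis; the general case needs a direct argument.

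The general reverse inequality I would prove as follows. Let $C \in \cl A_1\otimes_{\min}\cl A_2$, which we may take strictly positive (since both corners are standard, $\cl A_1\otimes_{\min}\cl A_2$ is standard, so by Remark \ref{HAmin}(iv) the infimum in $H$ is over $(\cl A_1\otimes_{\min}\cl A_2)^{++}$), be a minimiser, so $H_{\cl A_1\otimes_{\min}\cl A_2}(\rho_1\otimes\rho_2) = -\Tr((\rho_1\otimes\rho_2)\log C)$. Since $\cl A_1\otimes_{\min}\cl A_2$ is $V$-aligned (this needs checking: it follows because $\cl A_1^\sharp\otimes_{\max}\cl A_2^\sharp$ is $V$-aligned, as it is generated by products $A_1\otimes A_2$ and $\Delta_V(A_1\otimes A_2) = \Delta_{V_1}(A_1)\otimes\Delta_{V_2}(A_2)$, and Remark \ref{diagon}(ii) transfers $V$-alignment across the anti-blocker), I apply Lemma \ref{noncomm'} to replace $C$ by an element $C' \in (\cl A_1\otimes_{\min}\cl A_2)\cap\cl D_V$ with the same value, so $C' = \sum_{i,j} c_{ij}\, v_{1,i}v_{1,i}^* \otimes v_{2,j}v_{2,j}^*$ for scalars $c_{ij} > 0$. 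Writing $\rho_1 = \sum_i p_i v_{1,i}v_{1,i}^*$, $\rho_2 = \sum_j q_j v_{2,j}v_{2,j}^*$, the value is $-\sum_{i,j} p_i q_j \log c_{ij}$. Now the key: for each fixed $j_0$, the ``slice'' matrix $\sum_i c_{i j_0}\, v_{1,i}v_{1,i}^*$ — more precisely, I would test $C'$ against elements of $\cl A_1^\sharp\otimes_{\max}\cl A_2^\sharp$ of the form $S\otimes T$ with $S\in\cl A_1^\sharp$, $T\in\cl A_2^\sharp$ diagonal, to extract that the diagonal vector $(c_{ij})_i$, appropriately normalised, lies in $\phi^{-1}(\cl D_{V_1}\cap\cl A_1)$ — and dually for the $j$-index. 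The hard part, and the main obstacle, is exactly this disentangling step: showing that the minimiser $C'$ over $\cl A_1\otimes_{\min}\cl A_2$ can be replaced, without increasing the entropy value, by a \emph{product} $A_1\otimes A_2$ with $A_i\in\cl A_i\cap\cl D_{V_i}$; equivalently, that $\cl D_V\cap(\cl A_1\otimes_{\min}\cl A_2)$ corresponds under $\phi$ to the classical tensor product $\phi^{-1}(\cl A_1\cap\cl D_{V_1})\otimes_{\min}\phi^{-1}(\cl A_2\cap\cl D_{V_2})$, combined with the classical product-entropy equality from \cite[Section 5]{Csis}. Once the diagonal slices are controlled this way, one applies the concavity of $\log$ in the form used in the proof of Theorem \ref{noncomm23}(i) to each index separately, yielding $-\sum_{i,j}p_iq_j\log c_{ij} \geq H_{\cl A_1}(\rho_1) + H_{\cl A_2}(\rho_2)$, which closes the argument.
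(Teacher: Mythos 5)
The subadditivity half of your proposal is correct and is essentially the paper's argument: split $\log(A_1\otimes A_2)$, use $(\Tr_1\rho)\log A_1=\Tr_1\bigl(\rho\log(A_1\otimes I)\bigr)$ and its mirror, take $A_i$ strictly positive (legitimate by Remark \ref{HAmin}, since the $\cl A_i$ are standard), and pass from $\otimes_{\max}$ to $\otimes_{\min}$ via the inclusion (\ref{eq_maxinmin}).

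The equality (\ref{eq_proe2}) is, however, not proved. You correctly discard the route via Proposition \ref{lbAncg} (it is tight only when $\gamma(\cl A_i)\rho_i\in\cl A_i$), but the replacement argument you sketch stalls exactly where you say it does: after diagonalising a minimiser $C$ of $H_{\cl A_1\otimes_{\min}\cl A_2}(\rho_1\otimes\rho_2)$ to $C'\in\cl D_V$ via Lemma \ref{noncomm'}, the ``disentangling'' of $C'$ into a product $A_1\otimes A_2$ with $A_i\in\cl A_i\cap\cl D_{V_i}$ --- or equivalently the identification of $\cl D_V\cap(\cl A_1\otimes_{\min}\cl A_2)$ with a classical tensor product so that \cite{Csis} applies --- is precisely the missing content, and testing $C'$ against product elements of $\cl A_1^\sharp\otimes_{\max}\cl A_2^\sharp$ only yields information about marginal sums of the diagonal entries, not the product structure you need. (If you insist on this route, the honest way to close it is Proposition \ref{p_liftspro} combined with Theorem \ref{noncomm15} and the classical product theorem of \cite{Csis}, plus the $V_1\otimes V_2$-alignment bookkeeping; none of this is carried out in the proposal.) The paper avoids any analysis of minimisers of the product corner: it applies the entropy splitting result, Theorem \ref{noncomm23}(iii), to $\cl A_1\otimes_{\max}\cl A_2$, whose anti-blocker is $\cl A_1^\sharp\otimes_{\min}\cl A_2^\sharp$, obtaining
$$H(\rho_1\otimes\rho_2)=H_{\cl A_1\otimes_{\max}\cl A_2}(\rho_1\otimes\rho_2)+H_{\cl A_1^\sharp\otimes_{\min}\cl A_2^\sharp}(\rho_1\otimes\rho_2),$$
then bounds each summand above by the already-proved subadditivity (note $\cl A_i^\sharp$ is again $V_i$-aligned by Remark \ref{diagon}), and uses $H_{\cl A_i}(\rho_i)+H_{\cl A_i^\sharp}(\rho_i)=H(\rho_i)$ to see that the resulting upper bound equals $H(\rho_1\otimes\rho_2)$; hence every inequality in the chain is an equality, which gives (\ref{eq_proe2}) for $\tau=\max$ directly and for $\tau=\min$ after replacing $\cl A_i$ by $\cl A_i^\sharp$ and invoking reflexivity (Theorem \ref{2ndABe}). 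You should adopt this squeeze argument, or else supply a genuine proof of the disentangling step; as written, the proposal has a gap at the heart of the equality.
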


\begin{proof}
Let $A_i \in \cl A_i^{++}$, $i = 1,2$. 
If $B_1\in M_{d_1}$ then 
\begin{eqnarray*}
\langle (\Tr\mbox{}_1 \rho)\log A_1, B_1\rangle 
& = & 
\langle (\Tr\mbox{}_1 \rho), (\log A_1) B_1\rangle
= 
\langle \rho, (\log A_1) B_1\otimes I\rangle\\
& = & 
\langle \rho, ((\log A_1) \otimes I) (B_1\otimes I)\rangle\\
& = & 
\langle \rho \log (A_1\otimes I), B_1\otimes I\rangle\\
& = & 
\langle \Tr\mbox{}_1 (\rho \log (A_1\otimes I)), B_1\rangle;
\end{eqnarray*}
thus, $(\Tr\mbox{}_1 \rho)\log A_1 = \Tr\mbox{}_1 (\rho \log (A_1\otimes I))$
and, by symmetry,
$(\Tr\mbox{}_2 \rho)\log A_2 = \Tr\mbox{}_2 (\rho \log (I \otimes A_2))$.

Since $\cl A_1$ and $\cl A_2$ are standard, so are $\cl A_1\otimes_{\max} \cl A_2$ and 
$\cl A_1\otimes_{\min}\cl A_2$. 
Using Remark \ref{HAmin}, we have 
\begin{eqnarray*}
& & \hspace{-0.62cm} H_{\cl A_1\otimes_{\max}\cl A_2}(\rho)\\
& \leq & 
\inf \{-\Tr (\rho \log (A_1 \otimes A_2)) : A_i\in \cl A_i^{++}, i = 1,2\}\\
& = & 
\inf \{-\Tr (\rho \log (A_1 \otimes I)) - \Tr (\rho \log(I\otimes A_2)) : A_i\in \cl A_i^{++}, i = 1,2\}\\
& = & 
\inf \{-\Tr((\Tr\mbox{}_1 \rho)\log A_1) : A_1\in \cl A_1^{++}\}\\
& + & 
\inf \{-\Tr((\Tr\mbox{}_2 \rho)\log A_2) : A_2\in \cl A_2^{++}\}\\
& = & 
H_{\cl A_1}(\Tr\mbox{}_1\rho) + H_{\cl A_2}(\Tr\mbox{}_2\rho).
\end{eqnarray*}
The inequality in Theorem \ref{th_enprod} for the minimal tensor product now follows from (\ref{eq_maxinmin}). 
Using Theorem \ref{noncomm23}, we hence have 
\begin{eqnarray*}
H(\rho_1\otimes \rho_2) 
& = & 
H_{\cl A_1\otimes_{\max}\cl A_2}(\rho_1\otimes \rho_2) + H_{\cl A_1^{\sharp} \otimes_{\min} \cl A_2^{\sharp}}(\rho_1\otimes \rho_2)\\
& \leq & 
H_{\cl A_1}(\rho_1) + H_{\cl A_2}(\rho_2) + H_{\cl A_1^{\sharp}}(\rho_1) + H_{\cl A_2^{\sharp}}(\rho_2)\\
& = & 
H(\rho_1) + H(\rho_2) = H(\rho_1\otimes \rho_2).
\end{eqnarray*}
Equality (\ref{eq_proe2}) is now immediate. 
\end{proof}

\noindent {\bf Remark. } 
Tensor products of convex $\bb{R}^d$-corners were introduced in \cite{Csis} in an analogous way to Definition 
\ref{d_pcco}, where for the definition of the minimal tensor product one uses the classical anti-blocker 
$\flat$ instead of the non-commutative one $\sharp$. 
Equality (\ref{eq_proe2}) generalises \cite[Theorem 16]{Csis}, where the similar equality was shown for 
the entropy of product probability distributions with respect to products of classical convex corners.

\begin{prop}\label{p_liftspro}
Let $\cl A_i$ be a diagonal convex corner in $M_{d_i}$, 
$\cl B_i$ be a non-commutative lift of $\cl A_i$, $i = 1,2$, 
and $\tau\in \{\min,\max\}$. 
Then $\cl B_1\otimes_{\tau}\cl B_2$ is a non-commutative lift of $\cl A_1\otimes_{\tau}\cl A_2$. 
\end{prop}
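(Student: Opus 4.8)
The plan is to verify the defining condition of a non-commutative lift, namely that $\Delta_{d_1 d_2}(\cl B_1\otimes_\tau \cl B_2) = \cl D_{d_1 d_2} \cap (\cl B_1\otimes_\tau \cl B_2) = \cl A_1\otimes_\tau \cl A_2$, where here $\Delta_{d_1 d_2}$ denotes the diagonal expectation on $M_{d_1 d_2}$ with respect to the tensor product of the canonical bases; note this expectation factors as $\Delta_{d_1}\otimes \Delta_{d_2}$ on elementary tensors. By Remark \ref{diagon}(ii), it suffices to show that $\Delta_{d_1 d_2}(\cl B_1\otimes_\tau\cl B_2) \subseteq \cl B_1\otimes_\tau\cl B_2$ together with $\Delta_{d_1 d_2}(\cl B_1\otimes_\tau\cl B_2) = \cl A_1\otimes_\tau\cl A_2$; in fact it is cleanest to prove the stability $\Delta_{d_1 d_2}(\cl B_1\otimes_\tau\cl B_2)\subseteq \cl B_1\otimes_\tau\cl B_2$ and the two inclusions $\cl A_1\otimes_\tau\cl A_2 \subseteq \cl D_{d_1 d_2}\cap(\cl B_1\otimes_\tau\cl B_2)$ and $\Delta_{d_1 d_2}(\cl B_1\otimes_\tau\cl B_2)\subseteq \cl A_1\otimes_\tau\cl A_2$ separately, from which the triple equality follows.

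First I would handle the maximal tensor product. Since $\cl A_i\subseteq \cl B_i$ and $\cl A_i$ is a diagonal convex corner, we have $\cl A_1\otimes_{\max}\cl A_2 \subseteq \cl B_1\otimes_{\max}\cl B_2$ directly from the definition via ${\rm C}(\cdot)$ and monotonicity (Lemma \ref{ccinM_d}); moreover $\cl A_1\otimes_{\max}\cl A_2$ is a diagonal convex corner, so it lands inside $\cl D_{d_1 d_2}$, giving one inclusion. For the stability $\Delta_{d_1 d_2}(\cl B_1\otimes_{\max}\cl B_2)\subseteq \cl B_1\otimes_{\max}\cl B_2$: by Proposition \ref{l_gen}(i), $\cl B_1\otimes_{\max}\cl B_2 = \her(\overline{\conv}\{B_1\otimes B_2\})$, and since $\Delta_{d_1 d_2} = \Delta_{d_1}\otimes\Delta_{d_2}$ maps $B_1\otimes B_2$ to $\Delta_{d_1}(B_1)\otimes\Delta_{d_2}(B_2) \in \cl A_1\otimes\cl A_2 \subseteq \cl B_1\otimes\cl B_2$ (using that $\cl B_i$ is a lift of $\cl A_i$), and since $\Delta_{d_1 d_2}$ is positive, linear, continuous and order-preserving, it preserves the closed convex hull and the hereditary closure; hence $\Delta_{d_1 d_2}(\cl B_1\otimes_{\max}\cl B_2)\subseteq \her(\overline{\conv}\{\Delta_{d_1}(B_1)\otimes\Delta_{d_2}(B_2)\}) \subseteq \cl B_1\otimes_{\max}\cl B_2$. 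The same computation shows $\Delta_{d_1 d_2}(\cl B_1\otimes_{\max}\cl B_2)\subseteq \her(\overline{\conv}\{A_1\otimes A_2 : A_i\in\cl A_i\}) = \cl A_1\otimes_{\max}\cl A_2$ (using Corollary \ref{555555} for the diagonal corner), which is the remaining inclusion. This settles $\tau = \max$.

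For the minimal tensor product I would pass through anti-blockers using Proposition \ref{Deltasharp}: $\cl B_i$ is a non-commutative lift of $\cl A_i$ if and only if $\cl B_i^\sharp$ is a non-commutative lift of $\cl A_i^\flat$. By the $\max$ case already proved, $\cl B_1^\sharp\otimes_{\max}\cl B_2^\sharp$ is a non-commutative lift of $\cl A_1^\flat\otimes_{\max}\cl A_2^\flat$. Applying Proposition \ref{Deltasharp} once more, $(\cl B_1^\sharp\otimes_{\max}\cl B_2^\sharp)^\sharp = \cl B_1\otimes_{\min}\cl B_2$ is a non-commutative lift of $(\cl A_1^\flat\otimes_{\max}\cl A_2^\flat)^\flat$. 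It then remains to identify $(\cl A_1^\flat\otimes_{\max}\cl A_2^\flat)^\flat$ with $\cl A_1\otimes_{\min}\cl A_2$, which is precisely the definition of the minimal tensor product of the diagonal convex corners $\cl A_i$ (with the classical anti-blocker $\flat$), using the compatibility $\cl B^\flat = \phi(\phi^{-1}(\cl B)^\flat)$ from Remark \ref{dcc1} and Theorem \ref{th_sabt} to match the $\flat$-level definition with the $\sharp$-level one; one checks that the diagonal anti-blocker of $\cl A_1^\flat\otimes_{\max}\cl A_2^\flat$ inside $\cl D_{d_1 d_2}$ coincides with $\cl A_1\otimes_{\min}\cl A_2$ because $\cl A_i^\flat = \cl D_{d_i}\cap\cl B_i^\sharp$ and anti-blockers of diagonal corners are computed diagonally.

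I expect the main obstacle to be bookkeeping the interplay between the three notions of anti-blocker ($\sharp$ on $M_d$-corners, $\flat$ on $\bb R^d$-corners, and the diagonal anti-blocker $\cl D_d\cap(\cdot)^\sharp$) and making sure the identification $(\cl A_1^\flat\otimes_{\max}\cl A_2^\flat)^\flat = \cl A_1\otimes_{\min}\cl A_2$ in the $\tau=\min$ step is literally the definition used for tensor products of diagonal convex corners (cited as the analogue of \cite[Section 5]{Csis}); one must be careful that ``$\otimes_{\min}$ of diagonal corners'' is interpreted via $\flat$, while ``$\otimes_{\min}$ of $M_d$-corners'' is via $\sharp$, and that these are consistent when restricted to diagonals — which follows from Remark \ref{dcc1} and the fact established in the $\max$ case that $\phi(\phi^{-1}(\cl A_1)\otimes_{\max}\phi^{-1}(\cl A_2))$ is a diagonal corner with the expected diagonal compression. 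The positivity/continuity properties of $\Delta_{d_1 d_2}$ and its factorisation as $\Delta_{d_1}\otimes\Delta_{d_2}$ are routine and will not be elaborated.
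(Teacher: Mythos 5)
Your proposal is correct and follows essentially the same route as the paper: the $\max$ case is handled by showing $(\Delta_{d_1}\otimes\Delta_{d_2})$-stability of $\cl B_1\otimes_{\max}\cl B_2$ and invoking Remark \ref{diagon}, and the $\min$ case is obtained by dualising through Proposition \ref{Deltasharp} and the definitional identity $(\cl A_1^\flat\otimes_{\max}\cl A_2^\flat)^\flat=\cl A_1\otimes_{\min}\cl A_2$. You are in fact somewhat more explicit than the paper in identifying $\Delta(\cl B_1\otimes_{\max}\cl B_2)$ with $\cl A_1\otimes_{\max}\cl A_2$, which the paper leaves implicit.
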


\begin{proof}
Denote by $\Delta_i$ the conditional expectation onto $\cl D_{d_i}$;
we have that $\Delta_i(\cl B_i)\subseteq \cl B_i$, $i = 1,2$.
It follows that 
$$(\Delta_1\otimes\Delta_2)(\cl B_1\otimes_{\max}\cl B_2)\subseteq \cl B_1\otimes_{\max}\cl B_2,$$
and hence (see Remark \ref{diagon}),
$$(\Delta_1\otimes\Delta_2)(\cl B_1\otimes_{\max}\cl B_2) = (\cl B_1\otimes_{\max}\cl B_2)\cap (\cl D_{d_1}\otimes\cl D_{d_2}).$$
Thus, $\cl B_1\otimes_{\max}\cl B_2$ is a non-commutative lift of $\cl A_1\otimes_{\max}\cl A_2$. 
By Proposition \ref{Deltasharp}, 
$(\cl B_1^{\sharp}\otimes_{\max}\cl B_2^{\sharp})^{\sharp}$ is a non-commutative lift of 
$(\cl A_1^{\flat}\otimes_{\max}\cl A_2^{\flat})^{\flat}$, and the proof is complete. 
\end{proof}


\section{Convex corners from non-commutative graphs}\label{s_ccfncg}


\subsection{Motivation}

In this subsection, we recall some basic notions from zero-error information and quantum information theory; 
we refer the reader to \cite{NC} for some of the basic notions, such as completely positive maps and quantum channels.
Given a classical information channel $\cl N$ with an input alphabet $[d]$ and an output alphabet $[k]$, 
its confusability graph $G_{\cl N}$, as defined by Shannon in \cite{Shannon2}, has vertex set $[d]$, and 
two symbols $i,j\in [d]$ are adjacent if they may result in the same output from $[k]$ 
after transmission via $\cl N$. Shannon observed that the one-shot zero-error capacity of $\cl N$ 
-- that is, the size of a largest subset of $[d]$, no two elements of which can result in the same output 
after applying $\cl N$ 
-- is equal to the independence number $\alpha(G_{\cl N})$ of $G_{\cl N}$. 
The zero-error transmission properties of $\cl N$ were thus reduced to the study of various asymptotic 
combinatorial parameters of $G_{\cl N}$.
Given two information channels with confusability graphs $G_1$ and $G_2$, on vertex sets $[d_1]$ and 
$[d_2]$, respectively, the product channel has 
confusability graph equal to the \emph{strong product} $G_1 \boxtimes G_2$ of $G_1$ and $G_2$, that is, the graph with 
vertex set $[d_1]\times [d_2]$, in which $(i,k) \simeq (j,l)$
if and only if $i \simeq j$ in $G_1$ and $k \simeq l$ in $G_2$.
(Here, and in the sequel, we write $i\sim j$ to denote adjacency, and $i\simeq j$ if $i\sim j$ or $i = j$.)
Writing $G^{\boxtimes n}$ for the $n$-fold strong product of $G$, the \emph{Shannon capacity} \cite{Shannon2} of $G$ is 
the parameter
$$\Theta(G) = \lim_{n \to \infty}  \sqrt[n]{\alpha\left(G^{\boxtimes n}\right)}.$$

In the zero-error quantum communication task, Alice uses a quantum channel -- that is, a 
completely positive trace preserving linear map $\Phi : M_d\to M_k$ --
to send to Bob states from $\cl R_d$, received at Bob's site as states from $\cl R_k$. 
The \emph{one-shot zero-error capacity} of $\Phi$ is the maximum number $m$ of pure states 
$\xi_1\xi_1^*$, $\xi_2\xi_2^*,\dots,\xi_m\xi_m^*$ in $\cl R_d$ such that 
$\Phi(\xi_i\xi_i^*)\perp \Phi(\xi_j\xi_j^*)$ for $i\neq j$ 
(here, and in the sequel, for $\rho_1,\rho_2\in M_d$, we write $\rho_1\perp \rho_2$ if 
$\rho_1$ and $\rho_2$ are orthogonal in the Hilbert-Schmidt inner product). 
Let $\Phi$ have a Kraus representation 
$$\Phi(T) = \sum_{p=1}^r A_p T A_p^*, \ \ \ T\in M_d,$$
where $A_p : \bb{C}^d\to \bb{C}^k$, $p\in [r]$, are such that $\sum_{p=1}^r A_p^* A_p = I$.
Set 
$$\cl S_{\Phi} = {\rm span}\left\{A_p^* A_q : p,q\in [r]\right\}$$
and note that $\cl S_{\Phi}$ is an operator system in $M_d$, in the sense that 
$$I\in \cl S_{\Phi} \mbox{ and } S\in \cl S_{\Phi} \Rightarrow S^*\in \cl S_{\Phi}.$$
The operator system $\cl S_{\Phi}$ was shown in \cite{duan} to depend only on $\Phi$ -- and not on the 
particular Kraus representation of $\Phi$ used to define it -- 
and to capture many 
zero-error transmission properties of $\Phi$, 
playing the role of a confusability graph of $\Phi$ in the quantum setting. 
For example, it was observed that, for two unit vectors $\xi,\eta\in \bb{C}^d$, 
we have $\Phi(\xi\xi^*)\perp \Phi(\eta\eta^*)$ if and only if $\xi\eta^*\perp \cl S_{\Phi}$;
thus, the one-shot zero-error capacity of $\Phi$ coincides with the \emph{independence number}
$\alpha(\cl S)$ of $\cl S = \cl S_{\Phi}$, defined as 
$$\alpha(\cl S) = \max\left\{m : \exists \mbox{ unit vectors } \xi_i\in \bb{C}^d, i\in [m], \mbox{ s.t. } 
\xi_i\xi_j^*\perp \cl S \mbox{ if } i\neq j\right\}.$$
It is easy to note that, if $\cl S$ and $\cl T$ are operator systems in $M_d$ then 
$\alpha(\cl S\otimes\cl T) \geq \alpha(\cl S) \alpha(\cl T)$; by Fekete's Lemma, 
the \emph{Shannon capacity} 
$$\Theta(\cl S) = \lim_{n \To \infty} \sqrt[n]{\alpha\left(\cl S^{\otimes n}\right)}$$ 
of $\cl S$ is well-defined.

An arbitrary operator system in $M_d$ was hence called a \emph{non-commutative graph} in \cite{duan}. 
Given a graph $G$ with vertex set $[d]$, let 
$$\cl S_G = \Span\{ e_ie_j^*: i,j \in [n], ~i \simeq j \mbox{ in } G\}$$ 
be the \emph{graph operator system} of $G$. 
It was observed in \cite{duan} that $\alpha(\cl S_G) = \alpha(G)$ for every graph $G$. 
Since $\cl S_{G_1 \boxtimes G_2}= \cl S_{G_1} \otimes \cl S_{G_2}$, this implies that 
$\Theta(\cl S_G) = \Theta(G)$.

Identifying computable bounds on the Shannon capacity of a graph, together with questions about 
information sources equipped with non-uniform probability distributions that describe the likelihood of a 
particular symbol from $[d]$, leads naturally to the consideration of several convex $\bb{R}^d$-corners 
canonically associated with the graph $G$ \cite{relax}. 
In the next subsection, we recall these convex corners, their non-commutative counterparts \cite{BTW},
and establish some relations between them.


\subsection{Canonical convex corners from graphs}\label{ss_canonicalcc}

Let $G$ be a graph with vertex set $[d]$.
Recall that a subset $S\subseteq [d]$ is called independent (resp. a clique) 
if $i\not\sim j$ (resp. $i\simeq j$) whenever $i,j\in S$.
The complement $\bar{G}$ of $G$ has vertex set $[d]$, and $i\sim j$ in $\bar{G}$ 
if $i\not\simeq j$ in $G$. 
The \emph{vertex packing polytope} \cite{relax} of $G$ is the set
$$\vp(G) = \conv\left\{\chi_{S} : S\subseteq [d] \mbox{ an independent set}\right\},$$
while the \emph{fractional vertex packing polytope} \cite{relax} of $G$ is the set
$$\fvp(G) = \left\{x\in \bb{R}_+^d : \sum_{i\in K}x_i \leq 1,  \mbox{ for all cliques } K\subseteq [d]\right\};$$
note that $\fvp(G) = \vp(\bar{G})^{\flat}$.
(We denote by $\chi_S$ the characteristic function of a set $S$.)
We view these sets as diagonal convex corners in $M_d$ via the map (\ref{phi}). 

The notion of an $\cl S$-independent set in Definition \ref{Scliqfull} below
was first given in \cite{Paulsen}, while the notions of 
an $\cl S$-full set and an $\cl S$-clique were introduced in \cite{BTW}.

\begin{definition} \label{Scliqfull}  
Let $\cl S \subseteq M_d$ be a non-commutative graph. 
An orthonormal set $\{ v_1, \ldots, v_k\} \subseteq \bb C^d$ is called
\begin{enumerate} 
\item [(i)] \emph{$\cl S$-independent} if $v_iv_j^* \in \cl S^\perp$ for all $i \ne j$;
\item [(ii)] \emph{$\cl S$-clique} if $v_iv_j^* \in \cl S$ for all $i \ne j$, and
\item [(iii)] \emph{$\cl S$-full} if $v_iv_j^* \in \cl S$ for all $i,j \in [k]$.  
\end{enumerate} 
A projection $P\in M_d$ is called \emph{$\cl S$-abelian} 
(resp. \emph{$\cl S$-clique}, \emph{$\cl S$-full}) if its range is the span of an $\cl S$-independent set
(resp. an $\cl S$-clique, an $\cl S$-full) set.
\end{definition}

We let $\cl P_{\rm a}(\cl S)$ (resp. $\cl P_{\rm c}(\cl S)$, $\cl P_{\rm f}(\cl S)$) be the set of all 
$\cl S$-abelian (resp. $\cl S$-clique, $\cl S$-full) projections. 
We have that a projection $P$ is $\cl S$-abelian if and only if the set $P\cl S P$ consists of 
commuting operators; this fact was communicated to us by Vern Paulsen (see \cite{BTW}).

\medskip

\noindent {\bf Remark. } 
If $G$ is a graph with vertex set $[d]$
and $S\subseteq [d]$ is an independent set of $G$ then the set $\{e_i : i \in S\}$ is $\cl S_G$-independent. 
Similarly, 
if $K\subseteq [d]$ is a clique of $G$ 
then the set $\{e_i : i \in K\}$ is $\cl S_G$-full, and hence an $\cl S_G$-clique. 
The notion of an $\cl S$-independent set -- and that of an $\cl S$-abelian projection --
can thus be viewed a non-commutative version of the notion of an independent set of a graph. 
Similarly, $\cl S$-clique and $\cl S$-full projections are (distinct) non-commutative versions of  
the notion of a clique of a graph.

\medskip

Recall the following convex $M_d$-corners, associated with a non-commu- tative graph $\cl S \subseteq M_d$ \cite{BTW}:  
\begin{itemize}
\item $\ap(\cl S) = {\rm C}\left(\cl P_{\rm a}(\cl S)\right)$, the \emph{abelian projection convex corner};
\item $\cp(\cl S) = {\rm C}\left(\cl P_{\rm c}(\cl S)\right)$,
the \emph{clique projection convex corner};
\item $\fp(\cl S) = {\rm C}\left(\cl P_{\rm f}(\cl S)\right)$, the \emph{full projection convex corner}.
\end{itemize}

\begin{remark}\label{apcpbounds} 
Let $\cl S \subseteq M_d$ be a non-commutative graph.
\begin{itemize}
\item[(i)] Since every $\cl S$-full projection is $\cl S$-clique, we have $\fp(\cl S) \subseteq \cp(\cl S)$.
\item[(ii)] Since every rank one projection is trivially $\cl S$-abelian and $\cl S$-clique, 
$\cl A_{I_d} \subseteq \ap(\cl S) \subseteq \cl B_{I_d}$ and $\cl A_{I_d} \subseteq \cp(\cl S) \subseteq \cl B_{I_d}$.
\item[(iii)] The convex corners $\ap(\cl S)$ and $\cp(\cl S)$ are standard. This is not always true for 
$\fp(\cl S)$, which can reduce to $\{0\}$. 
\item[(iv)] If $\cl T\subseteq M_d$ is a non-commutative graph with 
$\cl S \subseteq \cl T$ then 
$\ap(\cl T) \subseteq \ap(\cl S)$, $\cp(\cl S) \subseteq \cp(\cl T)$ and $\fp(\cl S) \subseteq \fp(\cl T)$.
\end{itemize}
\end{remark}

Parts (i)-(ii) of the next proposition were established in \cite{BTW}, while (iii)-(iv) 
follow after an application of Proposition \ref{Deltasharp}. 

\begin{prop}  \label{apvp}
Let $G$ be a graph. The following hold:
\begin{itemize}
\item[(i)] 
$\ap(\cl S_G)$ is a non-commutative lift of $\vp(G)$;

\item[(ii)] 
$\cp(\cl S_G)$ and $\fp(\cl S_G)$ are non-commutative lifts of $\vp (\bar{G})$;

\item[(iii)] 
$\ap (\cl S_G)^\sharp$ is a non-commutative lift of $\fvp(\bar{G})$;

\item[(iv)] 
$\cp(\cl S_G)^\sharp$ and $\fp(\cl S_G)^\sharp$ are non-commutative lifts of $\fvp(G)$.
\end{itemize}
\end{prop}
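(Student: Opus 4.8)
The plan is to deduce all four statements from Proposition \ref{Deltasharp} together with the two statements (i)-(ii) that are quoted from \cite{BTW}. Recall that Proposition \ref{Deltasharp} asserts that for a diagonal convex corner $\cl A$ and a convex corner $\cl B$, $\cl B$ is a non-commutative lift of $\cl A$ if and only if $\cl B^{\sharp}$ is a non-commutative lift of $\cl A^{\flat}$. So the entire argument is: take a known lift, apply the $\sharp$ operation, and identify what the corresponding diagonal corner $\cl A^{\flat}$ is in graph-theoretic terms.

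First I would record the two classical identities relating the vertex packing polytope, its complement, and the diagonal anti-blocker: namely $\fvp(G) = \vp(\bar G)^{\flat}$, which is already stated in the excerpt, and its companion $\fvp(\bar G) = \vp(G)^{\flat}$ (obtained by replacing $G$ with $\bar G$ and using $\bar{\bar G} = G$). These are the only facts about the classical corners that are needed.

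For (iii): by part (i), $\ap(\cl S_G)$ is a non-commutative lift of $\vp(G)$. Applying Proposition \ref{Deltasharp}, $\ap(\cl S_G)^{\sharp}$ is a non-commutative lift of $\vp(G)^{\flat} = \fvp(\bar G)$. For (iv): by part (ii), each of $\cp(\cl S_G)$ and $\fp(\cl S_G)$ is a non-commutative lift of $\vp(\bar G)$; applying Proposition \ref{Deltasharp} to each, $\cp(\cl S_G)^{\sharp}$ and $\fp(\cl S_G)^{\sharp}$ are non-commutative lifts of $\vp(\bar G)^{\flat} = \fvp(G)$. One small point to check before invoking Proposition \ref{Deltasharp} is that it applies: the proposition is stated for a diagonal convex corner $\cl A$ and a convex $M_d$-corner $\cl B$, and indeed $\vp(G)$, $\vp(\bar G)$ are (standard) diagonal convex corners and $\ap(\cl S_G)$, $\cp(\cl S_G)$, $\fp(\cl S_G)$ are convex $M_d$-corners, so there is nothing to verify beyond the hypotheses already packaged in (i)-(ii).

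There is essentially no obstacle here; the content is entirely in the second anti-blocker theorem (Theorem \ref{2ndABe}), which powers Proposition \ref{Deltasharp}, and in the \cite{BTW} identifications (i)-(ii). The only mild care needed is bookkeeping of complements — making sure one writes $\fvp(\bar G)$ rather than $\fvp(G)$ in (iii) and vice versa in (iv) — which follows immediately from the two classical identities above once one fixes which of $\vp(G)$ and $\vp(\bar G)$ the relevant corner lifts. I would present the proof as the three short lines indicated, prefaced by the remark that (i)-(ii) are from \cite{BTW} and the two displayed classical identities, and note the brace ``$\vp(\bar G)^{\flat} = \fvp(G)$'' explicitly so the reader can match it against the definition of $\fvp$ recalled just above.
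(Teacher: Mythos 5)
Your proposal matches the paper's argument exactly: the paper states that (i)--(ii) are established in \cite{BTW} and that (iii)--(iv) follow by applying Proposition \ref{Deltasharp}, using the identity $\fvp(G)=\vp(\bar G)^{\flat}$ (and its complement-swapped companion) already recorded in the text. Your bookkeeping of complements is correct, so there is nothing to add.
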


Now Theorem \ref{diagdelta}, Remark \ref{apcpbounds} 
and Proposition \ref{apvp} imply the following.

\begin{cor} \label{apvpgraph} 
Let $G$ be a graph. The following hold:
\begin{itemize} 
\item[(i)]  $\her(\vp(G)) \subseteq \ap(\cl S_G) \subseteq  (\vp(G)^\flat)^\sharp$; 
\item[(ii)] $\her(\vp(\bar{G})) \subseteq \fp(\cl S_G) \subseteq \cp(\cl S_G) \subseteq  \fvp(G)^\sharp$; 
\item[(iii)]  $\her(\vp(G)^\flat) \subseteq \ap(\cl S_G) ^\sharp \subseteq  \vp(G)^\sharp$;
\item[(iv)]  $\her(\fvp(G)) \subseteq \cp(\cl S_G)^\sharp \subseteq \fp(\cl S_G)^\sharp \subseteq  (\fvp(G)^\flat)^\sharp$.
\end{itemize}  
\end{cor}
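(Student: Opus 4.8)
The statement is Corollary \ref{apvpgraph}, and the announced strategy is to combine Theorem \ref{diagdelta}, Remark \ref{apcpbounds}, and Proposition \ref{apvp}. The plan is to treat each of the four containments by reducing to the two-sided bound $\cl B_1 \subseteq \cl B \subseteq \cl B_2$ provided by Theorem \ref{diagdelta} for non-commutative lifts, where $\cl B_1 = \her(\cl A)$ and $\cl B_2 = (\cl A^\flat)^\sharp$, and where $\cl A$ is the appropriate diagonal convex corner in each case.

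First I would handle (i): by Proposition \ref{apvp}(i), $\ap(\cl S_G)$ is a non-commutative lift of $\vp(G)$, so Theorem \ref{diagdelta} applied with $\cl A = \vp(G)$ gives directly $\her(\vp(G)) \subseteq \ap(\cl S_G) \subseteq (\vp(G)^\flat)^\sharp$. For (iii) I would apply Theorem \ref{diagdelta} to $\cl A = \vp(G)^\flat$, noting by Proposition \ref{apvp}(iii) that $\ap(\cl S_G)^\sharp$ is a non-commutative lift of $\fvp(\bar G) = \vp(G)^\flat$; then $\cl B_1 = \her(\vp(G)^\flat)$ and $\cl B_2 = ((\vp(G)^\flat)^\flat)^\sharp = \vp(G)^\sharp$, where the last identity uses Theorem \ref{th_sabt} (or Remark \ref{dcc1}): $(\vp(G)^\flat)^\flat = \vp(G)$, and then $\vp(G)^{\sharp}$ requires the diagonal-vs-full-anti-blocker bookkeeping via $\her$, i.e.\ $(\her(\vp(G)))^\sharp = \vp(G)^\sharp$ by Proposition \ref{l_gen}. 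Similarly (iv) follows by applying Theorem \ref{diagdelta} to $\cl A = \fvp(G)$, using that both $\cp(\cl S_G)^\sharp$ and $\fp(\cl S_G)^\sharp$ are non-commutative lifts of $\fvp(G)$ by Proposition \ref{apvp}(iv), together with the containment $\fp(\cl S_G) \subseteq \cp(\cl S_G)$ from Remark \ref{apcpbounds}(i), which dualises (via Corollary \ref{reflexiveiff}) to $\cp(\cl S_G)^\sharp \subseteq \fp(\cl S_G)^\sharp$; the outer bound is $(\fvp(G)^\flat)^\sharp$.

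For (ii) the situation is slightly different because there are two convex corners, $\fp(\cl S_G)$ and $\cp(\cl S_G)$, both of which are non-commutative lifts of $\vp(\bar G)$ by Proposition \ref{apvp}(ii). Theorem \ref{diagdelta} gives $\her(\vp(\bar G)) \subseteq \fp(\cl S_G)$ and $\cp(\cl S_G) \subseteq (\vp(\bar G)^\flat)^\sharp$, while $\fp(\cl S_G) \subseteq \cp(\cl S_G)$ is Remark \ref{apcpbounds}(i); chaining these gives $\her(\vp(\bar G)) \subseteq \fp(\cl S_G) \subseteq \cp(\cl S_G) \subseteq (\vp(\bar G)^\flat)^\sharp$. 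It then remains to identify $(\vp(\bar G)^\flat)^\sharp$ with $\fvp(G)^\sharp$: by definition $\fvp(G) = \vp(\bar G)^\flat$, and by Proposition \ref{l_gen} (or Corollary \ref{555555}) $(\vp(\bar G)^\flat)^\sharp = \her(\fvp(G))^\sharp = \fvp(G)^\sharp$, since the diagonal corner $\fvp(G)$ and its hereditary cover have the same anti-blocker in $M_d$.

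The only real bookkeeping obstacle — and I expect it to be the main (if modest) one — is keeping straight the distinction between the diagonal anti-blocker $\flat$ (computed within $\cl D_d$) and the full anti-blocker $\sharp$ (computed within $M_d$), and the repeated use of the identity $\cl B^\sharp = \her(\cl B)^\sharp$ from Proposition \ref{l_gen} to pass from diagonal corners to their hereditary covers without changing the anti-blocker. Once that correspondence is used consistently, each of (i)--(iv) is a direct instantiation of Theorem \ref{diagdelta}, so no genuinely new argument is needed.
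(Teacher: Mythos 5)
Your proposal is correct and follows exactly the route the paper intends: the corollary is stated there as an immediate consequence of Theorem \ref{diagdelta}, Remark \ref{apcpbounds} and Proposition \ref{apvp}, which is precisely how you instantiate each of (i)--(iv) (with the same identifications $\fvp(G)=\vp(\bar G)^\flat$ and $(\vp(G)^\flat)^\flat=\vp(G)$, and dualising $\fp\subseteq\cp$ for the middle inclusions). Your occasional extra appeals to Proposition \ref{l_gen} are harmless but not needed once these identifications are made.
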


By Lemma \ref{flatsharp=}, the outer terms in Corollary \ref{apvpgraph} are distinct whenever $d > 1$. 
We next examine when the middle terms reduce to their extreme values. 
We denote by $K_d$ the complete graph with vertex set $[d]$, in which $i\simeq j$ for all $i,j\in [d]$. 
Its complement $\bar{K}_d$ is thus the empty graph on $[d]$, in which $i\simeq j$ precisely when $i = j$.

\begin{theorem} \label{emptyap}  
Let $G$ be a graph on $d$ vertices. 
\begin{itemize}
\item[(i)] $\her(\vp(G)) = \ap(\cl S_G)$  if and only if $G$ is empty;
\item[(ii)] $\ap(\cl S_G) =  (\vp(G)^\flat)^\sharp$ if and only if $G$ is complete;
\item[(iii)] $\cp(\cl S_G)= \fvp(G)^\sharp$ if and only if $G$ is empty;   
\item[(iv)]  $\cp(\cl S_G)= \her (\vp(\bar{G}))$ if and only if $G$ is complete;   
\item[(v)]  $\fp(\cl S_G)= \her (\vp(\bar{G}))$ for every graph $G$.
\end{itemize}
\end{theorem}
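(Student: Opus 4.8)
The plan is to prove the five assertions of Theorem~\ref{emptyap} separately, each time exploiting the sandwich bounds of Corollary~\ref{apvpgraph} together with the structure of $\cl S_G$. Throughout I use that $\cl S_G$-abelian (resp.\ $\cl S_G$-clique, $\cl S_G$-full) projections are quantum analogues of independent sets (resp.\ cliques) of $G$, and that for the empty graph $\bar K_d$ every orthonormal set is $\cl S_{\bar K_d}$-independent while for $K_d$ every orthonormal set is $\cl S_{K_d}$-full. First I would dispose of the "easy directions": if $G$ is empty then $\cl S_G = \cl D_d$, so every rank-one projection is $\cl S_G$-abelian, every projection is $\cl S_G$-abelian, and $\cl P_{\rm a}(\cl S_G)$ is exactly the set of all projections; then $\ap(\cl S_G) = {\rm C}(\{\text{all projections}\}) = \cl B_{I_d}$. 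A direct computation (or Proposition~\ref{apvp} combined with $\vp(\bar K_d) = \cl B_d$) shows $\her(\vp(G)) = \her(\cl B_d) = \cl B_{I_d}$ too, giving (i) in one direction; by passing to complements and anti-blockers one gets the empty-graph direction of (iii) similarly, since $\fvp(\bar K_d)^\flat$-type identities collapse. Dually, if $G = K_d$ then $\cl S_G = M_d$, every projection is $\cl S_G$-full and $\cl S_G$-clique, so $\ap(\cl S_G)$ — wait, one must be careful: $\cl S_G$-\emph{abelian} still requires commuting $P\cl SP = P M_d P$, which forces $\rank P \le 1$; so $\cl P_{\rm a}(M_d)$ is exactly the rank-one projections and $\ap(M_d) = \cl C_d$-lift $= \her(\cl C_d)$; one checks $\her(\cl C_d) = {\rm C}(\{\text{rank-one projections}\}) = (\vp(K_d)^\flat)^\sharp$ via $\vp(K_d) = \cl C_d$ and Lemma~\ref{flatsharp=}(i), which says $(\cl A^\flat)^\sharp = \{M : \Delta(M)\in\cl A\}$, and the set of $M\ge 0$ with $\Tr M \le 1$ is exactly $\cl A_{I_d}$\,; but $\ap(M_d)$ is the convex corner generated by rank-one projections, which is also $\cl A_{I_d}$. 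This settles (ii) and (iv) in the complete-graph direction.

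For the converse ("only if") directions of (i)--(iv), the strategy is to assume $G$ has an edge (for (i) and (iii)) or a non-edge (for (ii) and (iv)) and exhibit an explicit separating element. For (i): suppose $i\sim j$ in $G$. Then $\{e_i, e_j\}$ is an $\cl S_G$-full set (since $e_ie_j^*, e_je_i^* \in \cl S_G$), hence $P = e_ie_i^* + e_je_j^*$ is an $\cl S_G$-full, a fortiori $\cl S_G$-abelian? No — $\cl S_G$-abelian requires $P\cl S_G P$ abelian, and $P\cl S_G P$ contains $e_ie_j^*$ which does not commute with $e_ie_i^*$; so instead I use that the $2$-dimensional subspace spanned by $(e_i+e_j)/\sqrt2$ alone gives rank-one projections, which are always $\cl S_G$-abelian, and more usefully: when $i\not\sim j$ fails we cannot form the independent $2$-set $\{e_i,e_j\}$, so $\vp(G) \ne \cl B_d$ and $\her(\vp(G)) \subsetneq \cl B_{I_d}$, but we still need $\ap(\cl S_G)$ to be strictly larger, which it is because rank-one projections onto non-coordinate vectors (e.g.\ $\frac12(e_i+e_j)(e_i+e_j)^*$) lie in $\ap(\cl S_G)$ but their diagonal $\frac12(e_ie_i^*+e_je_je^*)$ together with hereditarity only reaches $\cl C_d$-type sets unless... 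The cleanest route: $\her(\vp(G)) = \ap(\cl S_G)$ forces $\Delta(\ap(\cl S_G)) = \cl D_d\cap\ap(\cl S_G) = \vp(G)$, and since $\ap(\cl S_G) = \her(\vp(G))$ is $V$-aligned only for $V = $ canonical basis, one shows a non-coordinate rank-one projection $vv^*$ with $v$ having two nonzero entries $i,j$ lies in $\ap(\cl S_G)$ but $vv^* \notin \her(\vp(G))$ unless $\{i,j\}$ contains no edge — iterating, $\her(\vp(G)) = \ap(\cl S_G)$ forces every pair to be a non-edge, i.e.\ $G$ empty. I would make this precise by computing $\her(\vp(G))$ explicitly: $M \in \her(\vp(G))$ iff $M \le D$ for some diagonal $D \in \vp(G)$, which forces $M$ to be "supported on independent sets" in a suitable sense, and then exhibiting $vv^* \in \ap(\cl S_G) \setminus \her(\vp(G))$. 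For (ii), (iii), (iv) I would apply anti-blockers and Proposition~\ref{apvp}, Corollary~\ref{reflexiveiff} to transfer (ii)$\leftrightarrow$(iii) and similar dualities, reducing the work to one or two genuinely new separations.

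For (v) — the statement that $\fp(\cl S_G) = \her(\vp(\bar G))$ for \emph{every} graph $G$ — the plan is: $\vp(\bar G) = \conv\{\chi_K : K \text{ clique of } G\}$, and $\{e_i : i \in K\}$ is $\cl S_G$-full for every clique $K$, so each $\chi_K$-projection $\sum_{i\in K} e_ie_i^*$ is $\cl S_G$-full, giving $\her(\vp(\bar G)) \subseteq \fp(\cl S_G)$. For the reverse inclusion I must show every $\cl S_G$-full projection $P$ satisfies $P \in \her(\vp(\bar G))$, equivalently $P \le D$ for some $D \in \vp(\bar G)$. The key point: if $\{v_1,\dots,v_k\}$ is $\cl S_G$-full, i.e.\ $v_iv_j^* \in \cl S_G = \Span\{e_pe_q^* : p\simeq q\}$ for all $i,j$, then I claim the union $\bigcup_l \supp(v_l)$ of the supports is a clique of $G$; indeed $v_iv_i^* \in \cl S_G$ forces each $\supp(v_i)$ to be a clique, and $v_iv_j^* \in \cl S_G$ for $i \ne j$ forces $\supp(v_i) \cup \supp(v_j)$ to be a clique (any $p \in \supp(v_i), q \in \supp(v_j)$ gives a nonzero $(p,q)$ entry of $v_iv_j^*$, so $p \simeq q$). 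Hence with $K = \bigcup_l \supp(v_l)$, a clique, we have $P = \sum_l v_lv_l^* \le \|P\| \sum_{i \in K} e_ie_i^* = (\text{a multiple of}) \chi_K$-projection, but $\|P\| = 1$ so $P \le \sum_{i\in K}e_ie_i^* \in \phi(\vp(\bar G))$, whence $P \in \her(\vp(\bar G))$. Since $\her(\vp(\bar G))$ is a convex corner containing $\cl P_{\rm f}(\cl S_G)$, it contains $\fp(\cl S_G) = {\rm C}(\cl P_{\rm f}(\cl S_G))$. This gives (v). I expect the main obstacle to be the converse directions of (i) and (iii): pinning down $\her(\vp(G))$ concretely enough to locate an element of $\ap(\cl S_G)$ (respectively $\cp(\cl S_G)$) outside it, and doing so uniformly. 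The cliquesupport argument for (v) is clean; the duality bookkeeping linking (i)/(ii) and (iii)/(iv) via $\sharp$, $\flat$ and $\bar G$ will need care but is routine given Proposition~\ref{apvp}, Corollary~\ref{reflexiveiff} and Lemma~\ref{flatsharp=}.
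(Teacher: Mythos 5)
Your ``if'' directions and part (v) are fine and essentially coincide with the paper's argument: for (v) you prove, exactly as the paper does, that the union of the supports of an $\cl S_G$-full set is a clique $K$ of $G$, that the corresponding full projection is dominated by $\sum_{i\in K}e_ie_i^*\in\vp(\bar G)$, and you then pass from $\cl P_{\rm f}(\cl S_G)$ to $\fp(\cl S_G)$ using that $\her(\vp(\bar G))$ is a convex corner. The computations $\ap(\cl S_{\bar K_d})=\cl B_{I_d}=\her(\vp(\bar K_d))$ and $\ap(\cl S_{K_d})=\cl A_{I_d}=(\vp(K_d)^\flat)^\sharp$ via Lemma~\ref{flatsharp=} are also the paper's.

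The genuine gaps are in the ``only if'' directions. First, for (i) you name the right witness $vv^*$ with $v=\frac{1}{\sqrt2}(e_i+e_j)$, $i\sim j$, but you never prove $vv^*\notin\her(\vp(G))$, and the heuristic you offer (domination by a diagonal element of $\vp(G)$ ``forces $M$ to be supported on independent sets'') is false as stated: $\frac12vv^*\le\frac12(e_ie_i^*+e_je_j^*)\in\vp(G)$ even when $i\sim j$, so support alone decides nothing -- the issue is quantitative. What is needed is that every $Q\in\vp(G)$ satisfies $q_{i,i}+q_{j,j}\le1$ when $i\sim j$ (no independent set contains both endpoints), while $vv^*\le Q$ forces $q_{i,i}=q_{j,j}=\frac12$ and then Lemma~\ref{PSD2} applied to $Q-vv^*$ gives a contradiction (equivalently, evaluate the quadratic form at $v$: $vv^*\le Q$ would force $q_{i,i}+q_{j,j}\ge2$). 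Second, the converses of (ii), (iii), (iv) are not supplied at all, and the reduction you propose -- transferring (ii)$\leftrightarrow$(iii) by applying $\sharp$ and Corollary~\ref{reflexiveiff} -- does not exist: taking anti-blockers in (ii) produces a statement about $\ap(\cl S_G)^\sharp$ versus $\her(\vp(G)^\flat)$, not a statement about $\cp(\cl S_G)$; the corners $\ap(\cl S_G)$ and $\cp(\cl S_G)$ are related only by the inclusion $\ap(\cl S_G)\subseteq\cp(\cl S_G)^\sharp$ of (\ref{eq_sand}), which is strict unless $G$ is complete (Proposition~\ref{p_ap=cp=fp}), so no bookkeeping with $\sharp$ and $\flat$ converts one item into the other. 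The separations must be exhibited directly, as in the paper: for (ii) and (iii) the witness is $A=(e_k+e_l)(e_k+e_l)^*$ (with $k\not\simeq l$, resp.\ $k\sim l$), which has $\Delta(A)\in\vp(G)$ (resp.\ $\vp(\bar G)$) and hence lies in $(\vp(G)^\flat)^\sharp$ (resp.\ $\fvp(G)^\sharp$) by Lemma~\ref{flatsharp=}, but $A\not\le I$ excludes it from $\ap(\cl S_G)$ (resp.\ $\cp(\cl S_G)$), both contained in $\cl B_{I_d}$; for (iv) one reruns the (i)-separation for $\bar G$ with the rank-one $vv^*\in\cp(\cl S_G)$. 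A smaller omission: the empty-graph direction of (iii) needs the fact that every $\cl S_{\bar K_d}$-clique projection has rank one (orthonormal $u,v$ with $uv^*\in\cl D_d$ cannot exist), which you do not address.
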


\begin{proof} 
(i) 
By \eqref{eqSKd}, $\ap\left(\cl S_{\bar{K}_d}\right) = \cl B_{I_d},$ and as $I_d \in \vp\left(\bar{K_d}\right)$ 
we have $\vp\left(\bar{K}_d\right) = \left\{ M \in M_d^+ \cap \cl D_d: M \le I_d \right\}$, 
giving 
\begin{equation} \label{hervpempty} 
\her\left(\vp\left(\bar{K}_d\right)\right)= \cl B_{I_d}. 
\end{equation}
Conversely, suppose that $G$ is non-empty with $i \sim j$ in $G$.  
Let $v = \frac{1}{ \sqrt{2}} (e_i + e_j)$; we have that $vv^* \in \ap(\cl S_G)$.  
Suppose that $vv^* \le Q \in \vp(G)$. 
Write $Q = (q_{i,j})_{i,j=1}^d = \sum_{k=1}^m \mu_k P_k$, where 
$P_k = \sum_{i \in S_k} e_ie_i^*$ for an independent set $S_k$ of $G$ and 
scalars $\mu_k > 0$, $k\in [m]$, with $\sum_k \mu_k = 1$.    
Then $q_{i,i} \ge 1/2 $ and  $q_{j,j}\ge 1/2 $.  
Since no independent set $S_k$ contains both $i$ and $j$, we have 
$q_{i,i} = q_{j,j} = 1/2$. Thus $\langle (Q-vv^*)e_i,e_i\rangle = \langle (Q-vv^*)e_j,e_j\rangle = 0$.
By Lemma \ref{PSD2} and the fact that $Q$ is diagonal, 
$$0 = \langle (Q-vv^*)e_i,e_j\rangle = - \langle (vv^*)e_i,e_j\rangle = -1/2,$$
a contradiction. 
It follows that $vv^* \notin \her(\vp(G))$ and hence $\her(\vp(G)) \ne \ap(\cl S_G)$.

(ii) 
We have $\cl S_{K_d}=M_d$ and so the $\cl S_{K_d}$-abelian projections are precisely 
the rank one projections; thus, $\ap(\cl S_{K_d}) = \cl A_{I_d}$.  
It is clear that $\vp(K_d) = \{M \in \cl D_d^+ : \Tr M \le 1\}$; by Lemma \ref{flatsharp=}, 
\begin{equation} \label{Kdflatsharp}  
\left(\vp(K_d)^\flat\right)^\sharp = \left\{M \in M_d^+: \Delta (M) \in \vp(K_d)\right\} = \cl A_{I_d}.
\end{equation}

Conversely, suppose that  $k \not\simeq l$ in $G$.  
Let $A = (e_k + e_l)(e_k +e_l)^*$ and note that
$I-A \not\ge 0$.  
Since $\ap(\cl S_G) \subseteq \cl B_{I_d}$, it follows that $A \notin \ap(\cl S_G)$.
However, $\Delta (A) = e_ke_k^*+e_le_l^* \in \vp(G)$. 
By Lemma \ref{flatsharp=}, $A \in (\vp(G)^\flat)^\sharp$ 
and hence $\ap(\cl S_G) \ne (\vp(G)^\flat)^\sharp$.

(iii)
We claim that $ \cp(\cl S_{\bar{K}_d})= \{ M \in M_d^+: \Tr M \le 1\}.$ 
To see this note that a projection $P$ lies in 
$\cp(\cl S_{\bar{K}_d})$ if and only if $\rank(P) =1$. 
To establish the latter assertion, suppose there exist orthogonal unit vectors
$u = (u_i)_{i=1}^d$ and $v = (v_i)_{i=1}^d$  
such that $uv^* \in \cl S_{\bar{K}_d} = \cl D_d$.  
Suppose $u_i \ne 0$; then $v_j \ne 0$ for some $j \ne i$.  
Thus $\ip{e_je_i^*}{uv^*} \ne 0$,  contradicting the fact that $uv^* \in \cl D_d$.
By \eqref{Kdflatsharp}, $\cp(\cl S_{\bar{K}_d}) = (\vp({K_d})^\flat)^\sharp$.   
Suppose that $k \sim l$ in $G$.  As in (ii), 
let $A = (e_k + e_l)(e_k +e_l)^*$; by Lemma \ref{flatsharp=}, $A \in (\vp(\bar{G})^\flat)^\sharp$.
On the other hand, since $A \not\le I$, we have that $A \notin \cp(\cl S_G)$.

(iv) 
By \eqref{hervpempty} and \eqref{eqSKd} below, 
$\cp(\cl S_{K_d})= \her (\vp(\bar{K}_d)).$
Suppose that
$i \not\simeq j$ and let $v = \frac{1}{ \sqrt{2}} (e_i +e_j)$.  
Using the argument from (i), we conclude that $vv^* \notin \her(\vp(\bar{G}))$.

(v) 
By Corollary \ref{apvpgraph}, $\her(\vp(\bar{G})) \subseteq \fp(\cl S_G)$;
we show the reverse inclusion.  
Let $\{v_1, \ldots, v_r\}$ be an $\cl S_G$-full set and  $P = \sum_{i=1}^r v_iv_i^*$.  
Set $v_i = \sum_{j=1}^d \lambda_j^{(i)} e_j$ with $\lambda_j^{(i)} \in \bb C$, $j\in [d]$, $i\in [r]$.  
Now $v_iv_j^* \in \cl S_G$ for all $i,j \in [r]$ 
and hence, if $\lambda_l^{(i)}\lambda_k^{(j)}  \ne 0$ for some $i,j \in [r]$ then 
$l \simeq k$ in $G$.  We conclude that the set 
$K = \left\{j\in [d] : \lambda_j^{(k)} \ne 0 \mbox{ for some } k\right\}$ is a clique of $G$.  
Thus, $Q := \sum_{j \in K} e_je_j^* \in \vp(\bar{G})$.  
Note that $v_1, \ldots, v_r \in \Span \{ e_j: j \in K\}$; thus,
$$\ran(P) = \Span \left\{ v_i: i \in [r]\right\} \subseteq \Span \left\{ e_j: j \in S_Q \right\} = \ran (Q).$$ 
Hence $P \le Q$ and so $P \in \her(\vp(\bar{G}))$.  
Since $\her\left(\vp(\bar{G})\right)$ is closed and convex,
Proposition \ref{l_gen} implies that 
$\fp(\cl S_G) \subseteq \her\left(\vp(\bar{G})\right)$, as required.  
\end{proof}

\begin{remark}\label{r_perr}
Recall that a graph $G$ is called perfect if every induced subgraph has 
equal clique and chromatic numbers. 
It is shown in \cite{Csis} that a graph $G$ is perfect if and only if $\vp(G)=\fvp(G)$.  
By Proposition \ref{apvp}, $\vp(G)= \cl D_d \cap \ap(\cl S_G)$ and 
$\vp(\bar{G})=\cl D_d \cap \fp(\cl S_G)= \Delta(\fp(\cl S_G))$, 
whence Propositions \ref{Deltasharp} and \ref{apvp} give
$$\fvp(G)= \vp(\bar{G})^\flat = \cl D_d \cap (\fp(\cl S_G)^\sharp).$$  
Thus, $G$ is perfect if and only if $\cl D_d  \cap \ap(\cl S_G) = \cl D_d \cap \fp(\cl S_G)^\sharp$. 
It is worthwhile to note that the latter condition is not equivalent to 
$\ap(\cl S_G) = \fp(\cl S_G)^\sharp$; 
in fact, $\ap(\cl S_G) = \fp(\cl S_G)^\sharp$ if and only if $G$ is complete.  
To see this, note first that, by Proposition \ref{apvp}, 
if $\vp(G) \ne \fvp(G)$ then $\ap(\cl S_G) \ne \fp(\cl S_G)^\sharp$.  
Combined with Theorem \ref{emptyap}, 
this means that if $G$ is perfect then $\fp(\cl S_G)=\her(\vp(G)^\flat)$ and  
$\fp(\cl S_G)^\sharp=\her(\vp(G)^\flat)^\sharp$.   
However, by Theorem \ref{emptyap},
$\ap(\cl S_G) = (\vp(G)^\flat)^\sharp=\her(\vp(G)^\flat)^\sharp$ if and only if $G$ is complete.  
\end{remark}

Recall \cite{BTW} that, for any non-commutative graph $\cl S$, we have 
\begin{equation}\label{eq_sand}
\ap(\cl S)\subseteq \cp(\cl S)^{\sharp} \subseteq \fp(\cl S)^{\sharp}.
\end{equation}
Equality in this chain, for graph operator systems, is characterised in the next proposition.

\begin{prop}\label{p_ap=cp=fp}
Let $G$ be a graph on $d$ vertices. The following are equivalent:
\begin{itemize}
\item[(i)] $\ap(\cl S_{\bar{G}})=\cp(\cl S_G)= \fp(\cl S_G)$;
\item[(ii)] $G = K_d$;
\item[(iii)] $\ap(\cl S_{G}) = \cp(\cl S_G)^{\sharp} = \fp(\cl S_G)^{\sharp}$.
\end{itemize}
\end{prop}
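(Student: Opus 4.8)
The plan is to prove the cycle of implications (i)$\Rightarrow$(ii)$\Rightarrow$(iii)$\Rightarrow$(i). The implication (ii)$\Rightarrow$(iii) is essentially already recorded: if $G = K_d$ then $\cl S_G = M_d$, so the $\cl S_G$-clique and $\cl S_G$-full projections are \emph{all} projections, whence $\cp(\cl S_G) = \fp(\cl S_G) = \cl B_{I_d}$ and $\cp(\cl S_G)^\sharp = \fp(\cl S_G)^\sharp = \cl A_{I_d}$; meanwhile $\cl S_{K_d}$-abelian projections are exactly the rank-one projections, so $\ap(\cl S_G) = \cl A_{I_d}$ as well (this is exactly the computation in the proof of Theorem \ref{emptyap}(ii)). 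So all three corners coincide with $\cl A_{I_d}$.

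For (iii)$\Rightarrow$(i), I would apply the anti-blocker. Taking $\sharp$ of the equality $\cp(\cl S_G)^\sharp = \fp(\cl S_G)^\sharp$ and using reflexivity of convex $M_d$-corners (Theorem \ref{2ndABe}, via Corollary \ref{reflexiveiff}(ii)), we get $\cp(\cl S_G) = \fp(\cl S_G)$. Similarly $\ap(\cl S_G) = \cp(\cl S_G)^\sharp$. Now I want to turn $\ap(\cl S_G)$ into $\ap(\cl S_{\bar G})$; the natural bridge is the general relation $\ap(\cl S) = \cp(\cl S)^\sharp$ combined with the observation that $\cl S_{\bar G}$-clique projections are the same as $\cl S_G$-abelian projections. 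Indeed, a set $\{v_1,\dots,v_k\}$ is $\cl S_G$-independent iff $v_iv_j^* \in \cl S_G^\perp$ for $i\ne j$, and $\cl S_G^\perp = \Span\{e_ie_j^* : i\sim j \text{ in } \bar G, \text{ i.e. } i \not\simeq j \text{ in } G\}$; but one must be careful here because $\cl S_{\bar G}$ contains the diagonal while $\cl S_G^\perp$ does not. The cleanest route is probably to combine (\ref{eq_sand}) with the equalities just derived to squeeze the whole chain to a single corner, then identify that corner. So from (iii) the chain (\ref{eq_sand}) collapses: $\ap(\cl S_G) = \cp(\cl S_G)^\sharp = \fp(\cl S_G)^\sharp$, and anti-blocking gives $\ap(\cl S_G)^\sharp = \cp(\cl S_G) = \fp(\cl S_G)$.

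It then remains to show $G = K_d$, i.e. to prove (iii)$\Rightarrow$(ii) (equivalently, to rule out (iii) when $G$ is not complete). Here I would invoke the machinery already set up in Theorem \ref{emptyap} and Remark \ref{r_perr}. By Proposition \ref{apvp}, passing to diagonals, $\cl D_d \cap \ap(\cl S_G) = \vp(G)$ and $\cl D_d \cap \fp(\cl S_G)^\sharp = \fvp(G)$; so (iii) forces $\vp(G) = \fvp(G)$, hence $G$ is perfect (by the characterisation in \cite{Csis} recalled in Remark \ref{r_perr}). For a perfect $G$, Remark \ref{r_perr} computes $\fp(\cl S_G) = \her(\vp(G)^\flat)$ and therefore $\fp(\cl S_G)^\sharp = (\her(\vp(G)^\flat))^\sharp = (\vp(G)^\flat)^\sharp$ (using Proposition \ref{l_gen}). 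On the other hand Theorem \ref{emptyap}(ii) says $\ap(\cl S_G) = (\vp(G)^\flat)^\sharp$ if and only if $G$ is complete. Combining, (iii) implies $\ap(\cl S_G) = \fp(\cl S_G)^\sharp = (\vp(G)^\flat)^\sharp$, which by Theorem \ref{emptyap}(ii) happens only when $G = K_d$. Finally, for (ii)$\Rightarrow$(i) one checks directly that $\cl S_{\bar K_d} = \cl D_d$, so $\cl S_{\bar K_d}$-abelian projections and $\cl S_{\bar K_d}$-clique projections are both exactly the rank-one projections (the argument appears in the proof of Theorem \ref{emptyap}(iii)), giving $\ap(\cl S_{\bar G}) = \cl A_{I_d}$; together with $\cp(\cl S_{K_d}) = \fp(\cl S_{K_d}) = \cl B_{I_d}$... wait — this shows (i) would require $\cl A_{I_d} = \cl B_{I_d}$, which fails for $d>1$.

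\emph{Main obstacle.} The last remark exposes the real subtlety: the statement as literally written cannot hold for $d > 1$ unless one reads "$\ap(\cl S_{\bar G})$" more carefully, since $\ap$ of an empty graph's operator system is $\cl A_{I_d}$ while $\cp$ and $\fp$ of a complete graph are $\cl B_{I_d}$. I expect the intended reading is $\ap(\cl S_{\bar G}) = \fp(\cl S_G)^\sharp$ type normalisation, or that (i) is meant with anti-blockers inserted, matching (iii); the substantive mathematical content — the equivalence of the collapse of the Sandwich chain (\ref{eq_sand}) with $G$ being complete — is what I would prove, routing everything through Theorem \ref{emptyap}, Remark \ref{r_perr}, and the reflexivity Corollary \ref{reflexiveiff}. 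So the hard part is not any one computation but correctly matching the three normalisations in (i), (ii), (iii) and reducing each to the already-established $G$-is-complete criterion; once the dictionary between $\ap(\cl S_{\bar G})$ and $\fp(\cl S_G)^\sharp$ (and between $\ap(\cl S_G)$ and $\fp(\cl S_{\bar G})^\sharp$) is pinned down via Proposition \ref{apvp} and Theorem \ref{diagdelta}, the three conditions become literally the same equality of convex corners and the result follows from Theorem \ref{emptyap}(ii).
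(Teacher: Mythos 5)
Your ``main obstacle'' is not real: it rests on a miscomputation of $\ap(\cl S_{\bar{K}_d})$. When $G=K_d$, the complement $\bar{G}$ is the empty graph and $\cl S_{\bar{K}_d}=\cl D_d$. A set $\{v_1,\dots,v_k\}$ is $\cl S_{\bar{K}_d}$-\emph{independent} when $v_iv_j^*\perp\cl D_d$ for $i\neq j$, and $\{e_1,\dots,e_d\}$ satisfies this, since $e_ie_j^*$ has zero diagonal for $i\neq j$; hence $I_d$ is an $\cl S_{\bar{K}_d}$-abelian projection and $\ap(\cl S_{\bar{K}_d})=\cl B_{I_d}$, not $\cl A_{I_d}$. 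The rank-one statement you quote from the proof of Theorem \ref{emptyap}(iii) concerns $\cl S_{\bar{K}_d}$-\emph{clique} projections (membership of $v_iv_j^*$ in $\cl D_d$), which is the opposite condition to independence (orthogonality to $\cl D_d$); you have in effect swapped $\ap(\cl S_{\bar{G}})$ with $\ap(\cl S_G)=\cl A_{I_d}$. So (ii)$\Rightarrow$(i) holds exactly as stated --- all three corners equal $\cl B_{I_d}$, which is the paper's equation (\ref{eqSKd}) --- there is nothing to reinterpret, and because of this confusion your proposal never actually establishes (ii)$\Rightarrow$(i).

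The second, larger gap is that no implication \emph{out of} (i) is proved. Your announced cycle needs (i)$\Rightarrow$(ii) (or (i)$\Rightarrow$(iii)); after the bridge $\ap(\cl S)=\cp(\cl S)^\sharp$ is (rightly) abandoned --- it is only an inclusion, by (\ref{eq_sand}) --- what you actually prove is (iii)$\Rightarrow$(ii), and the closing ``dictionary'' sentence is not a proof. Moreover the diagonal-restriction trick that powers your (iii)$\Rightarrow$(ii) step cannot work for (i): by Proposition \ref{apvp}, $\ap(\cl S_{\bar{G}})$, $\cp(\cl S_G)$ and $\fp(\cl S_G)$ are all non-commutative lifts of $\vp(\bar{G})$, so the three corners in (i) agree on $\cl D_d$ for \emph{every} $G$, and a genuinely non-diagonal witness is needed. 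The paper's argument takes $i\not\simeq j$ and $v=\tfrac{1}{\sqrt{2}}(e_i+e_j)$: the rank-one projection $vv^*$ lies trivially in $\ap(\cl S_{\bar{G}})$ and $\cp(\cl S_G)$, while every $A\in\overline{\conv}(\cl P_{\rm f}(\cl S_G))$ satisfies $\ip{A}{e_ie_j^*}=0$ and $\ip{e_i}{Ae_i}+\ip{e_j}{Ae_j}\le 1$, whence $vv^*\not\le A$ and $vv^*\notin\fp(\cl S_G)$; this is what rules out (i) for incomplete $G$. Without an argument of this kind the equivalence with (i) remains unproven. On the positive side, your (ii)$\Rightarrow$(iii) and (iii)$\Rightarrow$(ii) steps are correct and essentially reproduce the paper's route through Remark \ref{r_perr} and Theorem \ref{emptyap}(ii).
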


\begin{proof} 
(ii)$\Rightarrow$(i)
It is clear that $\{e_1, \ldots, e_d\}$ is an $\cl S_{\bar{K_d}}$-independent set, an 
$\cl S_{K_d}$-clique and an $\cl S_{K_d}$-full set.  
Thus $I_d = \sum_{i=1}^d e_ie_i^*$ is an $\cl S_{\bar{K_d}}$-abelian projection, 
an $\cl S_{K_d}$-clique projection and an $\cl S_{K_d}$-full projection, and hence
\begin{equation} \label{eqSKd} 
\ap(\cl S_{\bar{K}_d})= \cp(\cl S_{K_d})=\fp(\cl S_{K_d})= \cl B_{I_d}.
\end{equation}  

(i)$\Rightarrow$(ii)
Suppose that $G \ne K_d$ and let $i,j \in [d]$ such that 
$i \not \simeq j$.  
Let $v= \frac{1}{\sqrt{2}}(e_i + e_j)$; then 
$vv^* \in \ap(\cl S_{\bar{G}})$ and $vv^* \in \cp(\cl S_G)$. 

Consider an $\cl S_G$-full set $\{v_1, \ldots, v_k\}$ with associated $\cl S_G$-full projection $P$.  
Write $v_l = \sum_{r=1}^d \alpha_r^{(l)}e_r$, $l\in [k]$.
Now $v_lv_m^* = \sum_{r,s=1}^d \alpha_r^{(l)} \overline{\alpha}_s^{(m)} e_re_s^* \in \cl S_G$ for all $l,m \in [k]$.
Thus for all $l,m \in [k]$ we have $ \alpha_i^{(l)} \overline{\alpha}_j^{(m)} =0$, 
so either $\alpha_i^{(l)}=0$ for all $l \in [k]$, or $\alpha_j^{(m)}=0$ for all $m \in [k]$.  
Thus, $\ip{P}{e_ie_j^*} = \sum_{l,m \in [k]} \alpha_i^{(l)} \overline{\alpha}_j^{(m)} = 0$.
It follows that $\ip{A}{e_ie_j^*} = 0$ for all $A \in \overline{\conv}(\cl P_{\rm f}(\cl S_G))$. 
On the other hand, by (\ref{eq_sand}) and Theorem \ref{2ndABe}, 
$A\in \ap(\cl S_G)^{\sharp}$, and hence
$\ip{e_i}{Ae_i} + \ip{e_j}{Ae_j} \le 1$, whenever $A \in \overline{\conv}(\cl P_{\rm f}(\cl S_G))$.
Since $\frac{1}{2}\begin{pmatrix}1&1 \\ 1& 1 \end{pmatrix} \not \le \frac{1}{2}\begin{pmatrix}1&0 \\ 0& 1 \end{pmatrix}$, 
we have that $vv^* \not \le A$ for all $A \in \overline{\conv}(\cl P_{\rm f}(\cl S_G))$, and
we conclude $vv^* \notin \fp(\cl S_G).$ 

(ii)$\Leftrightarrow$(iii) is immediate from (\ref{eq_sand}) and Remark \ref{r_perr}. 
\end{proof}

We now turn to the theta corners of classical and non-commutative graphs.
Let $G$ be a graph with vertex set $[d]$. 
A family $(a_i)_{i\in [d]}$ of unit vectors in 
a finite dimensional complex Hilbert space is called an \emph{orthogonal labelling (o.l.)} of $G$ 
if 
$$i\not\simeq j \ \Rightarrow \ a_i \perp a_j.$$
Let 
$$\cl P_0(G) = \left\{\left(|\langle a_i,c\rangle|^2\right)_{i=1}^d : (a_i)_{i=1}^d \mbox{ is an o.l. of } G \mbox{ and } \|c\|\leq 1\right\},$$
viewed as a subset of $\cl D_d$, and
set
$\thab(G) = \cl P_0(G)^{\flat}$.
We note that the original definition of $\thab(G)$ was given in real Hilbert spaces, but inspection of the proofs shows that the 
results in \cite{relax, knuth, lovasz} are true for complex Hilbert spaces as well. 

Let $\cl S\subseteq M_d$ be an operator system. Set \cite{BTW}
$$\frak{C}(\cl S) = \left\{\Phi : M_d \to M_k \ : \ k\in \bb{N}, 
\Phi \mbox{ is a quantum channel with } \cl S_{\Phi} \subseteq \cl S\right\}$$
and
$$\thet(\cl S) = \left\{T\in M_d^+ : \Phi(T)\leq I \mbox{ for every } \Phi\in \frak{C}(\cl S)\right \}.$$
It was shown in \cite{BTW} that 
the set $\thet(\cl S)$ is a convex $M_d$-corner, which we call the \emph{theta corner} of $\cl S$.
Note that if $\cl S, \cl T \subseteq M_d$ are operator systems then 
\begin{equation}\label{monothet} 
\cl S \subseteq \cl T \ \Rightarrow \thet(\cl T) \ \subseteq \thet(\cl S).
\end{equation}

It was shown in \cite{BTW} that, if $G$ is a graph then $\thet(\cl S_G)$ is a non-commutative lift of ${\rm thab}(G)$.
By Proposition \ref{Deltasharp}, $\thet (\cl S_G)^\sharp$ is a non-commutative lift of ${\rm thab}(G)^\flat$.
Thus, Theorem \ref{diagdelta} has the following corollary.

\begin{cor} \label{ththabgraph} 
Let $G$ be a graph. Then   
\begin{itemize} 
\item[(i)]  $\her({\rm thab}(G)) \subseteq \thet(\cl S_G) \subseteq  ({\rm thab}(G)^\flat)^\sharp$, and 
\item[(ii)]  $\her({\rm thab}(G)^\flat) \subseteq \thet(\cl S_G) ^\sharp \subseteq {\rm thab}(G)^\sharp$. 
\end{itemize}  
\end{cor}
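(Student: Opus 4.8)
The plan is to obtain both inclusion chains as direct specialisations of Theorem~\ref{diagdelta}, exactly as in the derivation of Corollary~\ref{apvpgraph}. Note first that ${\rm thab}(G) = \cl P_0(G)^\flat$ is a diagonal convex corner, being (the image under $\phi$ of) an anti-blocker of a non-empty subset of $\bb R_+^d$; likewise its diagonal anti-blocker ${\rm thab}(G)^\flat$ is a diagonal convex corner. Since Theorem~\ref{diagdelta} carries no boundedness hypothesis, it applies verbatim to each of these.

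For part (i) I would take $\cl A = {\rm thab}(G)$ in Theorem~\ref{diagdelta}, so that $\cl B_1 = \her({\rm thab}(G))$ and $\cl B_2 = ({\rm thab}(G)^\flat)^\sharp$. Recalling that $\thet(\cl S_G)$ is a non-commutative lift of ${\rm thab}(G)$ (as noted above, from \cite{BTW}), the implication (i)$\Rightarrow$(ii) of Theorem~\ref{diagdelta} gives $\cl B_1 \subseteq \thet(\cl S_G) \subseteq \cl B_2$, which is precisely the chain in (i).

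For part (ii) I would instead take $\cl A = {\rm thab}(G)^\flat$. By Proposition~\ref{Deltasharp}, $\thet(\cl S_G)^\sharp$ is a non-commutative lift of ${\rm thab}(G)^\flat$, so Theorem~\ref{diagdelta} yields $\her({\rm thab}(G)^\flat) \subseteq \thet(\cl S_G)^\sharp \subseteq \big(({\rm thab}(G)^\flat)^\flat\big)^\sharp$. It then remains to identify the outer term: since ${\rm thab}(G)$ is a diagonal convex corner, Remark~\ref{dcc1} (the identity $\cl B^{\flat\flat} = \cl B$ for diagonal convex corners) gives $({\rm thab}(G)^\flat)^\flat = {\rm thab}(G)^{\flat\flat} = {\rm thab}(G)$, whence $\big(({\rm thab}(G)^\flat)^\flat\big)^\sharp = {\rm thab}(G)^\sharp$, as required.

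I do not expect a genuine obstacle here: all the substance has already been absorbed into Theorem~\ref{diagdelta} (which rests on the second anti-blocker theorem, Theorem~\ref{2ndABe}), into Proposition~\ref{Deltasharp}, and into the identification (from \cite{BTW}) of $\thet(\cl S_G)$ as a non-commutative lift of ${\rm thab}(G)$. The only points requiring a moment's care are verifying that the relevant sets are diagonal convex corners and correctly collapsing the iterated diagonal anti-blocker via Remark~\ref{dcc1}.
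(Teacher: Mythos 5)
Your proposal is correct and follows exactly the paper's route: the corollary is obtained by feeding the facts that $\thet(\cl S_G)$ is a non-commutative lift of ${\rm thab}(G)$ (from \cite{BTW}) and that $\thet(\cl S_G)^\sharp$ is a non-commutative lift of ${\rm thab}(G)^\flat$ (Proposition \ref{Deltasharp}) into Theorem \ref{diagdelta}, with the iterated diagonal anti-blocker collapsed via Remark \ref{dcc1}. No issues.
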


\noindent 
We examine when we have equalities in the inclusions of Corollary \ref{ththabgraph}.

\begin{theorem} \label{thabcomplete} 
Let $G$ be a graph. The following hold:
\begin{itemize}
\item[(i)] $\thet(\cl S_G) =  ({\rm thab}(G)^\flat)^\sharp$ if and only if $G$ is complete;
\item[(ii)] $\thet(\cl S_G) = \her ({\rm thab}(G))$ if and only if $G$ is empty.
\end{itemize}
\end{theorem}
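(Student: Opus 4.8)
The plan is to treat the two statements in parallel, noting that each is of the form ``equality holds in one of the inclusions of Corollary \ref{ththabgraph} iff $G$ lies in one extreme of the perfection spectrum'', and to exploit the structure already established in Theorem \ref{emptyap} together with the sandwich
$$\ap(\cl S_G) \subseteq \thet(\cl S_G) \subseteq \cp(\cl S_G)^{\sharp} \subseteq \fp(\cl S_G)^{\sharp},$$
which follows from the inclusions $\ap(\cl S)\subseteq\thet(\cl S)\subseteq\cp(\cl S)^\sharp$ recorded in \cite{BTW} (a theta-corner version of (\ref{eq_sand})). For the ``if'' directions I would compute $\thet(\cl S_G)$ directly on the complete and empty graphs. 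When $G = K_d$ we have $\cl S_{K_d} = M_d$, so $\frak{C}(\cl S_{K_d})$ is the set of all quantum channels $M_d \to M_k$; testing $\Phi(T)\leq I$ against all such channels forces $\Tr T \le 1$ (take $\Phi$ to be the trace, i.e. $T\mapsto (\Tr T)I_1$) and conversely any $T$ with $\Tr T\le 1$ satisfies $\Phi(T)\le(\Tr T)I\le I$ for every channel, so $\thet(\cl S_{K_d}) = \cl A_{I_d}$; on the other hand ${\rm thab}(K_d)^\flat = {\rm vp}(K_d)^{\flat\flat}={\rm vp}(K_d)$ since ${\rm thab}(K_d)={\rm vp}(K_d)$ for perfect (in particular complete) graphs, and by (\ref{Kdflatsharp}) one gets $({\rm thab}(K_d)^\flat)^\sharp = \cl A_{I_d}$, giving (i) in the forward direction. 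When $G = \bar K_d$ is empty, $\cl S_{\bar K_d}=\cl D_d$, the channels with $\cl S_\Phi\subseteq\cl D_d$ are exactly those whose Kraus operators have mutually orthogonal ``supports'' appropriately, and a short argument shows $\thet(\cl S_{\bar K_d}) = \cl B_{I_d} = \her({\rm thab}(\bar K_d))$ (using ${\rm thab}(\bar K_d) = {\rm vp}(\bar K_d)$ and (\ref{hervpempty})); this gives (ii) in the forward direction.

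For the ``only if'' directions I would argue by contraposition using the same rank-one witness $v = \tfrac{1}{\sqrt2}(e_i+e_j)$ that appears throughout Theorem \ref{emptyap}. For (i): if $G$ is not complete, pick $i\not\simeq j$. Then $\{e_i,e_j\}$ is an $\cl S_G$-clique, so $vv^* \in \cp(\cl S_G)$, hence $vv^*$ pairs $\le 1$ with every element of $\thet(\cl S_G)\subseteq\cp(\cl S_G)^\sharp$; combined with $\Delta(vv^*) = \tfrac12 e_ie_i^* + \tfrac12 e_je_j^* \in {\rm thab}(G)$ (the vertices $i,j$ form an independent set of $\bar G$... more directly, the standard basis vectors give an orthogonal labelling witnessing this) and Lemma \ref{flatsharp=}, one gets $vv^* \in ({\rm thab}(G)^\flat)^\sharp$; but $vv^*\notin\thet(\cl S_G)$ because $vv^*\le A\in\thet(\cl S_G)\subseteq\cl B_{I_d}$ forces a contradiction exactly as in the proof of Theorem \ref{emptyap}(ii). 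For (ii): if $G$ is non-empty, pick $i\sim j$; the same vector $vv^*$ lies in $\thet(\cl S_G)$ (it is a rank-one projection, hence in $\ap(\cl S_G)\subseteq\thet(\cl S_G)$), while the argument from Theorem \ref{emptyap}(i) shows $vv^*\notin\her({\rm thab}(G)) = \her({\rm vp}(G))$... here one must be slightly careful, since ${\rm thab}(G)$ is generally larger than ${\rm vp}(G)$; the correct statement is that $vv^*\notin\her({\rm thab}(G))$ because $\Delta(vv^*)$ has both diagonal entries equal to $\tfrac12$ while the off-diagonal obstruction of Lemma \ref{PSD2} forces any dominating diagonal element of ${\rm thab}(G)$ to have strictly larger entries, unless $i\not\simeq j$.

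The step I expect to be the main obstacle is the precise identification of $\thet(\cl S_{\bar K_d})$ and, more generally, making the ``only if'' direction of (ii) watertight: one needs that if $vv^*\in\her({\rm thab}(G))$ with $i\sim j$ then there is $w\in{\rm thab}(G)$ with $w\ge\Delta(vv^*)$, and $w_i\ge\tfrac12$, $w_j\ge\tfrac12$; whether this can be pushed to a contradiction uses that $\{i,j\}$ is not independent in $G$, hence some orthogonal-labelling/clique constraint of ${\rm thab}(G)$ caps $w_i+w_j$ — but unlike ${\rm vp}(G)$, membership in ${\rm thab}(G)$ is not characterised by clique inequalities. The cleanest route is probably to instead route everything through ${\rm vp}$ and ${\rm vp}(\bar G)$: since $\her({\rm vp}(G))\subseteq\her({\rm thab}(G))\subseteq\thet(\cl S_G)$ and $\thet(\cl S_G)\subseteq\fp(\cl S_G)^\sharp$ with $\her({\rm thab}(G))=\thet(\cl S_G)$ forcing $\her({\rm thab}(G))$ reflexive and squeezed against $\her({\rm vp}(\bar G))$ via Corollary \ref{ththabgraph}(ii) and (\ref{monothet}), one can reduce to the already-proven Theorem \ref{emptyap}(v) (which says $\fp(\cl S_G) = \her({\rm vp}(\bar G))$). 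So in the write-up I would first prove the forward implications by explicit computation of $\thet$ on $K_d$ and $\bar K_d$, then prove the contrapositives by exhibiting $vv^*$ and invoking the sandwich (\ref{eq_sand})-analogue for $\thet$ together with Lemma \ref{flatsharp=}, Theorem \ref{emptyap}, and the reflexivity results of Section \ref{sect_AB}.
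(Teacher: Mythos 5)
Your forward directions (computing $\thet(\cl S_{K_d})=\cl A_{I_d}$ and $\thet(\cl S_{\bar K_d})=\cl B_{I_d}$ and matching them with $({\rm thab}(K_d)^\flat)^\sharp$ and $\her({\rm thab}(\bar K_d))$) agree with the paper. Both contrapositive directions, however, have problems.

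For (i), your witness is wrong. With $v=\tfrac{1}{\sqrt2}(e_i+e_j)$, the matrix $vv^*$ is a rank-one \emph{projection}, hence lies in $\ap(\cl S_G)\subseteq\thet(\cl S_G)$ for every graph $G$ (equivalently: $\Phi(vv^*)$ is a state for every channel $\Phi$, so $\Phi(vv^*)\le I$). So the claim ``$vv^*\notin\thet(\cl S_G)$'' is false, and $vv^*$ cannot separate $\thet(\cl S_G)$ from $({\rm thab}(G)^\flat)^\sharp$. The paper's witness is the \emph{unnormalised} $A=(e_i+e_j)(e_i+e_j)^*=2vv^*$: one has $I-A\not\ge0$, so $A\notin\cl B_{I_d}\supseteq\thet(\cl S_G)$, while $\Delta(A)=e_ie_i^*+e_je_j^*\in\vp(G)\subseteq{\rm thab}(G)$ (because $i\not\simeq j$ makes $\{i,j\}$ independent), whence $A\in({\rm thab}(G)^\flat)^\sharp$ by Lemma \ref{flatsharp=}. (Also, $\{e_i,e_j\}$ with $i\not\simeq j$ is an $\cl S_G$-\emph{independent} set, not an $\cl S_G$-clique; your appeal to $\cp(\cl S_G)$ is misplaced, and in any case irrelevant once the correct witness is used.)

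For (ii), the witness $vv^*\in\thet(\cl S_G)$ is right, but the essential step --- that $vv^*\notin\her({\rm thab}(G))$ when $i\sim j$ --- is exactly the step you leave open, and your proposed detour through $\vp(\bar G)$ and Theorem \ref{emptyap}(v) cannot close it: $\her({\rm thab}(G))$ and $\thet(\cl S_G)$ are both non-commutative lifts of ${\rm thab}(G)$, so they have the same diagonal part, and no comparison of diagonal (classical) corners will distinguish them; the obstruction must be an off-diagonal one. Your worry that membership in ${\rm thab}(G)$ is not characterised by clique inequalities is a red herring: one does not need a characterisation, only the single inequality $\ip{Q}{e_ie_i^*+e_je_j^*}\le1$ for all $Q\in{\rm thab}(G)$. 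This follows by exhibiting one element of $\cl P_0(G)$: since $i\sim j$, there is an orthogonal labelling $(a^{(k)})_{k\in[d]}$ with $a^{(i)}=a^{(j)}$ and $a^{(i)}\perp a^{(l)}$ for $l\notin\{i,j\}$, and taking $c=a^{(i)}$ gives $e_ie_i^*+e_je_j^*\in\cl P_0(G)$, so the inequality holds on ${\rm thab}(G)=\cl P_0(G)^\flat$. Combined with the strict bounds $\ip{e_i}{Qe_i}>\tfrac12$ and $\ip{e_j}{Qe_j}>\tfrac12$ forced by Lemma \ref{PSD2} whenever $Q\ge vv^*$ with $Q$ diagonal, this yields the contradiction. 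You should carry out this construction explicitly rather than reroute the argument.
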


\begin{proof} 
(i) 
It is easy to see that 
$\thet(\cl S_{K_d}) = \cl A_{I_d}$ and ${\rm thab}(K_d) = \cl D_d \cap \cl A_{I_d}$.
By Lemma  \ref{flatsharp=}, we hence have
\begin{eqnarray*}  
({\rm thab}(K_d)^\flat)^\sharp 
& = & 
\{ M \in M_d^+ : \Delta (M) \in {\rm thab}(K_d)\}\\
& = & \{ M \in M_d^+ : \Tr M \le 1 \} = \thet (\cl S_{K_d}). 
\end{eqnarray*}
Conversely, suppose that $G$ is not complete, and let $k \not \simeq l$.  
Let  $A= (e_k + e_l)(e_k + e_l)^*$.  
Then $I-A \not\ge 0$ and, since $\thet(\cl S_G) \subseteq \cl B_{I_d}$, we have that $A \notin \thet(\cl S_G)$.
It is straightforward that $A \in  (\vp(G)^\flat)^\sharp$.  
Since $\vp(G) \subseteq {\rm thab}(G)$, we have 
$\vp(G)^\flat \supseteq {\rm thab}(G)^\flat$,   and  $ (\vp(G)^\flat )^\sharp \subseteq  ({\rm thab}(G)^\flat)^\sharp$.  
Thus $A \in  ({\rm thab}(G)^\flat)^\sharp$ and so $\thet(\cl S_G) \ne  ({\rm thab}(G)^\flat)^\sharp$.

(ii) 
It is easy to see that
$$\her({\rm thab}(\bar{K}_d))=\cl B_{I_d}= \thet(\cl S_{\bar{K}_d}).$$
Conversely, assume that $i \sim j$ in $G$.  
Setting $v = \frac{1}{\sqrt{2}} (e_i + e_j)$ we have $\Tr (vv^*) = 1$ and  $vv^* \in \thet(\cl S_G)$.  
Choosing an o.l. $(a^{(i)})_{i \in [d]}$ with $a^{(i)}=a^{(j)}$ and 
$\ip{a^{(i)}}{a^{(l)}}=0$ when $l \notin \{ i,j\}$ and
letting $c = a^{(i)}$ gives $ e_ie_i^*+e_je_j^* \in \cl P_0(G).$  
Suppose that $vv^* \in \her(\thet(G))$, that is $vv^* \le Q$ for some $Q \in {\rm thab}(G) \subseteq \cl D_d$.    
This requires $\ip{e_i}{Qe_i} > \frac{1}{2}$ and $\ip{e_j}{Qe_j} > \frac{1}{2}$.  
(Indeed, note that, since $Q \in \cl D_d$, we have that 
$\ip{e_i}{(Q-vv^*)e_j}=- \frac{1}{2}.$  But  if $\ip{e_i}{Qe_i }= \frac{1}{2}$, we have $e_i^*(Q-vv^*)e_i=0$, 
and since $Q \ge vv^*$, Lemma \ref{PSD2} implies that $\ip{e_i}{(Q-vv^*)e_j}=0$.  
A similar argument applies for $j$.) Thus, $\ip{Q}{e_ie_i^*+e_je_j^*} > 1$ and so
$Q \not \in \cl P_0(G)^\flat = {\rm thab}(G)$, a contradiction.  
We conclude $ vv^* \notin \her(\thet(G))$.
\end{proof}


\section{Non-commutative graph entropy}\label{s_nge}

In this section, we provide a quantum version of the 
notion of graph entropy, introduced by J. K\"{o}rner in \cite{Korner} and 
a non-commutative analogue of the fractional chromatic number of a graph.
We examine the continuity properties of non-commutative graph entropy and 
show its connection to the fractional chromatic number, extending to the non-commutative case a classical 
optimisation result from \cite{Simonyi2}.


\subsection{Entropy and fractional chromatic number}\label{ss_dbp}
Let $G$ be a graph with vertex set $[d]$ and let $p\in \cl P_d$ be a probability distribution
over its vertices.
The entropy $H(G,p)$ of $p$ with respect to $G$ was defined in \cite{Korner} as
the optimal coding rate of the source $\left([d],p\right)$ in the presence of ambiguity 
between the symbols from $[d]$, captured by the adjacency relation of $G$ 
(two symbols $i,j\in [d]$ are distinguishable if $i\sim j$ in $G$). 
The entropy functional $H(G,p)$ is thus defined as an asymptotic parameter, 
whose computation requires knowledge of the limiting behaviour 
of a sequence of chromatic numbers of powers of $G$.  
An elegant closed formula for $H(G,p)$, reminiscent of the 
definition of the classical Shannon entropy of $p$, was obtained in \cite{Korner}:
$$H(G,p) = \min \left\{\sum_{i=1}^d p_i \log\frac{1}{v_i} : v = (v_i)_{i=1}^d \in \vp(G), v > 0\right\}$$
or, equivalently, 
\begin{equation}\label{eq_HGp}
H(G,p) = \min_{v\in \vp(G)} - \Tr(p \log v).
\end{equation}

Let $\cl S\subseteq M_d$ be a non-commutative graph and $\rho$ be a state in $M_d$.
Since $\ap(\cl S)$ is a quantum version of $\vp(G)$,
taking (\ref{eq_HGp}) as a starting point in the non-commutative case, it is natural to 
make the following definition.

\begin{definition} \label{Hrho} 
The \emph{entropy} 
$H(\cl S, \rho)$ of a non-commutative graph $\cl S \subseteq M_d$ with respect to a state $\rho \in \cl R_d$  
is the quantity
$H(\cl S,\rho) = H_{\ap(\cl S)}(\rho).$
\end{definition}

It follows from Theorem \ref{noncomm15} and Proposition \ref{apvp} that, if $p \in \cl P_d$ and
$\rho = \sum_{i=1}^d p_ie_ie_i^*$ then $H(G,p) = H(\cl S_G, \rho)$. 
Thus, the parameter introduced in Definition \ref{Hrho} can be 
viewed as a non-commutative version of classical graph entropy.

\begin{remark}\label{nonneg}
Let $\cl S$ be a non-commutative graph in $M_d$ and $\rho\in \cl R_d$.
It follows from \eqref{genent} and Remark \ref{apcpbounds} that 
$0 \le H(\cl S, \rho) \le H(\rho).$
It is clear that 
$$\cl S_1 \subseteq \cl S_2 \ \Longrightarrow \ H( \cl S_1 , \rho) \le H( \cl S_2 , \rho).$$
\end{remark}

Let $G$ be a graph with vertex set $[d]$ and $\cl S\subseteq M_d$ be a non-commutative graph. 
Recall that the chromatic number $\chi(G)$ of $G$ is given by
$$\chi(G) = \min\left\{k\in \bb{N} : \ \exists \mbox{ indep. sets } S_1,\dots,S_k \mbox{ s.t. } 
\cup_{i=1}^k S_i = [d]\right\}.$$
Taking into account that the $\cl S$-abelian projections are the quantum analogue of 
independent sets, the following definition of a chromatic number of $\cl S$, given in \cite{Paulsen}, 
becomes natural:
$$\chi(\cl S) = \min\left \{k\in \bb{N} : \  P_1,\dots,P_k \in \cl P_{\rm a}(\cl S), \ \sum_{i=1}^k P_i = I\right\}.$$
It was shown in \cite{Paulsen} that, if $G$ is a graph then $\chi(\cl S_G) = \chi(G)$. 
Recalling the definitions made after Proposition \ref{capsharp}, we note that 
\begin{equation}\label{eq_chiGamma}
\Gamma(\ap(\cl S)) \leq \chi(\cl S).
\end{equation}

Similarly, recall that the \emph{fractional chromatic number} $\chi_{\rm f}(G)$ of $G$ 
is defined by letting 
\begin{equation}\label{eq_chrcl}
\chi_{\rm f}(G) = \min\left\{\sum_{S} \lambda_S : \lambda_S \geq 0, \sum_{S} \lambda_S\chi_S \geq 1\right\},
\end{equation}
where the summation is taken over independent sets $S$ of $G$.
By a duality argument, $\chi_{\rm f}(G)$ coincides with the \emph{fractional clique number} $\omega_{\rm f}(G)$ of $G$, 
defined by 
$$\omega_{\rm f}(G) = \max\left\{\sum_{i=1}^d \mu_i : \mu_i \geq 0, \sum_{i\in S} \mu_i \leq 1 \ \forall 
\mbox{ independent set } S\right\}.$$
In \cite{BTW}, we defined a non-commutative version of the fractional clique number by letting, 
for an operator system $\cl S\subseteq M_d$,
$$\omega_{\rm f}(\cl S) = \max\left\{\Tr(A) : A \in M_d^+, \Tr(AP) \leq 1 \mbox{ for all }
P \in \cl P_{\rm a}(\cl S)\right\}.$$ 
It is clear that 
\begin{equation}\label{eq_omegam}
\omega_{\rm f}(\cl S) = \gamma(\ap(\cl S)^{\sharp}),
\end{equation}
and it was shown in \cite{BTW} that $\omega_{\rm f}(\cl S_G) = \omega_{\rm f}(G)$. 

With the definition (\ref{eq_chrcl}) of the fractional chromatic number of a classical graph in mind, 
it is natural to define the \emph{fractional chromatic number} of a non-commutative graph $\cl S\subseteq M_d$ by setting 
\begin{equation} \label{eq_chif} 
\chi_{\rm f}(\cl S) 
= \inf\left\{\sum_{i=1}^k \lambda_i : \lambda_i > 0 \mbox{ and } \exists \    
P_1,\dots,P_k \in \cl P_{\rm a}(\cl S) \mbox{ s.t. }  \sum_{i=1}^k \lambda_i P_i \ge I \right\}.
\end{equation}

\begin{prop}\label{eq_chifGamma}
If $\cl S$ is a non-commutative graph then $\chi_{\rm f}(\cl S) = \Gamma_{\rm f}(\ap(\cl S))$.
\end{prop}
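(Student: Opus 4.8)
The plan is to show the two quantities are equal by comparing their defining optimisation problems directly, using the facts already established about $\Gamma_{\rm f}$ and the convex corner generated by a set of projections. Recall that $\ap(\cl S) = {\rm C}(\cl P_{\rm a}(\cl S))$, so $\ap(\cl S)$ is exactly a convex corner of the form ${\rm C}(\cl P)$ with $\cl P = \cl P_{\rm a}(\cl S)$. By Remark \ref{apcpbounds}, $\cl A_{I_d} \subseteq \ap(\cl S)$, so the parameter $\Gamma_{\rm f}(\ap(\cl S))$ is defined by the formula in item {\bf (e)} after Proposition \ref{capsharp}, namely
$$\Gamma_{\rm f}(\ap(\cl S)) = \inf\left\{\sum_{i=1}^k \lambda_i : \exists\, k\in\bb N,\ \text{proj. } P_i \in \ap(\cl S),\ \lambda_i > 0,\ \sum_{i=1}^k \lambda_i P_i \ge I\right\},$$
whereas $\chi_{\rm f}(\cl S)$ is given by \eqref{eq_chif}, which is the same expression but with the projections restricted to lie in $\cl P_{\rm a}(\cl S)$ rather than merely in $\ap(\cl S)$.

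The inequality $\Gamma_{\rm f}(\ap(\cl S)) \le \chi_{\rm f}(\cl S)$ is then immediate: every feasible tuple for \eqref{eq_chif} is a feasible tuple for the $\Gamma_{\rm f}$ problem, since $\cl P_{\rm a}(\cl S) \subseteq \ap(\cl S)$. For the reverse inequality, the key observation is that the only projections contained in $\ap(\cl S) = {\rm C}(\cl P_{\rm a}(\cl S))$ are the $\cl S$-abelian ones. Indeed, suppose $P$ is a projection with $P \in \ap(\cl S)$. By Proposition \ref{l_gen}(i), $\ap(\cl S) = \her(\overline{\rm conv}(\cl P_{\rm a}(\cl S)))$, so there exists $B \in \overline{\rm conv}(\cl P_{\rm a}(\cl S))$ with $P \le B$; writing $B$ as a limit of convex combinations $\sum_l \mu_l^{(n)} Q_l^{(n)}$ of $\cl S$-abelian projections, one sees that $B \le I$ and that $B$ restricted to $\ran(P)$ must act as the identity, forcing each $Q_l^{(n)}$ in the approximating combinations (for the portion of mass supported near $\ran(P)$) to contain $\ran(P)$ in its range — and hence $\ran(P)$ is a subspace of a span of an $\cl S$-independent set, so $P$ is $\cl S$-abelian. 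With this, any feasible tuple $(\lambda_i, P_i)$ for the $\Gamma_{\rm f}(\ap(\cl S))$ problem automatically has $P_i \in \cl P_{\rm a}(\cl S)$, hence is feasible for \eqref{eq_chif}, giving $\chi_{\rm f}(\cl S) \le \Gamma_{\rm f}(\ap(\cl S))$.

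The main obstacle I anticipate is making the claim ``the only projections in $\ap(\cl S)$ are $\cl S$-abelian'' fully rigorous, since $\ap(\cl S)$ involves a closure and the structure of $\cl S$-independent sets is not closed under taking limits in an obvious way. A cleaner route that sidesteps this issue is to invoke Theorem \ref{NMgamma2}(iii), which states that $M({\rm C}(\cl P)) = \Gamma_{\rm f}({\rm C}(\cl P))$ for any non-empty set $\cl P$ of non-zero projections; applying this with $\cl P = \cl P_{\rm a}(\cl S)$ gives $\Gamma_{\rm f}(\ap(\cl S)) = M(\ap(\cl S))$. One then only needs $\chi_{\rm f}(\cl S) = M(\ap(\cl S))$, and the proof of Theorem \ref{NMgamma2}(iii) itself — which shows that in the $M$-optimisation over ${\rm C}(\cl P)$ one may replace an arbitrary element $A$ of the corner by a nearby convex combination of elements of $\cl P$ — can be reproduced verbatim to pass from the infimum defining $\chi_{\rm f}(\cl S)$ (over $\cl S$-abelian projections) to the infimum defining $M(\ap(\cl S))$ (over all elements of $\ap(\cl S)$), since the $B^{(n)}$ produced there are convex combinations of elements of $\cl P_{\rm a}(\cl S)$. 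I would present the argument in this second form, citing Theorem \ref{NMgamma2}(iii) and its proof, as it avoids any delicate analysis of which projections lie in $\ap(\cl S)$.
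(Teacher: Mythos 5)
Your final argument (the second route you say you would present) is correct, but it takes a genuinely different path from the paper. The paper argues structurally: using the closedness of $\cl P_{\rm a}(\cl S)$ (quoted from \cite[Remark 2.7]{BTW}) and Carath\'eodory's theorem it gets $\overline{\rm conv}(\cl P_{\rm a}(\cl S)) = {\rm conv}(\cl P_{\rm a}(\cl S))$, and then, from the observation that a unit vector $\xi$ in the range of a projection $P \le \sum_{i}\lambda_i P_i$ (with $\sum_i \lambda_i = 1$, $P_i \in \cl P_{\rm a}(\cl S)$) must satisfy $P_i\xi = \xi$ for every $i$, concludes that every projection in $\ap(\cl S)$ is dominated by an $\cl S$-abelian projection; hence any $\Gamma_{\rm f}$-feasible family can be replaced by a $\chi_{\rm f}$-feasible family of the same total weight. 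You instead pass through $M(\ap(\cl S))$: Theorem \ref{NMgamma2}(iii) gives $\Gamma_{\rm f}(\ap(\cl S)) = M(\ap(\cl S))$, and re-running its $\ep$--$\delta$ approximation argument with $\cl P = \cl P_{\rm a}(\cl S)$ gives $\chi_{\rm f}(\cl S) \le M(\ap(\cl S))$, since the feasible decomposition it constructs uses only elements of $\cl P_{\rm a}(\cl S)$; the reverse inequality is trivial. This is sound, provided you really do re-run that proof rather than merely cite the statement (whose $\Gamma_{\rm f}$ optimises over all projections in the corner), which you acknowledge. What your route buys is independence from the closedness of $\cl P_{\rm a}(\cl S)$ and from Carath\'eodory; what the paper's buys is the sharper structural fact about projections in $\ap(\cl S)$, with no limiting argument. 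One caution about your abandoned first route: its key claim, that the only projections in $\ap(\cl S)$ are the $\cl S$-abelian ones, is not just delicate but false in general. For $\cl S = \cl S_{\bar K_3} = \cl D_3$ one has $\ap(\cl S) = \cl B_{I_3}$, so every projection belongs to $\ap(\cl S)$; yet a rank-two projection whose range contains no coordinate vector is not $\cl S$-abelian, since a $\cl D_3$-independent orthonormal pair must have disjoint supports and would force a coordinate vector into the range. The correct (and sufficient) statement, which is what the paper actually proves, is that every projection in $\ap(\cl S)$ is \emph{dominated by} an $\cl S$-abelian projection.
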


\begin{proof}
Since $\cl P_{\rm a}(\cl S)\subseteq \ap(\cl S)$, we have that 
$\Gamma_{\rm f}(\ap(\cl S)) \leq \chi_{\rm f}(\cl S)$. By \cite[Remark 2.7]{BTW}, 
the set $\cl P_{\rm a}$ is closed. Carath\'{e}odory's Theorem now implies that 
$\overline{{\rm conv}}(\cl P_{\rm a}) = {\rm conv}(\cl P_{\rm a})$. 
Suppose that $P\in \ap(\cl S)$ is a projection. Then 
$P \leq \sum_{i=1}^k \lambda_i P_i$ for some $P_i \in \cl P_{\rm a}$, $\lambda_i > 0$, 
$i = 1,\dots,k$, with $\sum_{i=1}^k \lambda_i = 1$. 
If $\xi$ is a unit vector with $\xi = P\xi$ then 
$1 \leq \sum_{i=1}^k \lambda_i \langle P_i\xi,\xi \rangle\leq 1$, and hence $\langle P_i\xi,\xi \rangle = 1$ for each $i\in [k]$. 
It follows that $P_i \xi = \xi$, and hence $P \leq P_i$, for each $i \in [k]$. 
Thus, if $I \leq \sum_{j=1}^l \mu_j Q_j$ for some positive scalars $\mu_j$ and some projections $Q_j\in \ap(\cl S)$
then $I \leq \sum_{r = 1}^m \nu_r P_r$, for some positive scalars $\nu_r$ and some $P_r\in\cl P_{\rm a}$, with 
$\sum_{r=1}^m \nu_r = \sum_{j=1}^l \mu_j$, completing the proof.
\end{proof}

As noted, if $G$ is a classical graph $G$ then
$\chi_{\rm f}(G) = \omega_{\rm f}(G)$. The non-commutative counterpart of this identity also holds,
but is much deeper and replies on the second anti-blocker theorem we proved in Section \ref{sect_AB}.

\begin{theorem}\label{th_fcc}
If $\cl S \subseteq M_d$ is a non-commutative graph then  
$\omega_{\rm f}(\cl S) =  \chi_{\rm f}(\cl S).$
\end{theorem}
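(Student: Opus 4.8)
The plan is to deduce $\omega_{\rm f}(\cl S) = \chi_{\rm f}(\cl S)$ entirely from the identifications already established. First I would rewrite both sides of the desired equality in terms of the convex corner $\cl A := \ap(\cl S)$ and its anti-blocker. By \eqref{eq_omegam}, we have $\omega_{\rm f}(\cl S) = \gamma(\cl A^{\sharp})$, and by Proposition \ref{eq_chifGamma}, $\chi_{\rm f}(\cl S) = \Gamma_{\rm f}(\cl A)$. Thus the statement reduces to proving
$$\gamma(\cl A^{\sharp}) = \Gamma_{\rm f}(\cl A).$$

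The key observation is that $\cl A = {\rm C}(\cl P_{\rm a}(\cl S))$ is, by construction, the convex corner generated by a set of projections; indeed, $\cl S$-abelian projections are non-zero projections (the rank-one ones always qualify), so we may invoke Theorem \ref{NMgamma2}(iii) with $\cl P = \cl P_{\rm a}(\cl S)$ and $\cl B = \cl A$. That theorem requires $\cl A$ to have non-empty relative interior, which holds by Remark \ref{apcpbounds}(iii) (the corner $\ap(\cl S)$ is standard, since $\cl A_{I_d}\subseteq \ap(\cl S)$). Theorem \ref{NMgamma2}(iii) then gives $M(\cl A) = \Gamma_{\rm f}(\cl A)$. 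Combining this with Theorem \ref{NMgamma2}(ii), which asserts $M(\cl A) = \gamma(\cl A^{\sharp})$, we obtain $\Gamma_{\rm f}(\cl A) = \gamma(\cl A^{\sharp})$, which is exactly what we wanted.

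So the proof is essentially a two-line chain of already-proved identities:
$$\chi_{\rm f}(\cl S) = \Gamma_{\rm f}(\ap(\cl S)) = M(\ap(\cl S)) = \gamma\bigl(\ap(\cl S)^{\sharp}\bigr) = \omega_{\rm f}(\cl S),$$
using Proposition \ref{eq_chifGamma}, Theorem \ref{NMgamma2}(iii), Theorem \ref{NMgamma2}(ii), and \eqref{eq_omegam} in turn. The only thing to verify carefully is that the hypotheses of Theorem \ref{NMgamma2}(iii) are met — namely that $\ap(\cl S)$ is generated by a non-empty set of non-zero projections and has non-empty relative interior — but both of these are immediate from the definition of $\ap(\cl S)$ and Remark \ref{apcpbounds}.

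I do not anticipate a genuine obstacle here: all the heavy lifting has already been done, in particular the second anti-blocker theorem (Theorem \ref{2ndABe}), which underlies Proposition \ref{NMgamma2}(ii)–(iii), and which is precisely why Theorem \ref{NMgamma2}(iii) — the identity $M(\cl B) = \Gamma_{\rm f}(\cl B)$ for projection-generated corners — holds in the non-commutative setting. The ``depth'' the paper alludes to is entirely concentrated in those earlier results; the present statement is a clean corollary obtained by specialising them to the corner $\ap(\cl S)$.
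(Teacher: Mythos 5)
Your proposal is correct and coincides with the paper's own proof: both reduce the statement via Proposition \ref{eq_chifGamma} and equation (\ref{eq_omegam}) to the identities $\Gamma_{\rm f}(\ap(\cl S)) = M(\ap(\cl S)) = \gamma(\ap(\cl S)^{\sharp})$ supplied by Theorem \ref{NMgamma2} (ii)--(iii), whose validity rests on the second anti-blocker theorem. Your extra verification that $\ap(\cl S)$ is generated by non-zero projections and has non-empty relative interior is accurate and harmless.
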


\begin{proof}  
By Proposition \ref{eq_chifGamma} and Theorem \ref{NMgamma2}, $\chi_{\rm f}(\cl S) = M(\ap(\cl S))$. 
The claim now follows from (\ref{eq_omegam}) and Theorem \ref{NMgamma2}.
\end{proof}

It was shown in \cite[Lemma 4]{Simonyi2} that 
$$\max_{ p \in \cl P_n}H(G, {p}) = \log \chi_{\rm f}(G).$$
The next theorem, which is a direct consequence of 
Proposition \ref{eq_chifGamma} and
Theorems \ref{noncomm13} and \ref{NMgamma2}
establishes a quantum version of this identity. 

\begin{theorem}\label{chif7}  
Let $\cl S \subseteq M_d$ be an operator system. Then 
$$\max_{ \rho \in \cl R_d} H(\cl S,\rho)  = \log\chi_{\rm f}(\cl S).$$ 
\end{theorem}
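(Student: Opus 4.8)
The plan is to read off the result from the machinery already assembled, essentially by chaining together three earlier statements. First I would observe that the non-commutative graph $\cl S$ gives rise to the convex $M_d$-corner $\ap(\cl S) = {\rm C}(\cl P_{\rm a}(\cl S))$, which by Remark \ref{apcpbounds}(iii) is standard (in particular it contains $\cl A_{I_d}$ and has non-empty relative interior). Since $H(\cl S,\rho) = H_{\ap(\cl S)}(\rho)$ by Definition \ref{Hrho}, Theorem \ref{noncomm13} applies verbatim to $\cl A = \ap(\cl S)$ and yields
\begin{equation*}
\max_{\rho\in \cl R_d} H(\cl S,\rho) = \log M(\ap(\cl S)).
\end{equation*}

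Next I would identify $M(\ap(\cl S))$ with $\chi_{\rm f}(\cl S)$. Here $\ap(\cl S)$ is of the form ${\rm C}(\cl P)$ for the non-empty set of non-zero projections $\cl P = \cl P_{\rm a}(\cl S)$, so Theorem \ref{NMgamma2}(iii) gives $M(\ap(\cl S)) = \Gamma_{\rm f}(\ap(\cl S))$, and Proposition \ref{eq_chifGamma} gives $\Gamma_{\rm f}(\ap(\cl S)) = \chi_{\rm f}(\cl S)$. Combining these with the previous display yields
\begin{equation*}
\max_{\rho\in \cl R_d} H(\cl S,\rho) = \log \chi_{\rm f}(\cl S),
\end{equation*}
which is exactly the claim. (Alternatively, one could route through $\omega_{\rm f}(\cl S) = \gamma(\ap(\cl S)^\sharp)$ from \eqref{eq_omegam} together with Theorem \ref{NMgamma2}(ii) and Theorem \ref{th_fcc}, but the $M$-parameter route is the most direct.)

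Since all of the ingredients — the standardness of $\ap(\cl S)$, the minimax computation in Theorem \ref{noncomm13}, the identity $M = \Gamma_{\rm f}$ on corners generated by projections, and $\Gamma_{\rm f}(\ap(\cl S)) = \chi_{\rm f}(\cl S)$ — are already established, there is no genuine obstacle left; the proof is a two-line citation chain. The only point that deserves a sentence of care is checking the hypotheses of Theorem \ref{noncomm13}, namely that $\ap(\cl S)$ is a \emph{standard} convex $M_d$-corner (so that $N(\ap(\cl S)) > 0$ and the logarithm is meaningful), which is precisely the content of Remark \ref{apcpbounds}(iii). I would also note in passing that this recovers the classical identity $\max_{p\in\cl P_d} H(G,p) = \log\chi_{\rm f}(G)$ of \cite[Lemma 4]{Simonyi2} upon specialising $\cl S = \cl S_G$, using $\chi_{\rm f}(\cl S_G) = \chi_{\rm f}(G)$ and the fact that $H(G,p) = H(\cl S_G,\rho)$ for diagonal $\rho$.
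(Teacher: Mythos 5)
Your argument is correct and is essentially the paper's own proof: Theorem \ref{chif7} is stated there as a direct consequence of Proposition \ref{eq_chifGamma} and Theorems \ref{noncomm13} and \ref{NMgamma2}, which is exactly the citation chain you assemble (with the standardness of $\ap(\cl S)$ from Remark \ref{apcpbounds} justifying the application of Theorem \ref{noncomm13}). Nothing further is needed.
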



\subsection{Further properties}\label{ss_sfp}

In this subsection, we include observations regarding the 
continuity, multiplicativity and extreme value properties of the
non-commutative graph entropy.

\begin{theorem}\label{th_contnceo}
Let $\cl S$ and $\cl S_n$ be non-commutative graphs in $M_d$, $n\in \bb{N}$, 
such that $\cl S\subseteq \liminf_{n\in \bb{N}}\cl S_n$. 
Then
$H(\cl S,\rho)\leq \liminf_{n\in \bb{N}} H(\cl S_n,\rho)$ for every $\rho\in \cl R_d$.
\end{theorem}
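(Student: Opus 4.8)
The plan is to reduce the statement to the continuity result for the entropy over convex corners, Theorem \ref{th_cecc}(i), applied to the abelian projection corners. Concretely, recall that $H(\cl S,\rho) = H_{\ap(\cl S)}(\rho)$ by Definition \ref{Hrho} and $H(\cl S_n,\rho) = H_{\ap(\cl S_n)}(\rho)$. Since $\ap(\cl S)\subseteq \cl B_{I_d}$ and $\ap(\cl S_n)\subseteq \cl B_{I_d}$ for all $n$ (Remark \ref{apcpbounds}(ii)), the family $\bigcup_{n\in\bb N}\ap(\cl S_n)$ is bounded, so Theorem \ref{th_cecc}(i) applies provided we can show
$$\limsup_{n\in\bb N}\ap(\cl S_n)\subseteq \ap(\cl S).$$
Thus the whole argument comes down to transferring the hypothesis $\cl S\subseteq\liminf_{n\in\bb N}\cl S_n$ (a containment of operator systems, which are closed subspaces of $M_d$) into a $\limsup$-containment of the associated abelian projection corners.

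The first step is to translate the hypothesis into a statement about $\cl S$-abelian projections. Recall $\ap(\cl S) = {\rm C}(\cl P_{\rm a}(\cl S))$ and, by Proposition \ref{l_gen}(i), $\ap(\cl S) = \her(\overline{\conv}(\cl P_{\rm a}(\cl S)))$, with $\cl P_{\rm a}(\cl S)$ closed (cited in the proof of Proposition \ref{eq_chifGamma}). The key intermediate claim I would establish is:
$$\cl S\subseteq\liminf_{n\in\bb N}\cl S_n \ \Longrightarrow\ \limsup_{n\in\bb N}\cl P_{\rm a}(\cl S_n)\subseteq \cl P_{\rm a}(\cl S).$$
To see this, take a cluster point $P$ of a sequence $(P_{n_k})_{k}$ with $P_{n_k}\in\cl P_{\rm a}(\cl S_{n_k})$; after passing to a subsequence, $P_{n_k}\to P$ in $\|\cdot\|_2$. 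Since projections of a fixed finite rank form a closed set and the rank is locally constant on $M_d^+$ near a projection, $P$ is a projection and $\rank P = \rank P_{n_k}$ eventually. Each $P_{n_k}$ is the span of an $\cl S_{n_k}$-independent orthonormal set; equivalently (the fact attributed to Paulsen in the paper) $P_{n_k}\cl S_{n_k}P_{n_k}$ is abelian. One then needs: for every $S\in\cl S$, $PSP$ commutes with $PS'P$ for all $S'\in\cl S$. Given $S,S'\in\cl S$, the hypothesis $\cl S\subseteq\liminf_n\cl S_n$ yields sequences $S^{(n)}, S'^{(n)}$ with $S^{(n)}\in\cl S_n$, $S'^{(n)}\in\cl S_n$, converging to $S, S'$. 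Then $P_{n_k}S^{(n_k)}P_{n_k}$ and $P_{n_k}S'^{(n_k)}P_{n_k}$ commute for each $k$; passing to the limit $k\to\infty$ shows $PSP$ and $PS'P$ commute. Hence $P\cl S P$ is abelian, i.e. $P\in\cl P_{\rm a}(\cl S)$.

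Having the $\limsup$-containment for the projection sets, I would propagate it up to the corners. This is exactly the kind of argument carried out in the proof of Corollary \ref{c_cpa}(ii): from $\limsup_n\cl P_{\rm a}(\cl S_n)\subseteq\cl P_{\rm a}(\cl S)$, using Carath\'eodory to write elements of $\conv(\cl P_{\rm a}(\cl S_n))$ as convex combinations of at most $2d^2+1$ projections, one gets $\limsup_n\overline{\conv}(\cl P_{\rm a}(\cl S_n))\subseteq\overline{\conv}(\cl P_{\rm a}(\cl S))$, and then the hereditary-cover step (if $A_k\le B_k$, $A_k\to A$, $B_k\to B$ with $B\in\overline{\conv}(\cl P_{\rm a}(\cl S))$, then $A\le B$, so $A\in\ap(\cl S)$) yields $\limsup_n\ap(\cl S_n)\subseteq\ap(\cl S)$. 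Applying Theorem \ref{th_cecc}(i) with $\cl A=\ap(\cl S)$ and $\cl A_n=\ap(\cl S_n)$ gives $H_{\ap(\cl S)}(\rho)\le\liminf_n H_{\ap(\cl S_n)}(\rho)$, which is the assertion.

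The main obstacle I anticipate is the intermediate claim about abelian projections: one must handle the possibility that the ranks $\rank P_{n_k}$ vary, and must be careful that the convergence $P_{n_k}\to P$ together with $S^{(n_k)}\to S$ really does pass through the commutator — this is where using the operator-norm (equivalently $\|\cdot\|_2$, since $d$ is fixed) continuity of multiplication and of the map $A\mapsto PAP$ is essential, and where the characterization ``$P$ is $\cl S$-abelian iff $P\cl S P$ is commutative'' is what makes the limit argument clean rather than forcing one to track orthonormal bases. Everything after that is a routine adaptation of the proof of Corollary \ref{c_cpa}(ii) together with a direct appeal to Theorem \ref{th_cecc}(i).
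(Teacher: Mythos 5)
Your proposal is correct and follows essentially the same route as the paper's proof: the paper likewise proves $\limsup_{n}\ap(\cl S_n)\subseteq\ap(\cl S)$ by first passing limits through commutators of compressions $P_k A_{n_k} P_k$ to show that limits of $\cl S_{n_k}$-abelian projections are $\cl S$-abelian (via the characterisation that $P$ is $\cl S$-abelian iff $P\cl S P$ is commutative), then upgrading to the corners via Carath\'eodory with $2d^2+1$ terms and the hereditarity/domination step, and finally invoking Theorem \ref{th_cecc}(i). The only cosmetic difference is your (unneeded but harmless) remark about ranks stabilising.
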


\begin{proof}
We first claim that 
\begin{equation}\label{eq_apc}
\limsup_{n\in \bb{N}} \ap(\cl S_n) \subseteq \ap(\cl S).
\end{equation}
Suppose that $(P_k)_{k\in \bb{N}}$ is a sequence of projections such that 
$P_k\in \cl P_{\rm a}(\cl S_{n_k})$, $k\in \bb{N}$, and $P_k\to_{k\to \infty} P$.  
Let $A,B\in \cl S$, and $(A_n)_{n\in \bb{N}}$ and $(B_n)_{n\in \bb{N}}$ be sequences such that
$A_n,B_n\in \cl S_n$, $n\in \bb{N}$, and $A_n\to_{n\to\infty} A$ and $B_n\to_{n\to\infty} B$. 
Then 
\begin{eqnarray*}
(PAP)(PBP) & = & 
\lim_{k\to \infty} (P_kA_{n_k}P_k)(P_kB_{n_k}P_k)\\ 
& = & 
\lim_{k\to \infty} (P_kB_{n_k}P_k)(P_kA_{n_k}P_k) = (PBP)(PAP);
\end{eqnarray*}
thus, $P \in \cl P_{\rm a}(\cl S)$. 

Now suppose that $(A_k)_{k\in \bb{N}}$ is a sequence with $A_k\in \ap(\cl S_{n_k})$, $k\in \bb{N}$, 
and $A_k\to_{k\to \infty} A$. 
Let $B_k = \sum_{j=1}^{m_k} \mu_j^{(k)} P_j^{(k)}$ be a convex combination of $\cl S_{n_k}$-abelian projections
$P_j^{(k)}$, $j \in [m_k]$, $k\in \bb{N}$, such that $A_k\leq B_k$. 
By Carath\'{e}odory's Theorem, we may assume that $m_k = 2d^2 + 1$ for all $k\in \bb{N}$. 
Passing to subsequences, we may assume that
$P_j^{(k)}\to_{k\to\infty} P_j$ and $\mu_j^{(k)} \to_{k\to\infty} \mu_j$, $j\in [2d^2 + 1]$.
By the previous paragraph, 
$$B := \sum_{j=1}^{2d^2 + 1} \mu_j P_j \in \ap(\cl S).$$
Since $A\leq B$, we conclude that $A\in \ap(\cl S)$, and (\ref{eq_apc}) is proved. 
The claim now follows from Theorem \ref{th_cecc}. 
\end{proof}

Let $G$ be  a graph with vertex set $[d]$.  
We note that $H(G,p) = 0$ if and only if there exists 
$v = (v_i)_{i=1}^d \in \vp(G)$ such that $p_i > 0 \Rightarrow v_i = 1$.  
This is equivalent to the condition that $\{i \in [d] : p_i > 0\}$ is an independent set of $G$.  
Note that $H(G,p) = 0$ for all $p \in \cl P_d$ if and only if $G = \bar{K}_d$. 
We now address the analogous questions in the non-commutative setting.

\begin{prop} \label{geiszero} 
Let $\cl S \subseteq M_d$ be an operator system.  
\begin{itemize}
\item[(i)]
Suppose that $\rho\in \cl R_d$. 
We have that $H(\cl S ,\rho)=0$ if and only if there exists 
an orthonormal basis $\{v_1, \ldots, v_d\}$ of $\bb C^d$ 
such that, if $T= \{i \in [d]: \ip{\rho v_i}{v_i} > 0\}$ then $\sum_{i \in T} v_i v_i^* \in \ap({\cl S})$. 
\item[(ii)]
$H(\cl S, \rho) = 0$ for all $\rho \in \cl R_d$ if and only if there exists an orthonormal basis $V$ of $\bb{C}^d$ 
such that $\cl S \subseteq \cl D_V$.
\end{itemize}
\end{prop}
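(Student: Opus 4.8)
The plan is to prove both parts by reducing to the diagonal (classical) picture through a diagonal expectation, using the tools already assembled: the characterisation of the minimiser in Lemma~\ref{minA2}, the diagonalisation of the minimiser furnished by Lemma~\ref{noncomm'} for $V$-aligned corners, and the chain $\cl A_{I_d}\subseteq\ap(\cl S)\subseteq\cl B_{I_d}$ from Remark~\ref{apcpbounds}.

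For part~(i), first I would unwind the definition: $H(\cl S,\rho)=H_{\ap(\cl S)}(\rho)=0$ means $\min_{A\in\ap(\cl S)}-\Tr(\rho\log A)=0$. Since $\ap(\cl S)\subseteq\cl B_{I_d}$, Remark~\ref{HAmin}(vi) gives $-\Tr(\rho\log A)\ge 0$ for every $A\in\ap(\cl S)$, so $H(\cl S,\rho)=0$ is equivalent to the existence of some $A_0\in\ap(\cl S)$ with $-\Tr(\rho\log A_0)=0$. Diagonalise $A_0=\sum b_i v_iv_i^*$ in an orthonormal basis; then $-\Tr(\rho\log A_0)=-\sum\langle\rho v_i,v_i\rangle\log b_i$, and since all $b_i\le 1$ (as $A_0\le I$) and all summands are $\ge 0$, the sum vanishes iff each summand does, i.e.\ $\langle\rho v_i,v_i\rangle=0$ whenever $b_i<1$. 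Writing $T=\{i:\langle\rho v_i,v_i\rangle>0\}$, this forces $b_i=1$ for $i\in T$. Then $P:=\sum_{i\in T}v_iv_i^*\le A_0$ (comparing eigenvalues on $v_i$: $1=b_i$ on $T$, $0\le b_i$ off $T$), so $P\in\ap(\cl S)$ by hereditarity, which is the stated condition. Conversely, if such a basis and such a $P\in\ap(\cl S)$ exist, then $-\Tr(\rho\log P)=-\sum_{i\in T}\langle\rho v_i,v_i\rangle\log 1 - \sum_{i\notin T}0\cdot\log 0=0$ (using $\langle\rho v_i,v_i\rangle=0$ for $i\notin T$ and the convention $0\log 0=0$), so $H(\cl S,\rho)\le 0$, hence $=0$ by Remark~\ref{nonneg}.

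For part~(ii), the ``if'' direction: if $\cl S\subseteq\cl D_V$ then every rank-one projection onto a basis vector $v_i\in V$ is $\cl S$-abelian (indeed $\cl S$ consists of $V$-diagonal matrices, so $v_iv_j^*\in\cl S$ forces $i=j$, making every $\cl S$-independent set contained in $V$ maximal — more directly, $\{v_i\}$ is $\cl S$-independent for trivial reasons, but one checks $I=\sum v_iv_i^*$ is an $\cl S$-abelian projection since $I\cl S I=\cl S$ is abelian as a subset of $\cl D_V$). Hence $I\in\ap(\cl S)$, so $\ap(\cl S)=\cl B_{I_d}$ and $H(\cl S,\rho)=H_{\cl B_{I_d}}(\rho)=0$ for all $\rho$ by Remark~\ref{HAmin}(vi). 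The ``only if'' direction is the main obstacle: from $H(\cl S,\rho)=0$ for \emph{all} $\rho$ I must extract a \emph{single} basis $V$ diagonalising $\cl S$. Applying part~(i) to $\rho=\tfrac1dI$ (where $T=[d]$) gives some basis $V_0$ with $I=\sum_{i}v_iv_i^*\in\ap(\cl S)$, so $\ap(\cl S)=\cl B_{I_d}$; the point is then that $\ap(\cl S)=\cl B_{I_d}$ forces $I$ itself to be (a limit of convex combinations of, hence by Carath\'eodory and closedness of $\cl P_{\rm a}(\cl S)$ a convex combination of) $\cl S$-abelian projections, and since $I$ is an extreme point of $\cl B_{I_d}$ it must equal a single $\cl S$-abelian projection $P$ with $P=I$. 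Thus $I$ is $\cl S$-abelian, meaning there is an orthonormal basis $V$ with $v_iv_j^*\in\cl S^\perp$ for all $i\ne j$; equivalently $I\cl S I=\cl S$ is abelian, i.e.\ $\cl S$ is a commutative operator system, which (being self-adjoint and unital) is simultaneously diagonalisable: there is an orthonormal basis $W$ with $\cl S\subseteq\cl D_W$. I expect the delicate point to be justifying that $\ap(\cl S)=\cl B_{I_d}$ implies $I$ is $\cl S$-abelian rather than merely a combination of such projections; this uses the extreme-point structure of $\cl B_{I_d}$ together with the closedness of $\cl P_{\rm a}(\cl S)$ (cited from \cite[Remark 2.7]{BTW}) and Carath\'eodory's Theorem, exactly as in the proof of Proposition~\ref{eq_chifGamma}.
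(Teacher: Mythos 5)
Your proof is correct and takes essentially the same route as the paper: part (i) is the paper's argument (diagonalise a minimiser $A_0\le I$, observe each term $-\ip{\rho v_i}{v_i}\log b_i\ge 0$ so $b_i=1$ on the support of $\rho$, then use hereditarity, with the converse by direct computation), and in part (ii) you likewise apply (i) to a faithful state to obtain $I\in\ap(\cl S)$ and conclude that $\cl S$ is diagonal in the resulting basis. Your explicit justification that $I\in\ap(\cl S)$ forces $I\in\cl P_{\rm a}(\cl S)$ (extreme point of $\cl B_{I_d}$ plus closedness of $\cl P_{\rm a}(\cl S)$ and Carath\'eodory, exactly as in Proposition \ref{eq_chifGamma}) fills in a step the paper leaves implicit, and your detour through commutativity and simultaneous diagonalisation could be shortened, since $v_iv_j^*\in\cl S^{\perp}$ for all $i\ne j$ already says every element of $\cl S$ has vanishing off-diagonal entries in that basis, i.e.\ $\cl S\subseteq\cl D_V$.
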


\begin{proof} 
(i) 
Note that $H(\cl S,\rho)=0$ if and only if there exists $A \in \ap( \cl S)$ such that 
$-\Tr (\rho \log A)=0$.  Write
$A = \sum_{i=1}^d \lambda_i v_i v_i^*$ for some orthonormal basis 
$\{v_1, \ldots, v_d\}$ and $\lambda_i \in \bb R_+$, $i\in [d]$.
We have $\sum_{i=1}^d  \ip{\rho v_i}{v_i} \log\lambda_i = 0$, and hence
$\lambda_i = 1$ whenever $\ip{\rho v_i}{v_i} > 0$. It follows that $P := \sum_{i \in T} v_i v_i^* \leq A$ and 
so $P\in \ap(\cl S)$. 
Conversely, if $P\in \ap(\cl S)$ then $\Tr (\rho \log P) = 0$ and hence $H(\cl S,\rho) = 0$.

(ii) 
Choose $\rho > 0$. By (i), 
if $H(\cl S, \rho) = 0$ then  $I \in \ap(\cl S)$.  
Thus, for some orthonormal basis 
$V = \{v_1, \ldots, v_d\}$ of $\bb C^d$, we have that $v_iv_j^* \in \cl S^\perp $ for all $i \neq j$.  
We conclude that $\cl S$ is diagonal in basis $V$.  
Conversely, if $\cl S$ is diagonal in some orthonormal basis then
$I$ is an $\cl S$-abelian projection, and (i) gives $H(\cl S, \rho) = 0$ for all $\rho \in \cl R_d$. 
\end{proof}

We next consider the extremal cases for the values of $H(\cl S,\rho)$; 
Propositions \ref{Scommutes} and \ref{chif=d} should 
be compared to Propositions \ref{H_A=0} and \ref{m(A)=d}.

\begin{prop} \label{Scommutes} 
The following are equivalent for a non-commutative graph $\cl S \subseteq M_d$:
\begin{enumerate} 
\item[(i)] $\cl S$ is diagonal in some orthonormal basis;
\item[(ii)] $H(\cl S, \rho)=0$  for all states $\rho \in \cl R_d$;
\item[(iii)] $\chi_{\rm f}(\cl S)=1$;
\item[(iv)] $I \in \ap (\cl S)$;
\item[(v)] $\alpha (\cl S) =d$;
\item[(vi)] $\chi(\cl S)=1$.
\end{enumerate}
\end{prop}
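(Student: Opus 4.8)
The plan is to prove the six conditions equivalent by establishing a cycle of implications, using the machinery already developed: the identification $\chi_{\rm f}(\cl S) = \Gamma_{\rm f}(\ap(\cl S)) = M(\ap(\cl S))$ from Proposition \ref{eq_chifGamma} and Theorem \ref{NMgamma2}, the entropy characterisation $\max_\rho H(\cl S,\rho) = \log\chi_{\rm f}(\cl S)$ from Theorem \ref{chif7}, and the description of when the graph entropy vanishes from Proposition \ref{geiszero}(ii). A natural route is: (i)$\Leftrightarrow$(ii) is exactly Proposition \ref{geiszero}(ii), since ``diagonal in some orthonormal basis'' means $\cl S\subseteq\cl D_V$ for some basis $V$; (ii)$\Leftrightarrow$(iii) follows from Theorem \ref{chif7}, because $H(\cl S,\rho)\ge 0$ always (Remark \ref{nonneg}) so $H(\cl S,\rho)=0$ for all $\rho$ iff $\max_\rho H(\cl S,\rho)=0$ iff $\log\chi_{\rm f}(\cl S)=0$ iff $\chi_{\rm f}(\cl S)=1$.

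Next I would close the loop through the remaining three conditions. For (iii)$\Leftrightarrow$(iv): by Proposition \ref{eq_chifGamma} and Theorem \ref{NMgamma2}, $\chi_{\rm f}(\cl S)=M(\ap(\cl S))=\tfrac{1}{N(\ap(\cl S))}$, so $\chi_{\rm f}(\cl S)=1$ iff $N(\ap(\cl S))=1$ iff $I\in\ap(\cl S)$ (using the definition of $N$ and hereditarity, noting $\ap(\cl S)\subseteq\cl B_{I_d}$ by Remark \ref{apcpbounds}(ii) so $N$ cannot exceed $1$). For (iv)$\Rightarrow$(vi): if $I\in\ap(\cl S)$, then since $\ap(\cl S)={\rm C}(\cl P_{\rm a}(\cl S))=\her(\overline{\rm conv}(\cl P_{\rm a}(\cl S)))$ by Proposition \ref{l_gen}(i) (the set of $\cl S$-abelian projections is bounded), $I$ lies above a convex combination $\sum\lambda_i P_i$ of $\cl S$-abelian projections; the argument in the proof of Proposition \ref{eq_chifGamma} then forces each $P_i=I$, so $I$ itself is an $\cl S$-abelian projection, giving $\chi(\cl S)=1$. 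Then (vi)$\Rightarrow$(v): $\chi(\cl S)=1$ means $I$ is $\cl S$-abelian, i.e. $I$ is the span of an $\cl S$-independent set of size $d$, which is precisely the statement $\alpha(\cl S)=d$. Finally (v)$\Rightarrow$(i): an $\cl S$-independent set $\{v_1,\dots,v_d\}$ of size $d$ is an orthonormal basis with $v_iv_j^*\in\cl S^\perp$ for $i\ne j$, which says exactly that $\cl S\subseteq\cl D_V$ for $V=\{v_1,\dots,v_d\}$, so $\cl S$ is diagonal in that basis.

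The implications I expect to require the most care are (iv)$\Rightarrow$(vi) and the handling of closedness of $\cl P_{\rm a}(\cl S)$: one must invoke that $\cl P_{\rm a}$ is closed (cited as \cite[Remark 2.7]{BTW}) together with Carath\'eodory's theorem so that $\overline{\rm conv}(\cl P_{\rm a})={\rm conv}(\cl P_{\rm a})$, and then run the ``unit-vector forces all summands to fix $\xi$'' argument exactly as in the proof of Proposition \ref{eq_chifGamma}, but now with the target projection being $I$. Everything else is essentially unwinding definitions. The only genuinely substantive input is that $\chi_{\rm f}(\cl S)=M(\ap(\cl S))$, which in turn rests on the second anti-blocker theorem (Theorem \ref{2ndABe}) via Theorem \ref{NMgamma2}; but that has already been established, so the proposition itself is a clean chain of short deductions and presents no real obstacle.
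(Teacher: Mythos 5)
Your proposal is correct, and it rests on the same underlying machinery as the paper's proof, but it is organised differently. The paper handles (i)$\Leftrightarrow$(ii) exactly as you do, via Proposition~\ref{geiszero}, and then obtains (ii)$\Leftrightarrow$(iii)$\Leftrightarrow$(iv)$\Leftrightarrow$(v) in a single stroke by applying the general Proposition~\ref{H_A=0} to the corner $\cl A = \ap(\cl S)$, using $\alpha(\cl S)=\gamma(\ap(\cl S))$ and $\chi_{\rm f}(\cl S)=\gamma(\ap(\cl S)^\sharp)$ (the latter through Theorem~\ref{th_fcc}), and it treats (iv)$\Leftrightarrow$(vi) as immediate from the definition of $\chi(\cl S)$. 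You instead run a cycle: (ii)$\Leftrightarrow$(iii) via Theorem~\ref{chif7} and nonnegativity of the entropy, (iii)$\Leftrightarrow$(iv) via $\chi_{\rm f}(\cl S)=M(\ap(\cl S))=1/N(\ap(\cl S))$ together with $\ap(\cl S)\subseteq\cl B_{I_d}$, and you close the loop with the definitional implications (iv)$\Rightarrow$(vi)$\Rightarrow$(v)$\Rightarrow$(i), reusing the ``$P\le\sum_i\lambda_iP_i$ forces $P\le P_i$'' argument from the proof of Proposition~\ref{eq_chifGamma} to show that $I\in\ap(\cl S)$ makes $I$ itself $\cl S$-abelian. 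Both routes ultimately rely on the second anti-blocker theorem through Theorem~\ref{NMgamma2}; yours spells out in detail the steps the paper delegates to Proposition~\ref{H_A=0} or declares clear, while the paper's version is shorter because that proposition packages four of the equivalences at once. One small slip in wording: membership of $I$ in $\her(\overline{\conv}(\cl P_{\rm a}(\cl S)))$ means $I$ is dominated by (lies \emph{below}) a convex combination $\sum_i\lambda_iP_i$, not above it; since the argument you then invoke uses exactly the inequality $I\le\sum_i\lambda_iP_i$, this does not affect correctness.
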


\begin{proof}   
(i)$\Leftrightarrow$(ii) is Proposition \ref{geiszero}.

(ii)$\Leftrightarrow$(iii)$\Leftrightarrow$(iv)$\Leftrightarrow$(v)  
Apply Proposition \ref{H_A=0}, 
recalling that 
$$H(\cl S, \rho) = H_{\ap(\cl S)}(\rho)$$ 
and using that $\alpha(\cl S)= \gamma(\ap(\cl S))$ and 
$\chi_{\rm f}(\cl S)= \gamma(\ap(\cl S)^\sharp).$

(iv)$\Leftrightarrow$(vi) is clear from the definition of $\chi(\cl S)$.
\end{proof}

\begin{prop} \label{chif=d} 
The following are equivalent for non-commutative graph $\cl S \subseteq M_d$:
\begin{itemize} 
\item[(i)] $H(\cl S, \rho)=H(\rho)$  for all states $\rho \in \cl R_d$;
\item[(ii)] $\chi_{\rm f}(\cl S)=d$;
\item[(iii)]$\chi(\cl S)=d$;
\item[(iv)] $\ap (\cl S)= \cl A_{I_d}$;
\item[(v)] $\alpha (\cl S) =1$.
\end{itemize}
\end{prop}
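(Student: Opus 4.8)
The plan is to reduce the equivalence of \textup{(i)}, \textup{(ii)}, \textup{(iv)}, \textup{(v)} to Proposition \ref{m(A)=d} applied to the convex corner $\cl A = \ap(\cl S)$, and then to splice in \textup{(iii)} via an elementary rank-counting argument. First I would recall that $H(\cl S,\rho) = H_{\ap(\cl S)}(\rho)$ by Definition \ref{Hrho}, and that $\cl A_{I_d} \subseteq \ap(\cl S) \subseteq \cl B_{I_d}$ by Remark \ref{apcpbounds}. Hence Proposition \ref{m(A)=d} is applicable with $\cl A = \ap(\cl S)$ and yields that the following are equivalent: $H(\cl S,\rho) = H(\rho)$ for all $\rho \in \cl R_d$; $\gamma(\ap(\cl S)^\sharp) = d$; $\ap(\cl S) = \cl A_{I_d}$; and $\gamma(\ap(\cl S)) = 1$. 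The first of these is exactly \textup{(i)} and the third is exactly \textup{(iv)}. For the second, $\gamma(\ap(\cl S)^\sharp) = \omega_{\rm f}(\cl S)$ by \eqref{eq_omegam} and $\omega_{\rm f}(\cl S) = \chi_{\rm f}(\cl S)$ by Theorem \ref{th_fcc}, so this condition is precisely \textup{(ii)}. For the fourth, $\gamma(\ap(\cl S)) = \alpha(\cl S)$ (the identity used in the proof of Proposition \ref{Scommutes}; see \cite{BTW}), so this condition is precisely \textup{(v)}. This gives \textup{(i)}$\Leftrightarrow$\textup{(ii)}$\Leftrightarrow$\textup{(iv)}$\Leftrightarrow$\textup{(v)}.

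It then remains to bring \textup{(iii)} into the chain, and I would do this by proving \textup{(iii)}$\Leftrightarrow$\textup{(v)} directly. Since every rank one projection is $\cl S$-abelian (Remark \ref{apcpbounds}), the PVM $(e_ie_i^*)_{i=1}^d$ lies in $\cl P_{\rm a}(\cl S)$, so $\chi(\cl S)\le d$ always. If $\alpha(\cl S)\ge 2$, pick an $\cl S$-independent set of size two, let $P$ be its rank two projection, and complete $P$ by $d-2$ mutually orthogonal rank one projections; this produces a PVM in $\cl P_{\rm a}(\cl S)$ of size $d-1<d$, so $\chi(\cl S)<d$. Conversely, if $\chi(\cl S)<d$, any witnessing PVM of nonzero $\cl S$-abelian projections has at most $d-1$ members summing to $I$, hence by a pigeonhole count at least one has rank $\ge 2$; its range is the span of an $\cl S$-independent set, so $\alpha(\cl S)\ge 2$. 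Since $\alpha(\cl S)\ge 1$ trivially, this yields $\chi(\cl S)=d \Leftrightarrow \alpha(\cl S)=1$, completing the proof.

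I do not expect a serious obstacle here: once Proposition \ref{m(A)=d}, Theorem \ref{th_fcc}, and the identities $\gamma(\ap(\cl S))=\alpha(\cl S)$ and $\gamma(\ap(\cl S)^\sharp)=\omega_{\rm f}(\cl S)$ are in hand, everything except \textup{(iii)}$\Leftrightarrow$\textup{(v)} is bookkeeping, and that last equivalence is elementary. The only point requiring care is verifying the inclusions $\cl A_{I_d}\subseteq \ap(\cl S)\subseteq \cl B_{I_d}$ so that Proposition \ref{m(A)=d} genuinely applies, and matching each clause of that proposition to the correct clause of the present statement.
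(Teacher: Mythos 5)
Your proposal is correct and follows essentially the same route as the paper: the core equivalences (i)$\Leftrightarrow$(ii)$\Leftrightarrow$(iv)$\Leftrightarrow$(v) come from Proposition \ref{m(A)=d} applied to $\ap(\cl S)$ via the identities $\gamma(\ap(\cl S))=\alpha(\cl S)$ and $\gamma(\ap(\cl S)^\sharp)=\omega_{\rm f}(\cl S)=\chi_{\rm f}(\cl S)$, and (iii) is spliced in by the same rank-counting observation (an $\cl S$-abelian projection of rank at least $2$ forces $\chi(\cl S)\le d-1$). The only cosmetic difference is that you link (iii) to (v) in both directions, whereas the paper argues (iii)$\Rightarrow$(iv) and uses $\chi_{\rm f}\le\chi\le d$ to close the loop.
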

\begin{proof} 
(iii)$\Rightarrow$(iv)  
All rank one projections are trivially $\cl S$-abelian. 
Suppose that $P$ is an $\cl S$-abelian projection with $\rank(P) \ge 2$. 
Then $I$ can be expressed as the sum of $P$ and at most $(d-2)$ rank one projections, 
giving $\chi(\cl S) \le d-1.$

(ii)$\Rightarrow$(iv) 
From their respective definitions, it is clear that $\chi_{\rm f}(\cl S)\leq \chi(\cl S)\leq d$.

(i)$\Leftrightarrow$(ii)$\Leftrightarrow$(iv)$\Leftrightarrow$(v) 
follow from Proposition \ref{m(A)=d}.
\end{proof}

\begin{remark}\label{completecase}
Clearly, the equivalent conditions of Proposition \ref{chif=d} are satisfied if $\cl S = M_d$. However, 
there exist proper operator subsystems of $M_d$ for which these conditions are also satisfied,
for example, the operator system $\cl S_d$ considered in Section \ref{ch4examples} for $d \ge 1$ 
(this follows from Propositions \ref{chif=d} and \ref{Snspaces}).
\end{remark}

We finish this section with noting the subadditivity of the entropy. 

\begin{prop}\label{p_submHS}
Let $\cl S_i\subseteq M_{d_i}$ be a non-commutative graph, $i = 1,2$, and 
$\rho \in \cl R_{d_1 d_2}$. Then
$$H(\cl S_1\otimes \cl S_2,\rho) \leq H(\cl S_1,\Tr\mbox{}_1\rho) + H(\cl S_2,\Tr\mbox{}_2\rho).$$
\end{prop}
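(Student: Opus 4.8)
The plan is to reduce the statement to the already-established product inequality for the entropy over convex corners, Theorem~\ref{th_enprod}, applied to the abelian projection corners. Recall that $H(\cl S_i,\cdot) = H_{\ap(\cl S_i)}(\cdot)$ by Definition~\ref{Hrho}, and $H(\cl S_1\otimes\cl S_2,\cdot) = H_{\ap(\cl S_1\otimes\cl S_2)}(\cdot)$. By Remark~\ref{apcpbounds}, the corners $\ap(\cl S_i)$ are standard, so Theorem~\ref{th_enprod} is applicable to them and yields
$$H_{\ap(\cl S_1)\otimes_{\tau}\ap(\cl S_2)}(\rho) \leq H_{\ap(\cl S_1)}(\Tr\mbox{}_1\rho) + H_{\ap(\cl S_2)}(\Tr\mbox{}_2\rho)$$
for $\tau\in\{\min,\max\}$. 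So the whole statement will follow once I show the inclusion
$$\ap(\cl S_1\otimes \cl S_2) \subseteq \ap(\cl S_1)\otimes_{\max}\ap(\cl S_2),$$
because then the monotonicity of the entropy under inclusion of corners (Remark~\ref{HAmin}(i)) gives $H_{\ap(\cl S_1\otimes\cl S_2)}(\rho) \leq H_{\ap(\cl S_1)\otimes_{\max}\ap(\cl S_2)}(\rho)$, and the inclusion $\ap(\cl S_1)\otimes_{\max}\ap(\cl S_2)\subseteq \ap(\cl S_1)\otimes_{\min}\ap(\cl S_2)$ from \eqref{eq_maxinmin} handles the minimal case too.

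The key step is therefore the inclusion $\ap(\cl S_1\otimes\cl S_2)\subseteq \ap(\cl S_1)\otimes_{\max}\ap(\cl S_2)$. First I would reduce it, via Proposition~\ref{l_gen} (which says that the generated convex corner of a bounded set equals the hereditary cover of the closed convex hull, and that $\ap(\cl S_1)\otimes_{\max}\ap(\cl S_2)$ already contains $\her$ and $\overline{\conv}$ of the products), to showing that every $(\cl S_1\otimes\cl S_2)$-abelian projection $P$ lies in $\ap(\cl S_1)\otimes_{\max}\ap(\cl S_2)$. The structure of such a $P$: its range is spanned by an orthonormal $(\cl S_1\otimes\cl S_2)$-independent set $\{w_1,\dots,w_k\}\subseteq \bb C^{d_1}\otimes\bb C^{d_2}$, i.e. $w_iw_j^*\in (\cl S_1\otimes\cl S_2)^{\perp} = \cl S_1^{\perp}\otimes M_{d_2} + M_{d_1}\otimes\cl S_2^{\perp}$ for $i\neq j$ — equivalently, $PSP$ is an abelian (commutative) subalgebra of $\cl L(\ran P)$ when $S$ ranges over $\cl S_1\otimes\cl S_2$, using the characterisation recalled after Definition~\ref{Scliqfull}.

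The main obstacle is proving that an $\cl S_1\otimes\cl S_2$-abelian projection decomposes appropriately; a clean route is to use that $P$ commutes with enough of $\cl S_1\otimes I$ and $I\otimes\cl S_2$. Concretely, since $\cl S_i$ is an operator system containing $I$, both $\cl S_1\otimes I$ and $I\otimes\cl S_2$ are contained in $\cl S_1\otimes\cl S_2$, so $P(\cl S_1\otimes I)P$ and $P(I\otimes\cl S_2)P$ are commutative; one then argues that $P$ is, up to a change of basis of its range, a sum of projections each of which is a product $Q_i\otimes R_i$ with $Q_i$ an $\cl S_1$-abelian projection and $R_i$ an $\cl S_2$-abelian projection. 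A careful way to get this: diagonalise $P(\cl S_1\otimes I)P$ simultaneously, obtaining a decomposition of $\ran P$ into joint eigenspaces; show each such eigenspace is a tensor of a subspace carrying an $\cl S_1$-abelian projection with a subspace carrying an $\cl S_2$-abelian projection; then $P = \sum_i Q_i\otimes R_i$ exhibits $P$ as a finite sum of $(\ap(\cl S_1)\otimes_{\max}\ap(\cl S_2))$-elements, hence (by convexity after normalising, or directly since the generated corner is closed under finite sums bounded above) $P\in \ap(\cl S_1)\otimes_{\max}\ap(\cl S_2)$. If the direct structural argument proves delicate, an alternative is to verify the inclusion on anti-blockers: show $(\ap(\cl S_1)\otimes_{\max}\ap(\cl S_2))^{\sharp}\subseteq \ap(\cl S_1\otimes\cl S_2)^{\sharp}$, i.e. every $\cl S_i$-clique-type bound tensors correctly, and invoke Corollary~\ref{reflexiveiff}. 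I expect the tensor-decomposition of the abelian projection to be where essentially all the work lies; everything else is bookkeeping with the results already proved.
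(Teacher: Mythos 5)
Your overall frame (reduce to Theorem~\ref{th_enprod} applied to $\ap(\cl S_1)$ and $\ap(\cl S_2)$, then compare corners) is the right one, but you have inverted the monotonicity and consequently set out to prove an inclusion that is both unnecessary and false. Remark~\ref{HAmin}(i) says that if $\cl A\subseteq\cl B$ then $H_{\cl A}(\rho)\ge H_{\cl B}(\rho)$: since $H_{\cl A}$ is an infimum over $\cl A$, a \emph{larger} corner gives a \emph{smaller} entropy. To conclude $H_{\ap(\cl S_1\otimes\cl S_2)}(\rho)\le H_{\ap(\cl S_1)\otimes_{\max}\ap(\cl S_2)}(\rho)$ you therefore need $\ap(\cl S_1)\otimes_{\max}\ap(\cl S_2)\subseteq\ap(\cl S_1\otimes\cl S_2)$, not the inclusion you state. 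The inclusion you propose to prove, $\ap(\cl S_1\otimes\cl S_2)\subseteq\ap(\cl S_1)\otimes_{\max}\ap(\cl S_2)$, fails in general: take $\cl S_i=M_{d_i}$ with $d_i\ge 2$, and let $w\in\bb C^{d_1}\otimes\bb C^{d_2}$ be an entangled unit vector. Then $ww^*$ is $(\cl S_1\otimes\cl S_2)$-abelian (it has rank one), but $\ap(M_{d_1})\otimes_{\max}\ap(M_{d_2})=\cl A_{I_{d_1}}\otimes_{\max}\cl A_{I_{d_2}}=\her\bigl(\overline{\conv}\{A_1\otimes A_2:\Tr A_i\le 1\}\bigr)$ consists of matrices dominated by separable matrices of trace at most $1$; if $ww^*\le B$ with $B$ separable and $\Tr B\le 1$, then $B-ww^*\ge 0$ has nonpositive trace, forcing $B=ww^*$, which is not separable. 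So your key step cannot succeed, and indeed the ``tensor decomposition of an $(\cl S_1\otimes\cl S_2)$-abelian projection'' you hope for does not exist.

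The correct route is the opposite, easy inclusion: if $P_i\in\cl P_{\rm a}(\cl S_i)$, $i=1,2$, then $P_1\otimes P_2\in\cl P_{\rm a}(\cl S_1\otimes\cl S_2)$ (since $(P_1\otimes P_2)(\cl S_1\otimes\cl S_2)(P_1\otimes P_2)$ consists of commuting operators), and hence, using Proposition~\ref{l_gen} and hereditarity, $\ap(\cl S_1)\otimes_{\max}\ap(\cl S_2)\subseteq\ap(\cl S_1\otimes\cl S_2)$. Remark~\ref{HAmin}(i) then gives $H(\cl S_1\otimes\cl S_2,\rho)=H_{\ap(\cl S_1\otimes\cl S_2)}(\rho)\le H_{\ap(\cl S_1)\otimes_{\max}\ap(\cl S_2)}(\rho)$, and Theorem~\ref{th_enprod} bounds the latter by $H_{\ap(\cl S_1)}(\Tr\mbox{}_1\rho)+H_{\ap(\cl S_2)}(\Tr\mbox{}_2\rho)$, which is exactly the paper's argument. (Also note there is no separate ``minimal case'' to handle: the proposition concerns only the corner $\ap(\cl S_1\otimes\cl S_2)$, so the inclusion \eqref{eq_maxinmin} plays no role here.)
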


\begin{proof}
It is clear that, if $P_i\in \cl P_{\rm a}(\cl S_i)$, $i = 1,2$, then $P_1\otimes P_2\in \cl P_{\rm a}(\cl S_1\otimes \cl S_2)$.
Thus, 
\begin{equation}\label{eq_apma}
\ap(\cl S_1)\otimes_{\max} \ap(\cl S_2) \subseteq \ap(\cl S_1\otimes \cl S_2).
\end{equation}
The statement now follows from Theorem \ref{th_enprod}.
\end{proof}




\section{Cliques and clique covering number}\label{ss_ccn}

In this section, we discuss the non-commutative versions of cliques and the 
clique covering number, and their entropic meaning, and provide a bound on the Shannon capacity 
of a non-commutative graph. 
Recall that the clique number $\omega(G)$ of a graph $G$ is defined as the size of a largest clique of $G$.
In the non-commutative case, clique and full projections both constitute a legitimate 
quantum version of a clique, and so we have two 
versions of $\omega(G)$ for an operator system $\cl S\subseteq M_d$ \cite[Corollary 3.9]{BTW}:
the \emph{clique number}
$$\omega(\cl S) = \max\left\{\rank P : P \mbox{ is an } \cl S\mbox{-clique projection}\right\}$$
of $\cl S$, and the \emph{full number} 
$$\tilde{\omega}(\cl S) = \max\left\{\rank P : P \mbox{ is an } \cl S\mbox{-full projection}\right\}$$
of $\cl S$.
Note that 
\begin{equation}\label{eq_omgafp}
\omega(\cl S) = \gamma\left(\cp(\cl S)\right) \ \mbox{ and } \ 
\tilde{\omega}(\cl S) = \gamma\left(\fp(\cl S)\right).
\end{equation}
The clique covering number of $G$, on the other hand, 
is the minimum number of cliques of $G$ whose union is equal to the 
vertex set of $G$. It is clear that the latter parameter coincides with the chromatic number 
$\chi(\bar G)$ of the complement $\bar G$ of $G$, which is often denoted by $\bar \chi(G)$.
We thus have the following natural non-commutative analogues of $\bar \chi(G)$ and its fractional versions:

\begin{definition} \label{cfcover} 
Let $\cl S\subseteq M_d$ be an operator system. We define 
\begin{itemize}
\item[(i)] 
the \emph{clique covering number} of $\cl S$ by 
$$\Omega(\cl S) = \min\left \{k\in \bb{N} : \  P_1,\dots,P_k \in \cl P_{\rm c}(\cl S), \
\sum_{i=1}^k P_i = I\right\};$$

\item[(ii)] 
the \emph{full covering number} of $\cl S$ by
$$\tilde{\Omega}(\cl S) = \min\left \{k\in \bb{N} : \  P_1,\dots,P_k \in \cl P_{\rm f}(\cl S), \
\sum_{i=1}^k P_i = I\right\}.$$
If the condition on the right hand side of the last equation cannot be satisfied, we set 
$\tilde{\Omega}(\cl S) = \infty$;

\item[(iii)] 
the \emph{fractional clique covering number} of $\cl S$ by 
$$\Omega_{\rm f}(\cl S) =  \inf\left\{\sum_{i=1}^k \lambda_i : k \in \bb N,\ \lambda_i > 0, \   
P_1,\dots,P_k \in \cl P_{\rm c}(\cl S),\   \sum_{i=1}^k \lambda_i P_i \ge I \right \};$$

\item[(iv)] 
The \emph{fractional full covering number} of $\cl S$ by
$$\tilde{\Omega}_{\rm f}(\cl S) =  \inf\left\{\sum_{i=1}^k \lambda_i : k \in \bb N,\ \lambda_i > 0, \  
 P_1,\dots,P_k \in \cl P_{\rm f}(\cl S),\   \sum_{i=1}^k \lambda_i P_i \ge I \right\}.$$
If the condition on the right hand side of the last equation cannot be satisfied, we set 
$\tilde{\Omega}_{\rm f}(\cl S) = \infty$.
\end{itemize}
\end{definition}

Similarly to Proposition \ref{eq_chifGamma}, one can show that 
\begin{equation}\label{eq_Om}
\Omega_{\rm f}(\cl S) = \Gamma_{\rm f}(\cp(\cl S)) \mbox{ and } \ 
\tilde{\Omega}_{\rm f}(\cl S) = \Gamma_{\rm f}(\fp(\cl S)).
\end{equation}
It now follows from Theorem \ref{NMgamma2} that 
$\Omega_{\rm f}(\cl S)$ (resp. $\tilde{\Omega}_{\rm f}(\cl S)$) 
coincides with the complementary fractional clique number 
(resp. the complementary fractional full number) defined in \cite{BTW}
and denoted therein by $\kappa(\cl S)$ (resp. $\varphi(\cl S)$).

We collect the main properties of these parameters in the next theorem.

\begin{theorem}\label{th_mainpro}
Let $G$ be a graph with vertex set $[d]$, and $\cl S$ and $\cl T$ be non-commutative graphs in $M_d$ 
with $\cl S\subseteq \cl T$. The following hold:

\begin{itemize}
\item[(i)]
$0 \le  \tilde{\omega}(\cl S) \le \omega(\cl S) \le \omega_{\rm f}(\cl S)  \le \chi(\cl S)  \le d$.

\item[(ii)]
$1 \le   \Omega_{\rm{f}}(\cl{S}) \le  \Omega(\cl{S}) \le d$ and 
$1 \le  \tilde{\Omega}_{\rm{f}}(\cl{S}) \le  \tilde{\Omega}(\cl{S}) \leq +\infty$;

\item[(iii)]
$\Omega(\cl S) \le \tilde{\Omega}(\cl S)$ and 
$\alpha(\cl S)  \leq \Omega_{\rm f}(\cl S) \le \tilde{\Omega}_{\rm f}(\cl S)$;

\item[(iv)]
$\Omega(\cl S) = 1$ $\Leftrightarrow$ $\omega(\cl S) = d$;

\item[(v)]
$\tilde{\Omega}_{\rm f}(\cl S) = 1$ $\Leftrightarrow$ $\tilde{\Omega}(\cl S) = 1$ $\Leftrightarrow$  $\tilde{\omega}(\cl S) = d$ $\Leftrightarrow$ $\cl S=M_d$;

\item[(vi)]
If  $\tilde{\omega}(\cl S) = 0$ then $\tilde{\Omega}_{\rm f}(\cl S) = \infty$;

\item[(vii)]
$\tilde{\Omega}_{\rm f}(\cl S) = \infty$ $\Leftrightarrow$ $\fp(\cl S)^\sharp$ is unbounded 
$\Leftrightarrow$ $\fp(\cl S)$ has empty relative interior;


\item[(viii)]
$\chi_{\rm f}(\cl S_G) = \chi_{\rm f}(G)$;

\item[(ix)]
$\tilde{\Omega}_{\rm f}(\cl S_G) =\Omega_{\rm f}(\cl S_G) = \chi_{\rm f}(\bar{G})$;

\item[(x)]
$\tilde{\Omega}(\cl S_G) = \Omega(\cl S_G)= \chi (\bar{G})$;

\item[(xi)]
If $\zeta \in \{\Omega_{\rm f}, \tilde{\Omega}_{\rm f}, \Omega, \tilde{\Omega} \}$, then $\zeta(\cl S) \ge \zeta(\cl T)$;

\item[(xii)]
If $\zeta \in \{ \omega,  \tilde{\omega}, \omega_{\rm f}, \chi    \}$, then $\zeta(\cl S) \le \zeta(\cl T)$;

\item[(xiii)]
$\alpha(\cl S) \chi (\cl S) \ge d$, $\omega(\cl S)\Omega( \cl S) \ge d$ and, if $\tilde{\omega}(\cl S) \ge 1$ then $\tilde{\omega}(\cl S)\tilde{\Omega}( \cl S) \ge d$. 
\end{itemize}
\end{theorem}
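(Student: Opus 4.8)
\textbf{Proof plan for Theorem \ref{th_mainpro}.}

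Since the excerpt ends at the statement of Theorem \ref{th_mainpro}, the plan is to prove it item by item, reducing each assertion to the optimisation parameters for convex corners and the structural results already established. The guiding principle is the dictionary
$$\alpha(\cl S) = \gamma(\ap(\cl S)), \quad \omega(\cl S) = \gamma(\cp(\cl S)), \quad \tilde\omega(\cl S) = \gamma(\fp(\cl S)),$$
$$\chi_{\rm f}(\cl S) = \Gamma_{\rm f}(\ap(\cl S)), \quad \Omega_{\rm f}(\cl S) = \Gamma_{\rm f}(\cp(\cl S)), \quad \tilde\Omega_{\rm f}(\cl S) = \Gamma_{\rm f}(\fp(\cl S)),$$
together with $\omega_{\rm f}(\cl S) = \gamma(\ap(\cl S)^\sharp) = M(\ap(\cl S))$ from (\ref{eq_omegam}) and Theorem \ref{NMgamma2}, and the inclusions $\fp(\cl S) \subseteq \cp(\cl S)$, $\cl A_{I_d} \subseteq \ap(\cl S), \cp(\cl S) \subseteq \cl B_{I_d}$ from Remark \ref{apcpbounds}, together with the sandwich (\ref{eq_sand}). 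First I would dispose of the elementary monotonicity and ordering items: (xi) and (xii) follow from Remark \ref{apcpbounds}(iv) ($\cl S\subseteq\cl T$ reverses $\ap$ and preserves $\cp,\fp$) plus monotonicity of $\gamma$ and $\Gamma_{\rm f}$ under inclusion; (iii) follows from $\fp\subseteq\cp$ and Remark \ref{r_spcnd}(iii) ($\Gamma_{\rm f}\le\Gamma$), while $\alpha(\cl S)\le\Omega_{\rm f}(\cl S)$ comes from (\ref{eq_sand}): $\ap(\cl S)\subseteq\cp(\cl S)^\sharp$ gives $\gamma(\ap(\cl S))\le\gamma(\cp(\cl S)^\sharp)=M(\cp(\cl S))=\Gamma_{\rm f}(\cp(\cl S))$ using Theorems \ref{NMgamma2} and the analogue of \ref{eq_chifGamma} recorded in (\ref{eq_Om}).

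Next I would handle the chains (i) and (ii). For (i): $\tilde\omega\le\omega$ is $\fp\subseteq\cp$; $\omega\le\omega_{\rm f}$ follows because an $\cl S$-clique projection $P$ lies in $\ap(\cl S)^\sharp$ by (\ref{eq_sand}), so $\Tr P = \langle P,I\rangle$... more directly, $\gamma(\cp(\cl S)) \le M(\ap(\cl S)) = \omega_{\rm f}(\cl S)$ because $\cp(\cl S)\subseteq\ap(\cl S)^\sharp$ and $\gamma$ is monotone; $\omega_{\rm f}\le\chi$ follows from $\omega_{\rm f}=\Gamma_{\rm f}(\ap(\cl S))=\chi_{\rm f}(\cl S)$ (Theorem \ref{th_fcc}, Proposition \ref{eq_chifGamma}) and $\chi_{\rm f}\le\chi$ (Remark \ref{r_spcnd}(iii), (\ref{eq_chiGamma})); finally $\chi(\cl S)\le d$ since $I$ decomposes into $d$ rank-one (hence $\cl S$-abelian) projections, and $\tilde\omega\ge0$ is trivial. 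For (ii): $\Omega_{\rm f}\le\Omega$ and $\tilde\Omega_{\rm f}\le\tilde\Omega$ are again $\Gamma_{\rm f}\le\Gamma$; $\Omega(\cl S)\le d$ since $I$ splits into $d$ rank-one $\cl S$-clique projections; the lower bounds $\ge1$ hold because $\sum\lambda_iP_i\ge I$ with each $P_i\le I$ forces $\sum\lambda_i\Tr P_i\ge d$, hence $\sum\lambda_i\ge1$. For (xiii) I would use Theorem \ref{NMgamma2}(iv): $\Gamma(\cl B)\gamma(\cl B)\ge d$ applied to $\cl B=\ap(\cl S)$ gives $\chi(\cl S)\alpha(\cl S)\ge\Gamma(\ap(\cl S))\gamma(\ap(\cl S))\ge d$ — but care is needed since $\Gamma(\ap(\cl S))\le\chi(\cl S)$ only, which is the right direction; similarly for $\cp$ and $\fp$ (the last conditioned on $\tilde\omega\ge1$ so that $\fp(\cl S)\ne\{0\}$, i.e. it contains PVMs at all).

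The equivalences (iv), (v), (vi), (vii) are where the structural theory does the work. For (iv): $\Omega(\cl S)=1$ means $I\in\cl P_{\rm c}(\cl S)$, i.e. $I$ is an $\cl S$-clique projection, which by definition means $\mathbb C^d$ is spanned by an $\cl S$-clique, i.e. $\omega(\cl S)=d$; conversely rank-$d$ clique projection is $I$. For (v): $\tilde\omega(\cl S)=d$ iff $I\in\cl P_{\rm f}(\cl S)$ iff $e_ie_j^*\in\cl S$ for a basis, i.e. $\cl S=M_d$; and $\cl S=M_d$ makes $\tilde\Omega=\tilde\Omega_{\rm f}=1$ trivially, while $\tilde\Omega_{\rm f}(\cl S)=1$ forces, via the $\sum\lambda_i\ge1$ computation being an equality, that $I$ itself is (a limit of convex combinations of, hence by closedness of $\cl P_{\rm f}$ and Carathéodory, in the convex hull of) full projections all equal to $I$ — I would argue as in the proof of Proposition \ref{eq_chifGamma} that the minimising configuration for $\Gamma_{\rm f}(\fp(\cl S))=1$ forces a single full projection equal to $I$. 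For (vi): if $\tilde\omega(\cl S)=0$ then $\cl P_{\rm f}(\cl S)=\emptyset$ (no nonzero full projections), so no PVM lies in $\fp(\cl S)$ and the infimum in $\tilde\Omega_{\rm f}$ is over the empty set, giving $\infty$ by convention. For (vii): $\tilde\Omega_{\rm f}(\cl S)=\Gamma_{\rm f}(\fp(\cl S))$, and by the defining convention this is $\infty$ precisely when $\fp(\cl S)$ fails to contain a PVM in the relevant sense; I would match this against Proposition \ref{cc1}(i) ($\fp(\cl S)$ has empty relative interior $\iff$ $\fp(\cl S)^\sharp$ unbounded) and show $\Gamma_{\rm f}(\fp(\cl S))<\infty$ $\iff$ $M(\fp(\cl S))<\infty$ $\iff$ $\fp(\cl S)$ has nonempty relative interior, using the argument of Theorem \ref{NMgamma2}(iii) adapted to $\fp$; the subtlety is that Theorem \ref{NMgamma2}(iii) is stated for $\cl B=\rm C(\cl P)$ with $\cl P$ a set of projections, which $\fp(\cl S)=\rm C(\cl P_{\rm f}(\cl S))$ indeed is.

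Finally the classical-comparison items (viii), (ix), (x): (viii) is $\chi_{\rm f}(\cl S_G)=\Gamma_{\rm f}(\ap(\cl S_G))=M(\ap(\cl S_G))$ and, since $\ap(\cl S_G)$ is a non-commutative lift of $\vp(G)$ (Proposition \ref{apvp}(i)), $M(\ap(\cl S_G))=M(\vp(G))=\chi_{\rm f}(G)$ by the commutative identity $M(\vp(G))=1/N(\vp(G))=\chi_{\rm f}(G)$ — actually I should quote that $M$ of a non-commutative lift equals $M$ of the diagonal corner, which follows from Theorem \ref{NMgamma2}(ii) and $N(\cl B)=N(\cl D_d\cap\cl B)$ for any lift $\cl B$, or directly from Theorem \ref{NMgamma2}(iii) since $\ap(\cl S_G)=\rm C(\cl P_{\rm a}(\cl S_G))$. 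For (ix): $\Omega_{\rm f}(\cl S_G)=\tilde\Omega_{\rm f}(\cl S_G)$ because by Theorem \ref{emptyap}(v) $\fp(\cl S_G)=\her(\vp(\bar G))=\cl D_d$-lift data, and more relevantly both equal $M(\cp(\cl S_G))=M(\fp(\cl S_G))$ via Theorem \ref{NMgamma2}(iii); and since $\cp(\cl S_G),\fp(\cl S_G)$ are both non-commutative lifts of $\vp(\bar G)$ (Proposition \ref{apvp}(ii)), this common value is $M(\vp(\bar G))=\chi_{\rm f}(\bar G)$. For (x): $\Omega(\cl S_G)=\tilde\Omega(\cl S_G)=\chi(\bar G)$ — here I expect the main obstacle, since $\Omega,\tilde\Omega$ are genuinely integer (not relaxed) parameters and the non-commutative lift structure controls only $M=\Gamma_{\rm f}$, not $\Gamma$; I would argue directly that an $\cl S_G$-clique (resp. $\cl S_G$-full) projection contributing to a PVM summing to $I$ must, after the argument in the proof of Theorem \ref{emptyap}(v) (tracking which coordinates the eigenvectors are supported on), be expressible using cliques of $G$, and that the diagonal (character-vector) PVMs are optimal — i.e. a clique cover of $\bar G$ gives a PVM of rank-$|K|$ full projections, and conversely any PVM by clique projections refines to one supported on cliques of $G$, yielding $\Omega(\cl S_G)\ge\chi(\bar G)$ after projecting to the diagonal. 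The step that needs the most care is showing no genuinely non-commutative PVM of clique projections can beat the classical clique cover number; I would reduce it to the observation that $P\in\cl P_{\rm c}(\cl S_G)$ with $\rank P\ge2$ still forces its range into $\Span\{e_j:j\in K\}$ for a clique $K$ when $P$ is full, and handle the clique (not full) case by noting $\Omega\le\tilde\Omega$ from (iii) together with $\tilde\Omega(\cl S_G)\le\chi(\bar G)$ (diagonal construction) and $\Omega(\cl S_G)\ge$ the classical bound via passing to $\Delta$ and using that $\Delta(P)$ for an $\cl S_G$-clique projection $P$ need not be a clique indicator — this last point is the real obstruction, and I would resolve it by instead bounding $\Omega(\cl S_G)$ below by $\Omega_{\rm f}$-type reasoning combined with integrality, or by a direct combinatorial argument on the supports.
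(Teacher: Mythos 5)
Most of your plan runs parallel to the paper: items (i)--(iv), (vi), (xi)--(xiii) are exactly the paper's reductions via $\gamma$, $\Gamma_{\rm f}$, $M=1/N$ and the inclusions $\fp(\cl S)\subseteq\cp(\cl S)\subseteq\ap(\cl S)^\sharp\subseteq\cl B_{I_d}$; your (v) and (vii) are a little loose about attainment of the infima, but the route you gesture at (closedness of $\fp(\cl S)$ and $M(\fp(\cl S))=1/N(\fp(\cl S))$, so $N=1$ forces $I\in\fp(\cl S)$ and hence $I\in\cl P_{\rm f}(\cl S)$) repairs this; and your lift-based derivations of (viii) and (ix) (observing $N(\cl B)=N(\cl D_d\cap\cl B)$ for a non-commutative lift $\cl B$, so $M(\ap(\cl S_G))=M(\vp(G))=\chi_{\rm f}(G)$, and similarly for $\cp(\cl S_G),\fp(\cl S_G)$ over $\vp(\bar G)$) are a legitimate, somewhat more self-contained alternative to the paper's use of Theorem \ref{th_fcc} together with $\omega_{\rm f}(\cl S_G)=\omega_{\rm f}(G)$ from \cite{BTW}.

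The genuine gap is the lower bound in (x): $\chi(\bar G)\le\Omega(\cl S_G)$. You correctly diagnose the obstruction (a rank-$\ge 2$ $\cl S_G$-clique projection, unlike a full projection, need not have range in a coordinate subspace spanned by a clique, and $\Delta(P)$ need not be a clique indicator), but neither fallback you offer closes it. The route ``$\Omega_{\rm f}$-type reasoning plus integrality'' cannot work: by (ix), $\Omega_{\rm f}(\cl S_G)=\chi_{\rm f}(\bar G)$, so it yields only $\Omega(\cl S_G)\ge\lceil\chi_{\rm f}(\bar G)\rceil$, which is strictly smaller than $\chi(\bar G)$ whenever $\bar G$ has a large fractional--integral chromatic gap (e.g.\ Kneser graphs); and ``a direct combinatorial argument on the supports'' is left unspecified. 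The paper's missing ingredient is the permutation lemma (\cite[Lemma 7.28]{Paulsen}, \cite[Lemma 13]{kim}): for any orthonormal basis $\{v_1,\dots,v_d\}$ of $\bb C^d$ there is a permutation $\sigma$ of $[d]$ with $\langle e_{\sigma(i)},v_i\rangle\ne 0$ for all $i$. Given a PVM $Q_1,\dots,Q_n$ of $\cl S_G$-clique projections, apply this to the concatenation of orthonormal eigenbases of the $Q_m$: if $v_p,v_q$ lie in the same block then $v_pv_q^*\in\cl S_G$ pairs non-trivially with $e_{\sigma(p)}e_{\sigma(q)}^*$, so $e_{\sigma(p)}e_{\sigma(q)}^*\notin\cl S_G^\perp$ and $\sigma(p)\simeq\sigma(q)$; thus each block maps under $\sigma$ to a clique of $G$, and since $\sigma$ is a bijection these $n$ cliques partition $[d]$, giving $\chi(\bar G)\le n$. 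Without this (or an equivalent) argument, (x) is not established in your proposal, while the rest is sound.
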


\begin{proof}
(i) 
Theorem \ref{2ndABe} and (\ref{eq_sand})
give $\fp(\cl S) \subseteq \cp(\cl S) \subseteq \ap(\cl S)^\sharp \subseteq \cl B_{I_d}$.
The assertion follows from (\ref{eq_omgafp}) and Theorem \ref{th_fcc}. 

(ii) 
Using (\ref{eq_Om}) and Remark \ref{r_spcnd}, we have 
$$\Omega_{\rm f}(\cl S) = \Gamma_{\rm f}({\rm cp}(\cl S)) \leq \Gamma ({\rm cp}(\cl S)) \leq \Omega(\cl S).$$
The rest of the statements follow from
Theorem \ref{NMgamma2} and the fact that 
$\cl A_{I_d} \subseteq \cp(\cl S) \subseteq \cl B_{I_d}$ and $\fp(\cl S) \subseteq \cl B_{I_d}$. 

(iii) Using (\ref{eq_sand}), (\ref{eq_Om}) and Theorem \ref{NMgamma2}, we have 
$$M({\rm fp}(\cl S)) \geq M({\rm cp}(\cl S)) = \Omega_{\rm f}(\cl S) \geq M({\rm ap}(\cl S)^{\sharp})
= \gamma({\rm ap}(\cl S)) = \alpha(\cl S).$$

(iv) follows from the fact that $\Omega(\cl S) = 1$ if and only if $I\in \cl P_{\rm c}(\cl S)$, 
if and only if $\omega(\cl S) = d$. 

(v) 
Since $\fp(\cl S) \subseteq \cl B_{I_d}$, we have that
$\cl A_{I_d} \subseteq \fp(\cl S)^\sharp.$ 
Thus, if $\tilde{\Omega}_{\rm f}(\cl S)=1$ then $\fp(\cl S)^\sharp = \cl A_{I_d} $, 
yielding $\fp(\cl S) =\cl B_{I_d}$ and $\tilde{\omega}(\cl S)=d$.    
It follows that $I \in \fp(\cl S)$ and so $I$ is an $\cl S$-full projection, implying $\cl S = M_d.$  
The proof is completed by noting that if $\cl S=M_d$, then $\fp(\cl S)= \cl B_{I_d}$ and 
$\tilde{\Omega}_{\rm f}(\cl S)= \gamma(\fp(\cl S)^\sharp)= \gamma(\cl A_{I_d})=1.$

(vi) The condition $\tilde{\omega}(\cl S)=0$ holds if and only if 
$\fp(\cl S)= \{0\}$ or, equivalently, $\fp(\cl S)^\sharp= M_d^+$, which yields 
$\tilde{\Omega}_{\rm f}(\cl S)= \infty.$ 

(vii) The second equivalence is immediate from  Proposition \ref{cc1}. 
On the other hand, if $\tilde{\Omega}_{\rm f}(\cl S) < \infty$ then there exist $\cl S$-full projections 
$P_1,\dots,P_k$ such that $\sum_{i=1}^k \lambda_i P_i \geq I$ for some positive scalars $\lambda_1,\dots,\lambda_k$. 
It follows by hereditarity that $\frac{1}{\sum_{i=1}^k \lambda_i} I\in \fp(\cl S)$, and hence 
$\fp(\cl S)$ has non-empty relative interior by Lemma \ref{l_ri}. Conversely, if $\fp(\cl S)$ has non-empty relative 
interior then, by Lemma \ref{l_ri}, $rI\in \fp(\cl S)$ for some $r > 0$. 
Thus, there exist $P_i \in \cl P_{\rm f}(\cl S)$ and $\lambda_i\geq 0$, $i\in [k]$, such that 
$\sum_{i=1}^k \lambda_i = 1$ and 
$rI \leq \sum_{i=1}^k \lambda_i P_i$. This implies that $\tilde{\Omega}_{\rm f}(\cl S) \leq \frac{1}{r}$.

(viii) follows from Theorem \ref{th_fcc} and the fact that $\omega_{\rm f}(\cl S_G) = \omega_{\rm f}(G)$ 
\cite[Corollary 3.9]{BTW}.

(ix) follows from \cite[Corollary 3.9]{BTW}.

(x) 
If $\{i_1, \ldots, i_k\}$ is a clique in $G$, then $\{e_{i_1}, \ldots, e_{i_k}\}$ is an 
$\cl S_G$-full set and hence $P= \sum_{j=1}^k e_{i_j}e_{i_j}^*$ is an 
$\cl S_G$-full, and thus an $\cl S_G$-clique, projection.
Thus, $\Omega(\cl S_G) \le \tilde{\Omega}(\cl S_G) \le \chi(\bar{G})$.

Let $G$ be a graph on $d$ vertices and let $\{v_1, \ldots, v_d\}$ be an orthonormal basis of $\bb C^d$.  
A standard combinatorial 
result (see \cite[Lemma 7.28]{Paulsen} and \cite[Lemma 13]{kim}) shows that there exists a 
permutation $\sigma$ on $[d]$ such that 
$\ip{e_{\sigma(i)}}{v_i} \ne 0$ for all $i \in [d]$ and so, for $j,k \in [d]$, we have that

\begin{equation} \label{onbsigma} 
\ip{v_jv_k^*}{e_{\sigma(j)}e_{\sigma(k)}^*}=\ip{e_{\sigma(k)}}{v_k}\ip{v_j}{e_{\sigma(j)}} \ne 0. 
\end{equation}  
Let $P = \sum_{i=1}^k v_i v_i^*$.  
If $P$ is an $\cl S_G$-clique, then $v_pv_q^* \in \cl S_G$ for distinct 
$p,q \in [k]$ and, by \eqref{onbsigma}, $e_{\sigma(p)}e_{\sigma(q)}^* \notin \cl S_G^\perp$.  
Thus, 
$\sigma(p) \sim \sigma(q)$ in $G$ and $\{ \sigma(1), \ldots, \sigma(k) \}$ 
is a clique in $G$.  Then, corresponding to any family of $n$ $\cl S_G$-clique 
projections which sum to $I$, there is a family of $n$ cliques in $G$ which partition 
$V(G)$, and  $\chi(\bar{G}) \le \Omega(\cl S_G).$

(xi) 
Note that $\Omega_{\rm f}(\cl S) = \gamma({\rm cp}(\cl S)^{\sharp})$ and 
$\tilde{\Omega}_{\rm f}(\cl S) = \gamma({\rm fp}(\cl S)^{\sharp})$, and then apply 
Remark \ref{apcpbounds} to obtain the results for $\Omega_{\rm f}$ and $\tilde{\Omega}_{\rm f}$.
For $\Omega$ and $\tilde{\Omega}$, it suffices to see that if $\cl S\subseteq \cl T$
then $\cl P_{\rm c}(\cl S) \subseteq \cl P_{\rm c}(\cl T)$ and 
$\cl P_{\rm f}(\cl S) \subseteq \cl P_{\rm f}(\cl T)$.

(xii)
The results for 
$\omega_{\rm f}$, $\omega$ and $\tilde{\omega}$ follow from 
Remark \ref{apcpbounds}. 
If $\cl S\subseteq \cl T$
then $\cl P_{\rm a}(\cl T) \subseteq \cl P_{\rm a}(\cl S)$ and the result for $\chi$ follows.

(xiii)
The first inequality follows from (\ref{eq_chiGamma}) and Theorem \ref{NMgamma2}, and the rest are similar. 
\end{proof}

\noindent {\bf Remark. } 
Part (xiii) of Theorem \ref{th_mainpro} can be viewed as a non-commutative version of the 
inequality $\alpha(G) \chi(G) \ge d$ for classical graphs $G$. 
Note that corresponding results for operator anti-systems are considered in \cite[Section 3.1]{kim}.

The following fact -- an immediate corollary of Theorem \ref{noncomm13} -- gives an 
entropic significance to the parameters $\Omega_{\rm f}$ and $\tilde{\Omega}_{\rm f}$.

\begin{theorem}\label{entmean}
Let $\cl S\subseteq M_d$ be an operator system. Then 
$$\max_{\rho \in \cl R_d} H_{\cp(\cl S)}(\rho) = \log \Omega_{\rm f}(\cl S)
\ \mbox{ and } \ 
\max_{\rho \in \cl R_d} H_{\fp(\cl S)}(\rho) = \log \tilde{\Omega}_{\rm f}(\cl S).$$
\end{theorem}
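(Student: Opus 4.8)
The plan is to derive this as a direct application of Theorem~\ref{noncomm13} to the standard convex corners $\cp(\cl S)$ and $\fp(\cl S)$, reducing everything to the identities $M(\cp(\cl S)) = \Omega_{\rm f}(\cl S)$ and $M(\fp(\cl S)) = \tilde{\Omega}_{\rm f}(\cl S)$.

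First I would recall from Remark~\ref{apcpbounds}(iii) that $\cp(\cl S)$ is a standard convex $M_d$-corner, so Theorem~\ref{noncomm13} applies directly to give
$$\max_{\rho \in \cl R_d} H_{\cp(\cl S)}(\rho) = \log M(\cp(\cl S)).$$
Then I would invoke the identity $\Omega_{\rm f}(\cl S) = \Gamma_{\rm f}(\cp(\cl S))$ from (\ref{eq_Om}) together with Theorem~\ref{NMgamma2}(iii), applied with $\cl P = \cl P_{\rm c}(\cl S)$ and $\cl B = \cp(\cl S) = {\rm C}(\cl P_{\rm c}(\cl S))$, which yields $M(\cp(\cl S)) = \Gamma_{\rm f}(\cp(\cl S)) = \Omega_{\rm f}(\cl S)$. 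Combining these gives the first equality.

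For the second equality the same strategy works, but one must be careful because $\fp(\cl S)$ need not be standard (Remark~\ref{apcpbounds}(iii)): it can be $\{0\}$ or, more generally, can have empty relative interior, in which case $\tilde{\Omega}_{\rm f}(\cl S) = \infty$ by Theorem~\ref{th_mainpro}(vii) and, correspondingly, $M(\fp(\cl S)) = \infty$ while $H_{\fp(\cl S)}(\rho) = +\infty$ for every $\rho$ (by Remark~\ref{HAmin}(iii)), so the identity $\max_\rho H_{\fp(\cl S)}(\rho) = \log\tilde{\Omega}_{\rm f}(\cl S) = +\infty$ holds trivially. When $\fp(\cl S)$ has non-empty relative interior, Theorem~\ref{th_mainpro}(vii) guarantees $\tilde{\Omega}_{\rm f}(\cl S) < \infty$ and $\fp(\cl S)$ is a standard convex corner (it is bounded since $\fp(\cl S) \subseteq \cl B_{I_d}$), so Theorem~\ref{noncomm13} applies, and the identity $\tilde{\Omega}_{\rm f}(\cl S) = \Gamma_{\rm f}(\fp(\cl S))$ from (\ref{eq_Om}) combined with Theorem~\ref{NMgamma2}(iii) (with $\cl P = \cl P_{\rm f}(\cl S)$) gives $M(\fp(\cl S)) = \tilde{\Omega}_{\rm f}(\cl S)$, completing the argument.

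The only real subtlety, and hence the step to handle with care, is the boundary case where $\fp(\cl S)$ has empty relative interior: one needs to check that both sides of the claimed equality are $+\infty$ simultaneously, invoking Theorem~\ref{th_mainpro}(vii) on the left (to see $\tilde{\Omega}_{\rm f}(\cl S)=\infty$) and Remark~\ref{HAmin}(iii) on the right (to see $H_{\fp(\cl S)}(\rho)=+\infty$ for all $\rho$, hence the max is $+\infty$). Everything else is a routine chaining of already-established identities, so I would present the proof as a short sequence of citations rather than a computation.
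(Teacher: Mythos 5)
Your proof is correct and follows exactly the route the paper intends: the paper offers no separate argument, calling the result an immediate corollary of Theorem \ref{noncomm13}, and the implicit chain is precisely your citations $\max_{\rho}H_{\cp(\cl S)}(\rho)=\log M(\cp(\cl S))$, $M(\cp(\cl S))=\Gamma_{\rm f}(\cp(\cl S))=\Omega_{\rm f}(\cl S)$ via Theorem \ref{NMgamma2}(iii) and (\ref{eq_Om}) (and likewise for $\fp(\cl S)$), with your treatment of the case where $\fp(\cl S)$ has empty relative interior being a sensible completion of a point the paper glosses over. The only slip is the parenthetical claim that $H_{\fp(\cl S)}(\rho)=+\infty$ for \emph{every} $\rho$ in that degenerate case: Remark \ref{HAmin}(iii) only yields one such state (e.g.\ the maximally mixed state), and the universal statement is false in general (if $\fp(\cl S)$ contains a nonzero projection $P$, any state supported in $\ran(P)$ has finite entropy over $\fp(\cl S)$), but since a single state with infinite entropy already makes the supremum $+\infty$, your conclusion is unaffected.
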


\begin{theorem}\label{th_contomegaf}
Let $\cl S$ and $\cl S_n$ be non-commutative graphs in $M_d$, $n\in \bb{N}$.
\begin{itemize}
\item[(i)] 
If $\cl S\subseteq \liminf_{n\in \bb{N}}\cl S_n$ then
$\chi_{\rm f}(\cl S)\leq \liminf_{n\in \bb{N}} \chi_{\rm f}(\cl S_n)$;

\item[(ii)] If $\tilde{\Omega}_{\rm f}(\cl S) < \infty$ and $\limsup_{n\in \bb{N}}\cl S_n \subseteq \cl S$ then
$\tilde{\Omega}_{\rm f}(\cl S)\leq \liminf_{n\in \bb{N}} \tilde{\Omega}_{\rm f}(\cl S_n)$.
\end{itemize}
\end{theorem}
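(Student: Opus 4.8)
The plan is to reduce both parts to the continuity results for the parameter $\gamma$ that are already established, by first relating $\chi_{\rm f}$ and $\tilde\Omega_{\rm f}$ to $\gamma$ of an anti-blocker and then controlling the Kuratowski behaviour of the relevant projection sets under the hypothesis on the $\cl S_n$. Recall from Proposition \ref{eq_chifGamma} and Theorem \ref{NMgamma2} that $\chi_{\rm f}(\cl S) = M(\ap(\cl S)) = \gamma(\ap(\cl S)^{\sharp})$, and from (\ref{eq_Om}), Theorem \ref{NMgamma2} and Theorem \ref{th_mainpro}(vii) that $\tilde\Omega_{\rm f}(\cl S) = \gamma(\fp(\cl S)^{\sharp})$, with the hypothesis $\tilde\Omega_{\rm f}(\cl S) < \infty$ guaranteeing that $\fp(\cl S)$ has non-empty relative interior, so that $\fp(\cl S)^{\sharp}$ is bounded by Proposition \ref{cc1}. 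Thus both quantities are values of $\gamma$ on an anti-blocker of a convex corner built from a set of projections.

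For part (i), the key step is exactly the inclusion (\ref{eq_apc}), namely $\limsup_{n\in \bb N} \ap(\cl S_n) \subseteq \ap(\cl S)$, which is proved in Theorem \ref{th_contnceo} from the assumption $\cl S \subseteq \liminf_n \cl S_n$ (the argument passes from convergent sequences of $\cl S_{n_k}$-abelian projections to an $\cl S$-abelian limit projection, using Carath\'eodory to bound the number of summands by $2d^2+1$, then passes to $\her(\overline{\rm conv}(\cdot))$ via Proposition \ref{l_gen}). Granting (\ref{eq_apc}), Proposition \ref{p_ccc}(ii) gives $\limsup_n \ap(\cl S_n)^{\sharp} \subseteq \ap(\cl S)^{\sharp}$; wait — more precisely, since $\cup_n \ap(\cl S_n)$ is bounded (all these corners sit inside $\cl B_{I_d}$), Proposition \ref{p_ccc} applies, and (\ref{eq_apc}) is the hypothesis ``$\limsup \subseteq \cl A$'' of part (i) of that proposition, yielding $\ap(\cl S)^{\sharp} \subseteq \liminf_n \ap(\cl S_n)^{\sharp}$. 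The continuity half of Corollary \ref{c_cpa}(i) — specifically, the direction showing $\cl A \subseteq \liminf_n \cl A_n \Rightarrow \gamma(\cl A) \le \liminf_n \gamma(\cl A_n)$, which is contained in the proof of Corollary \ref{c_cpa}(i) via the $N$/$\gamma$ duality of Proposition \ref{NMgamma2} — then gives $\gamma(\ap(\cl S)^{\sharp}) \le \liminf_n \gamma(\ap(\cl S_n)^{\sharp})$, i.e. $\chi_{\rm f}(\cl S) \le \liminf_n \chi_{\rm f}(\cl S_n)$.

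For part (ii), the roles of $\liminf$ and $\limsup$ are swapped, so I would prove the companion inclusion $\limsup_{n\in \bb N} \fp(\cl S_n) \subseteq \fp(\cl S)$ under the hypothesis $\limsup_n \cl S_n \subseteq \cl S$: if $P_k \in \cl P_{\rm f}(\cl S_{n_k})$ with $P_k \to P$, and $A, B \in \cl S$ are arbitrary with $A_{n}, B_{n} \in \cl S_n$, $A_n \to A$, $B_n \to B$, write $P_k = \sum_i v_i^{(k)} v_i^{(k)*}$; then for $i \ne j$, $v_i^{(k)} v_j^{(k)*} \in \cl S_{n_k}$, and since $\limsup_n \cl S_n \subseteq \cl S$ the limits $v_i v_j^*$ (along a further subsequence diagonalising the $P_k$) lie in $\cl S$ — and likewise $v_i v_i^* \in \cl S$ — so $P$ is $\cl S$-full. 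Passing from $\cl P_{\rm f}$ to $\fp = {\rm C}(\cl P_{\rm f}) = \her(\overline{\rm conv}(\cl P_{\rm f}))$ is handled exactly as in the proof of Corollary \ref{c_cpa}(ii): Carath\'eodory bounds the convex combinations, one gets $\limsup_n \overline{\rm conv}(\cl P_{\rm f}(\cl S_n)) \subseteq \overline{\rm conv}(\cl P_{\rm f}(\cl S))$, and hereditarity promotes this to $\limsup_n \fp(\cl S_n) \subseteq \fp(\cl S)$. Now apply Proposition \ref{p_ccc}(i) (all corners $\fp(\cl S_n) \subseteq \cl B_{I_d}$ are uniformly bounded) to get $\fp(\cl S)^{\sharp} \subseteq \liminf_n \fp(\cl S_n)^{\sharp}$, and then the same half of Corollary \ref{c_cpa}(i) gives $\gamma(\fp(\cl S)^{\sharp}) \le \liminf_n \gamma(\fp(\cl S_n)^{\sharp})$, that is $\tilde\Omega_{\rm f}(\cl S) \le \liminf_n \tilde\Omega_{\rm f}(\cl S_n)$.

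\textbf{Main obstacle.} The delicate point — and the one I would write out most carefully — is the $\limsup$-inclusion for $\fp$ in part (ii): one must ensure that a convergent sequence of $\cl S_{n_k}$-full \emph{projections} has a limit that is genuinely $\cl S$-full (not merely positive of the right form), which requires simultaneously extracting convergent eigenvector frames and invoking $\limsup_n \cl S_n \subseteq \cl S$ on each product $v_i v_j^*$; compactness of the unit sphere and the finite rank bound make this routine but it is where the hypothesis is actually used. A secondary technical point is bookkeeping the boundedness hypotheses needed to invoke Proposition \ref{p_ccc} and Corollary \ref{c_cpa} — here it is automatic since $\ap$, $\cp$, $\fp$ of any operator system are contained in $\cl B_{I_d}$, and the finiteness hypothesis $\tilde\Omega_{\rm f}(\cl S) < \infty$ in (ii) is precisely what is needed so that the limiting inequality is not vacuously about $\infty$ and so that $\fp(\cl S)^{\sharp}$ is bounded, keeping $\gamma$ finite and the duality of Proposition \ref{NMgamma2} in force.
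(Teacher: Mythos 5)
Your proposal is correct and follows essentially the same route as the paper: part (i) is exactly the combination of (\ref{eq_apc}), Proposition \ref{p_ccc} and the $\gamma$/$N$ continuity argument from Corollary \ref{c_cpa}, and part (ii) fills in the details of the inclusion $\limsup_{n}\fp(\cl S_n)\subseteq\fp(\cl S)$ (which the paper calls straightforward) before invoking Theorem \ref{th_mainpro}(vii), (\ref{eq_Om}) and Corollary \ref{c_cpa}. Your explicit eigenframe/Carath\'eodory argument for the limit of $\cl S_{n_k}$-full projections is precisely the content hidden in the paper's citation of Corollary \ref{c_cpa}(ii), so there is no substantive difference.
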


\begin{proof}
(i) By (\ref{eq_apc}) and Proposition \ref{p_ccc}, 
$\ap(\cl S)^{\sharp} \subseteq \liminf_{n\in \bb{N}} \ap(\cl S_n)^{\sharp}$, and the claim 
now follows from (\ref{eq_omegam}) and the proof of Corollary \ref{c_cpa}.

(ii) It is straightforward that
$\limsup_{n\in \bb{N}} \fp(\cl S_n) \subseteq \fp(\cl S)$. By Theorem \ref{th_mainpro}, 
$\fp(\cl S)$ has non-empty relative interior, and the statement follows from 
(\ref{eq_Om}) and Corollary \ref{c_cpa}.
\end{proof}

\noindent 
{\bf Remark. } 
Operator systems satisfying the conditions of Theorem \ref{th_mainpro} (vi) are 
precisely those for which no unit vector $v$ satisfies $vv^* \in \cl S$
(for example, $\Span \{I_d\}$ for $d>1$).  
Note that the converse of Theorem \ref{th_mainpro} (vi) does not hold.  
Indeed, let $d \ge 3$ and $\cl K = \Span\{ I_d, e_1e_1^* \}\subseteq M_d$.
It is straightforward to see that the only $\cl K$-full projection is $e_1e_1^*$.
Thus, $\fp(\cl K)= \{ M \in M_d^+:M \le e_1e_1^*\}$ and $\tilde{\omega}(\cl K) = 1$.
By Lemma \ref{ccinM_d2} we have $\fp(\cl K)^\sharp= \{ M \in M_d^+: \Tr (Me_1e_1^*) \le 1 \}$ 
and so $ke_2e_2^* \in \fp(\cl S)^\sharp$ for all $k \in \bb R_+$, 
giving that $\tilde{\Omega}_{\rm f}(\cl K) = \infty.$


\section{The Witsenhausen rate}\label{s_wit}

In this section, we define the Witsenhausen rate of a non-commutative graph, 
extending the well-known Witsenhausen rate of a 
classical graph \cite{wit}. En route, we examine the multiplicativity of some of the
non-commutative graph parameters discussed earlier. 
Some of our bounds are more conveniently expressed in terms of 
orthogonal complements of non-commutative graphs, 
already employed in \cite{Stahlke} and \cite{kim}. 
More specifically, a subspace $\cl T \subseteq M_d$ is called an 
\emph{operator anti-system} \cite{BTW}
if there exists an operator system $\cl S \subseteq M_d$ 
such that $\cl T = \cl S^\perp.$  
(Such subspaces are called trace-free non-commutative graphs in \cite{Stahlke}.)  
As was pointed out in \cite[Proposition 8]{kim}, 
a subspace $\cl T \subseteq M_d$ is an operator anti-system precisely when it is  
self-adjoint and traceless, in the sense that $\Tr T=0$ whenever $T \in \cl T$.
Given a graph $G$ with vertex set $[d]$, its operator anti-system \cite[Equation (7)]{Stahlke}, \cite[Definition 6]{kim}
is the space
$$\cl T_G = \Span \{e_i e_j^*: i \sim j \mbox{~in~}G\}.$$   
Note that
\begin{equation} \label{opsysanti}  
\cl T_G = (\cl S_{\bar{G}})^\perp. 
\end{equation} 

Let $\cl T \subseteq M_d$ be an operator anti-system. 
An orthonormal set $\{ v_1, \ldots, v_k \}$ in $\bb C^d$ is called 
\emph{$\cl T$-independent}  (resp. \emph{strongly $\cl T$-independent}) 
if $v_i v_j^* \in \cl T^\perp$ for all $i,j\in [k]$ with $i \ne j$ (resp. for all $i,j\in [k]$). 
It is clear that 
a set is $\cl T$-independent (resp. strongly $\cl T$-independent) 
if and only if it is $\cl T^\perp$-clique (resp. $\cl T^\perp$-full). 
The \emph{chromatic number} $\chi(\cl T)$ and \emph{strong chromatic number} $\chi_{\rm s}(\cl T)$ 
of an operator anti-system $\cl T$ were 
introduced in \cite{kim} and can be expressed in our terms as follows:
\begin{equation} \label{chistrongs} 
\chi(\cl T)= \Omega(\cl T^\perp)\mbox{ and }\chi_{\rm s}(\cl T)= \tilde{\Omega}(\cl T^\perp).
\end{equation}  
Thus, $\tilde{\Omega}_{\rm f}(\cl S)$ (resp. $\Omega_{\rm f}(\cl S)$) 
can be regarded as the fractional version of 
$\chi_{\rm s}(\cl S^\perp)$ (resp. $\chi(\cl S^\perp)$).  
It was shown in \cite[Corollary 28 and Theorem 14]{kim}, and follows from
\eqref{opsysanti} and Theorem \ref{th_mainpro}, that 
$$\chi(\cl T_{\bar{G}})  = \chi_{\rm s}(\cl T_{\bar{G}}) = \chi(\bar{G}).$$

Recall that, 
if $G_1$ and $G_2$ are graphs with vertex sets $[d_1]$ and $[d_2]$, respectively, 
their disjunctive product $G_1\ast G_2$ has vertex set $[d_1]\times [d_2]$ and 
two pairs $(i,k), (j,l)$ of vertices are adjacent if $i\sim j$ in $G_1$ or $k\sim l$ in $G_2$. 
The \emph{co-normal product} of operator anti-systems \cite{Stahlke}
$\cl T_{i}\subseteq M_{d_{i}}$, $i = 1,2$, is the operator anti-system
$$\cl T_1 * \cl T_2= \cl T_1 \otimes M_{d_2} + M_{d_1} \otimes \cl T_2.$$     
It is straightforward that
$$\cl T_{G_1} \ast \cl T_{G_2} = \cl T_{G_1 \ast G_2}.$$
Note that, if $\cl S_1$ and $\cl S_2$ are operator systems then 
$(\cl S_1 \otimes \cl S_2)^\perp = \cl S_1^{\perp} \ast \cl S_2^{\perp}$.

The next theorem collects the submultiplicativity properties of the chromatic, 
the fractional chromatic, the clique and the clique covering numbers.
Part (i) answers \cite[Question 7.5]{BTW}.

\begin{theorem}\label{th_mulpco}
Let $\cl S_i\subseteq M_{d_i}$ be a non-commutative graph, and 
$\cl T_i\subseteq M_{d_i}$ be an operator anti-system, $i = 1,2$. 
\begin{itemize}
\item[(i)]
If $\zeta\in \{\chi, \chi_{\rm f}, \tilde{\Omega}, \tilde{\Omega}_{\rm f}\}$ then 
$\zeta(\cl S_1 \otimes \cl S_2) \leq \zeta(\cl S_1) \zeta(\cl S_2)$;

\item[(ii)]
$\tilde{\omega}(\cl S_1 \otimes \cl S_2) \ge \tilde{\omega}(\cl S_1) \tilde{\omega}(\cl S_2)$;

\item[(iii)]
$\omega(\cl S_1 \otimes \cl S_2) \ge \min \{\omega(\cl S_1), \omega(\cl S_2)\}$;

\item[(iv)] 
If $\tilde{\omega}(\cl S_2)\ge 1$ then $\omega(\cl S_1 \otimes \cl S_2) \ge \omega(\cl S_1).$ 
Thus, if $\tilde{\omega}(\cl S_i)\ge 1$, $i = 1,2$, then 
$\omega(\cl S_1 \otimes \cl S_2) \ge \max\{\omega(\cl S_1),\omega(\cl S_2)\};$

\item[(v)]
$\chi_{\rm s}(\cl T_1 * \cl T_2) \le \chi_{\rm s}(\cl T_1) \chi_{\rm s} (\cl T_2)$.
\end{itemize}
\end{theorem}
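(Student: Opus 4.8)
**Theorem \ref{th_mulpco}(v): $\chi_{\rm s}(\cl T_1 * \cl T_2) \le \chi_{\rm s}(\cl T_1) \chi_{\rm s}(\cl T_2)$.**

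The plan is to reduce this to the already-established submultiplicativity of $\tilde{\Omega}$ from part (i), using the dictionary between operator anti-systems and operator systems set up just before the theorem. By \eqref{chistrongs}, $\chi_{\rm s}(\cl T_i) = \tilde{\Omega}(\cl T_i^\perp)$ for $i=1,2$, and likewise $\chi_{\rm s}(\cl T_1 * \cl T_2) = \tilde{\Omega}((\cl T_1*\cl T_2)^\perp)$. So the first step is to identify $(\cl T_1 * \cl T_2)^\perp$. Writing $\cl S_i = \cl T_i^\perp$, so that $\cl S_i$ is an operator system and $\cl T_i = \cl S_i^\perp$, the co-normal product is $\cl T_1 * \cl T_2 = \cl S_1^\perp \otimes M_{d_2} + M_{d_1}\otimes \cl S_2^\perp$. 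The remark immediately preceding the theorem records that $(\cl S_1\otimes\cl S_2)^\perp = \cl S_1^\perp * \cl S_2^\perp$; dualising (and using $(\cl T^\perp)^\perp = \cl T$ for subspaces of $M_d$) gives $(\cl T_1 * \cl T_2)^\perp = (\cl S_1^\perp * \cl S_2^\perp)^\perp = \cl S_1 \otimes \cl S_2$.

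With that identity in hand the proof is one line: applying part (i) with $\zeta = \tilde\Omega$ to the operator systems $\cl S_1, \cl S_2$,
\[
\chi_{\rm s}(\cl T_1 * \cl T_2) = \tilde{\Omega}(\cl S_1\otimes\cl S_2) \le \tilde{\Omega}(\cl S_1)\,\tilde{\Omega}(\cl S_2) = \chi_{\rm s}(\cl T_1)\,\chi_{\rm s}(\cl T_2).
\]
Here $\cl S_i \otimes \cl S_2$ denotes the (unique) tensor product of operator systems, which is the space $\operatorname{span}\{S_1\otimes S_2 : S_i\in\cl S_i\} \subseteq M_{d_1 d_2}$, and one should note this coincides with what appears in the definition of $\tilde\Omega$ via $\cl P_{\rm f}$.

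The only genuine content is the computation $(\cl T_1 * \cl T_2)^\perp = \cl S_1\otimes\cl S_2$, i.e. the compatibility of the orthogonal-complement operation with the co-normal/tensor product pairing — but this is exactly the remark stated just before the theorem, so there is no real obstacle; the main thing to be careful about is bookkeeping with the self-adjointness and tracelessness conditions to make sure $\cl T_1 * \cl T_2$ is again an operator anti-system (so that $\chi_{\rm s}$ of it is even defined), which is immediate since $\cl T_1\otimes M_{d_2} + M_{d_1}\otimes\cl T_2$ is clearly self-adjoint and traceless whenever $\cl T_1,\cl T_2$ are. If one prefers to avoid the operator-system language entirely, an alternative is to argue directly: given families of $\cl T_i$-strongly-independent (equivalently $\cl S_i$-full) projections partitioning $I$ of sizes $\chi_{\rm s}(\cl T_1)$ and $\chi_{\rm s}(\cl T_2)$, take all tensor products $P\otimes Q$; each is $\cl S_1\otimes\cl S_2$-full, they partition $I_{d_1 d_2}$, and there are $\chi_{\rm s}(\cl T_1)\chi_{\rm s}(\cl T_2)$ of them — but this is essentially re-proving the relevant case of part (i), so the reduction above is cleaner.
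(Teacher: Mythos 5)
Your argument for part (v) is correct and is essentially identical to the paper's: the paper likewise invokes \eqref{chistrongs} together with the duality $(\cl T_1 * \cl T_2)^\perp = \cl T_1^\perp \otimes \cl T_2^\perp$ (the remark preceding the theorem) and then applies part (i) with $\zeta = \tilde{\Omega}$; the ``direct'' alternative you sketch at the end is exactly the tensor-product-of-PVMs argument the paper uses to prove (i) in the first place. Note, however, that your proposal addresses only part (v) of the statement and takes part (i) as given, so parts (i)--(iv) remain unproved.
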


\begin{proof} 
(i)
Suppose that $\{P_i^{(k)}\}_{i=1}^{l_k}$ is a PVM consisting of projections in $\cl P_{\rm a}(\cl S_k)$, $k = 1,2$.
Then $\{P_i^{(1)}\otimes P_j^{(2)} : i\in [l_1], j\in [l_2]\}$ is a PVM consisting of projections in 
$\cl P_{\rm a}(\cl S_1\otimes \cl S_2)$; minimising over $l_1$ and $l_2$ proves the claim if 
$\zeta = \chi$. 
A similar argument shows the claim for $\zeta = \tilde{\Omega}$. 
For $\zeta = \chi_{\rm f}$, the statement 
follows from Theorems \ref{th_partp} and \ref{NMgamma2}, Proposition \ref{eq_chifGamma} and (\ref{eq_apma}).
The claims in the case $\zeta = \tilde{\Omega}_{\rm f}$ follow from
the -- straightforward to verify -- inclusion 
\begin{equation}\label{eq_fpma}
\fp(\cl S_1)\otimes_{\max} \fp(\cl S_2) \subseteq \fp(\cl S_1\otimes \cl S_2).
\end{equation}

(ii) follows from (\ref{eq_fpma}).

(iii) 
Without loss of generality, let $\omega(\cl S_1) = p \le q = \omega(\cl S_2)$, and choose 
an $\cl S_1$-clique $\{u_1, \ldots, u_p\}$ and an $\cl S_2$-clique $\{v_1, \ldots , v_q\}$.  
The set $\{ u_i \otimes v_i: i \in [p]\}$ is then an $\cl S_1\otimes \cl S_2$-clique.

(iv) Since $\tilde{\omega}(\cl S_2)\ge 1$, there exists an $\cl S_2$-full projection $vv^*$ of rank one.  
Let $\{u_1, \ldots, u_p\}$ be an $\cl S_1$-clique, where $p = \omega(\cl S_1)$.  
We have 
$$(u_i \otimes v) (u_j \otimes v)^* = u_iu_j^* \otimes vv^* \in \cl S_1 \otimes \cl S_2, \ \ \ i \ne j,$$
and hence the set $\{u_i \otimes v : i \in [p]\}$ is an $\cl S_1 \otimes \cl S_2$-clique. 

(v)
Using  \eqref{chistrongs} and (i), we have
$$\chi_{\rm s}(\cl T_1 * \cl T_2)
= 
\tilde{\Omega}(\cl T^\perp_1 \otimes \cl T^\perp_2) 
\le 
\tilde{\Omega}(\cl T^\perp_1)\tilde{\Omega}(\cl T^\perp_2)
= \chi_{\rm s}(\cl T_1)\chi_{\rm s}(\cl T_2).$$
\end{proof}



\noindent {\bf Remark. } 
It is well-known that the clique number of classical graphs is multiplicative with respect to strong graph products
\cite[Chapter 7, Exercise 13]{Godsil}.
The same does not hold true for non-commutative graphs; 
indeed, we will see in Section \ref{ch4examples} that there exist operator systems 
$\cl S$ and $\cl T$ such that $\omega(\cl S \otimes \cl T) < \omega(\cl T)$.   

\medskip

An application of Theorems \ref{th_mainpro} and \ref{th_mulpco} yields the following bound on the
Shannon capacity of a non-commutative graph: 

\begin{cor}\label{c_bounds}
Let $\cl S \subseteq M_d$ be a non-commutative graph. Then $\Theta(\cl S) \leq \tilde{\Omega}_{\rm f}(\cl S)$. 
\end{cor}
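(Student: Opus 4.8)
The goal is to bound the Shannon capacity $\Theta(\cl S) = \lim_n \sqrt[n]{\alpha(\cl S^{\otimes n})}$ by $\tilde{\Omega}_{\rm f}(\cl S)$. The plan is to combine two ingredients already available in the paper: first, a one-shot inequality $\alpha(\cl S) \leq \tilde{\Omega}_{\rm f}(\cl S)$; second, the submultiplicativity $\tilde{\Omega}_{\rm f}(\cl S_1 \otimes \cl S_2) \leq \tilde{\Omega}_{\rm f}(\cl S_1)\tilde{\Omega}_{\rm f}(\cl S_2)$ from Theorem \ref{th_mulpco}(i). Then $\alpha(\cl S^{\otimes n}) \leq \tilde{\Omega}_{\rm f}(\cl S^{\otimes n}) \leq \tilde{\Omega}_{\rm f}(\cl S)^n$, so taking $n$-th roots and passing to the limit gives $\Theta(\cl S) \leq \tilde{\Omega}_{\rm f}(\cl S)$.

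The one-shot inequality $\alpha(\cl S) \leq \tilde{\Omega}_{\rm f}(\cl S)$ is exactly part of Theorem \ref{th_mainpro}(iii), which states $\alpha(\cl S) \leq \Omega_{\rm f}(\cl S) \leq \tilde{\Omega}_{\rm f}(\cl S)$. So both the one-shot bound and the multiplicativity are recorded earlier in the excerpt, and the proof is genuinely just the assembly of these two facts with the limit. One should be slightly careful about the case $\tilde{\Omega}_{\rm f}(\cl S) = \infty$, in which the inequality is trivial; so I would first dispatch that case, and then assume $\tilde{\Omega}_{\rm f}(\cl S) < \infty$, where all the quantities involved are finite positive reals and the arithmetic with $n$-th roots and limits is unproblematic.

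Concretely, the steps in order are: (1) if $\tilde{\Omega}_{\rm f}(\cl S) = \infty$ the bound holds trivially, so assume it is finite; (2) for each $n \in \bb{N}$, apply Theorem \ref{th_mainpro}(iii) to the non-commutative graph $\cl S^{\otimes n}$ to get $\alpha(\cl S^{\otimes n}) \leq \tilde{\Omega}_{\rm f}(\cl S^{\otimes n})$; (3) apply Theorem \ref{th_mulpco}(i) with $\zeta = \tilde{\Omega}_{\rm f}$ inductively to obtain $\tilde{\Omega}_{\rm f}(\cl S^{\otimes n}) \leq \tilde{\Omega}_{\rm f}(\cl S)^n$; (4) combine to get $\sqrt[n]{\alpha(\cl S^{\otimes n})} \leq \tilde{\Omega}_{\rm f}(\cl S)$ for every $n$; (5) let $n \to \infty$ and use that $\Theta(\cl S)$ is the limit of the left-hand side.

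There is essentially no obstacle here: the substantive work — the second anti-blocker theorem, the identification $\tilde{\Omega}_{\rm f}(\cl S) = \Gamma_{\rm f}(\fp(\cl S))$, the multiplicativity via the inclusion $\fp(\cl S_1)\otimes_{\max}\fp(\cl S_2) \subseteq \fp(\cl S_1\otimes\cl S_2)$, and the chain $\alpha(\cl S) \leq \Omega_{\rm f}(\cl S) \leq \tilde{\Omega}_{\rm f}(\cl S)$ — has already been carried out in the preceding sections. The only point worth a sentence of care is the finiteness reduction in step (1), since $\tilde{\Omega}_{\rm f}$ can take the value $\infty$ (e.g. when $\tilde\omega(\cl S)=0$, by Theorem \ref{th_mainpro}(vi)); once that case is set aside, the argument is a routine Fekete-type passage to the limit, and indeed one does not even need Fekete since $\sqrt[n]{\alpha(\cl S^{\otimes n})}$ is already known to converge.

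\begin{proof}
If $\tilde{\Omega}_{\rm f}(\cl S) = \infty$, the inequality holds trivially; assume therefore that $\tilde{\Omega}_{\rm f}(\cl S) < \infty$. Fix $n\in \bb{N}$. Applying Theorem \ref{th_mainpro} (iii) to the non-commutative graph $\cl S^{\otimes n}$, we obtain
$$\alpha(\cl S^{\otimes n}) \leq \tilde{\Omega}_{\rm f}(\cl S^{\otimes n}).$$
By Theorem \ref{th_mulpco} (i), applied inductively with $\zeta = \tilde{\Omega}_{\rm f}$, we have
$$\tilde{\Omega}_{\rm f}(\cl S^{\otimes n}) \leq \tilde{\Omega}_{\rm f}(\cl S)^n.$$
Combining these inequalities yields $\alpha(\cl S^{\otimes n}) \leq \tilde{\Omega}_{\rm f}(\cl S)^n$, that is,
$$\sqrt[n]{\alpha\left(\cl S^{\otimes n}\right)} \leq \tilde{\Omega}_{\rm f}(\cl S).$$
Since this holds for every $n\in \bb{N}$ and $\Theta(\cl S) = \lim_{n\to\infty} \sqrt[n]{\alpha(\cl S^{\otimes n})}$, we conclude that $\Theta(\cl S) \leq \tilde{\Omega}_{\rm f}(\cl S)$.
\end{proof}
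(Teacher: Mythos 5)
Your proof is correct and is exactly the argument the paper intends: the corollary is stated as "an application of Theorems \ref{th_mainpro} and \ref{th_mulpco}", namely combining $\alpha(\cl S^{\otimes n}) \leq \tilde{\Omega}_{\rm f}(\cl S^{\otimes n})$ from Theorem \ref{th_mainpro}(iii) with the submultiplicativity of $\tilde{\Omega}_{\rm f}$ from Theorem \ref{th_mulpco}(i) and taking $n$-th roots. Your explicit handling of the case $\tilde{\Omega}_{\rm f}(\cl S)=\infty$ is a harmless extra precaution, and nothing further is needed.
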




In \cite{wit}, Witsenhausen identified the zero-error capacity of noisy channels in the presence of 
side information.  
In this scenario, in addition to a noisy channel $\cl N : [d] \To [m]$, 
Alice can communicate to Bob using an identity channel $[k] \To [k]$ for any $k \in \bb N$ of her choice, 
which she runs in parallel with $\cl N$ so that Bob can retrieve with certainty
her input $i \in [d]$.   
Thus, Alice seeks a function $f: [d] \To [k]$, such that 
the output of the channel $\cl N$ applied to $i\in [d]$, together with the value $f(i)$, 
completely determine $i$.     
The minimum value of $k$ such that these constraints can be satisfied is denoted 
$\chi(\cl N)$ and known as the \emph{packing number} of $\cl N$.       
Witsenhausen showed that  
$\chi (\cl N)$ coincides with the chromatic number $\chi(G)$ of the confusability graph $G$ of $\cl N$. 
The zero-error capacity of $\cl N$ (or, alternatively, of $G$) in the presence of side information, 
is the \emph{Witsenhausen rate}
$$R(G) = \lim_{n \To \infty} \sqrt[n]{\chi(G^{\boxtimes n})}.$$

The quantum zero-error side information problem was examined in
\cite[Section 7.3]{Paulsen}. Given a quantum channel $\Phi : M_d \To M_k$,
here we seek an orthonormal basis $\{v_1, \ldots, v_d \} \subseteq \bb C^d$, $k \in \bb N$ 
and a function $f: [d] \To [k]$ such that  the outputs $(\Phi \otimes \cl I) ((v_i \otimes e_{f(i)})(v_i \otimes e_{f(i)})^*)$ are perfectly distinguishable for $i=1, \ldots, d$, where $\cl I$ is the identity channel and $\{e_1, \ldots, e_k\}$ is the canonical orthonormal basis of 
$\bb C^n$.  The least $k \in \bb N$ with this property is the \emph{packing number} $\chi ( \Phi)$ of $\Phi$.
It was shown on \cite[p. 59]{Paulsen} that, if $\cl S$ is the confusability graph of $\Phi$ then 
$\chi (\Phi) = \chi (\cl S)$.
Theorem \ref{th_mulpco} and Fekete's Lemma now show that the limit
$$R(\cl S) := \lim_{n \To \infty} \sqrt[n]{\chi(\cl S^{\otimes n})},$$ 
which we call the \emph{Witsenhausen rate} of $\cl S$, 
exists and coincides with the infimum of the sequence
$\left(\sqrt[n]{\chi(\cl S^{\otimes n})}\right)_{n\in \bb{N}}$.
It is immediate that, if $G$ is a graph then $R(\cl S_G) = R(G)$.

Let $\cl T^{\ast n}$ denote the disjunctive product of $n$ copies of 
an operator anti-system $\cl T$. 
It follows from Theorem \ref{th_mulpco} and Fekete's Lemma that the limit
$\lim_{n \To \infty} \sqrt[n]{\chi_{\rm s}(\cl T^{\ast n})}$ exists and is equal to 
$\inf_{n \in \bb N} \{\sqrt[n]{\chi_{\rm s}(\cl T^{ n})}\}$. 
To appreciate the significance of this limit, recall that, by
\cite[Corollary 3.4.3]{schnei},
\begin{equation} \label{posmcel} 
\lim_{n \to \infty} \sqrt[n]{\chi(G^n)}= \chi_{\rm f}(G),
\end{equation} 
where $G^n$ denotes the disjunctive product of $n$ copies of a graph $G$. 
An application of \eqref{opsysanti} and Theorem \ref{th_mainpro} shows that  
$$\lim_{n \To \infty} \sqrt[n]{\chi_{\rm s}(\cl T_G^{n})} 
= \tilde{\Omega}_{\rm f}(\cl T_G^\perp).$$  
Since $\tilde{\Omega}_{\rm f}(\cl T^\perp)$ is a fractional version of 
$\chi_{\rm s}(\cl T)$, the following question about a non-commutative version of 
\eqref{posmcel} is natural:

\begin{question} 
Let $\cl T$ be an operator anti-system.
Is it true that 
$$\lim_{n \To \infty} \sqrt[n]{ \chi_{\rm s}(\cl T^{n})}= \tilde{\Omega}_{\rm f}(\cl T^\perp)?$$  
\end{question}


\section{Some examples} \label{ch4examples}

In this subsection, we consider some examples of non-commutative graphs and evaluate 
the parameters we introduced. 
For a graph $G$, let 
$$\theta(G) = \gamma(\thab(G)) = \max\left\{\Tr(A) : A\in \thab(G)\right\}$$
be the \emph{Lov\'asz number} of $G$ \cite{lovasz}. 
The non-commutative versions $\theta(\cl S)$ and $\hat{\theta}(\cl S)$ of the Lov\'{a}sz number 
were introduced in \cite{BTW}; we refer the reader to \cite{BTW} for their definitions and note here that, by  
\cite[ Corollary 4.8 and Theorem 5.2]{BTW}, if $\cl S \subseteq M_d$ is an operator system then 
\begin{equation} \label{eq_thandhatth} 
\alpha(\cl S) \le \theta(\cl S) \le \hat{\theta}(\cl S) \le d.  
\end{equation}
It was shown in \cite{BTW} that, if $G$ is a graph then $\theta(\cl S_G) = \hat{\theta}(\cl S_G) = \theta(G)$. 
It follows from \eqref{eq_Om} and Theorem \ref{NMgamma2} 
(and was shown in \cite{BTW}) that 
$\tilde{\Omega}_{\rm f}(\cl S) = \gamma (\fp(\cl S)^\sharp)$ and $ \Omega_{\rm f}(\cl S) = \gamma (\cp(\cl S)^\sharp)$. 
For completeness, whenever they are known,
we include in the following the values of $\theta$ and $\hat{\theta}$.

\begin{prop} \label{propCIpara}  
Let $d \in \bb N$. 
The following hold:
\begin{itemize}
\item[(i)]
$\alpha (\bb C I_d)= \theta(\bb C I_d) = \hat{\theta}(\bb C I_d)
= \Omega_{\rm f}( \bb CI_d)=\Omega(\bb CI_d)=d$; 

\item[(ii)]
$\omega (\bb C I_d) = \chi_{\rm{f}}(\bb C I_d) =  \chi(\bb C I_d) =1$;

\item[(iii)]
$\tilde{\Omega}_{\rm f}(\bb C I_d)= \tilde{\Omega}(\bb CI_d)= 1$ if $d = 1$, and 
$\tilde{\Omega}_{\rm f}(\bb C I_d)= \tilde{\Omega}(\bb CI_d)= \infty$ if $d \geq 2$;

\item[(iv)]
$\tilde{\omega}( \bb CI_d)= 1$ if $d = 1$ and $\tilde{\omega}( \bb CI_d)= 0$ if $d \geq 2$;

\item[(v)]
$H(\bb CI_d, \rho)=0 \mbox{ for all } \rho \in \cl R_d$;

\item[(vi)]
$\Theta(\bb CI_d) = d \ \mbox{ and } \ R(\bb CI_d) = 1$.
\end{itemize}
\end{prop}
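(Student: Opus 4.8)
\textbf{Proof plan for Proposition \ref{propCIpara}.}

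The plan is to compute everything from the explicit description of the convex corners attached to the one-dimensional operator system $\bb{C}I_d$. First I would identify the three projection corners. A projection $P\in M_d$ is $\bb{C}I_d$-abelian precisely when $P(\bb{C}I_d)P = \bb{C}P$ consists of commuting operators, which is automatic; hence every projection is $\bb{C}I_d$-abelian, so $\cl P_{\rm a}(\bb{C}I_d)$ is the set of all projections and therefore $\ap(\bb{C}I_d) = \cl B_{I_d}$ (using Proposition \ref{l_gen} and Lemma \ref{l_ri}, since ${\rm C}$ of all projections is the hereditary cover of their closed convex hull, which is $\cl B_{I_d}$). For the same reason every rank-one projection is $\bb{C}I_d$-clique, so $\cl A_{I_d}\subseteq \cp(\bb{C}I_d)\subseteq\cl B_{I_d}$; in fact $vv^*$ is $\bb{C}I_d$-clique for any unit $v$ and any rank-$\geq 2$ projection $P$ fails to be $\bb{C}I_d$-clique only when $d\geq 2$ would force $v_iv_j^*\in\bb{C}I_d$ for $i\neq j$, which is impossible, so $\cl P_{\rm c}(\bb{C}I_d)$ is exactly the rank-one projections and hence $\cp(\bb{C}I_d) = \cl A_{I_d}$. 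Finally, a projection $P=\sum v_iv_i^*$ is $\bb{C}I_d$-full iff $v_iv_j^*\in\bb{C}I_d$ for all $i,j$, including $i\neq j$; for $d = 1$ this gives $P = I_1$, while for $d\geq 2$ no such $P$ of rank $\geq 1$ exists because $v_1v_2^*\notin\bb{C}I_d$, so $\cl P_{\rm f}(\bb{C}I_d) = \{I_1\}$ if $d=1$ and $\cl P_{\rm f}(\bb{C}I_d) = \emptyset$ if $d\geq 2$; correspondingly $\fp(\bb{C}I_d) = \cl B_{I_d}$ if $d = 1$ and $\fp(\bb{C}I_d) = \{0\}$ if $d\geq 2$.

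With these three identifications the rest is bookkeeping via the formulas already proved. For (i): $\alpha(\bb{C}I_d) = \gamma(\ap(\bb{C}I_d)) = \gamma(\cl B_{I_d}) = d$; since $\alpha\leq\theta\leq\hat\theta\leq d$ by \eqref{eq_thandhatth}, the $\theta$ and $\hat\theta$ values follow; and $\Omega_{\rm f}(\bb{C}I_d) = \gamma(\cp(\bb{C}I_d)^\sharp) = \gamma(\cl A_{I_d}^\sharp) = \gamma(\cl B_{I_d}) = d$ (using $\cl A_{I_d}^\sharp = \cl B_{I_d}$), while $\Omega(\bb{C}I_d)\leq d$ always and $\Omega(\bb{C}I_d)\geq\Omega_{\rm f}(\bb{C}I_d) = d$ by Theorem \ref{th_mainpro}(ii), so $\Omega(\bb{C}I_d) = d$. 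For (ii): $\omega(\bb{C}I_d) = \gamma(\cp(\bb{C}I_d)) = \gamma(\cl A_{I_d}) = 1$; $\chi_{\rm f}(\bb{C}I_d) = \omega_{\rm f}(\bb{C}I_d) = \gamma(\ap(\bb{C}I_d)^\sharp) = \gamma(\cl B_{I_d}^\sharp) = \gamma(\cl A_{I_d}) = 1$ by Theorem \ref{th_fcc} and \eqref{eq_omegam}; and $1\leq\chi_{\rm f}(\bb{C}I_d)\leq\chi(\bb{C}I_d)$, with $\chi(\bb{C}I_d) = 1$ because $I_d$ is itself $\bb{C}I_d$-abelian. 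For (iii) and (iv): when $d = 1$ all corners are trivially $\cl B_{I_1} = \cl A_{I_1}$ and the values are $1$; when $d\geq 2$, $\fp(\bb{C}I_d) = \{0\}$ gives $\tilde\omega(\bb{C}I_d) = \gamma(\{0\}) = 0$ and $\fp(\bb{C}I_d)^\sharp = M_d^+$, whence $\tilde\Omega_{\rm f}(\bb{C}I_d) = \infty$ by Theorem \ref{th_mainpro}(vi)--(vii), and $\tilde\Omega(\bb{C}I_d) = \infty$ since it is $\geq\tilde\Omega_{\rm f}$. Part (v): since $\ap(\bb{C}I_d) = \cl B_{I_d}$, Remark \ref{HAmin}(vi) gives $H(\bb{C}I_d,\rho) = H_{\cl B_{I_d}}(\rho) = 0$; alternatively apply Proposition \ref{Scommutes}, as $\bb{C}I_d$ is diagonal in every basis.

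For (vi): the Shannon capacity $\Theta(\bb{C}I_d) = d$ follows because $\alpha(\bb{C}I_d) = d$ already equals the ambient dimension, so $\alpha((\bb{C}I_d)^{\otimes n}) = \alpha(\bb{C}I_{d^n}) = d^n$ (here $(\bb{C}I_d)^{\otimes n} = \bb{C}I_{d^n}$), giving $\Theta(\bb{C}I_d) = \lim_n (d^n)^{1/n} = d$; equivalently one could cite Corollary \ref{c_bounds} for the upper bound $\Theta(\bb{C}I_d)\leq\tilde\Omega_{\rm f}(\bb{C}I_d)$, but that only helps for $d = 1$. For the Witsenhausen rate, $R(\bb{C}I_d) = \lim_n\chi((\bb{C}I_d)^{\otimes n})^{1/n} = \lim_n\chi(\bb{C}I_{d^n})^{1/n} = \lim_n 1^{1/n} = 1$, using $\chi(\bb{C}I_m) = 1$ for every $m$ from part (ii). I do not anticipate a genuine obstacle here: the only point requiring care is the precise determination of which projections are $\bb{C}I_d$-clique versus $\bb{C}I_d$-full for $d\geq 2$, and the verification that $(\bb{C}I_d)^{\otimes n} = \bb{C}I_{d^n}$ so that the asymptotic parameters are computed from a single closed form rather than a genuine limiting process.
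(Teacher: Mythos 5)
Your proposal is correct and follows essentially the same route as the paper: identify the relevant projections (every projection is $\bb CI_d$-abelian, the $\bb CI_d$-clique projections are exactly the rank-one ones, and for $d\geq 2$ there are no $\bb CI_d$-full projections), read off the corners $\ap$, $\cp$, $\fp$, and then extract all parameters via $\gamma$, Theorem \ref{th_fcc}, Theorem \ref{th_mainpro}, Proposition \ref{Scommutes} and the identification $(\bb CI_d)^{\otimes n}=\bb CI_{d^n}$ for part (vi). One trivial repair: for $d\geq 2$ the non-existence of rank-one full projections rests on $vv^*\notin\bb CI_d$ for a unit vector $v$ (as the paper notes), not on $v_1v_2^*\notin\bb CI_d$, which only rules out ranks at least two.
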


\begin{proof} 
(i), (ii)  For orthonormal $u,v \in \bb C^d$ we have $\ip{u}{v}= \ip{uv^*}{I_d}=0$, and so $uv^*\in \bb CI_d^\perp$.  
It follows that a projection in $M_d$ is $\bb CI_d$-clique if and only if it has rank one.  Thus
$\omega(\bb CI_d)=1$, whence $\Omega(\bb CI_d)=d$.  
Proposition \ref{Scommutes} implies $\alpha(\bb CI_d) = d$
and $\chi(\bb CI_d)= \chi_{\rm f}(\bb CI_d)=1$.  
It is immediate that $\Omega_{\rm f}(\bb CI_d)=d$,
and \eqref{eq_thandhatth} yields $\theta(\bb C I_d) = \hat{\theta}(\bb C I_d) = d$. 

(iii), (iv) Note that if $d=1$ we have $e_1e_1^* \in \fp(\bb CI_1)$ and  $\fp( \bb C I_1)= [0,1]= \fp ( \bb C I_d)^\sharp$.  
This gives $\tilde{\omega}(\bb CI_1)=\tilde{\Omega}(\bb CI_1)=\tilde{\Omega}_{\rm f}(\bb C I_d)= 1$.  
However, if $d \ge 2$, no unit vector $v$ satisfies $vv^* \in \bb C I_d$.  
Thus  $\fp(\bb C I_d)= \{0\}$ and $\fp(\bb C I_d)^\sharp= M_d $, 
giving $\tilde{\omega}(\bb CI_d)=0$ and $\tilde{\Omega}(\bb CI_d) = \tilde{\Omega}_{\rm f}(\bb C I_d)= \infty.$   

(v) This follows from Proposition \ref{Scommutes}.   

(vi) We have 
$\alpha\left((\bb CI_d)^{\otimes n}\right)=\alpha(\bb CI_{d^n})=d^n$, giving $\Theta(\bb CI_d)=d.$  
Similarly, 
$\chi\left((\bb CI_d)^{\otimes n}\right)=\chi(\bb CI_{d^n})=1$, and so $R(\bb CI_d)=1.$
\end{proof}

Letting $J_d$ be the $d \times d$ matrix all of whose entries are equal to one, we define the operator system 
$\cl T_d = \bb CI_d + \bb C J_d$.

\begin{prop} \label{alphaTn} 
Let $d \in \bb N$. The following hold:
\begin{itemize}
\item[(i)]
$\alpha ( \cl T_d)= \theta  ( \cl T_d)= \hat{\theta} ( \cl T_d)= \Omega_{\rm f}(\cl T_d)= \Omega(\cl T_d)=d;$ 

\item[(ii)]
$\omega (\cl T_d) =  \chi_{\rm{f}}(\cl T_d) =  \chi(\cl T_d) =1;$  

\item[(iii)]
$H ( \cl T_d, \rho)=0 \mbox{ for all }\rho \in \cl R_d;$ 

\item[(iv)]
$\Theta(\cl T_d)=d$ and $R(\cl T_d) = 1$;

\item[(v)]
$\tilde{\Omega}(\cl T_2) = \tilde{\Omega}_{\rm f}(\cl T_2)= 2$ and $\tilde{\omega}(\cl T_2)=1$; 

\item[(vi)]
If $d \ge 3$ then $\tilde{\Omega}(\cl T_d) = \tilde{\Omega}_{\rm f}(\cl T_d)= \infty$ and $\tilde{\omega}(\cl T_d)=1.$  
\end{itemize} 
\end{prop}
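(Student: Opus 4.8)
The strategy is to compute everything from the structure of the $\cl T_d$-abelian, $\cl T_d$-clique and $\cl T_d$-full projections, and then to feed these into the general machinery of Sections \ref{s_ccabMd}--\ref{s_nge}. First I would record the basic linear-algebra facts about $\cl T_d = \bb C I_d + \bb C J_d$: since $J_d = (\mathbbm 1)(\mathbbm 1)^*$ where $\mathbbm 1 = \sum_i e_i$, the only rank-one matrix in $\cl T_d$ is (a scalar multiple of) $\frac{1}{d}J_d$, the projection onto $\Span\{\mathbbm 1\}$; and for orthonormal $u,v$ one has $u v^* \in \cl T_d^\perp$ iff $\langle u,v\rangle = 0$ (automatic) and $\langle u, J_d v\rangle = \langle u,\mathbbm 1\rangle\overline{\langle v,\mathbbm 1\rangle} = 0$, i.e. at most one of $u,v$ has a nonzero $\mathbbm 1$-component.

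\textbf{Parameters (i)--(iv).} From the last observation, a projection $P$ is $\cl T_d$-abelian iff $P\cl T_d P$ is abelian, which (since $I$ is central) amounts to $P J_d P$ being a scalar multiple of $P$; one checks this holds for any $P$, so in fact every orthonormal set of the right form works and $I$ itself is $\cl T_d$-abelian. (More carefully: $P J_d P$ is rank $\le 1$, hence diagonalizable, hence $P\cl T_d P$ is abelian for every $P$ — this is the same phenomenon as $\alpha(\cl T_d) = d$.) Thus $I \in \cl P_{\rm a}(\cl T_d)$, so $I \in \ap(\cl T_d)$; by Proposition \ref{Scommutes} (equivalences (ii)--(vi)) this gives at once $\alpha(\cl T_d) = d$, $\chi(\cl T_d) = \chi_{\rm f}(\cl T_d) = 1$, $\omega(\cl T_d) = 1$ (the last because a $\cl T_d$-clique set of size $\ge 2$ would need $uv^*\in \cl T_d$ with $u\perp v$, impossible), and $H(\cl T_d,\rho) = 0$ for all $\rho$, which is (ii), (iii), and part of (i). For the remaining entries of (i): $\Omega(\cl T_d) = \chi(\overline{?})$-style reasoning is not available directly, so instead I use $\omega(\cl T_d) = 1$ together with $\omega(\cl S)\Omega(\cl S)\ge d$ from Theorem \ref{th_mainpro}(xiii) to get $\Omega(\cl T_d) \ge d$, and $\Omega(\cl T_d)\le d$ always; hence $\Omega(\cl T_d) = d$, and then $\Omega_{\rm f}(\cl T_d)\le \Omega(\cl T_d)=d$ while $\alpha(\cl T_d)\le \Omega_{\rm f}(\cl T_d)$ by Theorem \ref{th_mainpro}(iii) forces $\Omega_{\rm f}(\cl T_d) = d$. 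The Lovász-type sandwich $\alpha(\cl S)\le\theta(\cl S)\le\hat\theta(\cl S)\le d$ from \eqref{eq_thandhatth} then pins $\theta(\cl T_d) = \hat\theta(\cl T_d) = d$. For (iv): $\cl T_d^{\otimes n} \supseteq (\bb C I_d)^{\otimes n} = \bb C I_{d^n}$, so $\alpha(\cl T_d^{\otimes n})\ge \alpha(\bb C I_{d^n}) = d^n$ by monotonicity (Remark after Proposition \ref{propCIpara}), while $\alpha\le d^n$ always, giving $\Theta(\cl T_d) = d$; and $R(\cl T_d) = \lim\sqrt[n]{\chi(\cl T_d^{\otimes n})} = 1$ since each $\chi(\cl T_d^{\otimes n}) = 1$ by the already-established $\chi(\cl T_d)=1$ and submultiplicativity (Theorem \ref{th_mulpco}(i)), or just directly since $I$ is $\cl T_d^{\otimes n}$-abelian.

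\textbf{The $\cl S$-full projections, parameters (v)--(vi) — the main obstacle.} The delicate part is identifying $\cl P_{\rm f}(\cl T_d)$ and then $\fp(\cl T_d)$, since this is where $d=2$ and $d\ge 3$ genuinely diverge. A rank-$r$ projection $P$ with orthonormal range basis $\{v_1,\dots,v_r\}$ is $\cl T_d$-full iff $v_i v_j^* \in \cl T_d$ for all $i,j$; in particular each $v_iv_i^*\in\cl T_d$, forcing $v_i v_i^* = \frac{1}{d}J_d$, so $r\le 1$ unless... wait — this already shows that for $d\ge 2$ every $\cl T_d$-full projection has rank $\le 1$, and rank exactly $1$ is achieved only by $P = \frac{1}{d}J_d$. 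Hence $\tilde\omega(\cl T_d) = 1$ for all $d\ge 2$, $\fp(\cl T_d) = \{M\in M_d^+ : M \le \tfrac1d J_d\} = \cl B_{J_d/d}$, and $\fp(\cl T_d)^\sharp = \cl A_{J_d/d} = \{M\in M_d^+ : \Tr(M J_d)\le d\}$ by Lemma \ref{ccinM_d2}. Wait: this makes $\tilde\Omega_{\rm f}(\cl T_d) = \gamma(\fp(\cl T_d)^\sharp) = \sup\{\Tr M : \Tr(MJ_d)\le d\}$, which is $\infty$ for every $d\ge 2$ (take $M = k\, w w^*$ with $w\perp \mathbbm 1$). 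So I must re-examine the claim $\tilde\Omega(\cl T_2) = \tilde\Omega_{\rm f}(\cl T_2) = 2$ — it cannot come from rank-one full projections alone. The resolution must be that for $d=2$, $J_2 = \begin{pmatrix}1&1\\1&1\end{pmatrix}$, the range of $\frac12 J_2$ is spanned by $\mathbbm 1 = e_1+e_2$, and one needs a \emph{pair} of $\cl T_2$-full projections summing to $I$; but the only rank-one full projection is $\frac12 J_2$, whose orthogonal complement $I - \frac12 J_2$ is the rank-one projection onto $e_1 - e_2$, and $(e_1-e_2)(e_1-e_2)^* = \begin{pmatrix}1&-1\\-1&1\end{pmatrix} \in \cl T_2$! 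So for $d=2$ \emph{both} rank-one projections $\frac12 J_2$ and $I-\frac12 J_2$ are $\cl T_2$-full, they form a PVM, giving $\tilde\Omega(\cl T_2)\le 2$; and $\tilde\omega(\cl T_2)\tilde\Omega(\cl T_2)\ge d = 2$ from Theorem \ref{th_mainpro}(xiii) with $\tilde\omega(\cl T_2)=1$ forces $\tilde\Omega(\cl T_2) = 2$, whence $2 = \tilde\omega(\cl T_2)\cdot 1 \le \tilde\Omega_{\rm f}(\cl T_2)\le\tilde\Omega(\cl T_2) = 2$ by Theorem \ref{th_mainpro}(iii) gives $\tilde\Omega_{\rm f}(\cl T_2) = 2$. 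For $d\ge 3$: the rank-one $\cl T_d$-full projections are exactly those $vv^*$ with $vv^*\in\cl T_d$, i.e. $v v^* = \alpha I + \beta J_d$ a rank-one matrix — solving, $\alpha = 0$ and $v$ parallel to $\mathbbm 1$, \emph{or} $\alpha = -\beta$ giving $vv^* = \beta(J_d - I)$ which is rank one only when $d=2$. So for $d\ge 3$ the \emph{only} $\cl T_d$-full projection is $\frac1d J_d$, $\fp(\cl T_d) = \cl B_{J_d/d}$ has empty relative interior (no strictly positive element, since $\frac1d J_d$ is not invertible for $d\ge 2$), and Theorem \ref{th_mainpro}(vii) gives $\tilde\Omega_{\rm f}(\cl T_d) = \infty$, hence $\tilde\Omega(\cl T_d) = \infty$ too, and $\tilde\omega(\cl T_d) = 1$ as noted. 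The genuine subtlety, and what I expect to spend the most care on, is exactly this case split: verifying that $(J_d - I)/(d-1)$ fails to be a projection (equivalently fails to be rank one) for $d\ge 3$ but succeeds for $d = 2$, and assembling the $d=2$ PVM $\{\frac12 J_2, I - \frac12 J_2\}$ and checking both members lie in $\cl P_{\rm f}(\cl T_2)$.
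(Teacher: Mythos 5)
Your overall route is the same as the paper's (commutativity of $\cl T_d$ plus Proposition \ref{Scommutes} and the sandwich inequalities for (i)--(iii), classification of the $\cl T_d$-full projections for (v)--(vi)), but two steps fail as written. First, in (iv) your justification of $\Theta(\cl T_d)=d$ uses monotonicity in the wrong direction: since $\alpha(\cl S)=\gamma(\ap(\cl S))$ and $\ap$ is inclusion-reversing (Remark \ref{apcpbounds}), $\alpha$ \emph{decreases} when the operator system grows, so $\bb C I_{d^n}\subseteq \cl T_d^{\otimes n}$ gives $\alpha(\cl T_d^{\otimes n})\le d^n$, the opposite of what you need (and the ``Remark after Proposition \ref{propCIpara}'' you cite does not exist). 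The conclusion is true, but you must instead use supermultiplicativity $\alpha(\cl S_1\otimes\cl S_2)\ge\alpha(\cl S_1)\alpha(\cl S_2)$ together with the trivial bound $\alpha\le d^n$, or argue as the paper does via $d=\alpha(\cl T_d)\le\Theta(\cl T_d)\le\hat{\theta}(\cl T_d)=d$.

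Second, in (v)--(vi) your self-correction is not carried through. The claims ``every $\cl T_d$-full projection has rank $\le 1$'' and ``$\tilde{\omega}(\cl T_2)=1$'' rest on the opening assertion that the only rank-one matrix in $\cl T_d$ is a multiple of $J_d$, which you yourself then refute for $d=2$ (both $\tfrac12 J_2$ and $I-\tfrac12 J_2$ are rank-one full projections). Once two orthogonal rank-one full projections exist, ruling out a two-element $\cl T_2$-full set requires an extra check: either verify that $uv^*\notin\cl T_2$ for $u=\tfrac{1}{\sqrt2}(e_1+e_2)$, $v=\tfrac{1}{\sqrt2}(e_1-e_2)$, or simply use $\tilde{\omega}(\cl T_2)\le\omega(\cl T_2)=1$ (your own part (ii); this is what the paper does). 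Two smaller slips: the ``solving'' step is algebraically off --- the second rank-one solution of $vv^*=\alpha I+\beta J_d$ has $\alpha=-d\beta$, i.e.\ is proportional to $2I-J_2$, whereas $\beta(J_d-I)$ has full rank for every $d\ge 2$ and $\beta\ne 0$; and the chain ``$2=\tilde{\omega}(\cl T_2)\cdot 1\le\tilde{\Omega}_{\rm f}(\cl T_2)$'' is garbled --- the intended lower bound is $2=\alpha(\cl T_2)\le\Omega_{\rm f}(\cl T_2)\le\tilde{\Omega}_{\rm f}(\cl T_2)$ from Theorem \ref{th_mainpro}(iii). With these repairs your argument agrees with the paper's, the only genuine variation being that you get $\tilde{\Omega}_{\rm f}(\cl T_2)=2$ and $\tilde{\Omega}_{\rm f}(\cl T_d)=\infty$ ($d\ge3$) from the sandwich inequalities and Theorem \ref{th_mainpro}(vii), rather than by computing $\fp(\cl T_d)^\sharp$ explicitly via Lemma \ref{ccinM_d2} as the paper does; both are legitimate.
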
  

\begin{proof} 
(i)-(iii)  As $\cl T_d$ is commutative,  Proposition \ref{Scommutes} gives 
(iii) and the fact that $\alpha(\cl T_d)=d$ and $\chi_{\rm{f}}(\cl T_d) =  \chi(\cl T_d) =1$.  
Theorem \ref{th_mainpro} and \eqref{eq_thandhatth} give the remaining results.

(iv)
That $\Theta(\cl T_d) = d$ follows from (i) and the fact that 
$\alpha(\cl S)  \le \Theta(\cl S) \le \hat{\theta}(\cl S)$ (see \cite[Corollary 5.5]{BTW}). 
Theorem \ref{th_mulpco} gives $\chi(\cl T_d^{\otimes n})=1$ for all $n \in \bb N$, whence we have $R(\cl T_d)=1.$

(v) 
Suppose unit vector $v = (v_i)_{i=1}^2 \in \bb C^2$ satisfies $vv^* \in \cl T_2$, so that $\{v\}$ is $\cl T_2$-full.  
Then $ |v_1|^2=|v_2|^2= 1/2$.  Since we have
$v_1\overline{v_2}=v_2\overline{v_1}$, it follows that $v_1= \pm v_2$.  
Setting $v_1= e^{i \theta} / \sqrt{2}= \pm v_2$ for $\theta \in [0,2\pi)$, gives
\begin{equation} \label{l1.50} 
vv^* = \frac{1}{2} \begin{pmatrix} 1 & \pm 1 \\ \pm 1 & 1 \end{pmatrix} \in \cl T_2, 
\end{equation}  
and we conclude that the $\cl T_2$-full singleton sets are those of the form 
$$\left \{ \frac{e^{i\theta}}{\sqrt{2}}\begin{pmatrix} 1 \\ \pm 1 \end{pmatrix} \right\}, \ \theta \in [0,2\pi) .$$
By (ii) and Theorem \ref{th_mainpro}, $\tilde{\omega}(\cl T_2) \le \omega(\cl T_2)=1$, and so $\tilde{\omega}(\cl T_2)=1.$  
It then follows from Theorem \ref{th_mainpro} that $\tilde{\Omega}(\cl T_2) \ge 2.$  
Now let $u=\frac{1}{\sqrt{2}} \begin{pmatrix} 1 \\ 1 \end{pmatrix}$,  
$v=\frac{1}{\sqrt{2}} \begin{pmatrix} 1 \\  -1 \end{pmatrix}$, 
$P_1 = uu^*$ and $P_2 = vv^*$. 
Noting that $\{u\}$ and $\{v\}$ are $\cl T_2$-full sets and that $P_1 + P_2 = I$ yields 
$\tilde{\Omega}(\cl T_2) = 2$.

By \eqref{l1.50}, $P_1$ and $P_2$ are the  only $\cl T_2$-full projections.
Thus by Proposition \ref{l_gen}, 
a matrix $M= \begin{pmatrix} a & b \\ \overline{b} & d \end{pmatrix} \in M_2^+$ 
belongs to $\fp (\cl T_2)^\sharp$ if and only if 
$$\frac{1}{2}(a + b + \overline{b}+d) \le 1 \ \mbox{ and } \ \frac{1}{2}(a - b - \overline{b}+d) \le 1.$$
It follows that if $M \in \fp(\cl T_2)^\sharp$ then $\Tr M = a+d \le 2.$  
Since $I \in \fp(\cl T_2)^\sharp$, we have that 
$\tilde{\Omega}_{\rm f}(\cl T_2) = 2$.

(vi)
Let unit vector $v= (v_i)_{i=1}^d \in \bb C^d$ satisfy $vv^* \in \cl T_d$.  
This requires that  $|v_i |^2= 1/ d$ for all $i \in [d]$.  
Letting $i, j, k \in [d]$ be pairwise distinct, we require  $v_i \overline{v_k}= v_j \overline{v_k},$ and so  $v_i=v_j$.  
Then $v= \frac{e^{i \theta}}{\sqrt{d}} \mathbbm 1$ for some $\theta \in \bb [0, 2\pi )$, and  
\begin{equation} \label{eq_vvstar}vv^* = \frac{1}{d} J_d \in \cl T_d. \end{equation}

Thus for $d \ge 3$, the  $\cl T_d$-full singleton sets are precisely those of the form 
$\left\{\frac{e^{i\theta}}{\sqrt{d}} \mathbbm 1 \right\}$,  $\theta \in [0, 2 \pi).$  
As in (v), for $d \ge 3$ we have $\tilde{\omega}(\cl T_d) \le \omega (\cl T_d)=1$ and we conclude that $\tilde{\omega}(\cl T_d)=1$.   
From \eqref{eq_vvstar} we see  that the only $\cl T_d$-full projection is $\frac{1}{d}J_d.$ 
Then for $M \in M_d^+$ we have $M \in \fp (\cl T_d)^\sharp$ if and only if $\Tr (MJ_d) \le d$.   Let unit vector $w= (w_i)_{i=1}^d \in \bb C^d$ satisfy $\sum_{i=1}^d w_i=0$, and thus $\ip{w}{\mathbbm 1}=0.$ For  $k \in \bb R_+$  form $M= k ww^* \in M_d^+,$  giving that $ \Tr M=k$ and $\Tr(MJ_d)= k | \ip{w}{ \mathbbm 1}| ^2=0.$ Hence we have $M \in \fp( \cl T_d)^\sharp$ for all 
$k \in \bb R^+$, and
$\tilde{\Omega}_{\rm f}(\cl T_d)= \gamma(\fp(\cl T_d)^\sharp)= \infty$.  Finally note by Theorem \ref{th_mainpro} that $\tilde{\Omega}(\cl T_d) = \infty$.    
\end{proof}

\begin{example} 
Here we give some quantum channels whose related operator systems are of the form $\cl T_d$ for some $d \in \bb N$.

(i)   
Consider a quantum channel $\Phi:M_2 \to M_2$ with Kraus representation 
$\Phi(T)= \sum_{i=1}^2 A_i T A_i^*, \, \rho \in M_2$, where   
$$A_1= \frac {1}{\sqrt{2}}
\begin{pmatrix} 
1 & 1 \\ 0 & 0 
\end{pmatrix} 
\mbox{ and }   
A_2 = \frac {1}{\sqrt{2}}
\begin{pmatrix} 
0 & 0 \\ 1 & -1 
\end{pmatrix}.$$
It is easy to verify that $\cl S_{\Phi}=\cl T_2$.

(ii)  The  operators $$B_1= \frac{1}{\sqrt{2}}\begin{pmatrix}1 & 0 & 0 \\ 0 & 1 & 0 \\ 0 & 0 & 1 \\ 0 & 0 & 0 \\ 0 & 0 & 0 \\ 0 & 0 & 0  \end{pmatrix}, \ \ \ B_2= \frac{1}{\sqrt{8}}\begin{pmatrix}0 & 1 & 1 \\ 1 & 0 & 1 \\ 1 & 1 & 0 \\ 1 & -1 & 0 \\ 0 & 1 & -1 \\ 1 & 0 & -1  \end{pmatrix} $$  satisfy $B_1^*B_1=B_2^*B_2= \frac{1}{2}I_3$ and $B_2^*B_1=B_1^*B_2= \frac{1}{4}(J_3-I_3).$  It follows that the channel $\Psi:M_3 \to M_6$ given by $\Psi(\rho)= B_1\rho B_1^*+B_2 \rho B_2^*, \, \rho \in M_3$ is a quantum channel with $S_\Psi = \cl T_3.$ 
\end{example}

\begin{prop} \label{prop_omegaR}
Consider operator systems $\cl R_i \subseteq M_{d_i}$ where $\chi(\cl R_i)=1$ for $i=1, \ldots, m$.  
Then \begin{itemize} 
\item[(i)] $ \chi (\bigotimes_{i=1}^m \cl R_i) =\chi_{\rm f}(\bigotimes_{i=1}^m \cl R_i) = \omega(\bigotimes_{i=1}^m \cl R_i) = R(\bigotimes_{i=1}^m \cl R_i)
= 1;$ 
\item[(ii)]  
$\alpha(\bigotimes_{i=1}^m \cl R_i) =\Omega(\bigotimes_{i=1}^m \cl R_i)=\Omega_{\rm f}(\bigotimes_{i=1}^m \cl R_i)  
= \theta (\bigotimes_{i=1}^m \cl R_i) \newline
= \hat{\theta} (\bigotimes_{i=1}^m \cl R_i) 
 = c (\bigotimes_{i=1}^m \cl R_i)
= d_1 \ldots d_m. $ \end{itemize} 
\end{prop}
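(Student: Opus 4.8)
The plan is to reduce the entire statement to one structural observation — that a tensor product of operator systems which are individually diagonal in some orthonormal basis is again diagonal in an orthonormal basis — and then to read off all the values from Proposition~\ref{Scommutes} together with the monotonicity chains recorded in Theorem~\ref{th_mainpro} and \eqref{eq_thandhatth}.

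First I would use the hypothesis $\chi(\cl R_i) = 1$ together with Proposition~\ref{Scommutes} to obtain, for each $i \in \{1,\dots,m\}$, an orthonormal basis $V_i$ of $\bb C^{d_i}$ with $\cl R_i \subseteq \cl D_{V_i}$. Setting $d = d_1 \cdots d_m$ and letting $V$ be the orthonormal basis of $\bb C^{d}$ consisting of the tensors of the vectors of $V_1,\dots,V_m$, one has
$$\cl R := \bigotimes_{i=1}^m \cl R_i \;\subseteq\; \bigotimes_{i=1}^m \cl D_{V_i} \;=\; \cl D_{V},$$
so $\cl R$ is a non-commutative graph in $M_d$ that is diagonal in the orthonormal basis $V$. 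Applying Proposition~\ref{Scommutes} to $\cl R$ then gives immediately $\chi(\cl R) = \chi_{\rm f}(\cl R) = 1$, $\alpha(\cl R) = d$, and $H(\cl R,\rho) = 0$ for every $\rho \in \cl R_d$.

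For part (i) it remains only to handle $\omega$ and $R$. By the chain in Theorem~\ref{th_mainpro}(i), $1 \le \omega(\cl R) \le \chi(\cl R) = 1$, so $\omega(\cl R) = 1$ (the lower bound holds since every rank-one projection is an $\cl S$-clique projection; alternatively one may use $\omega_{\rm f}(\cl R)=\chi_{\rm f}(\cl R)=1$ from Theorem~\ref{th_fcc}). For the Witsenhausen rate, each power $\cl R^{\otimes n}$ is again a tensor product of diagonal operator systems, hence diagonal in an orthonormal basis of $\bb C^{d^n}$; Proposition~\ref{Scommutes} gives $\chi(\cl R^{\otimes n}) = 1$ for all $n$, so $R(\cl R) = 1$ (one may instead invoke the submultiplicativity of $\chi$ in Theorem~\ref{th_mulpco}(i)). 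For part (ii) all remaining parameters are squeezed between $\alpha(\cl R) = d$ and the trivial upper bound $d$: by \eqref{eq_thandhatth}, $d = \alpha(\cl R) \le \theta(\cl R) \le \hat{\theta}(\cl R) \le d$; by Theorem~\ref{th_mainpro}(ii) and~(iii), $d = \alpha(\cl R) \le \Omega_{\rm f}(\cl R) \le \Omega(\cl R) \le d$; and the analogous lower bound by $\alpha(\cl R)$ (or $\hat{\theta}(\cl R)$) and upper bound by $d$ pin down $c(\cl R)=d$ as well.

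There is, frankly, no substantial obstacle: once the diagonality of $\cl R$ is in place, the proof is bookkeeping with already-established inequalities. The only step that is not entirely automatic is the identity $\bigotimes_i \cl D_{V_i} = \cl D_{V}$, together with the observation that $V$ is orthonormal — an elementary fact about tensor products of diagonal matrix algebras that should be stated explicitly but needs no real argument.
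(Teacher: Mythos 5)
Your proof is correct. The paper's own route is slightly different at the key step: it obtains $\chi\bigl(\bigotimes_{i=1}^m \cl R_i\bigr)=1$ in one stroke from the submultiplicativity $\chi(\cl S_1\otimes\cl S_2)\le\chi(\cl S_1)\chi(\cl S_2)$ of Theorem \ref{th_mulpco}(i), then feeds this into Proposition \ref{Scommutes} to get $\alpha=d_1\cdots d_m$, and finishes, exactly as you do, by squeezing the remaining parameters between $\alpha$ and $d_1\cdots d_m$ using Theorem \ref{th_fcc}, Theorem \ref{th_mainpro} and \eqref{eq_thandhatth}. You instead apply Proposition \ref{Scommutes} factor-by-factor to convert the hypothesis $\chi(\cl R_i)=1$ into diagonality $\cl R_i\subseteq\cl D_{V_i}$, note that $\bigotimes_{i=1}^m\cl D_{V_i}=\cl D_V$ for the tensor basis $V$, and then apply Proposition \ref{Scommutes} to the product. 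This is essentially the submultiplicativity argument unfolded in the special case at hand (the PVM witnessing $\chi(\cl R_i)=1$ is the single abelian projection $I_{d_i}$, i.e.\ diagonality), so the two proofs are close; what your version buys is that the diagonality of every power $\cl R^{\otimes n}$ is explicit, giving $\chi(\cl R^{\otimes n})=1$ and $\alpha(\cl R^{\otimes n})=(d_1\cdots d_m)^n$ exactly, hence $R(\cl R)=1$ and $\Theta(\cl R)=d_1\cdots d_m$ (the parameter written $c$ in the statement is the Shannon capacity $\Theta$) without invoking super-multiplicativity of $\alpha$ or the bound $\Theta\le\hat{\theta}$ quoted from \cite{BTW}. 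The only place you are a little loose is precisely this last parameter: ``the analogous lower bound \ldots pins down $c(\cl R)=d$'' should be made concrete, either by the exact computation $\alpha(\cl R^{\otimes n})=d^n$ just mentioned, or by citing $\alpha\le\Theta\le\hat{\theta}$ as the paper does in the proof of Proposition \ref{alphaTn}(iv).
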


\begin{proof} 
(i) By  Theorem \ref{th_mulpco}, $\chi (\bigotimes_{i=1}^m \cl R_i) = 1$, whence it is immediate that $R(\bigotimes_{i=1}^m \cl R_i)=1$.  The remaining equalities follow from 
Theorems \ref{th_fcc} and \ref{th_mainpro}. 

(ii) By (i) and Proposition \ref{Scommutes}, $\alpha(\bigotimes_{i=1}^m \cl R_i)=d_1\ldots d_m$, which implies 
$\Theta(\bigotimes_{i=1}^m \cl R_i) = d_1\ldots d_m$.  Theorem \ref{th_mainpro} and  \eqref{eq_thandhatth}  give the 
rest of the equalities.
\end{proof}

Propositions \ref{alphaTn} and \ref{prop_omegaR} have the following corollary. 

\begin{cor} 
We have that 
\begin{itemize} \item[(i)] $\chi (\bigotimes_{i=1}^m \cl T_{d_i})=\chi_{\rm f}(\bigotimes_{i=1}^m \cl T_{d_i}) 
=\omega(\bigotimes_{i=1}^m \cl T_{d_i})=R(\bigotimes_{i=1}^m \cl T_{d_i}) = 1;$ 
\item[(ii)] 
$\alpha(\bigotimes_{i=1}^m \cl T_{d_i}) 
= \Omega(\bigotimes_{i=1}^m \cl T_{d_i}) 
= \Omega_{\rm f}(\bigotimes_{i=1}^m \cl T_{d_i}) 
 =\theta (\bigotimes_{i=1}^m \cl T_{d_i}) \newline = \hat{\theta}(\bigotimes_{i=1}^m \cl T_{d_i})
 = \Theta(\bigotimes_{i=1}^m \cl T_{d_i})= d_1 \ldots d_m.$
\end{itemize}
\end{cor}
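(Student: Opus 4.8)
The plan is to obtain this corollary as an immediate specialisation of Proposition \ref{prop_omegaR} to the operator systems $\cl R_i = \cl T_{d_i} \subseteq M_{d_i}$. Proposition \ref{prop_omegaR} computes all of the listed parameters for a tensor product $\bigotimes_{i=1}^m \cl R_i$ under the single hypothesis that $\chi(\cl R_i) = 1$ for each $i$, so the only thing I would need to supply is a verification of that hypothesis in the present case.

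To check $\chi(\cl T_{d_i}) = 1$, I would note that $\cl T_{d_i} = \bb CI_{d_i} + \bb CJ_{d_i}$ is a commutative operator system, since $I_{d_i}$ and $J_{d_i}$ commute; hence by Proposition \ref{Scommutes} (equivalently, by Proposition \ref{alphaTn}(ii), which already records $\chi(\cl T_{d_i}) = 1$) its chromatic number equals $1$. With this in hand, Proposition \ref{prop_omegaR}(i) applied to $\cl R_i = \cl T_{d_i}$ yields part (i) of the corollary verbatim, and Proposition \ref{prop_omegaR}(ii) yields the chain $\alpha = \Omega = \Omega_{\rm f} = \theta = \hat\theta = d_1\cdots d_m$ of part (ii); the value $\Theta(\bigotimes_{i=1}^m \cl T_{d_i}) = d_1\cdots d_m$ is exactly what is established in the proof of Proposition \ref{prop_omegaR}(ii) (via $\alpha \le \Theta \le \hat\theta$ together with Proposition \ref{Scommutes}), so it carries over directly.

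I do not anticipate a genuine obstacle here: the statement is a bookkeeping corollary, and the entire content is the one-line observation that $\cl T_{d_i}$ is commutative, after which the cited propositions do all the work. The only point worth stating carefully in the write-up is which reference supplies which equality, to make the deduction transparent.
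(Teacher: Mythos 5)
Your proposal is correct and follows the paper's own route: the paper deduces this corollary precisely by combining Proposition \ref{alphaTn} (which records $\chi(\cl T_{d_i})=1$, itself via commutativity of $\cl T_{d_i}$ and Proposition \ref{Scommutes}) with Proposition \ref{prop_omegaR} applied to $\cl R_i=\cl T_{d_i}$. Your handling of $\Theta$ via $\alpha\le\Theta\le\hat\theta\le d_1\cdots d_m$ is also exactly the argument used in the proof of Proposition \ref{prop_omegaR}(ii), so nothing further is needed.
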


Next we discuss an operator system that has  been widely considered in the literature, see for example \cite{kim} and \cite{LPT}, namely 
$$\cl S_d = {\rm span}\{e_ie_j^*, I_d : i \ne j \} \subseteq M_d,  \ \ d \in \bb N.$$ 
For $d \ge 2$, $\cl S_d$ is not commutative, and so it does not reduce to the rather trivial case of 
Proposition \ref{Scommutes}, and nor is it equal to $\cl S_G$ for any graph $G$.  
In \cite{LPT} it was shown that $\alpha(\cl S_2) = 1$, and in 
\cite[Examples 4, 22]{kim}  that $\chi(\cl S_d) = \chi_{\rm s}(\cl S_d^{\perp}) = d$, 
while the parameters $\alpha$, $\omega_{\rm f}$, $\chi$, $\tilde{\omega}$ were 
identified in \cite[Proposition 3.12]{BTW}.
Here we extend these results by identifying 
the values of some  of the parameters introduced in Sections \ref{ss_ccn} and \ref{s_wit}.

\begin{prop} \label{Snspaces}
Let $d_1,\dots,d_m\in \bb{N}$. Then

\begin{itemize}
\item[(i)] $R(\bigotimes_{i=1}^m \cl S_{d_i}) = d_1, \ldots, d_m;$

\item[(ii)] $\tilde{\Omega}(\cl S_2)=\tilde{\Omega}_{\rm f}(\cl S_2)=2$ and $\tilde{\Omega}(\bigotimes_{i=1}^m \cl S_{d_i}) \ge \tilde{\Omega}_{\rm f}(\bigotimes_{i=1}^m \cl S_{d_i}) \geq d_1\dots d_m;$

\item[(iii)] $\Omega_{\rm f}(\cl S_d)=\Omega(\cl S_d)=1$.
\end{itemize}
\end{prop}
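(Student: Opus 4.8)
The plan is to handle the three items separately, drawing everything from two structural facts about $\cl S_d$: first, that $\cl S_d^{\perp}$ is the space of traceless diagonal matrices, which contains no nonzero matrix of rank $\leq 1$; and second, that $T\in\cl S_d$ if and only if $\Delta(T)\in\bb{C}I_d$, so that $\Delta(\cl S_d)=\bb{C}I_d$. Throughout write $D=d_1\cdots d_m$.

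Item (iii) is the quickest. Since $e_ie_j^{*}\in\cl S_d$ for all $i\neq j$, the set $\{e_1,\dots,e_d\}$ is an $\cl S_d$-clique, so $I_d=\sum_{i}e_ie_i^{*}\in\cl P_{\rm c}(\cl S_d)$; hence $\Omega(\cl S_d)=1$, and then $\Omega_{\rm f}(\cl S_d)=1$ by the bounds of Theorem \ref{th_mainpro}(ii). For item (ii), I would first prove $\tilde{\omega}(\bigotimes_{i=1}^{m}\cl S_{d_i})=1$. The inequality $\tilde{\omega}\geq 1$ holds because $\tfrac1D J_D=\tfrac1D\bigotimes_{i}J_{d_i}$ is a rank-one $(\bigotimes_{i}\cl S_{d_i})$-full projection. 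For $\tilde{\omega}\leq 1$ I would induct on $m$: applying $\Delta^{\otimes m}$ shows that any $v$ with $vv^{*}\in\bigotimes_i\cl S_{d_i}$ is flat (all coordinates of modulus $1/\sqrt{D}$). For $m=1$ a short direct computation then kills any full pair: $v_1v_2^{*}\in\cl S_d$ forces $(v_1)_k\overline{(v_2)_k}$ constant in $k$, so by flatness $v_1$ is a scalar multiple of $v_2$, contradicting orthogonality. For $m\geq 2$, write a putative full pair $\{v_1,v_2\}$ in block form over the first tensor factor, $v_a=\sum_{i}e_i\otimes w_a^{(i)}$; since $v_1v_2^{*}\in\cl S_{d_1}\otimes M_{D/d_1}$, all blocks $w_1^{(i)}(w_2^{(i)})^{*}$ coincide, and being nonzero (flatness) they have rank one, which forces $v_1=a\otimes\xi$ and $v_2=b\otimes\eta$ to be product vectors. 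The elementary fact that a nonzero rank-one tensor $M_1\otimes M_2$ lies in $\cl A\otimes\cl B$ only if $M_1\in\cl A$ and $M_2\in\cl B$ then reduces $\{v_1,v_2\}$ either to an $\cl S_{d_1}$-full pair (impossible by the base case) or to a $\bigotimes_{i\geq 2}\cl S_{d_i}$-full pair (impossible by induction).

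Granting $\tilde{\omega}(\bigotimes_i\cl S_{d_i})=1$, every element of $\cl P_{\rm f}(\bigotimes_i\cl S_{d_i})$ has rank $\leq 1$, so $\langle I_D,P\rangle\leq 1$ for all such $P$; since $\fp(\bigotimes_i\cl S_{d_i})^{\sharp}=\cl P_{\rm f}(\bigotimes_i\cl S_{d_i})^{\sharp}$ by Proposition \ref{l_gen}, this gives $I_D\in\fp(\bigotimes_i\cl S_{d_i})^{\sharp}$ and hence $\tilde{\Omega}_{\rm f}(\bigotimes_i\cl S_{d_i})=\gamma(\fp(\bigotimes_i\cl S_{d_i})^{\sharp})\geq D$; combined with $\tilde{\Omega}\geq\tilde{\Omega}_{\rm f}$ (Theorem \ref{th_mainpro}(ii)) this proves the inequality chain. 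Finally, for $\cl S_2$ the flat vectors $u=\tfrac1{\sqrt2}(e_1+e_2)$ and $v=\tfrac1{\sqrt2}(e_1-e_2)$ satisfy $uu^{*}+vv^{*}=I_2$, so $\{uu^{*},vv^{*}\}$ is a PVM of $\cl S_2$-full projections, whence $\tilde{\Omega}(\cl S_2)\leq 2$; together with the lower bound, $\tilde{\Omega}(\cl S_2)=\tilde{\Omega}_{\rm f}(\cl S_2)=2$.

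For item (i), the bound $R(\bigotimes_i\cl S_{d_i})\leq D$ follows from the submultiplicativity of $\chi$ (Theorem \ref{th_mulpco}(i)) together with $\chi(\cl S_d)\leq d$ (again $I_d=\sum_i e_ie_i^{*}$ is a sum of $d$ rank-one abelian projections). For the reverse bound, since $(\bigotimes_{i=1}^{m}\cl S_{d_i})^{\otimes n}$ is itself a tensor product of copies of systems of the form $\cl S_e$, and since $\alpha(\cl S)\chi(\cl S)\geq\dim$ by Theorem \ref{th_mainpro}(xiii), it suffices to prove the key lemma: \emph{every tensor product $\cl S_{e_1}\otimes\cdots\otimes\cl S_{e_N}$ has independence number $1$.} I expect this to be the main obstacle. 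The route I propose is to show by induction on $N$ that $(\cl S_{e_1}\otimes\cdots\otimes\cl S_{e_N})^{\perp}$ contains no nonzero matrix of rank $\leq 1$ — this gives $\alpha=1$ at once, since $v_1v_2^{*}$ has rank one for any orthonormal pair. Writing this complement as $\cl S_{e_1}^{\perp}\otimes M_q+M_{e_1}\otimes\cl V$ with $\cl V=(\cl S_{e_2}\otimes\cdots\otimes\cl S_{e_N})^{\perp}$ free of nonzero rank-$\leq 1$ elements by induction, the heart of the matter is: \emph{if $\cl V\subseteq M_q$ has no nonzero rank-$\leq 1$ element, then $\cl D_p^{0}\otimes M_q+M_p\otimes\cl V$ has none either}, where $\cl D_p^{0}$ denotes the traceless diagonal matrices. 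To prove this, write a putative rank-one $M=xy^{*}$ in block form over the first factor: the off-diagonal blocks $x_iy_j^{*}$ ($i\neq j$) lie in $\cl V$ and are rank $\leq 1$, hence vanish, so $x$ and $y$ are supported on a single common block $i_0$; since $\cl D_p^{0}\otimes M_q$ consists exactly of block-diagonal matrices with zero block-sum, the $(i_0,i_0)$-block $x_{i_0}y_{i_0}^{*}$ is then forced to lie in $\cl V$ as well, so it too vanishes — a contradiction. With $\alpha=1$ established, Theorem \ref{th_mainpro}(xiii) yields $\chi((\bigotimes_i\cl S_{d_i})^{\otimes n})\geq D^{n}$, hence $R(\bigotimes_i\cl S_{d_i})\geq D$, completing the proof.
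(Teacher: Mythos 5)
Your proposal is correct, but it takes a genuinely different route from the paper. The paper's own proof is essentially a citation: item (i) invokes \cite[Proposition 3.12]{BTW} for the identity $\chi\big(\bigotimes_{i=1}^m\cl S_{d_i}\big)=d_1\cdots d_m$ (from which $R=d_1\cdots d_m$ is immediate, since tensor powers are again products of this form), item (ii) uses the explicit description of $\fp\big(\bigotimes_{i=1}^m\cl S_{d_i}\big)$ from the same result to get $I\in\fp(\cdot)^{\sharp}$, and the upper bound for $\cl S_2$ comes from $\cl T_2\subseteq\cl S_2$ together with Proposition \ref{alphaTn}(v); item (iii) is the same as yours. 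You replace both citations by self-contained structural arguments inside this paper's toolkit: the flatness observation $\Delta(\cl S_d)=\bb CI_d$ plus a block-decomposition induction gives $\tilde{\omega}\big(\bigotimes_i\cl S_{d_i}\big)=1$, which yields $I\in\cl P_{\rm f}(\cdot)^{\sharp}=\fp(\cdot)^{\sharp}$ via Proposition \ref{l_gen} exactly as the cited description does; and a rank-one-freeness induction on the co-normal product decomposition of $\big(\bigotimes_i\cl S_{e_i}\big)^{\perp}$ gives $\alpha=1$ for arbitrary tensor products of the $\cl S_e$'s, so that Theorem \ref{th_mainpro}(xiii) forces $\chi\geq\dim$ and hence $R\geq d_1\cdots d_m$, with the matching upper bound from Theorem \ref{th_mulpco}(i) and $\chi(\cl S_d)\leq d$. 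I checked the two inductions and they are sound: the common nonzero diagonal blocks of $v_1v_2^{*}$ in $\cl S_{d_1}\otimes M_q$ do force product vectors, orthogonality then splits into an $\cl S_{d_1}$-full pair or a full pair in the remaining factors, and in the $\alpha$-lemma the zero block-sum of the $\cl D^0_p\otimes M_q$ component indeed pushes the surviving diagonal block $x_{i_0}y_{i_0}^{*}$ into $\cl V$. What each approach buys: the paper's proof is two lines given \cite{BTW}; yours is longer but independent of that external computation and actually proves more (independence number one and full number one for all tensor products of the $\cl S_e$'s, recovering $\chi=d_1\cdots d_m$ rather than quoting it). Two small points to make explicit when writing it up: full sets of size at least three contain full pairs, so the pairwise argument does bound $\tilde{\omega}$; and the degenerate factors $d_i=1$ are trivial (no orthonormal pairs exist there), so neither induction breaks. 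Also note that your identification $\tilde{\Omega}_{\rm f}=\gamma(\fp(\cdot)^{\sharp})$ via \eqref{eq_Om} and Theorem \ref{NMgamma2} needs $\cl P_{\rm f}$ non-empty, which your observation that $\frac{1}{d_1\cdots d_m}J$ is a rank-one full projection supplies.
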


\begin{proof}
(i) 
In  \cite[Proposition 3.12]{BTW} we have  
$\chi\left(\bigotimes _{i=1}^m \cl S_{d_i}\right) = d_1 \ldots d_m$, and  the result follows.

(ii)
It follows from Proposition \ref{l_gen} and  the expression for $\fp(\bigotimes_{i=1}^m \cl S_{d_i})$ given in \cite[Proposition 3.12]{BTW} that  $I_{d_1 \ldots d_m}\in \fp(\bigotimes_{i=1}^m \cl S_{d_i})^{\sharp}$. 
Theorem \ref{th_mainpro} then
gives that  $\tilde{\Omega}(\bigotimes_{i=1}^m \cl S_{d_i}) \geq \tilde{\Omega}_{\rm f}(\bigotimes_{i=1}^m \cl S_{d_i}) \geq d_1 \dots d_m$. As $\cl T_2 \subseteq \cl S_2$,
Theorem \ref{th_mainpro} and Proposition \ref{alphaTn} give 
$\tilde{\Omega}_{\rm f}(\cl S_2) \le \tilde{\Omega}_{\rm f}(\cl T_2)=2$ 
and $\tilde{\Omega}(\cl S_2) \le \tilde{\Omega}(\cl T_2)=2$, and we conclude that 
$\tilde{\Omega}(\cl S_2)=\tilde{\Omega}_{\rm f}(\cl S_2)=2.$

(iii) It is clear that $\{ e_1, \ldots, e_d\}$ is an $\cl S_d$-clique.  
Thus $I_d$ is an $\cl S_d$-clique projection and hence 
$\Omega(\cl S_d)=1$.  
By Theorem \ref{th_mainpro}, $\Omega_{\rm f}(\cl S_d) = 1$.
\end{proof}

We conclude with an example of an interesting phenomenon pointed out   
at the end of Section \ref{s_wit}. 

\begin{example} \label{omega5} 
Consider the operator system $\bb CI_2 \otimes \cl S_2$.  Recall from \cite[Proposition 3.12]{BTW} that $\omega(\cl S_2)=2$  and observe that $\omega(\bb CI_2)=1$ by Proposition \ref{propCIpara}. We claim that $\omega(\bb CI_2 \otimes \cl S_2)= 1 < \omega  (\cl S_2)$.  Since $\{u\}$ is an $\bb C I_2 \otimes \cl S_2$-clique for any unit vector  $u \in \bb C^4$, it suffices to show that no $(\bb CI_2 \otimes \cl S_2)$-clique has cardinality greater than 1.  To establish this, we show that if $uv^* \in \bb CI_2 \otimes \cl S_2$, then $u=0$ or $v=0$.   We note that
$$\bb CI_2 \otimes \cl S_2
= \left \{ \begin{pmatrix} 
\lambda & a & 0 & 0 \\ 
b & \lambda & 0 & 0 \\ 
0 & 0 & \lambda & a \\ 
0 & 0 & b & \lambda 
\end{pmatrix}
:\   \lambda, a, b \in \bb C \right \}.$$  
For $u,v \in \bb C^4$, write $u=(u_i)_{i =1}^4$ and $v=(v_i)_{i =1}^4,$ and suppose that $uv^* = (u_i \overline{v}_j)_{i,j =1}^4\in \bb CI_2 \otimes \cl S_2.$ This requires $u_1\overline{v}_3=u_1\overline{v}_4=u_2\overline{v}_3=u_2\overline{v}_4=0$, giving $u_1=u_2=0$ or $v_3=v_4=0.$
  Since for $uv^* \in \bb CI_2 \otimes \cl S_2$ we also have $$u_1\overline{v}_1=u_2\overline{v}_2=u_3\overline{v}_3=u_4\overline{v}_4,$$ it must then hold that all these terms vanish. Similarly,  $u_1\overline{v}_2=u_3\overline{v}_4$ and  vanishes because either $v_4=0$ or $u_1=0$. Finally, $u_2\overline{v}_1=u_4\overline{v}_3$ and vanishes because $u_2=0$ or $v_3=0$.  We then have $uv^*=0$, and it follows that $u=0$ or $v=0$, and $\{ u,v\} $ is not a $\bb CI_2 \otimes \cl S_2$-clique. 
  \end{example}

\smallskip

\noindent 
{\bf Acknowledgement. } 
AW acknowledges financial support by the Spanish MINECO (projects 
FIS2016-86681-P and PID2019-107609GB-I00) with the support of FEDER 
funds, and the Generalitat de Catalunya (project CIRIT 2017-SGR-1127).
It is our pleasure to thank Giannicola Scarpa for
fruitful discussions on the topic of graph entropy, and
P\'eter Vrana for valuable comments concerning convex corners.


\end{document}